\newcommand{\dbtilde}[1]{\accentset{\approx}{#1}}
\renewcommand{\epsilon}{\varepsilon}
\newcommand{\ep}{\epsilon}
\newcommand{\eps}{\ep}
\newcommand{\rsig}{w}
\newcommand{\kirch}{B}
\newcommand{\Bo}{B_0}
\newcommand{\Hr}{H_{\rm r}}
\newcommand{\Hf}{H_{\rm f}}
\newcommand{\Lr}{L_{\rm r}}
\newcommand{\Lf}{L_{\rm f}}
\newcommand{\crr}{c_{\rm r}}
\newcommand{\cf}{c_{\rm f}}
\newcommand{\oH}{\overline{\mathbb{H}^2}}
\newcommand{\vt}{\vartheta}
\newcommand{\cA}{\mathcal A}
\newcommand{\cP}{\mathcal P}
\newcommand{\cW}{\mathcal W}
\newcommand{\tH}{\Hr}
\newcommand{\1}{\mathds{1}}
\newcommand{\R}{\mathbb R}
\renewcommand{\H}{\mathbb H}
\DeclareMathOperator{\dist}{dist}
\newtheorem{theorem}{Theorem}[section]
\newtheorem{lemma}[theorem]{Lemma}
\newtheorem*{claim*}{Claim}
\newtheorem{proposition}[theorem]{Proposition}
\newtheorem{corollary}[theorem]{Corollary}
\theoremstyle{definition}
\newtheorem{definition}[theorem]{Definition}
\theoremstyle{remark}
\numberwithin{equation}{section}
\newcommand{\be}{\begin{equation}}
\newcommand{\ee}{\end{equation}}
\begin{document}

\title[Hamilton-Jacobi approach for road-field models]
	{A Hamilton-Jacobi approach to road-field reaction-diffusion models}

\author{Christopher Henderson}
\address[Christopher Henderson]{Department of Mathematics, University of Arizona, Tucson, AZ, 85719, USA}
\email{ckhenderson@math.arizona.edu}

\author{King-Yeung Lam}
\address[King-Yeung Lam]{Department of Mathematics, The Ohio State University, Columbus, OH, 43210, USA}
\email{lam.184@osu.edu}

\begin{abstract}
We consider the road-field reaction-diffusion model introduced by Berestycki, Roquejoffre, and Rossi.  By performing a ``thin-front limit,'' we are able to deduce a Hamilton-Jacobi equation with a suitable effective Hamiltonian on the road that governs the front location of the road-field model.  Our main motivation is to apply the theory of strong (flux-limited) viscosity solutions in order to determine a control formulation interpretation of the front location.  In view of the ecological meaning of the road-field model, this is natural as it casts the invasion problem as one of finding optimal paths that balance the positive growth rate in the field with the fast diffusion on the road.

Our main contribution is a nearly complete picture of the behavior on two-road conical domains.  When the diffusivities on each road are the same, we show that the  propagation speed in each direction in the cone can be computed via those associated with one-road half-space problem.   When the diffusivities differ, we show that the speed along the faster road is unchanged, while the speed along the slower road can be enhanced.  Along the way we provide a new proof of known results on the one-road half-space problem via our approach.
\end{abstract}

\maketitle

\section{Introduction}

\subsection{The model and main questions}

In \cite{BRR_JMB}, Berestycki, Roquejoffre, and Rossi introduced a model for the invasion of a species that can inhabit two different environments, a ``field'' in which each individual moves slowly and reproduces and a ``road,'' on which it moves quickly but cannot reproduce.  We refer to~\cite{BRR_JMB} for a more in-depth discussion of the ecological relevance of the model; however, the reader may find it helpful to have in mind the example of wolf packs in Western Canada, which have been observed to move quickly in the forest using seismic lines~\cite{wolves}; see also \cite{HillenPainter}.
We also mention \cite{Chen2023asymptotic,LiWang,Huang2024preprint} in which other effective boundary conditions, different from what we consider here, were derived by letting the width of the road to zero.

The Berestycki-Roquejoffre-Rossi model that we study in this paper is given by
\begin{equation}\label{e.unscaled_berestycki}
    \begin{cases}
        V_t - \tilde d\Delta V = \tilde rV(1-V)
            \qquad&\text{ for  }(t,x,y) \in (0,\infty) \times \H,\\
        U_t - \tilde{D} U_{xx} = \tilde \nu V|_{y=0} - \tilde \mu U
            \qquad &\text{ for  }(t,x) \in (0,\infty) \times \R,\\
        -\tilde dV_y(t,x,0) = \tilde \mu U(t,x) - \tilde \nu  V(t,x,0)  
            \qquad&\text{ for  }(t,x) \in (0,\infty) \times \R,
        \end{cases}
\end{equation}
where we use the following notation for the upper half-space:
\begin{equation}
    \H = \R_x \times (0,\infty)_y.
\end{equation}
Here, $V(t,x,y)$ represents the population density of the species at time $t$ in the field at $(x,y) \in \H$, $U(t,x)$ represents the population density at time $t$ on the road $(x,0) \in \R\times \{0\}$, and $\tilde d, \tilde D, \tilde r, \tilde \nu$, and $\tilde \mu$ are positive parameters related to the diffusivity of the population off the road and on the road, the reproduction rate of the species, and the exchange rate to and from the road, respectively.  After a suitable scaling,~\eqref{e.unscaled_berestycki} becomes\footnote{We find an extra parameter here compared to~\cite{BRR_JMB}.  See also the discussion in~\cite{LiWang}}
\begin{equation}\label{e.berestycki}
    \begin{cases}
        V_t - \Delta V = V(1-V)
           \qquad&\text{ in }  (0,\infty) \times \H,\\
        U_t - D U_{xx} = \nu V|_{y=0} - \mu U
            \qquad&\text{ in } (0,\infty) \times \R,\\
        -V_y = \kappa(\mu U - \nu  V)
            \qquad&\text{ in } (0,\infty) \times \R \times \{0\},
        \end{cases}
\end{equation}
and this is the model that we analyze.  We also consider this model posed on conical domains, bounded by two roads. That system takes significantly more space and care to write, so we postpone it to~\eqref{e.eqn11}.

The main question is understanding how the unexplored area ($U,V \sim 0$) is invaded and populated up to the fully populated steady state $(U,V) \sim (\sfrac\mu\nu,1)$.  A rephrasing of this is to ask how the levels sets of $U$ and $V$ propagate starting from initial data that is a compact perturbation of $(0,0)$; that is, 
\be\label{e.initial_data1}
    		U(0,\cdot) \in L^\infty(\R)
		\quad\text{ and }\quad
		V(0,\cdot) \in L^\infty(\H)
			~\text{ are nonnegative, nontrivial, and compactly supported.}
\ee
Actually, we make the seemingly stronger assumption that
\be\label{e.initial_data2}
	U(0,\cdot) \leq \frac{\nu}{\mu}
	\quad\text{ and }\quad
	V(0,\cdot) \leq 1.
\ee
In practice, this is not a strong assumption because, by a simple comparison principle argument, $U-\sfrac\nu\mu$ and $V-1$ have an exponentially decaying-in-time upper bound.

Importantly, one wishes to understand how the behavior of $(U,V)$ deviates from the homogeneous, or ``non-road,'' case
\be\label{e.homogeneous}
	\begin{cases}
		\tilde V_t - \Delta \tilde V = \tilde V(1-\tilde V)
   	        \qquad&\text{ in }  (0,\infty) \times \H,\\
        -\tilde V_y = 0
            \qquad&\text{ in } (0,\infty) \times \R \times \{0\},
	\end{cases}
\ee
and how the propagation depends quantitatively on the parameters $D$, $\mu$, and $\nu$.  The interesting case is when the diffusion on the road is faster than it is on the field.  As such, we make the standing assumption
\be
	D > 1
\ee
throughout the paper, even when not explicitly stated.

This model has attracted a huge amount of attention; see, e.g.,~\cite{AlfaroDucasseTreton, BRR_JMB, Berestycki2016shape, Ducasse2018influence, BRR_Non, BCRR, DR, Pauthier} and the many references therein.  While many questions are investigated, the basic results are that, when $D\leq 2$, the invasion occurs with speed $2$ in all directions, just as in~\eqref{e.homogeneous}, and when $D >2$, the speed is directionally dependent and scales like $\sqrt D$ as $D\to \infty$ along the road. We discuss many of these below.  These works are dependent on direct analysis of~\eqref{e.berestycki} relying on the careful construction of sub- and supersolutions.  This is, in a sense, an {\em Eulerian} approach to the problem.

Our goal here is to provide a more {\em Lagrangian} perspective.  Specifically, we perform a thin front limit {\em \`a la} Evans and Souganidis~\cite{Evans1989pde} (see also~\cite{BarlesEvansSouganidis, MajdaSouganidis, Freidlin1986geometric,Freidlin(N)}) to connect~\eqref{e.berestycki} with a suitable Hamilton-Jacobi equation.  This equation is a ``junction'' type problem where the road and field meet.  Using recent developments in the theory of Hamilton-Jacobi equations with junctions~\cite{Imbert2017flux,Imbert2017quasi,Forcadel2023non, Barles2018flux, Lions2017well,Guerand2017effective}, we characterize the longtime behavior of $(U,V)$ up to $o(t)$ fluctuations in space in terms of the zero set $\{J=0\}$ of the solution to the control problem
\begin{equation}\label{e.c031901}
    J(t,x,y)
    	= \min
		\int_0^t
			\hat{L}(\gamma(s), \dot \gamma(s)) \,ds,
\end{equation}
where we clarify the set on which minimum is taken and the Lagrangian $\hat L$ in the sequel (see~\eqref{e.N} and~\eqref{e.hatL}, respectively).  
Here, 
\be
\hat L(x,y,v_1,v_2) = \left( \tfrac{1}{4}|(v_1,v_2)|^2 -1\right) \1_{\{y>0\}} + \Lr(v_1) \1_{\{y=0\}}\ee 
represents a running cost that accounts for the faster diffusion on the road and the reproduction on the field, in which the running cost $L_r(v_1)$ on the road is defined implicitly in terms of all the parameters of the problem (see~\eqref{e.hatL} for the full definition of $\hat L$). 
In a sense, the optimizers in the above problem provide the ``optimal path'' that individuals should take.  This characterizes when the individual should be on the road versus the field and how fast it should move on each.  This, in a sense, provides a natural ecological interpretation of previous results, and it is appropriate given the motivating question: ``how do the paths that individuals take affect the population's expansion?''  Additionally, it reduces the study of~\eqref{e.berestycki}, which requires careful analysis via sub- and supersolutions, to a simple control problem~\eqref{e.c031901} that is numerically tractable and easy to approximate by hand.

Before diving into the specifics, a brief summary of the main results of the paper is the following:
\begin{itemize}

	\item We deduce the appropriate Lagrangian $\hat L$ and show that the ``front'' of~\eqref{e.berestycki} is given by $\{J(t,x,y) = 0\}$.  We can then recover all propagation results from~\cite{BRR_JMB, Berestycki2016shape} purely through an analysis of $J$ via~\eqref{e.c031901}.  
	
	\item We demonstrate a connection between the road and field problem~\eqref{e.berestycki} and the ongoing work on Hamilton-Jacobi equations with junctions to the road-field model~\eqref{e.berestycki}. 

    \item Our results generalize from $\H$ to conical domains $\Omega_a$ bounded by two roads forming an angle $0 < 2a \leq \pi$.  In fact, the front in this case can be given explicitly in terms of the front in the half-space case.  An interesting consequence is that the populated region is ``often'' non-convex, which is in contrast to the case on $\H$, where it is strictly convex for all $D>1$.  In a sense, this is the main novelty of our work because it seems that estimates on the spreading behavior in the interior of $\Omega_a$ are not accessible via the types of planar supersolutions typically used in the analysis of~\eqref{e.berestycki}.  With our approach, they are an immediate consequence of the analysis of the half-space case.

\end{itemize}

In summary, we marry two ongoing currents of research, providing both a new perspective to the road-field model and a new application to the theory of Hamilton-Jacobi equations with junctions.

\subsection{Statement of main results}\label{s.main}

In this section, we give the more technical statement of our results.  To motivate each result, we give an idea of the main computations that lead to the control formulation for~\eqref{e.berestycki}.

\subsubsection{Finding the correct Hamilton-Jacobi equation and the propagation result}
As we expect ballistic propagation, we perform a standard Hopf-Cole transform and rescaling of the equation: let
\begin{equation}\label{e.thin_front_scaling}
	u^\ep(t,x) = -\ep \log U(\sfrac{t}{\ep},\sfrac{x}{\ep})
	\quad\text{ and }\quad
	v^\ep(t,x) = - \ep \log V(\sfrac{t}{\ep},\sfrac{x}{\eps},\sfrac{y}{\ep}),
\end{equation}
then
\begin{equation}\label{e.scaled_eqn}
    \begin{cases}
        v^\ep_t - \ep \Delta v^\ep + |v^\ep_x|^2 + |v^\ep_y|^2 + (1-e^{-\frac{v^\ep}{\ep}}) =0
            \qquad&\text{ in }  (0,\infty) \times \H,\\
        u^\ep_t - \ep D u^\ep_{xx}+ D|u^\ep_x|^2 +  \nu e^{\frac{u^\ep - v^\ep}{\ep}} - \mu  =0
            \qquad&\text{ in }  (0,\infty) \times \R,\\
         v^\ep_y = \kappa(\mu e^{\frac{v^\ep - u^\ep}{\ep}} - \nu )  
            \qquad&\text{ in }  (0,\infty) \times \R \times \{0\}.
        \end{cases}
\end{equation}
If we momentarily suppose that $v^\ep \to \rsig$, which we prove in \Cref{t.uvw}, then it is a classical result \cite{Evans1989pde,Freidlin1986geometric} that $\rsig$ satisfies
\begin{equation}\label{e.HJ}
    \min\{\rsig, \rsig_t + \Hf(\nabla\rsig)\} = 0 \quad \text{ in }(0,\infty)\times \H,
\end{equation}
where we introduce the Hamiltonian
\begin{equation}\label{e.HF0}
    \Hf(q,p)=q^2 + p^2 + 1
\end{equation}
To understand the equation~\eqref{e.HJ}, notice that, by the comparison principle, $v^\eps\geq 0$, and, if $v^\eps > 0$, the exponential term in the first equation of~\eqref{e.scaled_eqn} tends to zero, while the other terms yield the Hamilton-Jacobi equation equation in~\eqref{e.HJ}.  

The boundary condition for $w$ at $y=0$, however, comes from a homogenization process when individuals transition between the two states $U$ and $V$.
 Observe from 
the third equation in~\eqref{e.scaled_eqn} that
\begin{equation}\label{e.c080302}
    \nu e^{\frac{u^\eps - v^\eps}{\eps}}
        = - \frac{\mu v_y^\eps}{\kappa\nu + v_y^\eps} + \mu.
\end{equation}
Additionally, from the exponential terms in~\eqref{e.scaled_eqn}, we expect that $v^\eps - u^\eps \to 0$, so that $u^\eps \to \rsig$ as well. And we obtain, from the second and third equations of \eqref{e.c031901}, 
an additional condition on the boundary: 
\begin{equation}\label{e.HJ_F_0}
         \min\{\rsig,\rsig_t + F_0(\rsig_x,\rsig_y)\} =0
            \quad \text{ on } (0,\infty)\times \mathbb{R}\times \{0\},
\end{equation}
where we have introduced the function
\begin{equation}\label{e.B}
F_0(q,p) = Dq^2 + \Bo(p), \quad \text{ where }\quad 
    \Bo(p) = \begin{cases}
            - \frac{\mu p}{\kappa\nu + p}
                \qquad&\text{ if } p > - \kappa\nu,\\
            +\infty
                \qquad&\text{ if } p \leq - \kappa\nu.
    \end{cases}
\end{equation}
The dynamic boundary condition \eqref{e.HJ_F_0} is understood in the relaxed viscosity sense that we make clear in \Cref{s.strong_solutions} via the definition of ``weak viscosity solution.''  Roughly, it is the standard definition of a viscosity solution, where the subsolution (resp. supersolution) condition involves minimizing (resp. maximizing) the equation and boundary condition when $w$ is ``touched'' by a smooth test function at the boundary.  This leads to our first result:

\begin{theorem}\label{t.uvw}
Suppose that $(U,V)$ solve~\eqref{e.berestycki} with initial data satisfying~\eqref{e.initial_data1}-\eqref{e.initial_data2}.  Then: 
	\begin{enumerate}[(i)]

		\item \label{i.convergence}
			 the rescaled solutions $v^\eps$ and $u^\eps$ converge locally uniformly to a weak viscosity solution $\rsig$ to \eqref{e.HJ}-\eqref{e.HJ_F_0}; 

		\item \label{i.scaling}
			the solution satisfies $\rsig(t, tx, ty) = t\rsig(1,x,y)$ for all $t>0$ and $(x,y) \in \overline \H$;

		\item \label{i.Wulff}
			 the set $\cW = \{(x,y)\in \oH: \rsig(1,x,y)=0\}$ is star-shaped; \footnote{We will show that $\mathcal{W}$ is strictly convex after connecting with the control formulation; see \Cref{c.convex}.}

		\item \label{i.spreading}
		  for all $\eta > 0$,
			\begin{equation}\label{e.conv_to_0}
        			\lim_{t\to+\infty} \left[ 
				\sup_{
					\dist\left(\frac{1}{t}(x,y), \mathcal{W} \right) >\eta} V(t,x,y)  + 
        			\sup_{
					\dist\left(\frac{1}{t}(x,0), \mathcal{W} \right) >\eta} 
				U(t,x) \right] = 0
			\end{equation}
			and
       	\be\label{e.conv_to_1}
			\lim_{t\to+\infty}
				\left[
					\sup_{ 
						\dist\left(\frac{1}{t}(x,y), \H \setminus \mathcal{W} \right) >\eta}|V(t,x,y) -1|
					+ \sup_{ 
						\dist\left(\frac{1}{t}(x,0), \H \setminus \mathcal{W} \right) >\eta}\left|U(t,x) -\frac{\nu}{\mu}\right|
				\right]
				= 0.
		\ee
       
       \end{enumerate}
\end{theorem}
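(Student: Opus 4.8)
The plan is to run the thin-front programme of Evans and Souganidis~\cite{Evans1989pde} adapted to the junction at $\{y=0\}$, the one genuinely new point being the homogenisation of the exchange terms into the relaxed boundary condition \eqref{e.HJ_F_0}. First I would record the $\ep$-uniform estimates: the comparison principle for \eqref{e.berestycki} together with \eqref{e.initial_data2} gives $0\le V\le1$ and $0\le U\le\nu/\mu$, hence $v^\ep,u^\ep\ge0$, and, using the compact support in \eqref{e.initial_data1} together with standard planar super- and subsolutions of \eqref{e.scaled_eqn}, one gets locally uniform Lipschitz and boundedness bounds for $(v^\ep,u^\ep)$ on $\overline{\H}$. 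Setting $\overline\rsig=\limsup^{*}v^\ep$ and $\underline\rsig=\liminf_{*}v^\ep$, the obstacle term $1-e^{-v^\ep/\ep}$ in the first line of \eqref{e.scaled_eqn} forces, by the by-now classical argument, $\overline\rsig$ to be a viscosity subsolution and $\underline\rsig$ a viscosity supersolution of \eqref{e.HJ} in the open half-space.

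The heart of the matter---and the step I expect to be the main obstacle---is to show that $\overline\rsig$ and $\underline\rsig$ satisfy the relaxed junction condition \eqref{e.HJ_F_0} in the weak viscosity sense made precise in \Cref{s.strong_solutions}. I would argue as follows. The exponential terms in the last two lines of \eqref{e.scaled_eqn} are coercive in $v^\ep-u^\ep$ near $\{y=0\}$, which forces $v^\ep-u^\ep\to0$ locally uniformly up to the road, so the boundary traces of $\overline\rsig$ and $\underline\rsig$ agree; on the set where this common trace is positive the $e^{(u^\ep-v^\ep)/\ep}$-type terms are negligible and \eqref{e.c080302} lets one rewrite the road equation, to leading order, as $u^\ep_t+D|u^\ep_x|^2+\Bo(v^\ep_y)=0$ with $\Bo$ as in \eqref{e.B}. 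Feeding a smooth test function that touches $\overline\rsig$ from above (resp.\ $\underline\rsig$ from below) at a boundary point through both the interior equation and this road relation, and then taking the relaxed minimum (resp.\ maximum) of the two, yields exactly the weak sub- (resp.\ super-) solution inequality for \eqref{e.HJ_F_0}; the $+\infty$ branch of $\Bo$ in \eqref{e.B} implements the one-sided constraint on $\rsig_y$. The delicacy is that this is a true interface homogenisation coupling the road and field dynamics and must be reconciled carefully with the definition of weak viscosity solution.

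With this in hand, item (i) follows on invoking the comparison principle for weak viscosity solutions of \eqref{e.HJ}--\eqref{e.HJ_F_0} with the singular initial datum equal to $0$ at the origin and $+\infty$ elsewhere on $\overline\H$ (which is what \eqref{e.initial_data1} becomes under \eqref{e.thin_front_scaling}): it gives $\overline\rsig\le\underline\rsig$, hence $\overline\rsig=\underline\rsig=:\rsig$, local uniform convergence of $v^\ep$, and---since $v^\ep-u^\ep\to0$---of $u^\ep$ to the same limit. For item (ii), both \eqref{e.HJ}--\eqref{e.HJ_F_0} and this initial datum are invariant under $(t,x,y)\mapsto(\lambda t,\lambda x,\lambda y)$ with $\rsig\mapsto\lambda^{-1}\rsig$, so uniqueness forces $\rsig(\lambda t,\lambda x,\lambda y)=\lambda\rsig(t,x,y)$. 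For item (iii), comparing $\rsig$ with its time-shift $\rsig(\cdot+h,\cdot,\cdot)$---whose datum at $t=0$ is finite, hence below the singular datum---gives $\rsig(t+h,\cdot)\le\rsig(t,\cdot)$, so $t\mapsto\rsig(t,x,y)$ is nonincreasing and $\{\rsig(t,\cdot)=0\}$ grows with $t$; rewriting the homogeneity as $\rsig(1,\theta x,\theta y)=\theta\,\rsig(1/\theta,x,y)$ for $\theta\in(0,1]$ and using $\rsig\ge0$ then shows $(x,y)\in\cW\Rightarrow\rsig(1/\theta,x,y)=0\Rightarrow\theta(x,y)\in\cW$, i.e.\ $\cW$ is star-shaped about the origin.

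Finally, for item (iv): the upper bounds in \eqref{e.conv_to_0} are immediate from $v^\ep\to\rsig$ and the homogeneity---if $\dist(\tfrac1t(x,y),\cW)>\eta$ then $\rsig(1,\cdot)\ge c(\eta)>0$ at $\tfrac1t(x,y)$, so for small $\ep$ one has $V(t/\ep,x/\ep,y/\ep)=e^{-v^\ep(t,x,y)/\ep}\le e^{-c(\eta)/(2\ep)}\to0$, the far field being absorbed by the finite-speed bound for \eqref{e.berestycki}, and likewise for $U$. The lower bounds in \eqref{e.conv_to_1} do not follow from the convergence of $\rsig$ alone (which only gives $v^\ep\to0$ in the interior, not $V\to1$) and require a separate, non-thin-front step: for each compact $K\subset\Int\cW$ one produces $\liminf_{t\to\infty}\inf_{(x,y)/t\in K}V(t,x,y)>0$ by inserting into \eqref{e.berestycki} a compactly supported subsolution moving with any speed strictly below that of $\partial\cW$---legitimate because the interior invasion is at least as fast as that of the homogeneous KPP problem \eqref{e.homogeneous}---after which a classical KPP comparison upgrades this positive lower bound to $V\to1$ locally uniformly inside $\cW$; the statement for $U$ then follows since in the road equation the source $\nu V|_{y=0}\to\nu$ drives $U\to\nu/\mu$.
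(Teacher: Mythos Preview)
Your overall architecture is close to the paper's, but there is one genuine gap that would make the argument fail as written: you invoke ``the comparison principle for weak viscosity solutions of \eqref{e.HJ}--\eqref{e.HJ_F_0}'' as if it were available off the shelf. It is not. The paper stresses (see \S\ref{ss.flux}) that comparison for the \emph{weak} (relaxed) boundary condition \eqref{e.HJ_F_0} was an open question and is exactly the obstacle here. What the paper does instead is pass from the weak condition with $F_0$ to the \emph{strong} (flux-limited) condition \eqref{e.HJ_F} with $F=\max\{\Hf^-,\tH\}$ via the Imbert--Monneau critical-slope lemmas (\Cref{l.critsub,l.critsuper} and \Cref{thm:4.7}), and only then applies a comparison principle for strong solutions (\Cref{thm:scp}). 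Without this detour you cannot close $\overline\rsig\le\underline\rsig$; the relaxed min/max alternative at the boundary is precisely what spoils the usual doubling-of-variables proof. In addition, the strong-subsolution step requires the weak continuity condition \eqref{eq:wcp} for $w^*$, which the paper secures by proving a separate regularity estimate (\Cref{l.Lipschitz}) on the limit, not on $v^\eps$.

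A second, smaller issue: you claim ``locally uniform Lipschitz and boundedness bounds for $(v^\ep,u^\ep)$'' from planar barriers. The paper does not obtain (and does not need) $\eps$-uniform Lipschitz bounds on $v^\eps$; it only proves an $\eps$-uniform \emph{upper bound} of the form $A_T(1+x^2+y^2)/t$ (\Cref{l.upper_bound}) and works throughout with half-relaxed limits, which require no compactness. Your remaining steps---the scaling identity for (ii), the time-monotonicity argument for (iii), and the sketch for (iv)---are essentially the paper's, though for \eqref{e.conv_to_1} the paper's argument is more delicate than a moving subsolution: it touches $v^\eps$ from above near a point of $\Int\cW$ by an explicit test function (with an auxiliary corrector $\Phi_\delta$ when $\bar y=0$) and reads off $V\to1$ directly from \eqref{e.scaled_eqn}, then bootstraps to $U\to\nu/\mu$ via a limiting ODE argument.
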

We note that $\cW$ is often referred to as a Wulff shape or asymptotic expansion shape associated to~\eqref{e.berestycki}.

\subsubsection{Comparison principle for $w$: strong viscosity solutions}\label{ss.flux}

One major difficulty in the proof of \Cref{t.uvw} is that, in order to use the half-relaxed limits approach to proving \Cref{t.uvw}.\ref{i.convergence}, the limiting equation must enjoy a comparison principle.  For some time it was an open question whether a comparison principle holds for weak viscosity solutions.  Recently, a breakthrough of Imbert and Monneau~\cite{Imbert2017flux,Imbert2017quasi}, 
established the comparison principle under certain hypotheses on $F_0$ by relating it to a notion of strong viscosity solutions with a suitable optimal control interpretation on the junction or boundary. Lions and Souganidis~\cite{Lions2017well} later provided a simpler proof by leveraging a deep connection with Kirchhoff-type junction conditions. Let us also mention~\cite{Barles2018flux,LionsSouganidis2016} for related work.

Moreover, while \Cref{t.uvw} reduces the front propagation problem of the road-field model~\eqref{e.berestycki} to understanding a limiting Hamilton-Jacobi equation~\eqref{e.HJ}-\eqref{e.HJ_F_0}, it does not give us access to a control formulation.  In particular, it is not immediately clear that $w$ is easier to understand than $(U,V)$.

Both of these issues lead us to search for the appropriate ``flux-limited'' boundary condition that is satisfied by  $w$ in the ``strong'' viscosity sense.  The definition of this is clarified in \Cref{s.strong_solutions}.  We follow \cite{Imbert2017quasi} to briefly derive it here.  

Fix a point $z=(t,x,0)$ on the ``road'' at which $w(z) > 0$, and let $(-\lambda, q,p) \in D^+_{\H}\rsig(z)$ be an element of the superdifferential. 
By interpreting~\eqref{e.HJ_F_0} in the relaxed sense, 
\begin{equation}\label{e.st.2}
 	\min\{\Hf(q,p),F_0(q,p)\} \leq \lambda.
\end{equation}
Being on the boundary, one can reduce the value of $p$ to find a critical slope $\overline p = \overline p(\lambda,q) \geq 0$ such that
\be\label{e.c052903}
	(-\lambda,q,p') \in D^+_{\H}\rsig(P)
	\quad\text{ if and only if }\quad
	p' \geq \overline p
\ee 
(see \Cref{d.subdifferential}). 
A key insight, due to Imbert and Monneau and summarized in \Cref{l.critsub}, is that
\begin{equation}\label{e.st.3}
    \Hf(q,\overline p) \leq \lambda.
\end{equation}
This is not obvious because~\eqref{e.st.2} involves a minimum of $H$ and $F_0$.  It is obtained by ``pushing'' the argument into the interior of $\H$, where $\rsig_t + H(\nabla \rsig) \leq 0$.
Let $\Hf^-$ be the nonincreasing-in-$p$ part of $\Hf$:
\be\label{e.Hf-}
	\Hf^-(q,p)
		:= \inf_{p' \leq p} \Hf(q,p')
		= \Hf(q,p_+)
		= p_+^2 + q^2 + 1.
\ee
Here $p_+ = \max\{p,0\}$ is the positive part of $p$.  
Since $p \geq \overline p$, then
\begin{equation}\label{e.st.4}
	\Hf^-(q,p)
		\leq \Hf(q, \bar p)
		\leq \lambda.
\end{equation}
Next, we define the {\em flux limiter} (see \cite{Imbert2017flux,Imbert2017quasi}): 
\be
	\tH(q) := \begin{cases}
	    \min \Hf(q,\cdot) = \Hf(q,0) & \text{ when }\Hf(q,0) \geq F_0(q,0),\\
     \sup_{p'>0} \min\{\Hf(q,p'), F_0(q,p')\} & \text{ when }\Hf(q,0) < F_0(q,0),
	\end{cases}
\ee
Let us mention three facts about $\tH$: (1) it is not immediately obvious that $\tH$ is convex, however, we show this in \Cref{lem.tH}; (2) we name $\tH$ in this way because it is the effective Hamiltonian
on the road in the control formulation (cf.~\eqref{e.Jcon}); (3) either $F_0(q,0) \leq \Hf(q,0)$  or there is $p_q > 0$ such that $\Hr(q) = F_0(q,p_q) = \Hf(q,p_q)$.  For this last point, we used that $\Hf(q,\cdot)$ is increasing in $p$ and $F_0(q,\cdot)$ is decreasing in $p$, for $p>0$.  Actually, this leads to the more explicit form~\eqref{e.tH} of $\tH$.

To motivate the definition of strong solutions, we claim, in addition to \eqref{e.st.4}, that
\be\label{e.Hr}
    H_r(q) \leq \lambda.
\ee
Indeed, if $F_0(q,0) \leq  \Hf(q,0)$ then $\tH(q)= \Hf(q,0) \leq \Hf(q,\bar p)$. We are then finished by applying~\eqref{e.st.3}.  Otherwise, $F_0(q,0) >  \Hf(q,0)$ and it follows from
 observation (3) above imply that there is $p_q > 0$ such that
\be\label{e.c050201}
	\tH(q) = \Hf(q,p_q) = F_0(q,p_q).
\ee
Since $p \mapsto \Hf(q,p)$ is increasing on $\R_+$, we deduce~\eqref{e.Hr} immediately from~\eqref{e.st.3} if $p_q \leq \bar p$.  If $p_q > \bar p$ then $(-\lambda, q, p_q) \in D^+_{\H} w(z)$, and again $H_r(q) \leq \lambda$,  thanks to~\eqref{e.st.2}-\eqref{e.c052903}. 
This concludes the justification of~\eqref{e.Hr}.

Combining \eqref{e.st.4} and \eqref{e.Hr}, we derive the strong  condition on the boundary:
\begin{equation}\label{e.F}
	F(q,p):= \max\{\Hf^-(q,p),\tH(q)\} \leq \lambda.
\end{equation}
whenever $(-\lambda,q,p) \in D^+_{\H}\rsig(P)$. 
This is to be compared with the relaxed condition \eqref{e.st.2}, where a minimum is involved.

Motivated by the above, we introduce the following flux-limited boundary condition:
\begin{equation}\label{e.HJ_F}
\min\{\rsig, \rsig_t + F(\rsig_x,\rsig_y)\} = 0 \quad \text{ in }(0,\infty) \times \R\times \{0\},
\end{equation}
By taking into account the explicit expressions of $H$ and $F_0$, we may compute the following:
\begin{equation}\label{e.tH}
    \tH(q)
        = q^2 + (p_q)^2 + 1
\end{equation}
with
\begin{equation}\label{e.pq}
    p_q = \begin{cases}
        0 \qquad &\text{ if } q^2 \leq \frac{1}{D-1},\\
        g^{-1}(q)
            \qquad &\text{ if } q^2 > \frac{1}{D-1}.
    \end{cases}
\end{equation}
Here $g:[0,\infty) \to  [\sfrac{1}{\sqrt{D-1}}, \infty)$ is the increasing function
\begin{equation}\label{eq:g}
    g(p) = \sqrt{ \frac{1}{D-1} \left[p^2 + 1 + \frac{\mu p}{\kappa\nu + p}\right]}.
\end{equation}

Making the above arguments more rigorous and precise, we obtain the following result.  Let us again note that the exact definition of a strong viscosity solution is given in \Cref{s.strong_solutions}. 
\begin{theorem}\label{t.strong}
    Under the assumptions of \Cref{t.uvw}, the limiting solution $\rsig$ is a strong viscosity solution to~\eqref{e.HJ}-\eqref{e.HJ_F}.
\end{theorem}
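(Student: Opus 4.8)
The plan is to bootstrap from \Cref{t.uvw}.\ref{i.convergence}, which already furnishes $\rsig$ as a weak viscosity solution of \eqref{e.HJ}--\eqref{e.HJ_F_0}. In particular $\rsig$ is a viscosity solution of the interior equation \eqref{e.HJ} on $(0,\infty)\times\H$, and $\rsig\ge 0$. Since the interior equation is \emph{identical} in the weak and strong formulations, and since $\rsig\ge 0$ already settles both boundary inequalities at points where $\rsig$ vanishes, the entire content of \Cref{t.strong} is to upgrade the relaxed boundary condition \eqref{e.HJ_F_0} to the flux-limited condition \eqref{e.HJ_F} at points $z=(t,x,0)$ with $\rsig(z)>0$. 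Explicitly: one must show $F(q,p)\le\lambda$ for every $(-\lambda,q,p)\in D^+_{\H}\rsig(z)$ and $F(q,p)\ge\lambda$ for every $(-\lambda,q,p)\in D^-_{\H}\rsig(z)$, with $F$ as in \eqref{e.F}.

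For the subsolution inequality I would make rigorous, nearly verbatim, the computation of \Cref{ss.flux}. Fix $z$ with $\rsig(z)>0$ and $(-\lambda,q,p)\in D^+_{\H}\rsig(z)$; the weak subsolution property is precisely \eqref{e.st.2}. Introduce the critical slope $\overline p=\overline p(\lambda,q)$ via \eqref{e.c052903} (using \Cref{d.subdifferential}) and apply \Cref{l.critsub} to get \eqref{e.st.3}, i.e.\ $\Hf(q,\overline p)\le\lambda$. This is the one step with real content: its proof perturbs the critical test function so as to enter $\H$, where the interior subsolution inequality $\rsig_t+\Hf(\nabla\rsig)\le 0$ is available. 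From \eqref{e.Hf-} and the critical-slope bound one gets $\Hf^-(q,p)\le\lambda$ exactly as in \eqref{e.st.4}, and the elementary dichotomy around \eqref{e.Hr}---using that $\Hf(q,\cdot)$ is increasing and $F_0(q,\cdot)$ decreasing on $\R_+$, i.e.\ observation~(3) of \Cref{ss.flux}---yields $\Hr(q)\le\lambda$. Hence $F(q,p)=\max\{\Hf^-(q,p),\Hr(q)\}\le\lambda$. The only points that go beyond the introduction are checking that $\rsig$ is regular enough (Lipschitz, from \Cref{t.uvw}) for \Cref{l.critsub} to apply, and observing that the obstacle term $\min\{\rsig,\cdot\}$ is inert since the whole argument lives on $\{\rsig>0\}$.

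The supersolution inequality follows by the mirror argument. Given $(-\lambda,q,p)\in D^-_{\H}\rsig(z)$ with $\rsig(z)>0$, the weak supersolution property reads $\max\{\Hf(q,p),F_0(q,p)\}\ge\lambda$. Using the explicit formulas \eqref{e.tH}--\eqref{eq:g}, one checks that $\Hf(q,\cdot)\ge F_0(q,\cdot)$ on $\{p\ge p_q\}$ and that $\Hf(q,p_q)=F_0(q,p_q)=\Hr(q)$. Consequently, if $p\ge p_q$ then $F(q,p)=\Hf(q,p)=\max\{\Hf(q,p),F_0(q,p)\}\ge\lambda$ at once, while if $p<p_q$ one either tests at the slope $p_q$---admissible, i.e.\ $(-\lambda,q,p_q)\in D^-_{\H}\rsig(z)$, precisely when $p_q$ does not exceed the critical subdifferential slope---to read off $\Hr(q)=\Hf(q,p_q)=\max\{\Hf(q,p_q),F_0(q,p_q)\}\ge\lambda$, or else invokes the supersolution counterpart of \Cref{l.critsub}, again pushing into $\H$ where now $\rsig_t+\Hf(\nabla\rsig)\ge 0$. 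In every case one arrives at $\Hr(q)\ge\lambda$, whence $F(q,p)\ge\Hr(q)\ge\lambda$.

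I expect the real obstacle to be the pair of critical-slope lemmas: they are exactly where a one-sided boundary condition written with a $\min$ (resp.\ $\max$) of the two Hamiltonians $\Hf$ and $F_0$ must be converted into the sharper flux-limited inequality, and this cannot be done by a pointwise Hamiltonian comparison---indeed $F\le\max\{\Hf,F_0\}$ holds pointwise, so the supersolution upgrade is genuinely not for free. Once \Cref{l.critsub}, its supersolution counterpart, and the convexity of $\Hr$ (\Cref{lem.tH}) are in hand, deducing \Cref{t.strong} from \Cref{t.uvw} is just bookkeeping with \eqref{e.Hf-}, \eqref{e.F} and \eqref{e.tH}--\eqref{eq:g}.
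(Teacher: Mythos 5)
Your approach matches the paper's: the content of \Cref{t.strong} is \Cref{thm:4.7}, which upgrades weak $F_0$-(sub/super)solutions to strong $F$-(sub/super)solutions via the Imbert--Monneau critical-slope lemmas, and you have correctly identified that the subsolution half is the rigorous version of the heuristic in \Cref{ss.flux}. However, your supersolution argument has a genuine gap.

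Your closing claim is ``In every case one arrives at $\Hr(q)\ge\lambda$.'' That is false. In the branch where $p<p_q$ and $p_q$ exceeds the admissible range of slopes, \Cref{l.critsuper} gives only $\Hf(q,p+\underline p)\ge\lambda$. If $p+\underline p>0$ this does yield $\tH(q)\ge\lambda$, because $\Hf(q,\cdot)$ is increasing on $[0,\infty)$ and $p+\underline p<p_q$, so $\Hf(q,p+\underline p)\le\Hf(q,p_q)=\tH(q)$. But if $p+\underline p\le 0$ the quantity $\Hf(q,p+\underline p)=q^2+(p+\underline p)^2+1$ may be \emph{strictly larger} than $\tH(q)=q^2+p_q^2+1$ (take $|p+\underline p|$ large and $p_q$ small), so nothing forces $\tH(q)\ge\lambda$. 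The correct route in that sub-case is through the other half of $F$: since $\underline p\ge0$ gives $p\le p+\underline p\le0$ and $\Hf^-$ is nonincreasing,
\[
\Hf^-(q,p)\ge\Hf^-(q,p+\underline p)=\Hf(q,p+\underline p)\ge\lambda,
\]
whence $F(q,p)\ge\Hf^-(q,p)\ge\lambda$. This is exactly the sub-case $p_0+\bar p\le 0$ that the paper's proof of \Cref{thm:4.7} treats separately, and which your case analysis omits; without it the supersolution upgrade is incomplete.

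A second, smaller error: ``if $p\ge p_q$ then $F(q,p)=\Hf(q,p)$'' is not correct. For $p\ge p_q\ge0$ the nonincreasing hull satisfies $\Hf^-(q,p)=\inf_{p'\le p}\Hf(q,p')=q^2+1\le\tH(q)$, so $F(q,p)=\max\{\Hf^-(q,p),\tH(q)\}=\tH(q)$, which is strictly less than $\Hf(q,p)$ as soon as $p>p_q$. Testing at slope $p$ therefore gives only $\Hf(q,p)\ge\lambda$, from which $\tH(q)\ge\lambda$ does not follow. The fix is the same move you already use in your next branch: a supersolution subdifferential is stable under lowering the $y$-slope, so $(-\lambda,q,p_q)\in D^-_{\H}\rsig(z)$ whenever $(-\lambda,q,p)$ is and $p\ge p_q$, and testing there yields $\max\{\Hf(q,p_q),F_0(q,p_q)\}=\tH(q)\ge\lambda$ directly. (You may have been misled by a typographical slip in the paper's display \eqref{e.Hf-}, which should read $\Hf(q,p_-)$; the surrounding text and all the proofs correctly treat $\Hf^-$ as the nonincreasing hull of $\Hf$.)
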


\subsubsection{The control formulation and $w$}

We now connect the solution $w$ to a control formulation. 
Define the value function 
\begin{equation}\label{e.Jcon}
    J(t,x,y)
    	= \min_{\gamma \in N(t,x,y)}
		\int_0^t
			\hat{L}(\gamma(s), \dot \gamma(s)) \,ds,
\end{equation}
the infimum is taken over the set
\be\label{e.N}
	N(t,x,y)
		= \left\{ \gamma \in H^1(0,t):
			~\gamma(0)=(0,0),
			~\gamma(t)=(x,y),
			\gamma(s) \in \overline \H \text{ for all } s \in[0,t]
			\right\},
\ee
and where the Lagrangian $\hat{L}(x,y,v_1,v_2)$ is given by
\begin{equation}\label{e.hatL}
    \hat{L}(x,y,v_1,v_2) = \Lf(v_1,v_2) \mathds{1}_{y>0} + \Lr(v_1)\mathds{1}_{y = 0},
\end{equation}
with the Lagrangians on the field ($\Lf$) and road ($\Lr$) given by
\be
	\begin{split}
	&\Lf(v) = \max_{(q,p)} \Big[v\cdot (q,p) -(q^2 + p^2 +1)\Big]= \frac{|v|^2}{4}-1 \quad \text{ and }
	\\&
	\Lr(v_1) = \max_{q} \Big[v_1 q - \tH(q)\Big].
	\end{split}
\ee
These are, respectively, the Legendre transforms of $\Hf(q,p) = q^2 + p^2 +1$ and the effective Hamiltonian $\tH(q)$. 
Let us note that $J$ has the following obvious scaling symmetry (see also \Cref{l.hr}.\ref{i.rho^*}):
\be\label{e.scaling}
	J(t,x,y) = t J(1, \sfrac{x}{t}, \sfrac{y}{t})
\ee
This allows one to focus on analyzing simply $J(1,\cdot)$.

It follows from \cite[Theorem 6.4]{Imbert2017flux} that $J$ is the unique solution satisfying, in the strong viscosity sense,
\begin{equation}\label{e.linear_HJ}
\begin{cases}
    J_t + \Hf(J_x,J_y)=0 &\text{ in }(0,\infty)\times \H,\\
    J_t + F(J_x,J_y) = 0 &\text{ in }(0,\infty)\times \mathbb{R},
\end{cases}
\end{equation}
with initial data
\be
\liminf_{(t',x',y') \to (0^+,x,y)}J(t',x',y') = \begin{cases}
    0  &\text{ when }(x,y) = (0,0),\\
    +\infty &\text{ otherwise}.
\end{cases}
\ee
Indeed, \eqref{e.linear_HJ} is the (flux-limited) Hamilton-Jacobi equation that arises from the linearization of~\eqref{e.berestycki} at the trivial solution.

\begin{theorem}\label{t.w=J+}
    Under the assumptions of \Cref{t.uvw}, we have
    \be\label{e.w=J+}
        \rsig(t,x,y) = \max\{0, J(t,x,y)\}.
    \ee
	As a consequence, the Wulff shape can also be written as follows
	\be\label{e.a0722.10}
		\mathcal{W} = \{(x,y) \in \H:~J(1,x,y) \leq 0\},
	\ee
	and, for $(t,x,y) \in (0,\infty) \times \H$,
	\be
		V^\eps(t,x,y) \to
			\begin{cases}
				1
					\qquad &\text{ if } (t,x,y) \in \{J < 0\}\\
				0
					\qquad &\text{ if } (t,x,y) \in \{J > 0\}
			\end{cases}
		\quad\text{ and }\quad
			U^\eps(t,x) \to
			\begin{cases}
				\frac{\nu}{\mu}
					\qquad &\text{ if } (t,x,0) \in \{J < 0\}\\
				0
					\qquad &\text{ if } (t,x,0) \in \{J > 0\}.
			\end{cases}
	\ee
\end{theorem}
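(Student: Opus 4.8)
The plan is to recognize $J^+:=\max\{0,J\}$ and $\rsig$ as the same object by showing they both solve, in the strong viscosity sense, the obstacle problem \eqref{e.HJ}--\eqref{e.HJ_F} with the same generalized initial data, and then invoking the comparison principle for strong solutions. The consequences \eqref{e.a0722.10} and the pointwise limits of $V^\eps,U^\eps$ then fall out of \Cref{t.uvw}.

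\textbf{Step 1: $J^+$ solves the obstacle problem.} By \cite[Theorem 6.4]{Imbert2017flux}, $J$ is a strong viscosity solution of the linear flux-limited problem \eqref{e.linear_HJ}. On the open set $\{J>0\}$ one has $J^+=J$, and since $J_t+\Hf(\nabla J)=0$ (resp.\ $J_t+F(J_x,J_y)=0$) there, the pair $(J^+,\ \min\{J^+,\cdot\})$ satisfies \eqref{e.HJ}--\eqref{e.HJ_F}. On $\{J<0\}$ we have $J^+\equiv 0$, and because $\Hf(0,0)=1$ and $F(q,p)\ge\tH(q)\ge\tH(0)=1>0$ for all $(q,p)$ by \eqref{e.tH}--\eqref{e.pq}, the constant $0$ is a (strict supersolution, hence) solution of the obstacle problem there; in particular the flux limiter never forces a violation on the road. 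The only subtle locus is $\{J=0\}$, where one checks the sub- and supersolution test-function inequalities for $J^+$ directly, using that truncation of an HJ solution at a constant below which the Hamiltonian is positive preserves both the subsolution and supersolution property; this is exactly the kind of bookkeeping that the notion of strong (flux-limited) solution in \Cref{s.strong_solutions} is designed to make routine.

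\textbf{Step 2: matching initial data and concluding.} At the origin, $\rsig(1,0,0)=0$ (since $\rsig\ge 0$ by the comparison principle for \eqref{e.scaled_eqn}, while $V(s,0,0)\to 1$ as $s\to\infty$ by the classical spreading for \eqref{e.berestycki} \cite{BRR_JMB}, so $v^\eps(1,0,0)\to 0$), hence $\rsig(t,0,0)=0$ for all $t\ge0$ by \Cref{t.uvw}.\ref{i.scaling}; likewise $J(t,0,0)\le t\,\Lr(0)=-t\,\tH(0)=-t<0$ using the constant path in \eqref{e.Jcon}. Away from the origin one must show $\rsig(t,x,y)\to+\infty$ as $t\to0^+$; this is precisely the finite-speed-of-propagation estimate for \eqref{e.berestycki} (the one already needed for the half-relaxed-limit step of \Cref{t.uvw}), which confines $(U,V)$ to a region expanding only linearly in time despite the fast road diffusion, and which passes through the scaling \eqref{e.thin_front_scaling} to give $\rsig(0,x,y)=+\infty$ for $(x,y)\ne(0,0)$; for $J^+$ this blow-up is built into the initial condition of \eqref{e.linear_HJ}. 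Since $\rsig$ (by \Cref{t.strong}) and $J^+$ (Step 1) are both strong solutions of \eqref{e.HJ}--\eqref{e.HJ_F}, are both finite, continuous and of at most quadratic growth, and are both nonnegative, the comparison principle for strong viscosity solutions of the obstacle problem (which descends from the comparison principle of \Cref{s.strong_solutions} for the underlying flux-limited equation, because at a would-be positive maximum of the difference the nonnegativity forces the Hamiltonian part — not the obstacle — to be active) yields $\rsig\le J^+$ and $\rsig\ge J^+$, i.e.\ \eqref{e.w=J+}.

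\textbf{Step 3: consequences.} From \eqref{e.w=J+}, $\rsig(1,x,y)=0\iff J(1,x,y)\le0$, which is \eqref{e.a0722.10} by the definition of $\mathcal W$ in \Cref{t.uvw}.\ref{i.Wulff}. For the limits, on $\{J>0\}=\{\rsig>0\}$ the local uniform convergence $v^\eps\to\rsig>0$ gives $V^\eps=e^{-v^\eps/\eps}\to0$ and $U^\eps\to0$. On $\{J<0\}$, the scaling \eqref{e.scaling} puts $\tfrac1t(x,y)$ in the open set $\{J(1,\cdot)<0\}\subseteq\mathcal W$, hence at a positive distance from $\H\setminus\mathcal W$; applying \eqref{e.conv_to_1} of \Cref{t.uvw}.\ref{i.spreading} along $s=t/\eps\to\infty$ with the fixed spatial direction $\tfrac1s(x/\eps,y/\eps)=\tfrac1t(x,y)$ then gives $V^\eps(t,x,y)\to1$ and $U^\eps(t,x)\to\sfrac{\nu}{\mu}$. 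I expect the genuine obstacle to be the $t\to0^+$ blow-up of $\rsig$ in Step 2 — the only place where one must return to the parabolic system and quantify finite speed of propagation in the presence of fast transport on the road — with the verification in Step 1 that $J^+$ satisfies the flux-limited condition on the junction being the secondary point requiring care.
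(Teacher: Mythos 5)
Your Step 3 and the use of comparison in Step 2 are on the right track, but Step 1 contains the real gap, and it is exactly the opposite of what you assess in your closing sentence: verifying that $J^{+}=\max\{J,0\}$ is a strong \emph{super}solution of \eqref{e.HJ}--\eqref{e.HJ_F} on the set $\{J=0\}$ is the heart of the matter and is \emph{not} routine bookkeeping.

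The parenthetical claim that ``truncation of an HJ solution at a constant below which the Hamiltonian is positive preserves both the subsolution and supersolution property'' is false in the supersolution direction. The max of two subsolutions is always a subsolution (so the subsolution check for $J^+$ is indeed easy, since both $J$ and $0$ are subsolutions), but the max of two supersolutions is \emph{not} in general a supersolution. The trouble is precisely on $\partial\cP=\{J=0\}$: since $J\le J^+$, there are test functions touching $J^+$ from below at a point of $\partial\cP$ that do not touch $J$ from below (their graph can dip into the region where $J<0=J^+$), so the supersolution inequality for $J$ gives you nothing. One must rule out that optimal paths for $J$ arriving at $\partial\cP$ have detoured through $\{J<0\}$, where $J$ and $J^+$ differ; otherwise the dynamic programming principle for $J^+$ has only the inequality $J^+(t_0,x_0,y_0)\le\inf_\gamma\{\int_\tau^{t_0}\hat L\,ds+J^+(\tau,\gamma(\tau))\}$ rather than equality, and the supersolution inequality does not follow. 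The paper closes this gap with Freidlin's condition (F) in \Cref{l.(F)}, proved by inspecting the explicit structure of optimal paths in \Cref{p.no_Huygens}: along an optimizer reaching the front, the path moves strictly faster than $c_*(\sfrac\pi2)$ on the road and strictly slower than speed $2$ in the field, which forces $J(s,\gamma(s))>0$ for $s\in(0,t_0)$, hence $J^+=J$ along the whole optimal path and the DPP for $J^+$ holds with equality. Your argument as written has no substitute for this step, and without it the supersolution property of $J^+$ at $\partial\cP$ does not follow. (By contrast, the $t\to0^+$ behavior you flag as the ``genuine obstacle'' in Step 2 is already handled by \Cref{lem:2.4,lem:2.4a} and \Cref{l.upper_bound}, and the paper's uniqueness statement \Cref{cor:scp} is formulated to accommodate exactly this infinite initial datum.)
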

See \Cref{fig:H2} for a computation of the Wulff-shape $\mathcal{W}$. 
Two easy corollaries of \Cref{t.w=J+} are the following:
\begin{corollary}
\label{c.convex}
    $\mathcal{W}$ is strictly convex for all $D>1$.
\end{corollary}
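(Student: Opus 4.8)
The plan is to invoke \Cref{t.w=J+}, which gives $\mathcal{W}=\{(x,y)\in\overline{\H}:J(1,x,y)\le 0\}$, so that convexity of $\mathcal{W}$ reduces to convexity of the function $J(1,\cdot)$. Directly averaging two optimal trajectories does \emph{not} work: the Lagrangian $\hat L$ in \eqref{e.hatL} is not jointly convex in $(y,\dot\gamma)$ across the junction, because on the road one pays $\Lr$, which lies strictly below $\Lf(\cdot,0)$. So I would begin with a structural reduction of the minimizing paths.

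Using Jensen's inequality together with the convexity of $\Lf$ and of $\Lr$ (the latter from \Cref{lem.tH}), I would show that every $\gamma\in N(1,x,y)$ can be replaced, without increasing $\int_0^1\hat L(\gamma,\dot\gamma)$, by a two-phase path: a constant-velocity road segment from $(0,0)$ to some $(\xi,0)$ over a time $\tau\in[0,1]$, followed by a straight field segment from $(\xi,0)$ to $(x,y)$ over time $1-\tau$. The two facts that make this work are that a field excursion with both endpoints on the road can be pushed onto the road at its average velocity without raising the cost (since $\Lr(v_1)\le\Lf(v_1,0)\le\Lf(v_1,v_2)$, and the average vertical velocity of such an excursion vanishes), and that consecutive segments of the same type merge by Jensen. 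This yields the representation
\[
J(1,x,y)=\min_{\tau\in[0,1],\,\xi\in\R}\left[\tau\,\Lr\!\left(\frac{\xi}{\tau}\right)+(1-\tau)\,\Lf\!\left(\frac{x-\xi}{1-\tau},\frac{y}{1-\tau}\right)\right],
\]
with the usual conventions for the perspective functions at $\tau\in\{0,1\}$. Each summand is the perspective of a convex function precomposed with an affine map of $(\tau,\xi,x,y)$, hence jointly convex; the sum is jointly convex, and partial minimization over the convex set $\{\tau\in[0,1],\,\xi\in\R\}$ leaves a convex function of $(x,y)$. Hence $J(1,\cdot)$, and therefore $\mathcal{W}$, is convex.

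For strict convexity — which, since $\mathcal{W}\cap\{y=0\}$ is automatically a nondegenerate segment lying in $\partial\overline{\H}$, must be read as ``$\partial\mathcal{W}\cap\H$ contains no line segment'' — I would argue by contradiction. Along a putative flat piece $J(1,\cdot)\equiv 0$; substituting the convex combination of the corresponding optimal pairs $(\tau,\xi)$ into the representation above and using that the perspective of the uniformly convex $\Lf$ is strictly convex transverse to its radial direction, I would force the optimal data $(\tau,\xi)$ and the endpoint $(x,y)$ to move along a common ray in the relevant variables. This collapses the situation to a short system of scalar equations linking $q^{\ast}=\Lr'(\cdot)$, the associated slope $p_{q^{\ast}}$, and the explicit functions $\tH$ and $g$ of \eqref{e.tH}--\eqref{eq:g}: attaining the value $0$ forces $\Lf=0$ and $\Lr=0$ simultaneously, and the accompanying gauge-derivative relations then pin the height component to zero, a contradiction. (On the complementary part of $\partial\mathcal{W}\cap\H$, where the optimal path never touches the road, $J(1,\cdot)=\Lf$, whose zero set is the strictly convex circle $\{|(x,y)|=2\}$.) The main obstacle is precisely this last verification: checking that the scalar system admits no positive-height solution for every $D>1$ and every $\mu,\nu,\kappa>0$. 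Everything before it is soft (Jensen, perspective functions, partial minimization); this is the one genuinely quantitative step, and it is exactly where the explicit form of the flux limiter $\tH$ and the standing hypothesis $D>1$ enter.
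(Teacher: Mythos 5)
Your reduction to the Lax--Oleinik representation and your observation that each summand in
\[
J(1,x,y)=\min_{\tau\in[0,1],\,\xi\in\R}\left[\tau\,\Lr\!\left(\tfrac{\xi}{\tau}\right)+(1-\tau)\,\Lf\!\left(\tfrac{x-\xi}{1-\tau},\tfrac{y}{1-\tau}\right)\right]
\]
is a perspective of a convex function (hence jointly convex, with partial minimization preserving convexity) is correct and is a cleaner way to obtain \emph{convexity} of $\cW$ than what appears in \Cref{l.W_convex}. The paper instead takes the two optimal paths to $(x_1,y_1)$ and $(x_2,y_2)$ in the form given by \Cref{p.straight_lines}, averages the road-switch times and the road/field velocities with the natural weights $\tau_i/(\tau_1+\tau_2)$ and $(1-\tau_i)/((1-\tau_1)+(1-\tau_2))$, and plugs the resulting two-phase competitor into \eqref{e.Jcon}. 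Strict convexity of $\Lr$ (\Cref{lem.tH}) and $\Lf$ then gives $J(1,\bar x,\bar y)<0$ directly. The two routes are the same computation seen from opposite ends, and either suffices for the non-strict statement.

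Where your proposal is genuinely incomplete is exactly where you flag it: the strictness. Your ``contradiction along a flat piece'' plan correctly reduces to the observation that, because the perspective of a strictly convex function is affine only along rays, a flat piece of $\{J(1,\cdot)=0\}$ would force both $\Lr(\xi_0/\tau_0)=0$ and $\Lf\bigl(\tfrac{x-\xi_0}{1-\tau_0},\tfrac{y}{1-\tau_0}\bigr)=0$ at an optimal split with $\tau_0$ varying. But your claim that the accompanying first-order (``gauge-derivative'') relations ``pin the height component to zero'' is not quite the right mechanism, and this is the non-soft step. What actually rules out the degenerate simultaneous vanishing is \Cref{prop:D.5}: if $q_0=\Lr'(v_0)$, $\Lr(v_0)\le 0$, and $D>2$, then $\tH(q_0)<2$, equivalently $\Lf(2q_0,2p_{q_0})=\tH(q_0)-2<0$, so $\Lr=0$ and $\Lf=0$ cannot hold at the same optimal split unless $p_{q_0}=0$, i.e.\ unless the segment lies on $\{y=0\}$ (which is harmless for the relative strict convexity you correctly identify as the right reading). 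For $D\le 2$ the Wulff shape is $\overline{B_2(0)\cap\H}$ (\Cref{prop:1.11}.\ref{i.D_leq_2}) and the statement is immediate. So your outline is structurally sound and parallels \Cref{l.W_convex}--\Cref{prop:D.5}, but the one quantitative lemma you defer is precisely the content of the proof, and would need to be carried out; in particular the ``height is forced to zero'' phrasing should be replaced by the sign statement $\tH(q_0)<2 \Leftrightarrow \Lf(2q_0,2p_{q_0})<0$ when $\Lr\le 0$.
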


Surprisingly, among the class of all conical domains in $\mathbb{R}^2$, the case $\Omega = \H$ is the only case where the Wulff shape is convex for all $D>1$; see \Cref{p.non_convexity}. \Cref{c.convex} is a direct consequence of \eqref{e.a0722.10} and \Cref{l.W_convex} concerning the strict convexity of level set $\{J \leq 0\}$ (cf. \Cref{p.non_convexity}).

\begin{corollary}\label{c.angular_speed}
For each $\vt \in [-\sfrac\pi2,\sfrac\pi2]$, there exists a directional spreading speed $c_*(\vt)>0$ such that
\begin{equation}\label{e.directional}
    \begin{split}
    	&\lim_{t\to\infty} V(t,x + ct\sin\vt, y + ct\cos\vt) = 1 \quad \text{ if } 0\leq c < c_*(\vt), \qquad \text{ and}
 	   \\
    	&\lim_{t\to\infty} V(t,x + ct\sin\vt, y + ct\cos\vt) = 0 \quad \text{ if } c > c_*(\vt),
	\end{split}
\end{equation}
locally uniformly in $(x,y) \in \R^2$.    
\end{corollary}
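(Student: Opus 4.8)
The plan is to read the directional speeds directly off the Wulff shape $\cW$, feeding its geometry into the spreading dichotomy of \Cref{t.uvw}.\ref{i.spreading}. First I would record three facts about $\cW$. By \Cref{t.w=J+}, $\cW=\{(x,y)\in\overline\H : J(1,x,y)\le0\}$; this set is closed, since $J(1,\cdot)$, the value function \eqref{e.Jcon}, is continuous (cf.\ \Cref{t.uvw}.\ref{i.convergence}), and it is bounded, since $\hat L$ is coercive: along any admissible path one has $\hat L(\gamma(s),\dot\gamma(s))\ge\frac{1}{4D}|\dot\gamma(s)|^2-1$ for a.e.\ $s$ (using $\dot\gamma_2=0$ a.e.\ on the road, together with $\tH(q)\le Dq^2+1$), so $J(1,x,y)\ge\frac{1}{4D}|(x,y)|^2-1$ by Cauchy--Schwarz and hence $\cW\subseteq\overline{B(0,2\sqrt D)}$. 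Finally, the constant path gives $J(1,0,0)\le\Lr(0)=-\min_q\tH(q)=-1<0$, so by continuity $J(1,\cdot)<0$ on some relative ball $\overline\H\cap B(0,\delta_0)$ about the origin, whence $\overline\H\cap B(0,\delta_0)\subseteq\cW$.

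Writing $e_\vt:=(\sin\vt,\cos\vt)\in\overline\H$ for $\vt\in[-\pi/2,\pi/2]$, I would set $c_*(\vt):=\sup\{c\ge0 : ce_\vt\in\cW\}$; the three facts above give $\delta_0\le c_*(\vt)<\infty$ with $c_*(\vt)e_\vt\in\cW$, and convexity of $\cW$ (\Cref{c.convex}) gives $\{c\ge0:ce_\vt\in\cW\}=[0,c_*(\vt)]$. The crux is to upgrade this to a \emph{robust} membership: I claim $d_c:=\dist(ce_\vt,\H\setminus\cW)>0$ for every $c\in[0,c_*(\vt))$. For $c=0$ this is the relative ball $\overline\H\cap B(0,\delta_0)\subseteq\cW$. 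For $c\in(0,c_*(\vt))$ with $\vt\in(-\pi/2,\pi/2)$, the point $ce_\vt$ lies in the open half-plane $\H$; were it on $\partial\cW$, a supporting line at $ce_\vt$ would — since the endpoints $0$ and $c_*(\vt)e_\vt$ both lie in $\cW$ — have to contain the whole segment $[0,c_*(\vt)e_\vt]$, so that segment would lie in $\partial\cW$ and its relative interior would be a nondegenerate segment inside $\H$, contradicting the strict convexity of $\cW$ from \Cref{c.convex}; hence $ce_\vt\in\Int\cW$ and $d_c>0$. For $\vt=\pm\pi/2$, the point $ce_\vt=(\pm c,0)$ lies on the road, and here I would instead use the elementary fact that if $\overline\H\cap B(a,r)\subseteq\cW$ and $b\in\cW$ has vanishing second coordinate, then $\overline\H\cap B(\lambda a+(1-\lambda)b,\lambda r)\subseteq\cW$ for all $\lambda\in(0,1]$ — one writes a point of the left-hand ball as $\lambda a'+(1-\lambda)b$ with $|a'-a|<r$ and checks $a'\in\overline\H$, which is exactly where $b$ being on the road is used; applying this with $a=0$, $b=c_*(\vt)e_\vt$, $\lambda=1-c/c_*(\vt)$ gives a relative half-ball about $ce_\vt$ inside $\cW$, so again $d_c>0$.

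To conclude, I would fix a compact set of base points $(x,y)$ and note that $\frac1t(x+ct\sin\vt,\,y+ct\cos\vt)=\frac1t(x,y)+ce_\vt\to ce_\vt$ uniformly on that set. If $0\le c<c_*(\vt)$, then for $t$ large these rescaled points lie at distance $>d_c/2$ from $\H\setminus\cW$, so \eqref{e.conv_to_1} with $\eta=d_c/2$ forces $V(t,x+ct\sin\vt,y+ct\cos\vt)\to1$, uniformly on the compact set. If $c>c_*(\vt)$, then $ce_\vt\notin\cW$ and, $\cW$ being closed, these rescaled points eventually lie at distance $>\frac12\dist(ce_\vt,\cW)$ from $\cW$, so \eqref{e.conv_to_0} forces $V(t,x+ct\sin\vt,y+ct\cos\vt)\to0$, uniformly. (When $\vt\ne\pm\pi/2$ and $c>0$ the evaluation points lie in $\H$ for $t$ large; otherwise one restricts to base points whose evaluation point lies in $\overline\H$.) I expect the main obstacle to be the geometric step of the second paragraph: the ``$c<c_*(\vt)$'' half of the dichotomy needs the invaded region to reach \emph{solidly} into $\cW$ right up to the speed $c_*(\vt)$, which is exactly where the strict convexity of $\cW$ — and a careful distinction between interiority relative to $\R^2$ versus relative to the closed half-space $\overline\H$ — is needed; the boundedness of $\cW$, though routine, is likewise essential, as it is what makes $c_*(\vt)$ finite.
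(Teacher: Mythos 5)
Your proof is correct, but it takes a genuinely different route from the one the paper sketches. The paper invokes \Cref{l.monotonicity}.\ref{i.radially_increasing}: since $c\mapsto J(1,c\sin\vt,c\cos\vt)$ is strictly increasing and continuous, starts at $J(1,0,0)=-1<0$, and is coercive, there is a unique zero $c_*(\vt)>0$, with $J<0$ for $c<c_*(\vt)$ and $J>0$ for $c>c_*(\vt)$; continuity of $J$ then gives the positive distance needed to feed into \eqref{e.conv_to_0}--\eqref{e.conv_to_1}. You instead work entirely with the geometry of $\cW$, using that it is a closed, bounded, convex body containing a relative neighborhood of the origin, and then do Euclidean convex geometry (supporting lines, relative half-balls along the road) to show $c\,e_\vt$ lies solidly inside $\cW$ for $c<c_*(\vt)$. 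Your coercivity estimate $\hat L\ge\frac1{4D}|\dot\gamma|^2-1$ and the bound $\cW\subseteq\overline{B(0,2\sqrt D)}$ are new to this argument and correct. Two small remarks: you do not actually need \emph{strict} convexity anywhere — plain convexity of $\cW$ plus $0\in\Int\cW$ already gives, by the standard line-segment lemma, that $c\,e_\vt\in\Int\cW$ for $0<c<c_*(\vt)$ when $\vt\in(-\sfrac\pi2,\sfrac\pi2)$, and your half-ball dilation argument covers $\vt=\pm\sfrac\pi2$ likewise using only convexity; and the radial-monotonicity route is shorter because it yields the one-dimensional dichotomy directly without any convex-geometric bookkeeping. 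Both arguments buy the same conclusion; yours has the minor advantage of not depending on \Cref{l.monotonicity}.\ref{i.radially_increasing} at all, at the cost of invoking the convexity machinery of \Cref{c.convex} (equivalently \Cref{l.W_convex}).
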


To obtain \Cref{c.angular_speed},  simply notice that $J$ is strictly radially increasing (\Cref{l.monotonicity}.\ref{i.radially_increasing}), so that there is a unique value such that
\be
	J(1, c_*(\vt) \sin\vt, c_*(\vt)\cos\vt) = 0.
\ee
As such, we omit the proof as it follows directly from the tools we develop in the sequel.

In \Cref{s.control_corollaries}, we deduce several further results from \Cref{t.w=J+} by understanding the optimal paths $\gamma$ in~\eqref{e.Jcon}.  In particular:
\begin{itemize}

	\item (\Cref{p.straight_lines}) Optimal paths stick to the road $\{y=0\}$ until a time $\tau_0 \in [0,1]$ when they proceed through the field along straight lines.  This yields a Lax-Oleinik-type formula for $J$.

	\item (\Cref{p.roadspeed}) A characterization of the speed along the road $c_*(\sfrac\pi2)$ in terms of $\tH$ from which it is easy to deduce that $c_*(\sfrac\pi2) = 2$ for $D \leq 2$, $c_*(\sfrac\pi2) > 2$ for $D>2$, and $c_*(\sfrac\pi2) \sim \sqrt D$ as $D\to\infty$ (\Cref{c.D_near_2} and \Cref{c.large_D}).

	\item (\Cref{l.W_convex}) The Wulff shape $\cW$ is strictly convex.  Later, we see that optimal paths to the front do {\em not} follow speed $c_*(\sfrac\pi2)$ on the road for some time and speed $2$ in the field for the remainder of the time; instead, they move faster than speed $c_*(\sfrac\pi2)$ on the road and slower than speed $2$ in the field (\Cref{p.no_Huygens}). In particular, Huygen's principle does not hold.

	\item Further, a portion of the boundary of $\cW$ is a circle of radius two and, if $D>2$, a portion ``lifts off'' this circle.  More precisely, there is $\vt_*>0$ such that $c(\vt) = 2$ for $|\vt| \leq \vt_*$ and $c(\vt)$ is strictly increasing for $\vt> \vt_*$ (\Cref{prop:1.11}).

\end{itemize}
The results about the optimal paths are (necessarily) new, while those about the Wulff shape, or equivalently, the speed $c_*(\vt)$, recover known results of~\cite{BRR_JMB,Berestycki2016shape}.  Our approach, however, is quite different.

\subsection{Conical domains}\label{ss.conical_domains}

We discuss a novel and natural extension of our approach to the case when the half space $\H$ is replaced by a general conical domain:
for a fixed $a \in (0,\sfrac\pi2]$, let
\be\label{e.Omega_a}
\Omega_a = \{(r\sin \vt, r\cos\vt):~ r>0,~ \vt \in (\sfrac\pi2 -2a,\sfrac\pi2)\}.
\ee
In particular, $\Omega_{\sfrac\pi2}=\H$ and $\Omega_{\pi/4}$ is the first quadrant of $\mathbb{R}^2$.  Here, there are two portions of the road:
\be
    \Gamma_0 = (0,\infty)\times\{0\}
    \qquad\text{ and }\qquad
    \Gamma_a = \{(x,y): y>0, \sfrac{x}{y} = \tan(\sfrac\pi2-2a)\}.
\ee
Then we arrive at the system for $(V,U)$
\begin{equation}\label{e.eqn11}
    \begin{cases}
        V_t - \Delta V = V(1-V)
           \qquad&\text{ in }  (0,\infty) \times \Omega_a,\\
        U_t - D U_{xx} = \nu V|_{y=0} - \mu U
            \qquad&\text{ in } (0,\infty) \times (0,\infty),\\
        -V_y(t,x,0) = \kappa [\mu U(t,x) - \nu  V(t,x,0)]
            \qquad&\text{ in } (0,\infty) \times (0,\infty),
        \\
        \tilde U_t - \tilde D \tilde U_{xx} = \tilde \nu \tilde V|_{y=0} - \tilde \mu \tilde U
            \qquad&\text{ in } (0,\infty) \times (0,\infty),\\
        -\tilde V_y(t,x,0) = \tilde \kappa [\tilde \mu \tilde U(t,x) - \tilde \nu  \tilde V(t,x,0)]
            \qquad&\text{ in } (0,\infty) \times (0,\infty),
        \end{cases}
\end{equation}
where 
\be\label{e.Psi_tilde}
    \tilde{V}(t,x,y) = V(t,\Psi_a(x,y))
    \qquad\text{ and }\qquad
    \tilde{U}(t,x) = U(t, \Psi_a(x,0)),
\ee
with
\be\label{e.Psi_a}
    \Psi_a \text{ being the reflection that takes $\Gamma_0$ and $\Gamma_a$ to each another.}
\ee
Here $U$ is the population on the roads $\Gamma_0\cup \Gamma_a$ and $V$ is the population on the road $\Gamma_a$.  In general, we only work with $U|_{\Gamma_0}$ and $\tilde U|_{\Gamma_0}$ for convenience.  See \Cref{f.Omega_a}.

\begin{figure}
\centering
    \begin{tikzpicture}
        \def\a{25} 
        \def\length{3} 
    
        \draw[<->] (-1.1*\length,0) -- (1.1*\length,0) node[right] {$x$};
        \draw[->] (0,-.5) -- (0,1.1*\length) node[above] {$y$};
    
        \draw[violet] (0,0) -- ({\length*cos(2*\a)}, {\length*sin(2*\a)});
        \draw[violet] (0,0) -- (\length, 0);
    
        \draw[red, dashed]
            (0,0) -- ({\length*cos(\a)}, {\length*sin(\a)})
            node [above right] { $\Gamma_{\sfrac{a}2}$};
    
        \draw ({.95*(\length/3)*cos(\a)},{.95*(\length/3)*sin(\a)}) arc[start angle=\a, end angle={2*\a}, radius=.95*(\length/3)];
        
        \draw ({1.05*(\length/3)*cos(\a)},{1.05*(\length/3)*sin(\a)}) arc[start angle=\a, end angle={2*\a}, radius=1.05*(\length/3)]
            node[midway, above right]{\tiny $a$};
        
        \draw ({\length/3},0) arc[start angle=0, end angle=\a, radius=1]
            node[midway, right]{\tiny $a$};
        
        \draw[dotted] ({(\length/2)*cos(2*\a)},{(\length/2)*sin(2*\a)}) arc[start angle=2*\a, end angle=90, radius=\length/2]
            node[midway, above ]{\tiny $\frac\pi2-2a$};
    
        \fill[blue, opacity=0.1]
            (0,0) -- plot[domain=0:{2*\a}, samples=50] ({\length*cos(\x)}, {\length*sin(\x)}) -- 
            ({\length*cos(\a)}, {\length*sin(\a)}) -- ({\length*cos(2*\a)}, {\length*sin(2*\a)}) -- cycle;

        \node[blue] at ({\length*.85*cos(\a/2)},{\length*.85*sin(\a/2)}) {$\Omega_a$};

        \node[violet, below] at ({\length*.85},0) {$\Gamma_0$};
        \node[violet,left] at ({\length*.85*cos(2*\a)},{\length*.85*sin(2*\a)}) {$\Gamma_a$};
    \end{tikzpicture}
\qquad
    \begin{tikzpicture}
        \def\a{55} 
        \def\length{3} 
    
        \draw[<->] (-1.1*\length,0) -- (1.1*\length,0) node[right] {$x$};
        \draw[->] (0,-.5) -- (0,1.1*\length) node[above] {$y$};
    
        \draw[violet] (0,0) -- ({\length*cos(2*\a)}, {\length*sin(2*\a)});
        \draw[violet] (0,0) -- (\length, 0);
    
        \draw[red, dashed]
            (0,0) -- ({\length*cos(\a)}, {\length*sin(\a)})
            node [above right] { $\Gamma_{\sfrac{a}2}$};
    
        \draw ({.95*(\length/3)*cos(\a)},{.95*(\length/3)*sin(\a)}) arc[start angle=\a, end angle={2*\a}, radius=.95*(\length/3)];
        
        \draw ({1.05*(\length/3)*cos(\a)},{1.05*(\length/3)*sin(\a)}) arc[start angle=\a, end angle={2*\a}, radius=1.05*(\length/3)]
            node[midway, above right]{\tiny $a$};
        
        \draw ({\length/3},0) arc[start angle=0, end angle=\a, radius=1]
            node[midway, above right]{\tiny $a$};
    
        \fill[blue, opacity=0.1]
            (0,0) -- plot[domain=0:{2*\a}, samples=50] ({\length*cos(\x)}, {\length*sin(\x)}) -- 
            ({\length*cos(\a)}, {\length*sin(\a)}) -- ({\length*cos(2*\a)}, {\length*sin(2*\a)}) -- cycle;

        \node[blue] at ({\length*.85*cos(\a/2)},{\length*.85*sin(\a/2)}) {$\Omega_a$};

        \node[violet, below] at ({\length*.65},0) {$\Gamma_0$};
        \node[violet,left] at ({\length*.65*cos(2*\a)},{\length*.65*sin(2*\a)}) {$\Gamma_a$};
        
    \end{tikzpicture}

    \caption{Two examples of the domain $\Omega_a$, the first with angle $a < \sfrac\pi4$ and the second with $a> \sfrac\pi4$.}\label{f.Omega_a}
\end{figure}
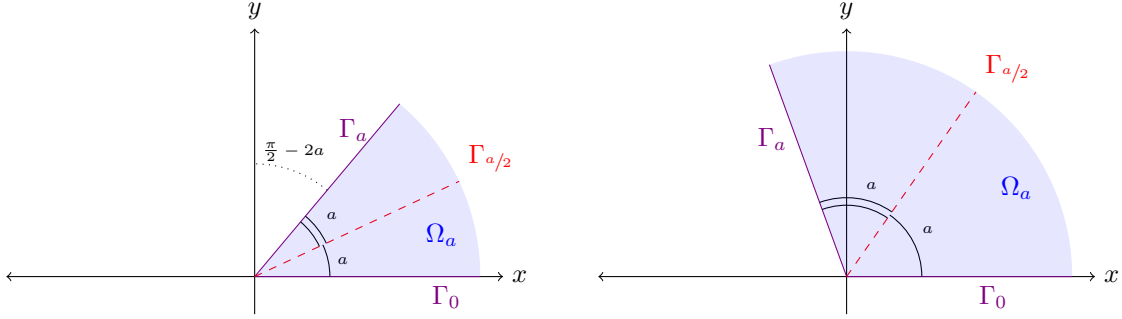

Let us note that the well-posedness of~\eqref{e.eqn11} is open due to the corner at $(0,0)$.  Existence is simple: by smoothing $\Omega_a$ to $\Omega_{a,\eps}$, one easily constructs an approximate solution $(V^\eps, U^\eps)$.  By compactness, we may take $\eps \to 0$ to obtain a solution on $\Omega_a$.  Instead, it is uniqueness that is not obvious.  As it is not our interest to settle that here, we avoid these technical details below.

Domains of this form were considered by Ducasse \cite{Ducasse2018influence} who, by generalizing the arguments of \cite{Berestycki2016shape}, showed that the speed of the road is not affected by the value of $a$ when the diffusion on both portions of the road is identical; that is, $D = \tilde D$. The spreading in the interior and the behavior if $D\neq \tilde D$ is a novel contribution of this work.

In \Cref{sec:ext}, we show that the study of the spreading of $(V,U,\tilde U)$ reduces to studying the solution $w_a$ of a Hamilton-Jacobi equation similar to~\eqref{e.HJ}-\eqref{e.HJ_F}.  Moreover, this equation has a control formulation:
\begin{equation}\label{e.Jcon2}
    J_a(t,x,y)
        = \inf_{\gamma \in N_a(t,x,y)}
            \int_0^t \left[
                \Lf(\dot\gamma(s)) \mathds{1}_{\{\gamma(s)\in\Omega_a\}}
                + \Lr(\dot\gamma(s))\mathds{1}_{\{\gamma(s) \in \Gamma_0\}}
                + \Lr(\dot{\tilde\gamma}(s))\mathds{1}_{\{\gamma(s) \in \Gamma_a\}}\right]\,ds,
\end{equation}
where the class of admissible paths $N_a$ is defined analogously as in~\eqref{e.N} and $\tilde \gamma = \Psi_a \gamma$. 
This leads to a characterization of the Wulff shape, along the lines of \Cref{t.w=J+}.  Let us roughly state this here in the simplest case when $\mu = \tilde \mu$, $\nu = \tilde \nu$, $\kappa = \tilde \kappa$ and $D = \tilde D$.  Further results when $D\neq \tilde D$ are discussed in \Cref{sec:ext}.

\begin{theorem}\label{t.conical}
Let $(V,U,\tilde U)$ be any solution of \eqref{e.eqn11} with initial data analogous to~\eqref{e.initial_data1}-\eqref{e.initial_data2} and $(v^\eps, u^\eps, \tilde u^\eps)$ are defined analogously as in~\eqref{e.thin_front_scaling}, then
    \be
        v^\eps, u^\eps, \tilde u^\eps \to w_a
    \ee
    locally uniformly, with
    \be
        w_a(t,x,y) = \max\{0, J_a(t,x,y)\}
    \ee
    and
    \be\label{e.c060701}
        J_a(t,x,y) = \min\{J(t,x,y), J(t,\Psi_a(x,y))\}.
    \ee
    Moreover, for every $(t,x,y) \in (0,\infty) \times \H$
    \be
		V^\eps(t,x,y) \to
			\begin{cases}
				1
					\qquad &\text{ if } (t,x,y) \in \{J_a < 0\}\\
				0
					\qquad &\text{ if } (t,x,y) \in \{J_a > 0\}
			\end{cases}
		\quad\text{ and }\quad
			U^\eps(t,x) \to
			\begin{cases}
				\frac{\nu}{\mu}
					\qquad &\text{ if } (t,x,0) \in \{J_a < 0\}\\
				0
					\qquad &\text{ if } (t,x,0) \in \{J_a > 0\}.
			\end{cases}
    \ee
    so that the Wulff shape is
    \be
        \cW_a =\{ (t,x,y)\in\overline \Omega_a: J_a(1,x,y) \leq 0\}.
    \ee
\end{theorem}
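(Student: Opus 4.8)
The plan is to mirror, on the cone, the three-step argument used for the half-space: (1) perform the thin-front limit on~\eqref{e.eqn11} and identify the limiting Hamilton--Jacobi problem on $\overline{\Omega_a}$; (2) recognize its solution as $\max\{0,J_a\}$ and establish the reflection identity~\eqref{e.c060701}; (3) read off the spreading statements and the Wulff shape. For step~(1), applying the Hopf--Cole transform and parabolic rescaling~\eqref{e.thin_front_scaling} to~\eqref{e.eqn11} and passing to half-relaxed limits produces a problem consisting of the field equation $\min\{w_a,\partial_t w_a+\Hf(\nabla w_a)\}=0$ in $\Omega_a$ together with two flux-limited boundary conditions, one on $\Gamma_0$ and one on $\Gamma_a$. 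The condition on $\Gamma_0$ is exactly the one from the half-space case; the one on $\Gamma_a$ comes from the $(\tilde U,\tilde V)$ equations, and since $D=\tilde D$, $\mu=\tilde\mu$, $\nu=\tilde\nu$, $\kappa=\tilde\kappa$, the change of variables~\eqref{e.Psi_tilde}--\eqref{e.Psi_a} shows it is the $\Psi_a$-conjugate of the $\Gamma_0$ condition, so the effective road Hamiltonian is again built from the same $\tH$. The upgrade from the relaxed to the strong (flux-limited) sense on each road, and the fact that $v^\eps,u^\eps,\tilde u^\eps$ share the same limit (the exchange terms force $u^\eps-v^\eps\to0$ on $\Gamma_0$ and $\tilde u^\eps-\tilde v^\eps\to0$ on the reflected road), are local computations along each road and reproduce verbatim the proofs of \Cref{t.uvw,t.strong} away from the apex $(0,0)$.

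Step~(2) is the heart. By the optimal-control theory for flux-limited junctions (\cite[Theorem 6.4]{Imbert2017flux}, \cite{Lions2017well}), extended to a domain whose two junction rays meet at the apex, $\max\{0,J_a\}$ with $J_a$ the value function~\eqref{e.Jcon2} is a strong viscosity solution of the cone problem with the singular datum at $(0,0)$; this is the analog of \Cref{t.w=J+}. It then remains to prove $J_a(t,x,y)=\min\{J(t,x,y),J(t,\Psi_a(x,y))\}$. First I would note that $J_a$ is $\Psi_a$-invariant, since $\Psi_a$ is an isometry exchanging $\Gamma_0$ and $\Gamma_a$ under which the running cost in~\eqref{e.Jcon2} is unchanged. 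For ``$\le$'': using that $J(t,\cdot)$ is directionally monotone (its Wulff shape is convex and widest along the road), it suffices to bound $J_a\le J$ at the one of the two points $(x,y),\Psi_a(x,y)$ closer to $\Gamma_0$; since $2a\le\pi$ that point lies in the first quadrant and in $\overline{\Omega_a}$, so by \Cref{p.straight_lines} an optimal half-space path to it sticks to $\Gamma_0$ and then runs along a straight segment, and since $\Omega_a$ is convex and the segment's endpoints lie in $\overline{\Omega_a}$ this path stays in $\overline{\Omega_a}$ — hence is admissible for $J_a$ with identical cost — which, combined with the $\Psi_a$-invariance of $J_a$, gives $J_a\le\min\{J,J\circ\Psi_a\}$. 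For ``$\ge$'': I would prove a cone analog of \Cref{p.straight_lines}, namely that an optimal path for $J_a(t,x,y)$ travels along the roads and then along a straight segment and, moreover, touches \emph{at most one} of $\Gamma_0,\Gamma_a$ — because a path visiting both edges must cross $\Omega_a$ and can be strictly shortened by instead proceeding along the cheaper single road and then in a straight line (again using convexity of $\Omega_a$). Such a single-road optimal path is, directly if it touches $\Gamma_0$ or after applying $\Psi_a$ if it touches $\Gamma_a$, an admissible half-space path for $J(t,x,y)$ or for $J(t,\Psi_a(x,y))$ of the same cost, yielding ``$\ge$''.

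With~\eqref{e.c060701} in hand, $\max\{0,\min\{J,J\circ\Psi_a\}\}$ is a strong solution of the cone problem, so the upper and lower half-relaxed limits of $(v^\eps,u^\eps,\tilde u^\eps)$ are both squeezed onto $w_a=\max\{0,J_a\}$: away from the apex by the one-road comparison principle applied locally on each of $\Gamma_0,\Gamma_a$, and at the apex by barriers built from the explicit straight-line minimizers of~\eqref{e.Jcon2}, exactly as uniqueness is obtained in the proof of \Cref{t.uvw}.\ref{i.convergence}. Finally, the convergence of $V^\eps,U^\eps$ to $0$ and to $1$ (resp.\ $\sfrac\nu\mu$) off and inside $\{J_a=0\}$, and the formula $\cW_a=\{J_a(1,\cdot)\le0\}$, follow by repeating verbatim the arguments behind \Cref{t.uvw}.\ref{i.spreading} and \Cref{t.w=J+}, now on $\Omega_a$ with $J,w$ replaced by $J_a,w_a$. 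I expect the main obstacle to be the apex $(0,0)$ — the same corner that leaves well-posedness of~\eqref{e.eqn11} open — which forces the squeezing/uniqueness step to be carried out by hand with barriers rather than by a clean comparison principle; a secondary difficulty is making the ``at most one road'' surgery rigorous for the whole range $2a\in(0,\pi]$, where, unlike for the special angles $\sfrac\pi{2a}\in\N$, no exact reflection-group tiling of the plane is available and one must rely solely on convexity of $\Omega_a$ and the road-then-line structure of optimizers.
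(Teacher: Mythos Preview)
Your overall architecture matches the paper's (thin-front limit $\to$ identification with $\max\{0,J_a\}$ $\to$ spreading), but the paper handles the core step differently and in a way that sidesteps exactly the difficulties you flag.

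For the identity $J_a=\min\{J,J\circ\Psi_a\}$, the paper declares it ``simple to check'' from the rotational monotonicity of $J$ (\Cref{l.monotonicity}.\ref{i.grad_J}) or directly from~\eqref{e.Jcon2}. Your ``$\ge$'' sketch is shakier than you acknowledge: since both roads carry the \emph{same} $\Lr$, there is no ``cheaper single road'', so the strict-shortening claim is unjustified as stated. The clean argument is a reflection: for $(x,y)\in\overline{\Omega_{a/2}}$, take any admissible cone path $\gamma$, reflect its portions above $\Gamma_{\sfrac{a}{2}}$ across that line to obtain $\hat\gamma\subset\overline{\Omega_{a/2}}$; since $\Lf$ is isometry-invariant and $\Psi_a$ swaps the two $\Lr$ contributions, $\hat\gamma$ has identical cost and is admissible for the half-space $J$, giving $J_a\ge J$ directly. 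No ``at most one road'' lemma is needed.

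More importantly, the paper does \emph{not} verify that $\max\{0,J_a\}$ solves the cone problem by invoking junction control theory as you propose. It proceeds purely at the PDE level (\Cref{p.w_aDD}): by monotonicity, $\min\{J,J\circ\Psi_a\}=J$ on $\Omega_{\sfrac{a}{2}}$ and $=J\circ\Psi_a$ on the other half, so it inherits the interior equation and the respective road boundary conditions from $J$ away from $\Gamma_{\sfrac{a}{2}}$; it is Lipschitz, hence satisfies the interior equation a.e., and an approximation/stability argument upgrades this to a viscosity subsolution across $\Gamma_{\sfrac{a}{2}}$; as a minimum of two interior solutions it is automatically a supersolution. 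The argument of \Cref{sec:w=J+} then promotes this to $\max\{0,J_a\}$ solving the obstacle problem. This route avoids both the apex issue for the Imbert--Monneau theory and the path-surgery lemma you identify as obstacles. Your approach can be made to work, but carries extra technical overhead that the paper's PDE shortcut bypasses.
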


Let us note an important aspect of this result: it connects the problem on the conical domain $\Omega_a$ with the problem on the half-space $\H$ by relating $J_a$ and $J$.  Indeed, by symmetry, we see that $J_a(t,x,y) = J(t,x,y)$ if $(x,y)$ is closer to the road $\Gamma_0$ and is $J(t,\Psi_a(x,y))$ otherwise.  In this way, we can leverage the results on $\H$ to the more general case.

\begin{figure}
    \centering
     \begin{subfigure}[b]{\textwidth}
        \centering
        \resizebox{.8\textwidth}{!}{
        \begin{tikzpicture}
            \node[anchor=south west,inner sep=0] at (0,0) {\includegraphics[width=\textwidth]{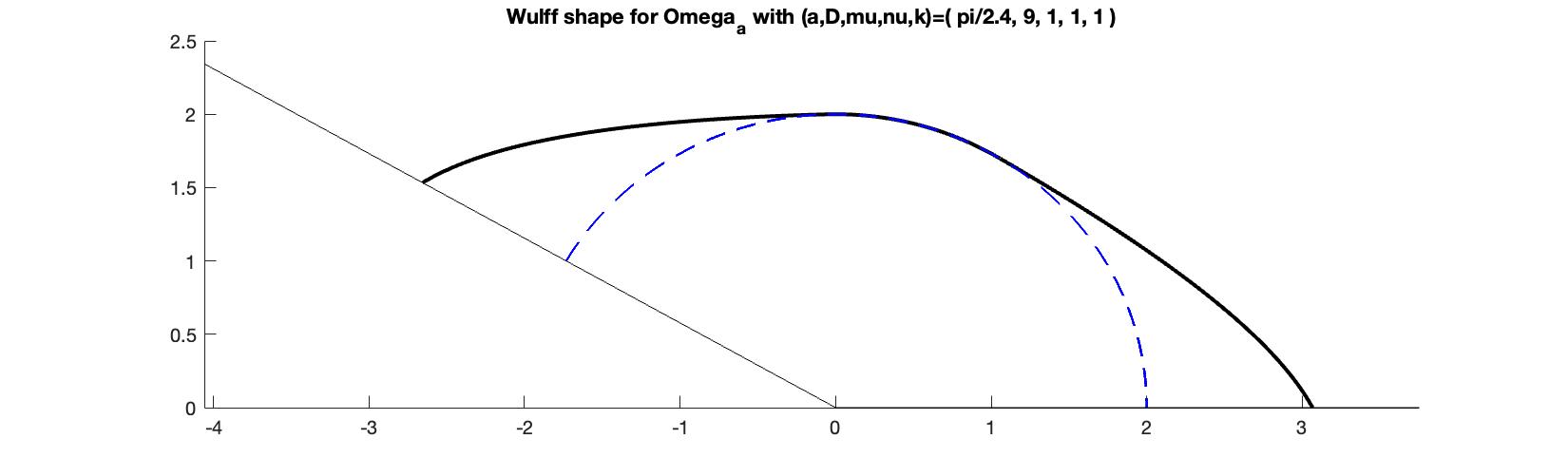}};
            \draw[white,fill=white] (5.2,4.4) rectangle (11.8,5);
    \end{tikzpicture}
    }
    \caption{Wulff Shape of $\Omega_{\tfrac{5\pi}{12}}$}
    \label{fig:obtuse}
    \end{subfigure}
    \hfill
 \begin{subfigure}[b]{.4\textwidth}
        \centering
        \resizebox{\textwidth}{!}{
        \begin{tikzpicture}
            \node[anchor=south west,inner sep=0] at (0,0) {\includegraphics[width=\textwidth]{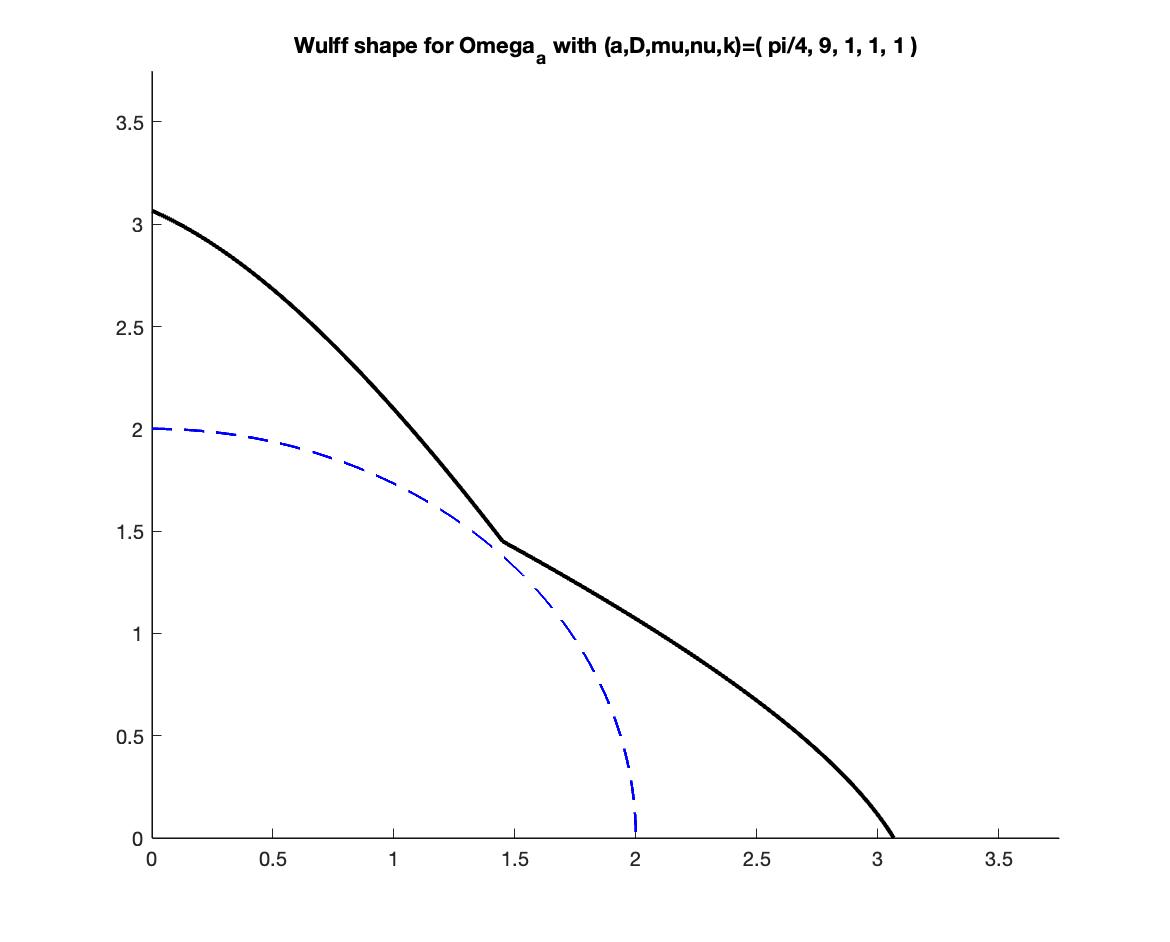}};
            \draw[white,fill=white] (1.5,4.8) rectangle (5.3,5.15);
    \end{tikzpicture}
    }
    \caption{Wulff Shape of $\Omega_{\tfrac{\pi}{4}}$}
    \label{fig:quad}
\end{subfigure}  
\begin{subfigure}[b]{.4\textwidth}
        \centering
        \resizebox{\textwidth}{!}{
        \begin{tikzpicture}
            \node[anchor=south west,inner sep=0] at (0,0) {\includegraphics[width=\textwidth]{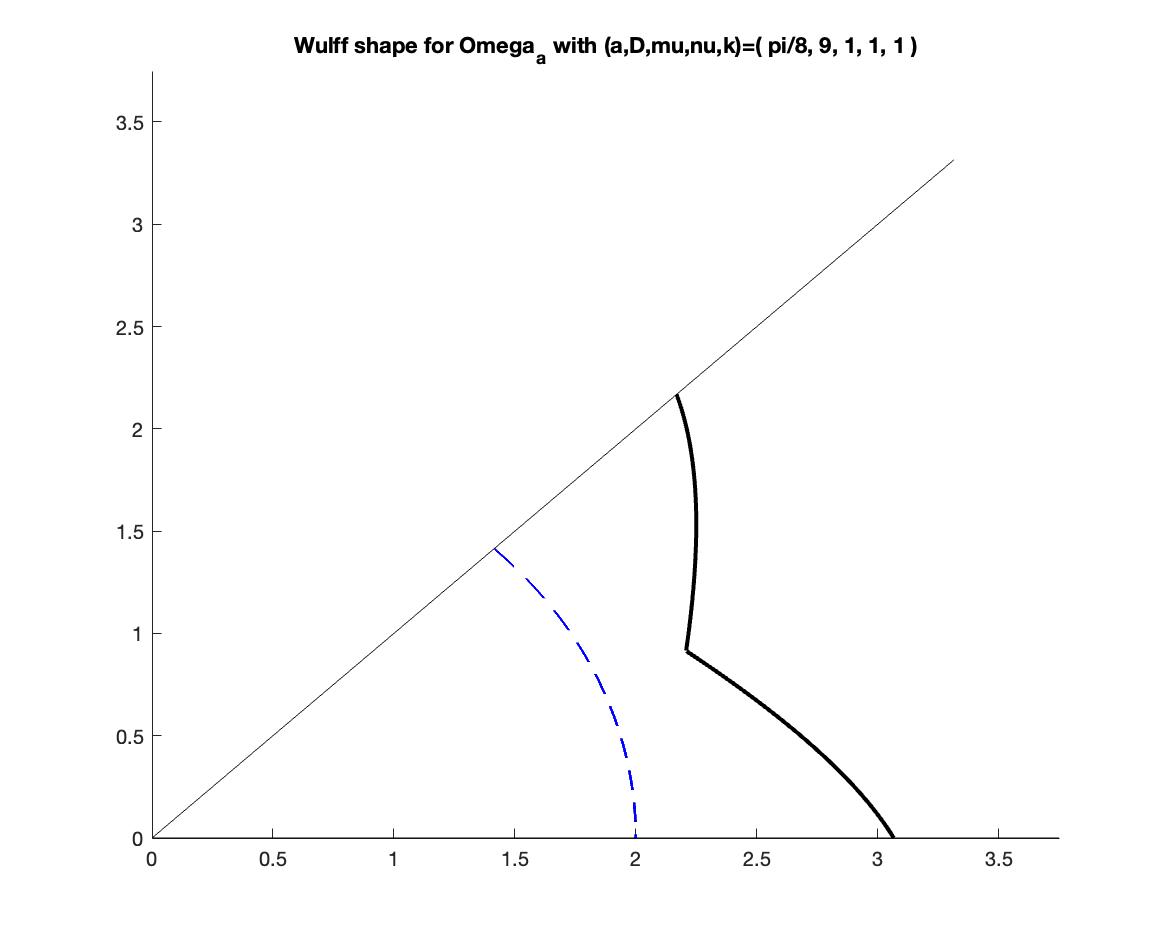}};
            \draw[white,fill=white] (1.5,4.8) rectangle (5.3,5.15);
    \end{tikzpicture}
    }
    \caption{Wulff Shape of $\Omega_{\tfrac{\pi}{8}}$}
    \label{fig:acute}
\end{subfigure}
        \caption{The boundary of $\mathcal{W}_a$, with different values of $a=\tfrac{5\pi}{12}, \tfrac{\pi}{4}, \tfrac{\pi}{8}$.  Here we take $\tilde D = D = 9$ and we set all other parameters to $1$. The dashed line represents the Wulff shape without enhancement by the road (the circle with radius $2$), and the solid line represents the Wulff shape with enhancement by the road. The road speed is given by $c_*(\sfrac\pi2)\approx 3.1243$.}
        \label{fig:cones}
\end{figure}

Using this, we derive further properties of the Wulff shape essentially ``for free'' from the results on $\H$:
\begin{itemize}
    \item We immediately recover from the discussion above that the speed on the roads $\Gamma_0$ and $\Gamma_a$ are $c_*(\sfrac\pi2)$.

    \item Further, we see that speeds in the field in the subdomain $\Omega_{\sfrac{a}{2}}$, which is the area under the dashed red line in \Cref{f.Omega_a}, are unchanged from the half-space case.  More succinctly,
    \be
        c_{*a}(\vt)
            = c_*(\vt)
            \qquad \text{ for $\vt > \sfrac\pi2 - a$}.
    \ee
    By symmetry, it follows that
    \be 
        c_{*a}(\vt)
            = c_{*a}(\pi - 2a - \vt)
            = c_{*}(\pi - 2a - \vt)
    \ee
    when $\vt \in [\sfrac\pi2-2a,\sfrac\pi2-a]$.  Equivalently,
    \be
        \cW_a
            = \left(\cW \cup \Psi_a(\cW) \right) \cap \overline{\Omega_a}.
    \ee
    See \Cref{p.J_a_J}.

    \item Interestingly, we see that the Wulff shape loses convexity for any $a\in (0,\sfrac\pi2)$ when $D$ is sufficiently large (\Cref{p.non_convexity}).  In particular, we obtain a lower bound $D \geq {4(2+\mu)^2} \csc^2{2a}$.  See \Cref{fig:acute,fig:quad} for numerical simulations illustrating this.  As we pointed out above, this implies that the interior behavior cannot be bounded by the types of planar supersolutions used in previous works.
    
\end{itemize}

We also discuss the behavior when $D \neq \tilde D$.  We prove that the speed on the slower road can be further enhanced by the effect on the faster road, while the speed on the faster road remains unchanged compared to the case on $\H$ (\Cref{p.tilde_D > D}).  Our results here are less precise because~\eqref{e.c060701} may not hold (cf.~\Cref{lemma:4.4}).

\subsubsection{Noncompactly supported initial data}

It is not difficult to see that our results easily generalize to initial data that is not compactly supported, although some care has to be taken as it is known that initial data with ``too slow'' of decay yield accelerating fronts~\cite{HamelRoques,HendersonFast}. 
If we assume that 
\be
	V_0(r\sin \vt,r\cos\vt)
		= e^{-(h(\vt)+o(1))r}, \quad U_0(r)
		= e^{-\left(h(\sfrac{\pi}{2})+o(1)\right)r}
			\quad \text{ as }r \to +\infty,
\ee
for some positive function $h\in C([-\sfrac\pi2,\sfrac\pi2])$ then our proofs go through nearly exactly, with only the step involving the comparison principle changed (and that step becomes easier because $w_0$ is continuous and finite) and with $J$ replaced by
\be
    J_h(t,x,y)
        = \inf_{\substack{\gamma \in H^1,\\ \gamma(t)= (x,y)}}\Bigg\{ \int_0^t \hat L(\gamma(s), \dot \gamma(s)) ds
        + w_0(\gamma(0))\Bigg\},
\ee
such that $w_0(x,y) = rh(\vt)$, in polar coordinates.  
We omit further discussion.

\subsection{Organization}
We organize the paper as follows.
We begin by demonstrating the strength of the control formulation by using it and the interpretation of the front as $\{J = 0\}$ to deduce fairly precise estimates on the Wulff shape in the half space case $\H$, as well as the general conical case $\Omega_a$.

The main work to establish the connection between the control formulation~\eqref{e.Jcon} and the original reaction-diffusion problem~\eqref{e.berestycki} in the half-space $\H$ occurs in \Cref{s.strong_solutions,s.half_relaxed,sec:w=J+}.  Specifically, \Cref{s.strong_solutions} gives the definition of weak and strong viscosity solutions, \Cref{s.half_relaxed} gives the main arguments to prove \Cref{t.uvw}, and \Cref{sec:w=J+} gives the proof of \Cref{t.w=J+}. 
Extensions of these results to conical domains $\Omega_a$ are discussed in \Cref{sec:ext}. All technical lemmas are contained in \Cref{s.technical}.

Finally, we include an appendix that contains a proof of the comparison principle for strong solutions of our Hamilton-Jacobi equations (\Cref{thm:scp}).  The main novelty here is that we allow for infinite initial data.

\subsection{Notation}

We use big-oh and little-oh notation throughout the manuscript.  In particular, when a limit is taken, say $\eps \to 0$, we use $b_\eps = o(a_\eps)$ and $\tilde b_\eps = O(a_\eps)$ to mean that
\be
	\lim_{\eps\to 0} \frac{b_\eps}{a_\eps} = 0
	\quad\text{ and }\quad
	\limsup_{\eps \to 0} \frac{|\tilde b_\eps|}{|a_\eps|} < \infty.
\ee
Alternatively, we write $b_\eps \sim a_\eps$ if $b_\eps = O(a_\eps)$ and $a_\eps = O(b_\eps)$.

We use weak solution and strong solution for, respectively, weak viscosity solution and strong viscosity solution (the precise definition of this is given in \Cref{s.strong_solutions}).  Aside from the equations relating to $u^\eps$ and $v^\eps$, all other equations are considered in the viscosity sense, so there is no risk of confusion.
Let us note that there is some ambiguity in the terminology for these solutions.  For example, they are sometimes also called flux-limited solutions.  We opt for simplicity here and follow the example of~\cite{Guerand2017effective}.

\subsubsection{Acknowledgements}

Both authors warmly acknowledge support of the Institut Henri Poincar\'e (UAR 839 CNRS-Sorbonne
Universit\'e), and LabEx CARMIN (ANR-10-LABX-59-01) during the thematic program ``‘Mathematical modeling of organization in living matter,'' during the course of which this project was begun.  CH was supported by NSF grants DMS-2204615 and DMS-233766. KYL is supported by NSF grant DMS-2325195.

\section{Consequences of the control formulation~\eqref{e.Jcon}}
\label{s.control_corollaries}

\Cref{s.control_corollaries}, together with part of \Cref{s.technical}, give a self-contained treatment of the value function $J$ of the optimization problem given in \eqref{e.Jcon}, and properties of its level sets. For simplicity, we will slightly abuse notation to denote \be\mathcal{W} = \{(x,y):~ J(1,x,y) \leq 0\}.\ee Later, after establishing \Cref{t.w=J+}, we see that this is consistent with the definition of $\cW$ given in \Cref{t.uvw}.\ref{i.Wulff}.

Before embarking on this, let us note that the fact that $\tH$ is convex and coercive is not obvious for $q > \sfrac{1}{\sqrt{D-1}}$.  We state this fact here and prove it by complicated, but elementary, calculus in \Cref{ss.control_lemmas}.
\begin{lemma}\label{lem.tH}
The Hamiltonian $\tH$ is strictly convex and coercive.
\end{lemma}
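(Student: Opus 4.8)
The plan is as follows. Coercivity is immediate from~\eqref{e.tH}: since $p_q^2 \ge 0$, we have $\tH(q) \ge q^2 + 1 \to +\infty$ as $|q| \to \infty$. For strict convexity, observe first that $\Hf$ and $F_0$ are even in $q$, hence so are $\tH$ and the critical slope $p_q$; thus it suffices to treat $q \ge 0$. On $0 \le q \le (D-1)^{-1/2}$ one has $p_q = 0$ and $\tH(q) = q^2 + 1$, which is smooth and strictly convex, so the real content is on the range $q > (D-1)^{-1/2}$, after which the two pieces must be glued at $q_0 := (D-1)^{-1/2}$.

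On $q > (D-1)^{-1/2}$ I would parametrize by $p = p_q = g^{-1}(q) > 0$, so that $q = g(p)$ with $g$ a smooth, strictly increasing function ($g' > 0$), and set $\Theta(p) := \tH(g(p)) = g(p)^2 + p^2 + 1$. Introduce the auxiliary function $\phi(p) := \mu p/(\kappa\nu + p)$, which on $[0,\infty)$ satisfies $\phi' = \mu\kappa\nu/(\kappa\nu+p)^2 > 0$ and $\phi'' = -2\mu\kappa\nu/(\kappa\nu+p)^3 < 0$; by~\eqref{eq:g} the relation $q = g(p)$ is exactly $(D-1)G(p) = p^2 + 1 + \phi(p)$ with $G := g^2$. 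A chain-rule computation gives $\frac{d^2\tH}{dq^2} = (\Theta'' g' - \Theta' g'')/(g')^3$, so --- using $g' > 0$, $g = \sqrt{G}$, and clearing the positive factors --- strict convexity on this range is equivalent to
\be
	2G\big(G'\Theta'' - G''\Theta'\big) + (G')^2\,\Theta' > 0 .
\ee
Since $\Theta = G + p^2 + 1$ we have $\Theta' = G' + 2p$ and $\Theta'' = G'' + 2$, while differentiating $(D-1)G = p^2+1+\phi$ gives $(D-1)G' = 2p + \phi'$ and $(D-1)G'' = 2 + \phi''$. Expanding, the crucial cancellation is
\be
	G'\Theta'' - G''\Theta' = \tfrac{2}{D-1}\,\big(\phi' - p\,\phi''\big),
\ee
which is strictly positive because $\phi' > 0$, $-\phi'' > 0$ and $p \ge 0$; together with $G > 0$ and $\Theta' = \tfrac{1}{D-1}(2Dp + \phi') > 0$, this establishes the displayed inequality and hence strict convexity of $\tH$ on $q > (D-1)^{-1/2}$.

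It remains to check that $\tH$ is $C^1$ across $q_0$. The left derivative there is $\frac{d}{dq}(q^2+1)\big|_{q_0} = 2q_0 = 2(D-1)^{-1/2}$, and the right derivative is $\Theta'(0)/g'(0)$; evaluating at $p=0$ (where $\phi(0)=0$, $G(0)=(D-1)^{-1}$, $G'(0)=\Theta'(0)=\phi'(0)(D-1)^{-1}$, $g'(0)=G'(0)/(2g(0))$), the factor $\phi'(0)$ cancels and one again gets $2(D-1)^{-1/2}$. A $C^1$ function whose derivative is strictly increasing on each of $[0,q_0]$ and $[q_0,\infty)$ has strictly increasing derivative on $[0,\infty)$, hence is strictly convex there; evenness then extends this to all of $\R$ (and makes $\tH$ smooth near $0$, so globally $C^1$).

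The only real obstacle is bookkeeping: arranging the second-derivative computation so that the identity $G'\Theta'' - G''\Theta' = \tfrac{2}{D-1}(\phi' - p\phi'')$ emerges cleanly --- this is precisely where the structure of $\phi$ (positive, concave) is used --- and not overlooking the $C^1$-matching at $q_0$, without which strict convexity on each piece would not by itself yield convexity of $\tH$ on all of $\R$. Everything else is routine calculus.
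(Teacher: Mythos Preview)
Your proof is correct and follows essentially the same route as the paper --- direct calculus on the parametrization $q = g(p)$, reducing positivity of $\tH''$ on $q > (D-1)^{-1/2}$ to the sign information $\phi' > 0$, $\phi'' < 0$. The paper packages this slightly differently by first observing that, since $\tH(q) = q^2 + p_q^2 + 1$, it suffices to show $q \mapsto p_q^2 = (g^{-1}(q))^2$ is (non-strictly) convex, which boils down to the inequality $g'(p) \ge p\,g''(p)$ and is dispatched by the quick estimate $p\,g'' \le p/g \le g'$; your version is a bit more careful in explicitly treating coercivity and the $C^1$-matching at $q_0 = (D-1)^{-1/2}$, which the paper leaves implicit.
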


We start with a basic proposition that characterizes the optimal paths in~\eqref{e.Jcon}, as well as provides various qualitative properties of $J$.  The first conclusion in \Cref{p.straight_lines} has a nice ecological interpretation: individuals who wish to invade as quickly as possible will remain on\footnote{Actually, after scaling, being ``on'' the road represents, in the original variables, paths that stay near the road, hopping back and forth to take advantage of the faster propagation of the road and the ability to reproduce in the field.} the road for a portion of time $[0,\tau_0]$ and then follow a straight line path in the field for the remainder of time $[\tau_0,1]$.  This yields~\eqref{e.Lq3a}, which is a type of Lax-Oleinik formula.

\begin{proposition}\label{p.straight_lines}
Fix any $(x,y) \in \overline \H$ with $x\geq 0$. 
There is a unique minimizing path $\gamma$  in the control formulation~\eqref{e.Jcon}.  Further, there exists $\tau_0,z_0,p_0,q_0$ such that
		\be\label{e.Lq3b}
			\gamma(s)
				= \begin{cases}
				    \frac{s}{\tau_0}(z_0,0)
						&\text{ for }s\in[0,\tau_0],\\
					(z_0,0) + (s-\tau_0) \left( 2q_0, 2p_0\right) 
						&\text{ for }s \in [\tau_0,1].
				\end{cases}
		\ee
		In particular, recalling~\eqref{e.hatL}, we have
	  	\be\label{e.Lq3a}
			J(1,x,y)
				= \min_{0\leq z \leq x\atop 0 \leq \tau\leq t}
					\left[
						(1-\tau) \Lf\left(\frac{x-z}{1-\tau}, \frac{y}{1-\tau}\right)  
						+ \tau \Lr\left( \frac{z}{\tau}\right)
					\right],
		\ee
		with the convention that $\tau = 0$ implies $\tau \Lr\left(\tfrac{z}{\tau}\right) = 0$ (and similarly if $\tau = 1$).
		Furthermore, $\nabla J \in C_{\rm loc}((0,\infty)\times \H)$ and is given by
  \be\label{e.a0722.11}
\nabla J(t,x,y) = \frac{1}{2}\dot\gamma(t) = (q_0,p_0) \quad \text{ for } (t,x,y) \in (0,\infty)\times\overline \H.
  \ee
		Finally, if $y=0$, then the minimum occurs for $(z_0,\tau_0) = (x,t)$ and
		\begin{equation}\label{e.a.240719.1}
			J(t,x,0) = t\Lr\left(x/t\right) \quad \text{ for }(t,x) \in (0,\infty)\times \mathbb{R}.
		\end{equation}
\end{proposition}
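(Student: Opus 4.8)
The plan is to exploit the convexity and coercivity of both Lagrangians together with the scaling symmetry \eqref{e.scaling}. First, I would establish existence of a minimizer: the set $N(t,x,y)$ is nonempty (the straight line works), and along any minimizing sequence the running cost $\hat L$ is bounded below and coercive in velocity (on the field $\Lf(v) = \tfrac{|v|^2}{4}-1$, and on the road $\Lr$ is coercive by \Cref{lem.tH}), so the sequence is bounded in $H^1$; by weak lower semicontinuity of the (convex) action functional — here one must be slightly careful that $\hat L$ is discontinuous across $\{y=0\}$, but since $\Lf \le \Lr$ is \emph{not} automatic I would instead argue that the time spent on the road is itself a limit of the indicator integrals, or simply observe that both $\Lf$ and $\Lr$ are convex and lower semicontinuous and that the functional $\gamma \mapsto \int_0^t \hat L(\gamma,\dot\gamma)$ is lsc on $H^1$ with the weak topology — we extract a minimizer.

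Next comes the structural description \eqref{e.Lq3b}. The key point is that an optimal path visits the road set $\{y=0\}$ on a \emph{single} time interval, and I would prove this in two steps. (a) On any maximal subinterval where $\gamma$ stays in the open field $\H$, the Euler–Lagrange equation for $\Lf$ forces $\ddot\gamma = 0$, i.e. $\gamma$ is an affine segment traversed at constant velocity; likewise on the road, minimizing $\int \tau\,\Lr(z/\tau)$ over the endpoint (with $\Lr$ strictly convex) forces constant velocity there too. (b) The path uses the road at most once: if it left the road, returned to it, and left again, one could replace the excursion into the field between two road-times by a shortcut staying on the road, using that moving along the road is ``cheaper per unit $x$-displacement'' in the relevant regime — more precisely, I would use the explicit forms $\tH(q) = q^2 + p_q^2 + 1$ with $p_q$ from \eqref{e.pq} and $\Hf(q,p) = q^2+p^2+1$ to compare the two running costs via their Legendre duals, and a strict-convexity/exchange argument to rule out a non-contiguous visit. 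Combined with the observation (from $x \ge 0$ and convexity) that it is never advantageous to move with negative $x$-velocity or to leave the road before reaching its optimal departure point, this gives the single-interval structure and hence \eqref{e.Lq3b}, with $(z_0,0)$ the departure point, $\tau_0$ the departure time, and $(2q_0,2p_0)$ the constant field velocity (the factor $2$ coming from $\nabla \Lf^*(q,p) = (2q,2p)$). Formula \eqref{e.Lq3a} is then just the dynamic programming principle: optimize over the departure point $z \in [0,x]$ and departure time $\tau \in [0,t]$, inserting the values $\tau\,\Lr(z/\tau)$ for the road leg and $(1-\tau)\Lf\big(\tfrac{x-z}{1-\tau},\tfrac{y}{1-\tau}\big)$ for the (straight) field leg.

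For the regularity and gradient formula \eqref{e.a0722.11}, I would invoke uniqueness of the minimizer (which follows from strict convexity of $\Lf$ and of $\tH$, hence of $\Lr$ by duality) and the standard fact that $J(t,\cdot)$ is then differentiable in the spatial variables with $\nabla J(t,\gamma(t)) = \nabla_v \hat L$ evaluated along the optimal path at the terminal time; on the field leg $\nabla_v \Lf(\dot\gamma) = \tfrac12\dot\gamma = (q_0,p_0)$, giving \eqref{e.a0722.11}, and local continuity of $\nabla J$ follows from continuous dependence of the (unique) minimizer on the endpoint. Finally, if $y=0$ then the field leg is degenerate: any excursion into $\H$ and back to $\{y=0\}$ strictly increases the cost (again by the cost comparison in step (b), since the detour adds positive transverse displacement that must be paid for by $\Lf$ while a road path of the same $x$-displacement is available), so the optimum is $(z_0,\tau_0) = (x,t)$ and $J(t,x,0) = t\,\Lr(x/t)$, which is \eqref{e.a.240719.1}. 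I expect the main obstacle to be step (b) — rigorously excluding a minimizer that visits the road on two separated intervals — since this is where one genuinely needs the explicit interplay between $\Hf$, $F_0$, and the flux limiter $\tH$, rather than soft convexity arguments; the cleanest route is probably to show that the ``road cost'' $z \mapsto \Lr(z)$ is dominated by the cost of any field path with the same horizontal displacement and zero net vertical displacement, which reduces to the inequality $\tH(q) \ge \Hf(q,0)$-type comparisons already implicit in the derivation of $F$ in \eqref{e.F}.
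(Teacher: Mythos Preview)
Your overall strategy is correct and largely matches the paper's. Both arguments reduce to the same two ingredients: strict convexity of $\Lf$ and $\Lr$ (Jensen) and a cost comparison between road and field. However, you misidentify where the difficulty lies, and this makes your outline more complicated than necessary.

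The inequality you flag as the ``main obstacle'' at the end is in fact immediate and is the very first line of the paper's proof: from \eqref{e.tH} one has $\tH(q) = q^2 + p_q^2 + 1 \geq q^2 + 1 = \Hf(q,0)$, and by Legendre duality this gives $\Lr(v_1) \leq \Lf(v_1,0) \leq \Lf(v_1,v_2)$ for all $v_2$. Once this is in hand, the structural claim \eqref{e.Lq3b} follows without any Euler--Lagrange analysis or exchange arguments. The paper sets $\tau_0 = \max\{s : \gamma_2(s)=0\}$ and simply \emph{projects} the path onto the road for $s\leq \tau_0$, i.e.\ replaces $\gamma$ by $\tilde\gamma(s) = (\gamma_1(s),0)$ there; since $\Lr(\dot\gamma_1) \leq \Lf(\dot\gamma)$ pointwise, this can only decrease the action, strictly so on any excursion into $\H$. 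Then one straightens each leg via Jensen. This handles your step~(b) in one stroke and automatically gives the road-then-field ordering.

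Two further simplifications relative to your sketch. First, existence: once every path is beaten by one of the form \eqref{e.Lq3b}, the infimum is over the compact two-parameter family $(z,\tau)\in[0,x]\times[0,1]$, so you never need to worry about weak lower semicontinuity of the discontinuous Lagrangian $\hat L$ on $H^1$. Second, for $y=0$ the claim $\tau_0=1$ is immediate from $\gamma_2(1)=0$ and the definition of $\tau_0$, so no separate cost comparison is needed there. Your treatment of $\nabla J$ via uniqueness and the terminal-velocity formula is essentially the paper's.
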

It is somewhat standard that optimal paths have constant velocity for homogeneous, convex control problems.  While our setting is novel, due to the Lagrangians differing on the road and the field, the arguments are standard enough that we relegate its proof to \Cref{ss.control_lemmas}.  The main novelty of the proof is due to the presence of the road.

From this characterization and standard ideas in Hamilton-Jacobi equations, we can easily derive some monotonicity properties of $J$.

\begin{lemma}\label{l.monotonicity}
In the setting of \Cref{p.straight_lines}, with $\tau_0$ and $p_0$ given by~\eqref{e.Lq3b}, we have
	\begin{enumerate}[(i)]

		\item\label{i.grad_J}
		The value function is rotationally nondecreasing from the positive $x$-axis to the positive $y$-axes: for all $x,y> 0$,
			\be
				(-y,x)\cdot \nabla J(1,x,y) \geq 0.
			\ee
		Moreover, the inequality above is strict if $\tau_0>0$.

		\item\label{i.radially_increasing}
			The value function is strictly radially increasing for $t>0$ and $(x,y)\neq 0$:
			\be
				(x,y) \cdot \nabla J(1,x,y) > 0.
			\ee
			(Although, we note that it is not rotationally symmetric).	

	\end{enumerate}
\end{lemma}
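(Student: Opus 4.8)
The plan is to read off $\nabla J(1,x,y)$ directly from the explicit optimal path produced in \Cref{p.straight_lines} and then compute the two inner products by hand. First I would reduce to $x\ge 0$: the Hamiltonian $\tH$ is even in $q$ (by \eqref{e.tH}--\eqref{e.pq}), so $\Lr$ and $\Lf$ are even in the $x$-velocity, hence $J(1,\cdot,\cdot)$ is even in $x$, and it suffices to prove both inequalities for $x\ge 0$, where \Cref{p.straight_lines} applies. Set $T:=1-\tau_0$. When $T>0$ (equivalently, $y>0$, since otherwise the endpoint lies on the road), the field portion of $\gamma$ is the segment from $(z_0,0)$ to $(x,y)$ traversed at constant velocity $2(q_0,p_0)$, so the terminal condition $\gamma(1)=(x,y)$ in \eqref{e.Lq3b} gives $2q_0=(x-z_0)/T$ and $2p_0=y/T$; combined with \eqref{e.a0722.11} this yields the clean identity $\nabla J(1,x,y)=\tfrac{1}{2T}(x-z_0,\,y)$. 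When $y=0$ the path stays on the road, and \eqref{e.a.240719.1} gives $J(1,x,0)=\Lr(x)$, hence $\partial_x J(1,x,0)=\Lr'(x)$.

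For the first claim, $x,y>0$ forces $T>0$; substituting the formula for $\nabla J$ into $(-y,x)\cdot\nabla J$ cancels the $xy$-terms, leaving $(-y,x)\cdot\nabla J(1,x,y)=yz_0/(2T)\ge 0$ because $z_0\ge 0$. To obtain strictness when $\tau_0>0$, I would show $\tau_0>0\Rightarrow z_0>0$: were $z_0=0$, the linear road portion of $\gamma$ would be pinned at the origin, so $\gamma$ would sit still for time $\tau_0$ at cost $\tau_0\Lr(0)=-\tau_0$ (using $\Lr(0)=-\min\tH=-1$) and then run straight to $(x,y)$ in time $T$ at field cost $\tfrac{x^2+y^2}{4T}-T$; the total, $\tfrac{x^2+y^2}{4T}-1$, strictly exceeds $\tfrac{x^2+y^2}{4}-1=\Lf(x,y)$ since $T<1$, contradicting the optimality of $\gamma$.

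For the second claim, by evenness take $x\ge 0$ and $(x,y)\ne(0,0)$. If $y>0$, then $(x,y)\cdot\nabla J(1,x,y)=\tfrac{x(x-z_0)+y^2}{2T}$; the minimization in \eqref{e.Lq3a} runs over $0\le z\le x$, so $x(x-z_0)\ge 0$, and $y^2>0$, whence the expression is strictly positive. If $y=0$ and $x>0$, then $(x,0)\cdot\nabla J(1,x,0)=x\,\Lr'(x)$; by \Cref{lem.tH} together with \eqref{e.tH}--\eqref{e.pq}, $\tH$ is strictly convex and coercive with $\tH'(0)=0$, so the maximizer in $\Lr(x)=\max_q[xq-\tH(q)]$ is some $q_0(x)>0$ and $\Lr'(x)=q_0(x)>0$, giving $x\,\Lr'(x)>0$. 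For $x<0$ the claim follows from the case $-x>0$ by evenness.

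The argument is essentially mechanical once \Cref{p.straight_lines} is in hand; the one step that is not pure substitution is the short optimality comparison $\tau_0>0\Rightarrow z_0>0$ used for the strictness in the first claim. A minor bookkeeping point is that at boundary points $y=0$ one must use \eqref{e.a.240719.1} rather than the velocity identity \eqref{e.a0722.11}, since there the first component of $\nabla J$ is the road momentum $\Lr'(x)$ and not half the road velocity. As a cross-check, the second claim can instead be obtained from Euler's identity $xJ_x+yJ_y=J-J_t$ (valid because $J$ is positively $1$-homogeneous by \eqref{e.scaling}) together with \eqref{e.linear_HJ}, which gives $J_t=-\Hf(\nabla J)<0$ in the field and $J\ge -1$; I would nonetheless keep the path-based argument as the primary one since it handles the road case $y=0$ uniformly.
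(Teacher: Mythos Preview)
Your approach is essentially the same as the paper's: read off $\nabla J(1,x,y)=(q_0,p_0)=\tfrac{1}{2(1-\tau_0)}(x-z_0,y)$ from \Cref{p.straight_lines} and compute the two inner products directly. The paper's proof is terser---it simply asserts that $z_0=0$ ``corresponds to $\tau_0=0$'' for the strictness in (i) and declares (ii) ``essentially the same''---whereas you supply an explicit optimality comparison for $\tau_0>0\Rightarrow z_0>0$, treat the boundary case $y=0$ in (ii) via \eqref{e.a.240719.1}, and correctly flag that \eqref{e.a0722.11} should not be used there; these are genuine improvements in rigor over the paper's presentation, but the core idea is identical.
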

\begin{proof}
By \Cref{p.straight_lines}, 
$\nabla J(t,x,y)$ is well-defined in $(0,\infty)\times \H$ and 
we have, when $\tau_0 < 1$,
\be
	(-y,x)\cdot \nabla J(1,x,y)
		= (-y,x) \cdot \frac{1}{2}
\left(\tfrac{x-z_0}{1-\tau_0}, \tfrac{y}{1-\tau_0}  \right)
		= \frac{y z_0}{2(1-\tau_0)}
		\geq 0,
\ee
where we used $x,y>0$ and $z_0 \geq 0$.  This is clearly strict except in the cases $z_0 = 0$ (which corresponds to $\tau_0 = 0$).

The proof of the assertion~\ref{i.radially_increasing} is essentially the same, so we omit it.  This completes the proof.
\end{proof}

Next, we define the quantity that corresponds to directional spreading speed. 
For each $\vt \in [-\sfrac\pi2, \sfrac\pi2]$, define
\be
c_*(\vt) = \sup\{ r \geq 0:~ J(1, r\sin\vt, r\cos\vt) \leq 0\}.
\ee
Notice that this is well-defined thanks to $J(1,0,0) = L_r(0) = -1$ and \Cref{l.monotonicity}.\ref{i.radially_increasing}.

\subsection{Invasion speed on the road}

Here, we discuss the speed $c_*(\sfrac\pi2)= c_*(-\sfrac\pi2)$ on the road.  In particular, we show its threshold behavior, that is, that $c_*(\sfrac\pi2) = 2$ for $D \leq 2$ and $c_*(\sfrac\pi2) > 2$ for $D > 2$, and we obtain rough asymptotic behavior as $D\to\infty$.

We begin by characterizing the speed along the road in two useful ways.
\begin{proposition}\label{p.roadspeed}
The speed along the road (recall~\eqref{e.directional}) is the unique positive velocity such that
\be\label{e.roadspeed1}
	0 = \Lr(c_*(\sfrac\pi2)).
\ee
Furthermore,
\begin{equation}\label{e.roadspeed2}
	c_*(\sfrac\pi2)
		= \min_{q>0} \frac{\tH(q)}{q}.
\end{equation}
Additionally, $c_*(\sfrac\pi2)$ is nondecreasing in $D$.
\end{proposition}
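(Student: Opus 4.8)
The plan is to reduce the entire statement to elementary convex analysis of the road Lagrangian $\Lr$. The starting point is the Lax--Oleinik identity \eqref{e.a.240719.1}, which gives $J(1,r,0)=\Lr(r)$ for $r\geq 0$; hence, by the definition of $c_*$ preceding this proposition,
\be\label{e.plan.road}
	c_*(\sfrac\pi2)=\sup\{r\geq 0:~\Lr(r)\leq 0\}.
\ee
I would then introduce $\rho:=\inf_{q>0}\tH(q)/q$. By \Cref{lem.tH} the map $q\mapsto \tH(q)/q$ is continuous on $(0,\infty)$, and since $\tH(q)\geq q^2+1$ by \eqref{e.tH} it tends to $+\infty$ both as $q\to 0^+$ and as $q\to\infty$; thus the infimum is attained at some $q^*>0$ and $\rho=\tH(q^*)/q^*>0$.

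The key computation is to unpack $\Lr(r)\leq 0$ using that $\Lr$ is the Legendre transform of $\tH$. For $r\geq 0$ one has $\Lr(r)=\max_q[rq-\tH(q)]$; when $q\leq 0$ the bracket is $\leq -\tH(q)\leq -1<0$ (as $\tH\geq 1$), and when $q>0$ it is $\leq 0$ exactly when $r\leq \tH(q)/q$. Hence $\Lr(r)\leq 0$ if and only if $0\leq r\leq \rho$, which together with \eqref{e.plan.road} gives $c_*(\sfrac\pi2)=\rho=\min_{q>0}\tH(q)/q$, i.e.\ \eqref{e.roadspeed2}. To get \eqref{e.roadspeed1}, note that $\Lr(\rho)\geq \rho q^*-\tH(q^*)=0$, so $\Lr(\rho)=0$; for $0\leq r<\rho$, evaluating $\Lr(r)$ at its maximizer $\hat q$ (which exists since $rq-\tH(q)\leq rq-q^2-1\to-\infty$) gives $\Lr(r)\leq \hat q(r-\rho)<0$ if $\hat q>0$ and $\Lr(r)\leq -1<0$ if $\hat q\leq 0$; and for $r>\rho$, $\Lr(r)\geq rq^*-\tH(q^*)=q^*(r-\rho)>0$. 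Thus $\rho$ is the unique positive zero of $\Lr$, proving \eqref{e.roadspeed1}.

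For monotonicity in $D$, the plan is to show that $D\mapsto \tH(q)$ is nondecreasing for each fixed $q\geq 0$, using the explicit form \eqref{e.tH}--\eqref{eq:g}. In the regime $q^2\leq \sfrac{1}{D-1}$ we have $p_q=0$ and $\tH(q)=q^2+1$, independent of $D$. In the regime $q^2>\sfrac{1}{D-1}$, for each fixed $p\geq 0$ the function $g$ of \eqref{eq:g} is nonincreasing in $D$ (its bracket is $D$-independent and nonnegative, while $\sfrac{1}{D-1}$ decreases), so $g^{-1}$---and therefore $p_q=g^{-1}(q)$---is nondecreasing in $D$, whence so is $\tH(q)=q^2+p_q^2+1$. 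Since $g(0)=\sfrac{1}{\sqrt{D-1}}$, the two regimes match at $q^2=\sfrac{1}{D-1}$, so $D\mapsto \tH(q)$ is globally nondecreasing; hence $D\mapsto \tH(q)/q$ is nondecreasing for each $q>0$, and taking the infimum over $q$ preserves this. By \eqref{e.roadspeed2}, $c_*(\sfrac\pi2)$ is nondecreasing in $D$.

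I expect this to be mostly bookkeeping rather than conceptually difficult. The one point that needs care is the Legendre-transform step: coercivity of $\tH$ (from \Cref{lem.tH}) must be used both to guarantee that the maximum defining $\Lr(r)$ is attained---which is what lets one conclude strict negativity of $\Lr$ below $\rho$, hence uniqueness of the positive zero---and to guarantee that $\rho=\inf_{q>0}\tH(q)/q$ is itself attained. The monotonicity step is also slightly delicate, as it requires inspecting the explicit description \eqref{e.pq}--\eqref{eq:g}, verifying the $D$-monotonicity of $g^{-1}$, and checking that the two branches of \eqref{e.pq} agree at the threshold $q^2=\sfrac{1}{D-1}$.
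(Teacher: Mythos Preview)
Your proposal is correct and takes essentially the same approach as the paper: both reduce to $J(1,r,0)=\Lr(r)$ via \eqref{e.a.240719.1}, then exploit the Legendre duality between $\Lr$ and $\tH$ to identify the positive zero of $\Lr$ with $\min_{q>0}\tH(q)/q$, and finally argue monotonicity of $\tH$ in $D$. The paper is terser---it derives \eqref{e.roadspeed2} from \eqref{e.roadspeed1} in two lines and asserts the $D$-monotonicity of $\tH$ without proof---whereas you spell out the equivalence $\Lr(r)\leq 0 \Leftrightarrow r\leq \rho$ and the branch analysis of \eqref{e.pq}--\eqref{eq:g} explicitly, but the underlying argument is the same.
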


\begin{proof}
Let us note that the strict convexity and coercivity of $\tH$ in \Cref{lem.tH} guarantees the existence and uniqueness of the minimum in~\eqref{e.roadspeed2}.  It also guarantees the strict convexity of $\Lr$, which is the basis for the existence and uniqueness of the solution to~\eqref{e.roadspeed1}.  We omit further details.

The identity~\eqref{e.roadspeed1} follows directly from \eqref{e.a.240719.1} of \Cref{p.straight_lines}.  

For~\eqref{e.roadspeed2}, we begin by noticing that
\be
	0 
		= \Lr(c_*(\sfrac\pi2))
		= \max_q\left( c_*(\sfrac\pi2) q - \tH(q)\right).
\ee
Let $q_*$ be the maximizer above.  This implies that
\be
	c_*(\sfrac\pi2) = \frac{\tH(q_*)}{q_*}
		\qquad\text{ and }\qquad
	c_*(\sfrac\pi2) \leq \frac{\tH(q)}{q}
		\quad\text{ for all } q>0,
\ee
which is precisely~\eqref{e.roadspeed2}.

Finally, the fact that $c_*(\sfrac\pi2)$ is nondecreasing in $D$ follows directly from \eqref{e.roadspeed2} and the fact that $\tH$ is nondecreasing in $D$.  This concludes the proof.
\end{proof}

With this characterization in hand, it is quite easy to derive various properties about $c_*(\sfrac\pi2)$.  We begin by showing that speedup occurs only when $D>2$. 

\begin{corollary}\label{c.D_near_2}
We have the following:
\begin{enumerate}[(i)]

	\item\label{i.speedup}
		The speed $c_*(\sfrac\pi2) = 2$ when $D\leq 2$ and 
		\be
			c_*(\sfrac\pi2) >2 \qquad\text{ when } D>2.
		\ee

	\item\label{i.speed_bounds1}
		For any $\tau\in [0,1]$, $c\in [-c_*(\sfrac\pi2), c_*(\sfrac\pi2)]$, and $v \in \H$ such that $|v| \leq 2$, we have
		\be
			J(1,  c\tau + v_1 (1-\tau), v_2(1-\tau))
				\leq 0.
		\ee
		Roughly, this says that the endpoint of any path that moves at speed $|c| \leq c_*(\sfrac\pi2)$ on the road and speed at most $2$ in the field is inside the Wulff shape $\cW$.

	\item\label{i.speed_bounds2}
		Letting $\beta=(2 + \mu)^{-1}$, we have
		\be\label{e.a060702}
			2\sqrt D \beta
				\leq c_*(\sfrac\pi2)
				\leq \sfrac{D}{\sqrt{D-1}}.
		\ee
	
\end{enumerate}
\end{corollary}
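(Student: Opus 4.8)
The plan is to derive all three items from the representation $c_*(\sfrac\pi2) = \min_{q>0}\tH(q)/q$ in~\eqref{e.roadspeed2}, together with the explicit description of $\tH$ in~\eqref{e.tH}--\eqref{eq:g}. The one elementary fact used throughout is that $\tH(q) = q^2 + (p_q)^2 + 1 \ge q^2 + 1$, so that $\tH(q)/q \ge q + \sfrac1q \ge 2$ for every $q>0$; since $q\mapsto \tH(q)/q$ blows up as $q\to 0^+$ and as $q\to\infty$, its minimum is attained at some $q_*\in(0,\infty)$.

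For part~(i), the argument splits on the value of $D$. When $D \le 2$ the point $q=1$ lies in the regime $q^2\le \sfrac1{D-1}$, so $p_1=0$ and $\tH(1)/1 = 2$; combined with $\tH(q)/q\ge 2$ this forces $c_*(\sfrac\pi2) = 2$. When $D>2$ I would instead show that $\tH(q)/q > 2$ for \emph{every} $q>0$: in the flat regime $q\le \sfrac1{\sqrt{D-1}} < 1$ one has $q + \sfrac1q > 2$, while in the regime $q^2 > \sfrac1{D-1}$ one has $p_q > 0$ and hence $\tH(q)/q > q + \sfrac1q \ge 2$. Evaluating at the minimizer $q_*$ then gives $c_*(\sfrac\pi2) = \tH(q_*)/q_* > 2$.

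For part~(ii), the idea is to bound $J(1,\cdot)$ from above with the concrete competitor in~\eqref{e.Jcon} that moves along the road $\{y=0\}$ with velocity $c$ on $[0,\tau]$ and then in a straight line with velocity $v$ on $[\tau,1]$; since $v\in\H$ has $v_2>0$ this path is admissible, ends at $(c\tau + v_1(1-\tau),\,v_2(1-\tau))$, and has cost $\tau\Lr(c) + (1-\tau)\Lf(v)$ (with the stated conventions at $\tau\in\{0,1\}$). Because $\tH$ is even (being built from $\Hf$ and $F_0$, which are even in $q$), so is $\Lr$; being also strictly convex with $\Lr(0)=-1<0$ and $\Lr(\pm c_*(\sfrac\pi2)) = 0$ by~\eqref{e.roadspeed1}, it is nonpositive on $[-c_*(\sfrac\pi2), c_*(\sfrac\pi2)]$. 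Hence $\Lr(c)\le 0$, and since $\Lf(v) = \tfrac{|v|^2}{4} - 1 \le 0$ when $|v|\le 2$, the total cost is $\le 0$, which yields $J(1, c\tau + v_1(1-\tau), v_2(1-\tau)) \le 0$.

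For part~(iii), the upper bound follows by testing $q=\sfrac1{\sqrt{D-1}}$ in~\eqref{e.roadspeed2}, where $p_q=0$ and $\tH(q)/q = \bigl(\sfrac1{D-1}+1\bigr)\sqrt{D-1} = \sfrac{D}{\sqrt{D-1}}$. For the lower bound I would parametrize the nonflat regime by $p=p_q\ge 0$, $q=g(p)$; writing $\psi(p) = \sfrac{\mu p}{\kappa\nu+p}\in[0,\mu)$ and using $(D-1)g(p)^2 = p^2+1+\psi(p)$ together with $\tH(q) = Dq^2 - \psi(p)$, a short computation gives
\begin{align*}
	\frac{\tH(q)}{q}
		&= \frac{Dp^2 + D + \psi(p)}{\sqrt{(D-1)\bigl(p^2+1+\psi(p)\bigr)}}\\
		&\ge \frac{D(p^2+1)}{\sqrt{(D-1)(p^2+1+\mu)}}
		\ge \frac{D}{\sqrt{(D-1)(1+\mu)}},
\end{align*}
the last step using that $s\mapsto \sfrac{s}{\sqrt{s+\mu}}$ is increasing on $[1,\infty)$ and $s = p^2+1\ge 1$; on the flat regime one instead has $q+\sfrac1q \ge \sfrac{D}{\sqrt{D-1}} \ge \sfrac{D}{\sqrt{(D-1)(1+\mu)}}$. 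Since $\tfrac{D}{\sqrt{(D-1)(1+\mu)}} \ge \tfrac{2\sqrt D}{2+\mu} = 2\sqrt D\,\beta$ is equivalent, after squaring, to $D\mu^2 \ge -4(1+\mu)$, which always holds, we conclude $c_*(\sfrac\pi2) \ge 2\sqrt D\,\beta$. I expect this last piece of bookkeeping, together with cleanly matching the flat and nonflat regimes, to be the only mildly delicate point; everything else is an immediate consequence of~\eqref{e.roadspeed2} and part~(i).
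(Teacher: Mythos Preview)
Your arguments for parts~(i) and~(ii) are correct and match the paper's closely; your case split in~(i) directly at the threshold $q=\sfrac{1}{\sqrt{D-1}}$ is even a bit more direct than the paper's, which introduces an auxiliary $\bar q\in(\sfrac{1}{\sqrt{D-1}},1)$. For the upper bound in~(iii), plugging $q=\sfrac{1}{\sqrt{D-1}}$ straight into~\eqref{e.roadspeed2} is likewise cleaner than the paper's device of bounding $\tH(q)/q\le Dq$ on the nonflat regime and sending $q\searrow\sfrac{1}{\sqrt{D-1}}$.

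For the lower bound in~(iii) you take a genuinely different route. The paper uses that on the nonflat regime one has simultaneously $\tH(q)=q^2+p_q^2+1$ and $\tH(q)=Dq^2-\tfrac{\mu p_q}{\kappa\nu+p_q}$, and takes the convex combination with weights $(1-\beta,\beta)$:
\[
\tH(q)=(1-\beta)(q^2+p_q^2+1)+\beta\Bigl(Dq^2-\tfrac{\mu p_q}{\kappa\nu+p_q}\Bigr)\ge(1-\beta)+\beta(Dq^2-\mu)=\beta(Dq^2+1),
\]
the last equality using $1-\beta-\beta\mu=\beta$; then $\tH(q)/q\ge\beta(Dq+\sfrac1q)\ge 2\beta\sqrt D$ by AM--GM. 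Your parametrization by $p=p_q$ and the explicit estimate $\tH(q)/q\ge D/\sqrt{(D-1)(1+\mu)}$ is more computational but correct, and actually gives a sharper intermediate bound than $2\beta\sqrt D$.

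One small gap: your flat-regime claim ``$q+\sfrac1q\ge D/\sqrt{D-1}$'' needs $\sfrac{1}{\sqrt{D-1}}\le 1$, i.e.\ $D\ge 2$; for $1<D<2$ the minimum of $q+\sfrac1q$ on $(0,\sfrac{1}{\sqrt{D-1}}]$ is $2$ (attained at $q=1$), which is strictly smaller than $D/\sqrt{D-1}$. This is harmless, since for $D\le 2$ part~(i) already gives $c_*(\sfrac\pi2)=2\ge 2\sqrt D/(2+\mu)=2\sqrt D\,\beta$. The paper's handling of the flat regime carries the same tacit restriction.
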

Before proving \Cref{c.D_near_2}, we observe that the bound in~\ref{i.speed_bounds2} is sharp in the case $\mu \ll 1 \ll D$.  In general, though, it is not sharp.  We investigate this further below; however, it is interesting to easily deduce the result that $c_*(\sfrac\pi2) \sim \sqrt D$ as $D\to\infty$.
\begin{proof}[Proof of \ref{i.speedup}]
Let us begin with the case $D\leq 2$.  The lower bound in~\ref{i.speedup} when $D \leq 2$ follows from \Cref{p.roadspeed} and~\eqref{e.tH}:
\be
	c_*(\sfrac\pi2)
		= \min_{q>0} \frac{\tH(q)}{q}
		\geq \min_{q>0} \frac{q^2 + 1}{q}
		= 2.
\ee
For the upper bound, notice that $1 \in [0,\sfrac1{\sqrt{D-1}}]$, so $H_r(1) =  q^2 + 1\big|_{q=1}=2$, and
\be
	c^* \leq \min_{q>0} \frac{\tH(q)}{q}
		\leq \frac{\tH(1)}{1}
		= 2.
\ee

We now consider the case $D>2$. For the lower bound, fix an arbitrary
\be
	\bar q \in \left( \frac{1}{\sqrt{D-1}}, 1\right).
\ee
For $q \leq \bar q$,
\be\label{e.c052301}
	\frac{\tH(q)}{q}
		\geq q + \frac{1}{q}
		\geq \bar q + \frac{1}{\bar q}
		>2.
\ee
Here we used that $x + \sfrac1x$ is decreasing for $x< 1$ and that $\bar q < 1$.   For $q > \bar q$, we have
\be\label{e.c052302}
	\frac{\tH(q)}{q}
		= q + \frac{p_q^2 + 1}{q}
		\geq  q + \frac{p_{\bar q}^2 + 1}{q}
		\geq 2 \sqrt{ p_{\bar q}^2 + 1} 
		>2.
\ee
Here we used that $p_q$ is increasing in $q$ and that $\bar q > \sfrac{1}{\sqrt{D-1}}$ so that $p_{\bar q} > 0$.  The combination of~\eqref{e.c052301} and ~\eqref{e.c052302} finish the proof of \ref{i.speedup}.
\end{proof}

\begin{proof}[Proof of \ref{i.speed_bounds1}]
We use~\eqref{e.Lq3a} from \Cref{p.straight_lines} with the choice $x = c \tau + v_1(1-\tau)$, $y = v_1(1-\tau)$, and $z = c \tau$.  Then
\be
	\begin{split}
		J(1,x,y)
			&\leq (1-\tau) \Lf\left(\frac{v_1 (1-\tau)}{1-\tau}, \frac{v_2 (1-\tau)}{1-\tau}\right)
				+ \tau \Lr\left(\frac{\tau c}{\tau}\right)
			\\&
			= (1-\tau) \left( \frac{v_1^2 + v_2^2}{4} - 1\right)
				+ \tau \Lr\left(c\right)
			\\&
			\leq (1-\tau) \cdot 0  + \tau \Lr(c_*(\sfrac\pi2))
			= 0.
	\end{split}
\ee
In the last inequality, we used that $\Lr$ is increasing for in $|c|$, and $r, c_*(\sfrac\pi2) \geq 0$.
\end{proof}

\begin{proof}[Proof of \ref{i.speed_bounds2}]
The upper bound in~\ref{i.speed_bounds2} follows from the fact that, for $q>\sfrac{1}{\sqrt{D-1}}$, we have
\be
	c_*(\sfrac\pi2)
		\leq \frac{\tH(q)}{q}
		= \frac{ D q^2 - \frac{\mu p_q}{\kappa \nu + p_q}}{q}
		\leq D q.
\ee
Taking the limit $q\to \sfrac{1}{\sqrt{D-1}}$, we obtain the bound.

We now consider the lower bound.  Observe from the definition of $\tH$ that 
\be
	\tH(q) = q^2 + p_q^2 + 1
		= D q^2 - \frac{\mu p_q}{\kappa \nu + p_q}.
\ee
Hence, we may write, for $q\geq \sfrac{1}{\sqrt{D-1}}$,
\be
	\begin{split}
		\tH(q)
			= (1-\beta) (q^2 + p_q^2 + 1) + \beta\left( D q^2 - \frac{\mu p_q}{\kappa \nu + p_q}\right)
			\geq (1-\beta) + \beta\left( D q^2 - \mu\right)
			= \beta \left( D q^2 + 1\right),
	\end{split}
\ee
where we used that $\beta = \sfrac{1}{(2+\mu)}$.
We deduce that
\be
	\min_{q \geq \frac{1}{\sqrt{D-1}}} \frac{\tH(q)}{q}
		\geq 2\beta \sqrt D.
\ee
It is simple to check that
\be
	\min_{q \leq \frac{1}{\sqrt{D-1}}} \frac{\tH(q)}{q}
		= \frac{D}{\sqrt{D-1}}
		\geq 2\beta \sqrt D
\ee
because $2\beta < 1$. This completes the proof.
\end{proof}

These identities make various asymptotic computations quite easy.  In particular, let us consider the case $D\to \infty$.  For any $\theta>0$, define $\zeta_\theta$ to be zero if $\theta \leq 1$ and the unique positive solution of
	\be\label{e.zeta}
		\theta^2 = \zeta^2 + 1 + \frac{\mu \zeta}{\kappa \nu + \zeta}
	\ee
	 if $\theta >1$.  Then we recover \cite[Theorem 1.1 and eqn~(7.8)]{BRR_JMB}.
\begin{corollary}\label{c.large_D}
	With $\zeta_\theta$ defined above, we have
	 \be
	 	\lim_{D\to\infty} \frac{c_*(\sfrac\pi2)}{\sqrt D}
			= \min_{\theta>0} \frac{1+ \zeta_\theta^2}{\theta}
			> 0.
	 \ee
\end{corollary}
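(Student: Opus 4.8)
\noindent\emph{Proof idea.}
The plan is to start from the exact formula $c_*(\sfrac\pi2) = \min_{q>0}\tH(q)/q$ of \Cref{p.roadspeed} (the minimum is attained by the strict convexity and coercivity from \Cref{lem.tH}), together with the explicit description \eqref{e.tH}--\eqref{eq:g} of $\tH$, and then to rescale by setting $q = \theta/\sqrt{D-1}$. Under this substitution the relation $g(p_q)=q$ that defines $p_q$ when $q^2 > \tfrac1{D-1}$ becomes exactly $\theta^2 = p_q^2 + 1 + \tfrac{\mu p_q}{\kappa\nu + p_q}$, i.e.\ \eqref{e.zeta}, so that $p_q = \zeta_\theta$; and when $q^2 \leq \tfrac1{D-1}$ one has $\theta \leq 1$ and $p_q = 0 = \zeta_\theta$. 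Hence $p_q = \zeta_\theta$ for every $\theta>0$ and $\tH(q) = q^2 + p_q^2 + 1 = \tfrac{\theta^2}{D-1} + \zeta_\theta^2 + 1$, which gives
\be
	\frac{c_*(\sfrac\pi2)}{\sqrt D} = \min_{\theta>0} f_D(\theta),
	\qquad
	f_D(\theta) := \frac{\theta}{\sqrt{D(D-1)}} + \sqrt{\tfrac{D-1}{D}}\,\frac{1+\zeta_\theta^2}{\theta}.
\ee
Since $f_D(\theta)\to f_\infty(\theta):= \tfrac{1+\zeta_\theta^2}{\theta}$ pointwise as $D\to\infty$, the result will follow once I can move the limit inside the minimum; and positivity of the limit is then immediate, because $\tfrac{c_*(\sfrac\pi2)}{\sqrt D}\geq 2\beta$ for all $D$ by \eqref{e.a060702}, so $\min_{\theta>0}f_\infty(\theta)\geq 2\beta = \tfrac{2}{2+\mu}>0$.

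To interchange the limit and the minimum I would establish two points. First, the minimizers of $f_D$ — and of $f_\infty$ — all lie in a single compact interval $[\theta_-,\theta_+]\subset(0,\infty)$ independent of $D\geq 2$. Indeed, for $D\geq2$ one has $f_D(\theta)\geq \tfrac1{\sqrt2}\tfrac{1+\zeta_\theta^2}{\theta}$, and since \eqref{e.zeta} gives $\zeta_\theta^2 \geq \theta^2 - 1 - \mu$ while \eqref{e.a060702} gives $\tfrac{c_*(\sfrac\pi2)}{\sqrt D}\leq\sqrt{\tfrac{D}{D-1}}\leq\sqrt2$, one obtains $f_D(\theta)>\sqrt2\geq\tfrac{c_*(\sfrac\pi2)}{\sqrt D}$ whenever $\theta$ is smaller than some $\theta_-(\mu)>0$ or larger than some $\theta_+(\mu)<\infty$; the same bounds applied directly to $f_\infty$ confine its minimizer. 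Second, $f_D\to f_\infty$ uniformly on $[\theta_-,\theta_+]$: here I use that $\theta\mapsto\zeta_\theta$ is continuous on $(0,\infty)$ (it vanishes on $(0,1]$ and, on $(1,\infty)$, is the inverse of the strictly increasing map $\zeta\mapsto\zeta^2+1+\tfrac{\mu\zeta}{\kappa\nu+\zeta}$, with $\zeta_\theta\to0$ as $\theta\to1^+$), so $\tfrac{1+\zeta_\theta^2}{\theta}$ is bounded on $[\theta_-,\theta_+]$ and $\sup_{[\theta_-,\theta_+]}|f_D-f_\infty|\to0$. Combining the two points, $\min_{(0,\infty)}f_D = \min_{[\theta_-,\theta_+]}f_D \to \min_{[\theta_-,\theta_+]}f_\infty = \min_{(0,\infty)}f_\infty$, which is the claimed identity.

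The only delicate step is the uniform confinement of the minimizers: this is what prevents the minimizing $q$ from drifting to $0$ or to $\infty$ (relative to the $D^{-1/2}$ scale) as $D\to\infty$, and it is exactly the information encoded in the a priori two-sided bound \eqref{e.a060702} of \Cref{c.D_near_2}.\ref{i.speed_bounds2}. Everything else reduces to bookkeeping with the explicit formula for $\tH$ and the continuity of $\theta\mapsto\zeta_\theta$, so the argument is self-contained given what has already been proved.
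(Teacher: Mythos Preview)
Your proof is correct and follows essentially the same approach as the paper: both proofs use the characterization $c_*(\sfrac\pi2)=\min_{q>0}\tH(q)/q$ from \Cref{p.roadspeed}, perform the substitution $q=\theta/\sqrt{D-1}$, and identify $p_q=\zeta_\theta$ via \eqref{e.zeta} to arrive at the same rescaled expression. The only difference is that you supply a more careful justification (confining minimizers to a compact interval via \eqref{e.a060702} and invoking uniform convergence there) for interchanging the limit and the minimum, whereas the paper leaves this step implicit, noting only that $\zeta_\theta\approx\theta$ for large $\theta$ so that the minimum is well-defined.
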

\begin{proof}
	This follows by letting $q_\theta = \sfrac{\theta}{\sqrt{D-1}}$ and noticing that $\zeta_\theta = p_{q_\theta}$.  Then
	\be
		\frac{c_*(\sfrac\pi2)}{\sqrt {D-1}}
			= \min_{q>0} \frac{\tH(q)}{q\sqrt {D-1}}
			= \min_{\theta>0} \frac{\tH(q_\theta)}{\theta}
			= \min_{\theta>0} \frac{1+ \zeta_\theta^2 + \frac{\theta^2}{D-1}}{\theta}.
	\ee
	Let us note that $\zeta_\theta \approx \theta$ for $\theta \gg1$, so the minimum above is well-defined.
\end{proof}

Finding $\zeta_\theta$ via~\eqref{e.zeta} amounts to solving a cubic equation.  While there are closed form solutions of these, they are somewhat complicated.  It is best to consider \Cref{c.large_D} numerically or in certain asymptotic regimes.  Using \Cref{c.large_D}, this is often quite simple.

We present briefly an example where the asymptotic speed can be computed.  When $\mu \to \infty$, we see that
\be
	\zeta_\theta \sim \frac{\theta^2 - 1}{\mu} \ll 1.
\ee
Hence,
\be
	c_*(\sfrac\pi2)
		= \min_{\theta>0}\frac{1+ \zeta_\theta^2}{\theta}
		\sim \min_{\theta>0}\frac{1 + \frac{(\theta^2-1)^2}{\mu}}{\theta}
		\sim \frac{4}{3^{\sfrac34} \mu^{\sfrac14}}.
\ee
The last equality follows from simple calculus computations after noting that the minimum occurs at $\theta_{\min} = O(\mu^{\sfrac{1}{4}})$.  The work above yields the following.
\begin{corollary}
	The asymptotic limit of the speed along the road is
	\be
		\lim_{\mu\to\infty} \lim_{D\to\infty} \frac{\mu^{\sfrac14} c_*(\sfrac\pi2)}{\sqrt D}
			= \frac{4}{ 3^{\sfrac34} }.
	\ee
\end{corollary}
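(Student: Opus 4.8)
The plan is to feed the inner limit, which \Cref{c.large_D} already computes, into an elementary one-variable minimization and to track its $\mu$-asymptotics. By \Cref{c.large_D},
\[
	\lim_{D\to\infty}\frac{c_*(\sfrac\pi2)}{\sqrt D}
		= \min_{\theta>0}\frac{1+\zeta_\theta^2}{\theta}
		=: I(\mu),
\]
with $\zeta_\theta$ defined by~\eqref{e.zeta}; thus the corollary is equivalent to $\mu^{\sfrac14}I(\mu)\to 4\cdot 3^{-\sfrac34}$ as $\mu\to\infty$, and everything reduces to the behaviour of $\zeta_\theta$ and of the minimizing $\theta$ as $\mu\to\infty$.

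First I would use the $\mu\to\infty$ asymptotics of $\zeta_\theta$ recorded just before the statement. Rewriting~\eqref{e.zeta} as $\theta^2-1-\zeta^2 = \mu\zeta/(\kappa\nu+\zeta)$, one checks that for $\theta>1$ the root $\zeta_\theta$ is positive and increasing in $\theta$, vanishes as $\theta\downarrow 1$, and tends to $0$ as $\mu\to\infty$; the resulting small-$\zeta$ expansion shows that
\[
	I(\mu)=\big(1+o(1)\big)\,\widehat I(\mu),
	\qquad
	\widehat I(\mu):=\min_{\theta>0}\left[\frac1\theta+\frac{(\theta^2-1)^2}{\mu\,\theta}\right].
\]

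Next I would carry out the calculus for $\widehat I$. The map $\theta\mapsto\theta^{-1}+(\theta^2-1)^2(\mu\theta)^{-1}$ tends to $+\infty$ at both ends of $(0,\infty)$, and its derivative vanishes exactly when $(\theta^2-1)(3\theta^2+1)=\mu$; this equation has a unique solution $\theta_\mu>1$, and since its left-hand side equals $3\theta^4(1+o(1))$ as $\theta\to\infty$ we obtain $\theta_\mu=(\mu/3)^{\sfrac14}(1+o(1))$. Substituting, the two competing terms are $\theta_\mu^{-1}=(3^{\sfrac14}+o(1))\mu^{-\sfrac14}$ and $(\theta_\mu^2-1)^2(\mu\theta_\mu)^{-1}\sim\theta_\mu^3/\mu=(3^{-\sfrac34}+o(1))\mu^{-\sfrac14}$, so, using $3^{\sfrac14}\cdot 3^{\sfrac34}=3$,
\[
	\widehat I(\mu)=\big(3^{\sfrac14}+3^{-\sfrac34}\big)\mu^{-\sfrac14}(1+o(1))=\frac{4}{3^{\sfrac34}}\,\mu^{-\sfrac14}(1+o(1)),
\]
which, together with the previous display, proves the corollary.

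The step I expect to be the real work is the uniformity hidden in the reduction $I(\mu)=(1+o(1))\widehat I(\mu)$. The expansion of $\zeta_\theta$ is naturally pointwise in $\theta$, whereas the minimizer $\theta_\mu\sim\mu^{\sfrac14}$ drifts to infinity with $\mu$, so I must (a) control $\zeta_\theta^2$ with an error negligible uniformly over a window $[\mu^{\sfrac14}/C,\,C\mu^{\sfrac14}]$, and (b) show a priori that the infimum defining $I(\mu)$ is attained in such a window. Part (b) is cheap: for $\theta\le 1$ we have $\zeta_\theta=0$, hence $\tfrac{1+\zeta_\theta^2}{\theta}\ge 1\gg I(\mu)$; and as $\theta\to\infty$ the coercivity of $\tH$ from \Cref{lem.tH} (equivalently $\zeta_\theta\sim\theta$) forces $\tfrac{1+\zeta_\theta^2}{\theta}\to\infty$, so a crude comparison confines the minimizer to $[\mu^{\sfrac14}/C,\,C\mu^{\sfrac14}]$. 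On that window a quantitative form of the small-$\zeta$ expansion, with a correction to $\tfrac{(\theta^2-1)^2}{\mu}$ whose contribution to the minimized ratio is $o(\mu^{-\sfrac14})$, lets me sandwich $I(\mu)$ between $\widehat I(\mu)$ and $\widehat I(\mu)\big(1+o(1)\big)$, completing the argument.
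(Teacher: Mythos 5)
Your calculus for $\widehat I(\mu)$ is carried out correctly: the critical equation $(\theta^2-1)(3\theta^2+1)=\mu$, the scaling $\theta_\mu\sim(\mu/3)^{\sfrac14}$, and $3^{\sfrac14}+3^{-\sfrac34}=4\cdot 3^{-\sfrac34}$ are all right, and you correctly identify the reduction $I(\mu)=(1+o(1))\widehat I(\mu)$ as the step that needs justification. Unfortunately that reduction is false, and part (b) of your plan — confining the minimizer of $I(\mu)$ to a window $[\mu^{\sfrac14}/C,\,C\mu^{\sfrac14}]$ — is exactly where it fails. Parametrize $I(\mu)$ by $\zeta$ rather than $\theta$: with $\theta(\zeta)^2=\zeta^2+1+\tfrac{\mu\zeta}{\kappa\nu+\zeta}$ one has, for fixed $\zeta>0$ and $\mu\to\infty$,
\[
  \frac{1+\zeta_\theta^2}{\theta}\bigg|_{\theta=\theta(\zeta)}
  =\frac{1+\zeta^2}{\theta(\zeta)}
  \sim\frac{(1+\zeta^2)\sqrt{\kappa\nu+\zeta}}{\sqrt{\mu\zeta}},
\]
and the right side is of order $\mu^{-\sfrac12}$, with prefactor minimized at the positive root $\zeta_*$ of $4\zeta^3+3\kappa\nu\zeta^2-\kappa\nu=0$. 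Thus at the true minimizer $\zeta_\theta\to\zeta_*>0$ (not $0$), the corresponding $\theta$ drifts like $\mu^{\sfrac12}$ (not $\mu^{\sfrac14}$), the small-$\zeta$ expansion is never applicable there, and $I(\mu)\sim C\mu^{-\sfrac12}$ with $C=(1+\zeta_*^2)\sqrt{(\kappa\nu+\zeta_*)/\zeta_*}$. In particular $\mu^{\sfrac14}I(\mu)\to 0$, so the route you (and the paper) take cannot deliver the stated limit.

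The slip enters before the calculus even begins. The pointwise asymptotic $\zeta_\theta\sim\tfrac{\theta^2-1}{\mu}$ (itself suppressing a factor $\kappa\nu$) gives $\zeta_\theta^2\sim\tfrac{(\theta^2-1)^2}{\mu^2}$, yet $\widehat I$ is built from $\tfrac{(\theta^2-1)^2}{\mu}$; you have inherited that dropped power of $\mu$ from the paper's display, which is internally inconsistent. And even with the exponent corrected one still cannot push a fixed-$\theta$ expansion under $\min_\theta$ when the minimizer escapes to infinity, because the hypothesis $\zeta_\theta\ll1$ is then violated. Your instinct to flag the uniformity was exactly right; carrying it through shows the sandwich you propose does not close, and both the exponent $\sfrac14$ and the constant $4/3^{\sfrac34}$ in the statement should be re-examined against \Cref{c.large_D}.
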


\subsection{Convexity of the Wulff shape}

We now begin our investigation into how the presence of the road influences the invasion behavior in the field.  First, we examine the strict convexity of the Wulff shape.  In~\cite{Berestycki2016shape}, the authors were interested in comparing $\cW$ with $\underline \cW$, the shape containing all paths moving at speed $c_*(\sfrac\pi2)$ on the road (for time $\tau$) and speed $2$ in the field (for time $1-\tau$):
\be
	\underline \cW = \{(x,y) : \exists \tau \in [0,1], v\in \H \text{ s.t. } |v| = 2,  |x| \leq v_1(1-\tau) + c_*(\sfrac\pi2)\tau, y \leq v_2(1-\tau)\}.
\ee
See \Cref{fig:H2}.
\begin{figure}[h]
   \centering
    \includegraphics[width=.75\textwidth,height=0.2625\textwidth]{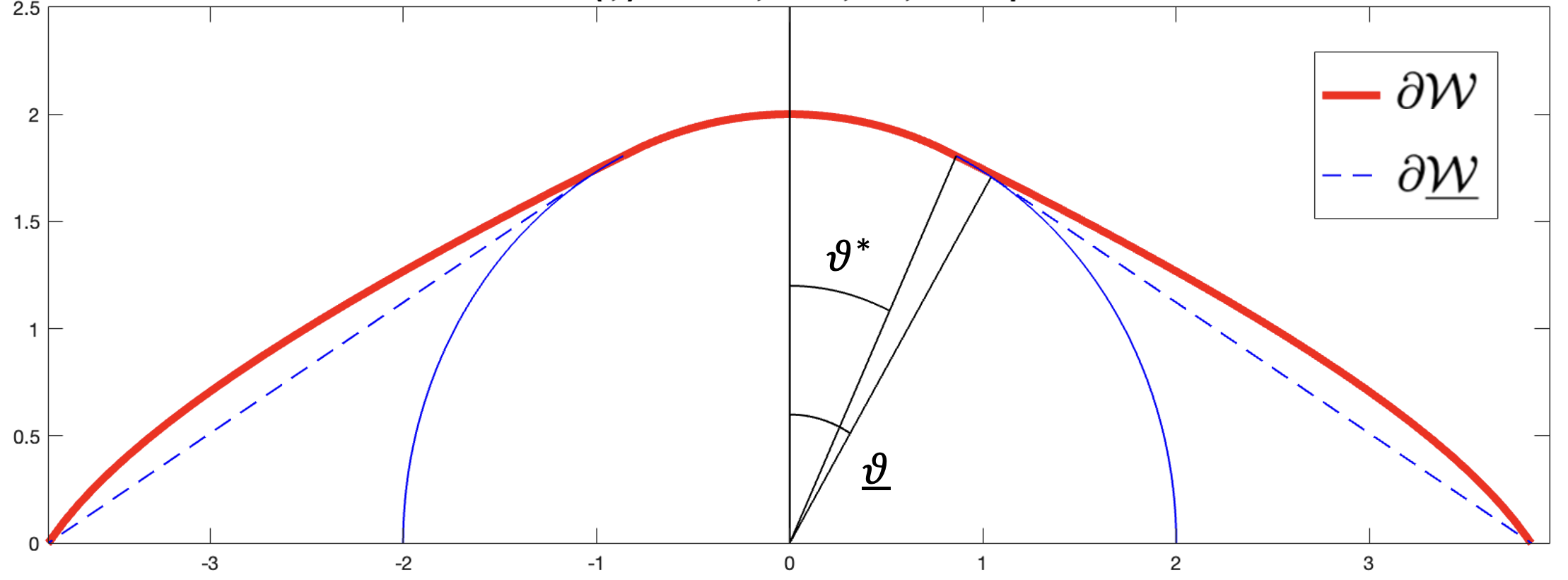}
	\caption{The boundary of $\mathcal{W}$ and $\underline{\mathcal{W}}$ in the case $D = 9$, and $d=\mu=\nu  =k=1$. In this case, the road speed is given by $c_*(\sfrac\pi2)\approx 3.1243$.  The angle $\vt_*$ is such that, for $|\vt| < \vt_*$ individuals (following optimal paths) do not use the road to travel and, for $|\vt|> \vt_*$, individuals use the road to travel for at least some portion of their journey; see \Cref{prop:1.11}. The constant $\underline \vt$ is defined in~\eqref{e.underline_vt}.}
 \label{fig:H2}
\end{figure}
Notice that $\underline{\mathcal{W}}$ is the convex hull of the union of the ball in $\H$ of radius $2$ centered at the origin and  the segment of the $x$-axis between $x= \pm c_*(\sfrac\pi2)$.  It is, thus, {\em not strictly convex}.  In fact, using some simple trigonometry, we find
\be
	\underline{\mathcal W}
			= \left\{r(\sin\vt,\cos\vt):|\vt| \leq \frac\pi2,~0 \leq r \leq 
				\begin{cases}
						2
							&\text{ for } |\vt| \leq \underline \vt\\
						2\sec (|\vt| - \underline\vt)
							&\text{ for } |\vt| \geq \underline \vt
					\end{cases}
				\right\},
\ee
where
\be\label{e.underline_vt}
	\underline\vt = \sin^{-1}\left(\frac{2}{c_*(\sfrac\pi2)}\right).
\ee
Recalling \Cref{c.D_near_2}.\ref{i.speed_bounds1}, we see that $\underline \cW \subset \cW$.
The following lemma shows that $\cW \neq \underline \cW$ when $D>2$ because the former is strictly convex and the latter is not. Hence, we recover that $\underline \cW \subsetneq\cW$, which was observed in \cite{Berestycki2016shape}. Further comparisons between $\underline \cW$ and $\cW$ are made in \Cref{p.c_angle>2,prop:1.18}.

\begin{lemma}\label{l.W_convex}
The set \,$\mathcal{W}$ is strictly convex.  As a consequence, $\underline \cW \subsetneq \cW$ when $D>2$.
\end{lemma}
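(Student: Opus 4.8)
The plan is to work directly from the Lax--Oleinik representation~\eqref{e.Lq3a} and the structure of the optimal paths in \Cref{p.straight_lines}, showing that the boundary of $\cW$ is the graph (in polar coordinates) of a function that is the pointwise infimum of a family of chords coming from affine functions of the running cost, and that this infimum is \emph{strictly} concave. Concretely, since $J(1,\cdot)$ is convex (being a value function of a convex control problem with convex Lagrangians $\Lf$ and $\Lr$; recall \Cref{lem.tH} gives strict convexity of $\tH$, hence of $\Lr$) and $J(1,0,0) = \Lr(0) = -1 < 0$, the sublevel set $\cW = \{J(1,\cdot) \leq 0\}$ is automatically convex. The real content is \emph{strict} convexity: I must rule out line segments in $\partial\cW$.

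First I would show $J(1,\cdot)$ is \emph{strictly} convex on the open half-space $\H$. This follows from the explicit gradient formula~\eqref{e.a0722.11}: $\nabla J(1,x,y) = (q_0,p_0)$ where $(2q_0,2p_0)$ is the (constant) field-velocity of the unique optimal path to $(x,y)$. If $J$ were affine on a segment $[(x_1,y_1),(x_2,y_2)] \subset \H$, the gradient would be constant along it, so all endpoints on that segment would share the same field-velocity $(2q_0,2p_0)$ and the same road-exit data $(z_0,\tau_0)$ — but then~\eqref{e.Lq3b} forces the optimal endpoints to lie along a \emph{single ray} in direction $(2q_0,2p_0)$ emanating from $(z_0,0)$, along which $J(1,\cdot)$ is the strictly convex one-variable function $s \mapsto \tau_0\Lr(z_0/\tau_0) + (1-\tau_0)\Lf\big((s-\tau_0)(2q_0,2p_0)/(1-\tau_0)\big)$ of the arclength parameter (strict convexity of $\Lf(v) = |v|^2/4 - 1$ in the radial direction). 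Contradiction. Hence $J(1,\cdot)$ is strictly convex in $\H$, and so $\{J(1,\cdot) \leq 0\} \cap \H$ has no segments in its boundary interior to $\H$.

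Second, I must handle the part of $\partial\cW$ that meets the road $\{y = 0\}$, i.e.\ near the points $(\pm c_*(\sfrac\pi2), 0)$. Here~\eqref{e.a.240719.1} gives $J(1,x,0) = \Lr(x)$, which is strictly convex in $x$ by strict convexity of $\tH$ (\Cref{lem.tH}); combined with \Cref{l.monotonicity}.\ref{i.radially_increasing} (strict radial monotonicity) and \Cref{l.monotonicity}.\ref{i.grad_J} (rotational monotonicity), a small argument shows that the boundary curve cannot contain a horizontal segment through $(\pm c_*(\sfrac\pi2),0)$ and cannot have a "corner" degenerate enough to admit a supporting segment — the one-sided directional derivatives of $r \mapsto J(1,r\sin\vt,r\cos\vt)$ at $\vt = \pm\sfrac\pi2$ are finite and positive. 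I expect this boundary/corner analysis to be the main obstacle: strict convexity in the interior is soft, but one must confirm that strictness is not lost at the single point where the two pieces of the Lagrangian meet, and that the value function does not flatten there. The key tool is again~\eqref{e.Lq3a}: evaluating it near $\vt = \sfrac\pi2$ with $\tau$ slightly less than $1$ and $z$ slightly less than $x$ gives a strict improvement over the pure-road value $\Lr(x)$ in a way that is strictly convex in the endpoint, ruling out a supporting segment.

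Finally, for the consequence $\underline\cW \subsetneq \cW$ when $D > 2$: by \Cref{c.D_near_2}.\ref{i.speed_bounds1} we already have $\underline\cW \subseteq \cW$, and $\underline\cW$ is \emph{not} strictly convex (it contains the segment of the $x$-axis between $\pm c_*(\sfrac\pi2)$ on its boundary — more precisely its boundary is flat on $|\vt| \leq \underline\vt$, and $\underline\vt < \sfrac\pi2$ exactly because $c_*(\sfrac\pi2) > 2$ when $D > 2$ by \Cref{c.D_near_2}.\ref{i.speedup}), whereas $\cW$ \emph{is} strictly convex by the first part; two convex sets with one containing the other cannot be equal if one is strictly convex and the other is not. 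Hence the inclusion is strict, recovering the result of~\cite{Berestycki2016shape}.
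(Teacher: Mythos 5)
Your proposal takes a genuinely different route from the paper (the paper argues strict \emph{midpoint} convexity of the level set $\{J\le 0\}$ directly, by averaging the two optimal path parameterizations with suitable weights and invoking strict convexity of $\Lf$ and $\Lr$), but your route contains a real error: the claim that $J(1,\cdot)$ is strictly convex on $\H$ is false.

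To see this, fix $q_0>\sfrac{1}{\sqrt{D-1}}$ and let $p_0=p_{q_0}$.  Consider the one-parameter family of endpoints
\be
	(x(\tau),y(\tau)) = \bigl(\tau\,\tH'(q_0) + (1-\tau)\,2q_0,\ (1-\tau)\,2p_0\bigr), \qquad \tau\in[0,1],
\ee
which is a line segment in $\overline\H$ connecting $(2q_0,2p_0)$ to $(\tH'(q_0),0)$.  By \Cref{lem:3.4}.\ref{i.tau_0>0a}, the unique optimal path to $(x(\tau),y(\tau))$ is exactly the one that travels on the road at speed $\tH'(q_0)$ for time $\tau$ and then enters the field at velocity $(2q_0,2p_0)$; consequently
\be
	J\bigl(1,x(\tau),y(\tau)\bigr) = \tau\,\Lr\bigl(\tH'(q_0)\bigr) + (1-\tau)\,\Lf(2q_0,2p_0),
\ee
which is \emph{affine} in $\tau$.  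Along this characteristic segment one has $\nabla J\equiv (q_0,p_0)$, so a constant gradient does \emph{not} lead to a contradiction; your step confusing ``constant $(q_0,p_0)$'' with ``constant $(z_0,\tau_0)$'' is where the argument slips.  (If $(q_0,p_0)$ is fixed, then $(z_0,\tau_0)$ must vary along the segment: $\tau_0 = 1-\sfrac{y}{2p_0}$.)  The formula you wrote for $J$ restricted to the ray, $\Lf\bigl((s-\tau_0)(2q_0,2p_0)/(1-\tau_0)\bigr)$, is also not the cost of the optimal path along that ray.

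Two further, related concerns.  First, your opening assertion that ``$J(1,\cdot)$ is convex, being the value function of a convex control problem'' is not automatic: $\hat L(x,y,v)=\Lf(v)\1_{y>0}+\Lr(v_1)\1_{y=0}$ is not jointly convex in $((x,y),v)$, since $\Lr\le\Lf$ means $\hat L$ drops discontinuously at $y=0$ (it is not even convex in $y$ alone).  Convexity of $J$ does hold, but it needs exactly the kind of hand-crafted path-averaging the paper performs, not an appeal to a general theorem.  Second, since $J$ itself has affine pieces, the strict convexity of $\cW=\{J\le0\}$ cannot follow from strict convexity of $J$; one must instead show that no such affine segment can lie in $\{J=0\}$ (which is what the paper's midpoint argument accomplishes in one stroke).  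Your final paragraph, deducing $\underline\cW\subsetneq\cW$ for $D>2$ from $\underline\cW\subseteq\cW$ and the non-strict-convexity of $\underline\cW$, is correct and matches the paper.
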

\begin{proof}
It is enough to show that, if
\be
	J(1,x_i,y_i) = 0
		\qquad\text{ for } i =1,2,
\ee
then
\be
	J(1,\bar x,\bar y)<0
		\qquad\text{ where } \quad
	(\bar x,\bar y)
		=\left( \frac{x_1+x_2}{2}, \frac{y_1 + y_2}{2}\right).
\ee
Thanks to \Cref{p.straight_lines}, there exists $v_i, q_i, p_i>0$ and $\tau_i \in (0,1)$ such that 
the minimizing path $\gamma_i$ of $J(1,x_i,y_i)$ is given by
\be
\gamma_i(s) = \begin{cases}
s(v_i,0)                                            &\text{ for } s \in [0,\tau_i],\\
(s-\tau_i)(2q_i,2p_i) + \tau_i(v_i,0)     &\text{ for } s \in (\tau_i,1].
\end{cases}
\ee
i.e.
\be
0 = J(1,x_i,y_i) = \int_0^1 \hat{L}\gamma_i(s),\dot\gamma_i(s))\,ds = (1-\tau_i)L(2q_i,2p_i) + \tau_i \Lr(v_i).
\ee
Define a path to $(\bar x, \bar y)$ as follows: let
\be
	\begin{split}
		&\bar \tau = \frac{\tau_1 + \tau_2}{2}
		\qquad
		\bar v= \frac{\tau_1v_1 + \tau_2 v_2}{\tau_1 + \tau_2},
		\\&
		\bar q =\frac{(1-\tau_1)q_1+(1-\tau_2)q_2}{(1-\tau_1)+(1-\tau_2)},
		\quad\text{ and }\quad
		\bar p =\frac{(1-\tau_1)p_1+(1-\tau_2)p_2}{(1-\tau_1)+(1-\tau_2)},  				
	\end{split}
\ee
and, for all $s \in [0,1]$, let
\be\label{e.c052401}
	\gamma(s)
		= \begin{cases}
			s(\bar v,0)
				\qquad &\text{ for }s\in[0,\bar\tau],
			\\
			\tau(\bar v,0) + (s-\bar\tau)(2\bar q,2\bar p)
				\qquad &\text{ for }s\in[\bar\tau,1].
		\end{cases}
\ee
Clearly, $\gamma(1) = (\bar x, \bar y)$.  Computing $J$ directly, we find
\be
	\begin{split}
		J(1,\bar x, \bar y)
			&\leq \int_{\bar \tau}^1 \Lf(2 \bar q, 2\bar p)\,ds + \int_0^{\bar \tau} \Lr(\bar v)\,ds
			\\&= (1-\bar \tau) \Lf(2\bar q, 2\bar p) + \bar \tau \Lr(\bar v)
			\\&
			< (1-\bar \tau) \left[\frac{1-\tau_1}{2(1-\bar \tau) } \Lf(2q_1,2p_1)
				+ \frac{1-\tau_2}{2(1-\bar \tau) } \Lf(2q_2,2p_2) \right]
			\\&\qquad
				+ \bar \tau \left[ \frac{\tau_1}{2\bar\tau} \Lr(v_1)
				+ \frac{\tau_2}{2\bar \tau}\Lr(v_2) \right] = 0.
	\end{split}
\ee
where $\Lf(v_1,v_2) = \frac{|v|^2}{4}-1$, and we used the strict convexity of $\Lf$ and $\Lr$.
\end{proof}

\subsection{The spreading speed in different directions}\label{ss.directions}

We now understand how the shape of $\cW$ depends on the angle $\vt$ from the $y$-axis.  Our main goal is to show that there is a critical angle $\vt_*>0$ such that $c_*(\vt)$, defined in \Cref{c.angular_speed}, equals $2$ (that is, it is unaffected by the presence of the road) if and only if $|\vt| \leq \vt_*$.  Otherwise $c_*(\vt) > 2$; that is, the presence of the road speeds up the invasion.  We state the main result here:
\begin{proposition}\label{prop:1.11}
There exists $\vt_* \in (0,\sfrac\pi2]$ such that
\be\label{e.vt.bd1}
	c_*(\vt) = 2 \quad \text{ for }|\vt|\leq \vt_*
	\quad \text{ and }\quad
	c_*(\vt) >2 \quad \text{ for }\vt_*<|\vt|\leq \sfrac\pi2.
\ee
\begin{enumerate}[(i)]
	\item\label{i.D_leq_2}
	If $D \leq 2$, then $\vt_*  = \sfrac\pi2$.  In other words, $c_*(\vt) = 2$ for all $\vt$ and
		\be
			\mathcal{W}
				= \{(r\sin \vt, r \cos\vt):
					~ 0 \leq r \leq 2t,~ |\vt| \leq \sfrac\pi2\}
                = \overline{B_2(0) \cap \H}.
		\ee
	
	\item\label{i.D_>_2}
	If $D > 2$, then
    \be\label{e.c071905}
        \vt_* \in
			\left[
				\arcsin\Big(\frac{7}{16\sqrt{D-1}}\Big),
				\min\left\{
					\frac{\pi}{2},
					\arcsin\left(\frac{2+\mu}{\sqrt{D}}\right)
					\right\}
				\right).
    \ee
    Moreover,
  \be\label{e.a240722.13}
        \frac{d}{d\vt} c_*(\vt) > 0
        \qquad\text{ for } \vt_* < \vt \leq \sfrac\pi2.
    \ee
\end{enumerate}
\end{proposition}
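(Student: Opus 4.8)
The plan is to run everything through the Lax--Oleinik formula~\eqref{e.Lq3a} of \Cref{p.straight_lines}. For a target $(x,y)$ with $x\ge 0$, write
\be\label{e.sk.1}
	F_{x,y}(z,\tau):=\tau\,\Lr\!\left(\tfrac z\tau\right)+\frac{(x-z)^2+y^2}{4(1-\tau)}-(1-\tau),
\ee
so that $J(1,x,y)=\min\{F_{x,y}(z,\tau):0\le z\le x,\ 0\le\tau\le1\}$ with the convention of \Cref{p.straight_lines}. The starting observation is the equivalence
\be\label{e.sk.2}
	c_*(\vt)=2\quad\Longleftrightarrow\quad J(1,2\sin\vt,2\cos\vt)=0\quad\Longleftrightarrow\quad\min F_{2\sin\vt,2\cos\vt}=0,
\ee
valid since the straight-line path gives $J(1,2\sin\vt,2\cos\vt)\le F_{2\sin\vt,2\cos\vt}(0,0)=\tfrac44-1=0$ while $J(1,\cdot)$ is strictly radially increasing (\Cref{l.monotonicity}.\ref{i.radially_increasing}). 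Combining~\eqref{e.sk.2} with the fact that $\vt\mapsto J(1,r\sin\vt,r\cos\vt)$ is nonincreasing for fixed $r$ (a reformulation of \Cref{l.monotonicity}.\ref{i.grad_J}) and with the one-line computation $c_*(0)=2$ (put $x=0$, which forces $z=0$ in~\eqref{e.Lq3a}, so $J(1,0,y)=\tfrac{y^2}4-1$), I first conclude that $\vt\mapsto c_*(\vt)$ is nondecreasing on $[0,\sfrac\pi2]$. Hence $\vt_*:=\sup\{\vt\in[0,\sfrac\pi2]:c_*(\vt)=2\}$ is well defined and the dichotomy~\eqref{e.vt.bd1} holds.

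The core of the argument is a convexity observation: $(z,\tau)\mapsto F_{x,y}(z,\tau)$ is jointly convex on $\{\tau\in(0,1)\}$, because $\tau\,\Lr(z/\tau)$ is the perspective of the convex function $\Lr$ (convexity by \Cref{lem.tH}), $\tfrac{(x-z)^2+y^2}{4(1-\tau)}$ is the perspective in the variables $(z,1-\tau)$ of the convex function $z\mapsto\tfrac14((x-z)^2+y^2)$, and $-(1-\tau)$ is affine; moreover $F_{x,y}=+\infty$ on the boundary pieces of $[0,x]\times[0,1]$ corresponding to infeasible paths. Consequently $\min F_{x,y}$ equals the corner value $F_{x,y}(0,0)=\tfrac{x^2+y^2}4-1$ if and only if every feasible one-sided directional derivative of $F_{x,y}$ at $(0,0)$ is nonnegative. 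These directions are parametrized by the would-be road velocity $v\ge0$ (direction $(v,1)$), and a short computation using $x^2+y^2=4$ evaluates that derivative to be $\Lr(v)+2-\tfrac{vx}2$. Together with~\eqref{e.sk.2} this gives the clean characterization
\be\label{e.sk.3}
	c_*(\vt)=2\quad\Longleftrightarrow\quad\sin\vt\le\inf_{v>0}\frac{\Lr(v)+2}{v}.
\ee
The infimum is attained at an interior point $v_0>0$ (the ratio blows up as $v\to0^+$ and, since $\Lr(v)\ge\tfrac{v^2}{4D}-1$, as $v\to\infty$), at which the line through $(0,-2)$ and $(v_0,\Lr(v_0))$ is tangent to the graph of $\Lr$; Legendre duality ($\Lr=\tH^*$) then identifies $\inf_{v>0}\tfrac{\Lr(v)+2}v=\bar q$, where $\bar q>0$ is the unique solution of $\tH(\bar q)=2$. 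Thus $\vt_*=\arcsin(\min\{1,\bar q\})$.

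From this identity the remaining assertions are bookkeeping with $\tH$. If $D\le2$ then $\tH(q)=q^2+1$ for $|q|\le\sfrac1{\sqrt{D-1}}$ and $\sfrac1{\sqrt{D-1}}\ge1$, so $\bar q=1$, hence $\vt_*=\sfrac\pi2$; since $\cW=\{r(\sin\vt,\cos\vt):|\vt|\le\sfrac\pi2,\ 0\le r\le c_*(\vt)\}$ and $c_*\equiv2$, this set is exactly $\overline{B_2(0)\cap\H}$, proving~\ref{i.D_leq_2}. If $D>2$ then $\tH(\sfrac1{\sqrt{D-1}})=\tfrac D{D-1}<2$ and $\tH(1)=2+p_1^2>2$, so $\sfrac1{\sqrt{D-1}}<\bar q<1$ and $\vt_*=\arcsin\bar q\in(0,\sfrac\pi2)$. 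On $q\ge\sfrac1{\sqrt{D-1}}$ one has $\tH(q)=Dq^2-\tfrac{\mu p_q}{\kappa\nu+p_q}$, whence $\beta(Dq^2+1)\le\tH(q)\le Dq^2$ with $\beta=(2+\mu)^{-1}$ (the lower bound being the estimate from the proof of \Cref{c.D_near_2}.\ref{i.speed_bounds2}); evaluating at $\bar q$ gives $\tfrac2D\le\bar q^2\le\tfrac{3+2\mu}D<\tfrac{(2+\mu)^2}D$, so $\sqrt{2/D}\le\bar q<\tfrac{2+\mu}{\sqrt D}$, and since $\sqrt{2/D}>\tfrac7{16\sqrt{D-1}}$ for $D>2$ (equivalently $512(D-1)>49D$) we obtain the bracketing~\eqref{e.c071905}. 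For~\eqref{e.a240722.13}, fix $\vt\in(\vt_*,\sfrac\pi2)$ and set $(x,y)=c_*(\vt)(\sin\vt,\cos\vt)\in\H$; then $J(1,x,y)=0$ and $c_*(\vt)>2$, so the optimal path of~\eqref{e.Jcon} to $(x,y)$ cannot be a straight line (a straight line would have speed $c_*(\vt)>2$, hence positive cost $\tfrac{c_*(\vt)^2}4-1>0=J(1,x,y)$); thus $\tau_0>0$ in~\eqref{e.Lq3b} and \Cref{l.monotonicity}.\ref{i.grad_J} applies in its strict form, $(-y,x)\cdot\nabla J(1,x,y)>0$. Since $\nabla J\in C_{\rm loc}((0,\infty)\times\H)$ and $(x,y)\cdot\nabla J(1,x,y)>0$, the implicit function theorem applied to $J(1,r\sin\vt,r\cos\vt)=0$ yields $c_*\in C^1$ near $\vt$ with $c_*'(\vt)=\tfrac{(-y,x)\cdot\nabla J}{(\sin\vt,\cos\vt)\cdot\nabla J}>0$; the endpoint $\vt=\sfrac\pi2$ follows from the same formula with $\nabla J$ replaced by its boundary value on the road determined by \Cref{p.roadspeed}, which is strictly positive when $D>2$.

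I expect the main obstacle to be the convexity step together with the corner analysis: one must argue carefully that $\min F_{x,y}$ is attained (using lower semicontinuity and that $F_{x,y}=+\infty$ on the infeasible part of the boundary), identify precisely the feasible directions at the corner $(0,0)$, and then carry out the Legendre-duality identification of $\inf_{v>0}\tfrac{\Lr(v)+2}v$. A more hands-on route to~\eqref{e.sk.3}---splitting the Lax--Oleinik minimization according to whether the road velocity $z/\tau$ is at most $\sfrac2{\sqrt{D-1}}$ or larger, and using in the second regime the tangent-line bound $\Lr(v)\ge\tfrac v{\sqrt{D-1}}-\tfrac D{D-1}$ together with $\tfrac{a^2}\tau+\tfrac{b^2}{1-\tau}\ge(a+b)^2$ and the triangle inequality---is the computation that most directly produces the explicit constant $\tfrac7{16\sqrt{D-1}}$, and it is also the natural way to prove the ``$J>0$ outside $B_2$'' statement behind~\ref{i.D_leq_2} without invoking the identity $\vt_*=\arcsin(\min\{1,\bar q\})$.
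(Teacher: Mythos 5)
Your proof is correct but takes a genuinely different route from the paper's. The paper establishes both parts directly from the first-order optimality conditions of \Cref{lem:3.4}, with case analysis on whether $\tau_0=0$: part~(i) is a pointwise sign comparison of $J$ against $\tfrac{x^2+y^2}{4}-1$, and in part~(ii) the lower bound on $\vt_*$ comes from tracing the geometry of the optimal path (bounding the $x$-coordinate of the ``liftoff'' point), while the upper bound comes from $\vt_*<\underline\vt$ and the asymptotics of $c_*(\sfrac\pi2)$. You instead observe that the Lax--Oleinik functional $F_{x,y}(z,\tau)$ is jointly convex (a perspective-function argument), reduce the statement ``$c_*(\vt)=2$'' to a first-order condition at the corner $(z,\tau)=(0,0)$, and then apply Legendre duality to obtain the clean closed form $\vt_*=\arcsin(\min\{1,\bar q\})$, where $\bar q>0$ is the unique positive root of $\tH(\bar q)=2$. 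This identity is not stated in the paper and is the real dividend of your approach: it handles parts~(i) and~(ii) uniformly --- the case $D\le 2$ is simply $\bar q=1$ --- and yields the sharper lower bound $\sin\vt_*=\bar q\ge\sqrt{2/D}$, which improves the paper's constant $\tfrac{7}{16\sqrt{D-1}}$ for all $D>2$. For the strict monotonicity~\eqref{e.a240722.13} your argument and the paper's coincide (implicit function theorem together with \Cref{l.monotonicity}.\ref{i.grad_J}, after ruling out $\tau_0=0$). One small imprecision in the convexity step: $\tfrac{(x-z)^2+y^2}{4(1-\tau)}$ is not the perspective in $(z,1-\tau)$ of $z\mapsto\tfrac14\big((x-z)^2+y^2\big)$ as written; it is the perspective of $(u,v)\mapsto\tfrac14(u^2+v^2)$ precomposed with the affine map $(z,\tau)\mapsto(x-z,y,1-\tau)$, which still delivers the joint convexity you need, so the conclusion stands.
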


Let us note that the result above was first proved in \cite{Berestycki2016shape} via fine construction of super/subsolutions for the cooperative system \eqref{e.unscaled_berestycki}.  Here, we give an alternative derivation of the result entirely via the control formulation~\eqref{e.Jcon} and \Cref{t.w=J+}.  We begin with a technical lemma whose proof we postpone until \Cref{s.technical}.
\begin{lemma}\label{lem:3.4}
Fix $(x,y) \in \H$ and let $\tau_0, p_0,q_0,z_0$ be given by~\eqref{e.Lq3b} for the point $(1,x,y)$.  Suppose that $\tau_0>0$.
	\begin{enumerate}[(i)]
    
		\item\label{i.tau_0>0a}
		We have
		\be
			q_0 = \Lr'\left( \frac{z_0}{\tau_0}\right),
            \quad
            \frac{z_0}{\tau_0} = \tH'(q_0),
            \quad\text{ and }\quad
            p_0 = p_{q_0}.
		\ee
        Moreover, $q_0 > \tfrac{1}{\sqrt{D-1}}$ if $y>0$.
		\item\label{i.tau_0>0b}
		The action takes the value
		\be
			J(s,\gamma(s))
				= (s-\tau_0)(\tH(q_0) -2) + \tau_0 \Lr\left( \tH'(q_0)\right) \quad \text{ for }s \in [\tau_0,t].
		\ee

	\end{enumerate}
\end{lemma}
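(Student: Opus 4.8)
The plan is to read \Cref{lem:3.4} off the Lax--Oleinik formula~\eqref{e.Lq3a} from \Cref{p.straight_lines} together with first-order optimality and the Legendre duality between $\tH$ and $\Lr$ (and between $\Hf$ and $\Lf$). Using the symmetry $J(t,x,y)=J(t,-x,y)$ I would first reduce to $x\ge 0$, so that $(z_0,\tau_0)$ is the minimizer of
\be
    F(z,\tau)
        = (1-\tau)\,\Lf\!\left(\tfrac{x-z}{1-\tau},\tfrac{y}{1-\tau}\right)
            + \tau\,\Lr\!\left(\tfrac{z}{\tau}\right),
    \qquad 0\le z\le x,\ 0\le \tau\le 1 .
\ee
Since $(x,y)\in\H$ we have $y>0$, which already forces $\tau_0<1$ (an optimal path cannot end on the road).

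The first real step — and what I expect to be the only delicate point — is to show that under the hypothesis $\tau_0>0$ the minimizer is interior, i.e. $0<z_0<x$. If $\tau_0>0$ but $z_0=0$, the path in~\eqref{e.Lq3b} has cost $-1+\tfrac{x^2+y^2}{4(1-\tau_0)}$, which is \emph{strictly} larger than the cost $\tfrac{x^2+y^2}{4}-1$ of the straight segment $s\mapsto s(x,y)$ (here $x^2+y^2>0$ since $y>0$); this contradicts the uniqueness of the minimizing path in \Cref{p.straight_lines}, so $z_0>0$ and in particular $x\ge z_0>0$. If instead $z_0=x$ (so $q_0=0$), I would perturb $z=x-\delta$ with $\tau=\tau_0$ fixed: the change in $F$ is $-\Lr'(x/\tau_0)\,\delta+O(\delta^2)$, and since $\tH$ is strictly convex and coercive (\Cref{lem.tH}), $\Lr$ is even, strictly convex, and minimized at $0$, so $\Lr'(x/\tau_0)>0$, again contradicting optimality. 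Hence $(z_0,\tau_0)$ is an interior critical point of the smooth function $F$.

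Next I would read off part~\ref{i.tau_0>0a} from the Euler--Lagrange equations for $F$. Differentiating in $z$ and using $\partial_{v_1}\Lf=v_1/2$ with $\dot\gamma\equiv(2q_0,2p_0)$ on $[\tau_0,1]$ gives $0=\partial_z F=-q_0+\Lr'(z_0/\tau_0)$, hence $q_0=\Lr'(z_0/\tau_0)$; by Legendre duality $\Lr'=(\tH')^{-1}$, so $z_0/\tau_0=\tH'(q_0)$. Differentiating in $\tau$ and using the standard identities $\tfrac{d}{d\tau}\big[(1-\tau)\Lf(\tfrac{\xi}{1-\tau})\big]=\Hf(\nabla\Lf(\tfrac{\xi}{1-\tau}))$ and $\tfrac{d}{d\tau}\big[\tau\Lr(\tfrac{\zeta}{\tau})\big]=-\tH(\Lr'(\tfrac{\zeta}{\tau}))$ (both instances of $v\cdot\nabla L(v)-L(v)=H(\nabla L(v))$) yields
\be
    0=\partial_\tau F(z_0,\tau_0)=\Hf(q_0,p_0)-\tH(q_0)=q_0^2+p_0^2+1-\tH(q_0).
\ee
Comparing with the explicit form~\eqref{e.tH} gives $p_0^2=p_{q_0}^2$, and since $y>0$ forces $p_0>0$ we conclude $p_0=p_{q_0}$; then $p_{q_0}>0$ together with~\eqref{e.pq} forces $q_0^2>\tfrac1{D-1}$, i.e. $q_0>\tfrac1{\sqrt{D-1}}$ (note $q_0=\Lr'(z_0/\tau_0)>0$).

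Finally, for part~\ref{i.tau_0>0b} I would invoke the dynamic programming principle: for $s\in[\tau_0,1]$ the restriction $\gamma|_{[0,s]}$ is itself optimal for $(s,\gamma(s))$ — otherwise a cheaper sub-path concatenated with $\gamma|_{[s,1]}$ would beat $\gamma$ — so $J(s,\gamma(s))=\tau_0\Lr(z_0/\tau_0)+(s-\tau_0)\Lf(2q_0,2p_0)$. Using $\Lf(2q_0,2p_0)=q_0^2+p_0^2-1=\tH(q_0)-2$ from the $\tau$-condition and $z_0/\tau_0=\tH'(q_0)$ from the $z$-condition, this is exactly the claimed formula $J(s,\gamma(s))=(s-\tau_0)(\tH(q_0)-2)+\tau_0\Lr(\tH'(q_0))$. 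Apart from the boundary analysis ruling out $z_0\in\{0,x\}$ — the one place where the hypotheses $\tau_0>0$, $(x,y)\in\H$, uniqueness from \Cref{p.straight_lines}, and the convexity in \Cref{lem.tH} genuinely enter — everything is a routine application of convex duality and the explicit expressions~\eqref{e.tH}--\eqref{e.pq}.
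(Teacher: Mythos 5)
Your proof is correct and follows essentially the same route as the paper: you derive the Euler--Lagrange optimality conditions in $(z,\tau)$ for the Lax--Oleinik formula~\eqref{e.Lq3a}, translate them via the Legendre duality between $\Hf$ and $\Lf$ (respectively $\tH$ and $\Lr$) to obtain $q_0 = \Lr'(z_0/\tau_0)$, $z_0/\tau_0 = \tH'(q_0)$, and $\Hf(q_0,p_0)=\tH(q_0)$, hence $p_0 = p_{q_0}$, and then invoke dynamic programming for part~(ii). The one place you go beyond the paper is in explicitly ruling out the boundary cases $z_0\in\{0,x\}$ before differentiating (the paper simply posits $z_0\in(0,x)$); that extra care is welcome but does not change the method, and it is accurate as written apart from a minor mislabel --- the $z_0=0$ argument shows non-optimality rather than contradicting uniqueness \emph{per se}.
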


Next, we state and prove two short propositions that let us distinguish between $\underline \cW$ and $\cW$ more precisely.  The first of these, showing that $\vt_*<\underline \vt$ (see \Cref{fig:H2} for an illustration), is used in the proof of \Cref{prop:1.11}.
\begin{proposition}\label{p.c_angle>2}
We have that $\vt_* < \underline \vt$.  Moreover:
\be\label{e.c072202}
	c_*(\vt)
		>\begin{cases}
			2 &\text{ for }|\vt| \in (\vt_*,\underline\vt),\\
			\frac{2}{\cos(\vt - \underline\vt)} &\text{ for }|\vt| \in [\underline \vt, \sfrac\pi2).
		\end{cases}
\ee
\end{proposition}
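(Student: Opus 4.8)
Throughout we may assume $D>2$; for $D\le 2$ one has $\cW=\underline\cW=\overline{B_2(0)}\cap\overline\H$ and $\vt_*=\underline\vt=\sfrac\pi2$, so both assertions are vacuous. Thus, by \Cref{c.D_near_2}.\ref{i.speedup}, $c_*(\sfrac\pi2)>2$ and $\underline\vt=\arcsin(2/c_*(\sfrac\pi2))\in(0,\sfrac\pi2)$. The plan is to deduce everything from two facts already available: the inclusion $\underline\cW\subseteq\cW$, which follows from \Cref{c.D_near_2}.\ref{i.speed_bounds1} together with $\overline{B_2(0)}\cap\overline\H\subseteq\cW$ and $(c_*(\sfrac\pi2),0)\in\cW$ (the latter from \Cref{p.roadspeed}), and the strict convexity of $\cW$ (\Cref{l.W_convex}). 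Since $J(1,\cdot)$ is continuous and strictly radially increasing (\Cref{l.monotonicity}.\ref{i.radially_increasing}) with $J(1,0,0)=-1$, the set $\cW$ is a closed convex subset of $\overline\H$ with $\Int\cW=\{J(1,\cdot)<0\}$ nonempty, and for each $\vt$ we have $c_*(\vt)>r\iff J(1,r\sin\vt,r\cos\vt)<0\iff r(\sin\vt,\cos\vt)\in\Int\cW$. The inclusion $\underline\cW\subseteq\cW$ already yields the non-strict bounds $c_*(\vt)\ge 2$ for $|\vt|\le\underline\vt$ and $c_*(\vt)\ge 2\sec(|\vt|-\underline\vt)$ for $|\vt|\in[\underline\vt,\sfrac\pi2]$, and in particular $\vt_*\le\underline\vt$; it remains to make these strict and to rule out $\vt_*=\underline\vt$.

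The object that drives the argument is the segment $S$ joining $P:=2(\sin\underline\vt,\cos\underline\vt)$ to $(c_*(\sfrac\pi2),0)$. Using $\sin\underline\vt=2/c_*(\sfrac\pi2)$ (see \eqref{e.underline_vt}), a one-line computation gives $S\perp OP$ at $P$; hence $S$ lies on the line $T$ tangent to $\partial B_2(0)$ at $P$, $S$ is exactly the flat face of $\partial\underline\cW$ corresponding to directions $|\vt|\in[\underline\vt,\sfrac\pi2]$, and the ray in direction $\vt$ meets $T$ at radius $2\sec(\vt-\underline\vt)$. Both endpoints of $S$ lie in $\cW$, namely $P\in\overline{B_2(0)}\cap\overline\H\subseteq\cW$ and $(c_*(\sfrac\pi2),0)\in\cW$, so $S\subseteq\cW$ by convexity.

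I would then argue by contradiction in each regime, in each case producing a nondegenerate segment contained in $\partial\cW$ and invoking strict convexity. First, suppose $c_*(\vt_0)=2\sec(\vt_0-\underline\vt)$ for some $\vt_0\in(\underline\vt,\sfrac\pi2)$. Then the boundary point $R_0:=c_*(\vt_0)(\sin\vt_0,\cos\vt_0)$ of $\cW$ coincides with the boundary point of $\underline\cW$ in direction $\vt_0$, which lies in the relative interior of $S$; any supporting line $\ell$ of $\cW$ at $R_0$ must then contain all of $S$, since the chord $S\subseteq\cW$ of the convex set $\cW$ has $R_0$ in its relative interior and hence cannot leave the closed half-plane bounded by $\ell$. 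Thus $S\subseteq\cW\cap\ell\subseteq\partial\cW$, a contradiction. Second, suppose $\vt_*=\underline\vt$. By the defining property of $\vt_*$ this forces $c_*(\underline\vt)=2$, i.e.\ $P\in\partial\cW$. A supporting line $\ell$ of $\cW$ at $P$ must equal $T$: otherwise $\ell$ would be a secant of $\overline{B_2(0)}$ through $P$, placing points of $\overline{B_2(0)}\subseteq\cW$ strictly on both sides of $\ell$, which is impossible. Since $S\subseteq T=\ell$ and $S\subseteq\cW$, again $S\subseteq\partial\cW$, a contradiction; hence $\vt_*<\underline\vt$. The remaining claims now follow: $(\vt_*,\underline\vt)$ is a nonempty interval on which $c_*(\vt)>2$ by the threshold property of $\vt_*$, and for $|\vt|\in[\underline\vt,\sfrac\pi2)$ the endpoint $\vt=\underline\vt$ gives $c_*(\underline\vt)>2=2\sec 0$ because $\underline\vt>\vt_*$, while $\vt\in(\underline\vt,\sfrac\pi2)$ was handled above.

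I expect the main difficulty to be organizational rather than conceptual: one must carefully check that $S$ is simultaneously the tangent segment at $P$ and the flat face of $\partial\underline\cW$, that $\cW$ is a closed convex body with $\Int\cW=\{J(1,\cdot)<0\}$, and --- the subtlest point --- handle the endpoint $P$ of $S$, where the supporting line of $\cW$ cannot be pinned down by interiority in $S$ and one must instead use $\overline{B_2(0)}\subseteq\cW$ to force $\ell=T$. One logical remark: the argument uses ``$c_*(\vt)>2$ for all $|\vt|>\vt_*$'', i.e.\ the first assertion of \Cref{prop:1.11}; this is not circular, as that assertion is to be established independently of, and prior to, the present proposition.
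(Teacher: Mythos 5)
Your proof is correct and follows essentially the same route as the paper's: establish $\underline\cW\subseteq\cW$, then derive a contradiction from the strict convexity of $\cW$ (\Cref{l.W_convex}) by exhibiting a nondegenerate segment of $\cW$ lying on a supporting line. The paper only writes out the argument for $c_*(\underline\vt)>2$ (and hence $\vt_*<\underline\vt$) and states that the second inequality in \eqref{e.c072202} ``follows by an elementary geometry argument''; your treatment fills in precisely that elementary geometry, showing that the tangent segment $S$ from $P=2(\sin\underline\vt,\cos\underline\vt)$ to $(c_*(\sfrac\pi2),0)$ is the flat face of $\partial\underline\cW$ and that equality at an interior direction would force $S\subseteq\partial\cW$. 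One tiny imprecision: for $D\le 2$ you say both assertions are vacuous, but in that case $\vt_*=\underline\vt=\sfrac\pi2$, so the first assertion ``$\vt_*<\underline\vt$'' is not vacuous---it is simply false (an equality); the proposition is implicitly understood under $D>2$, which is also the only regime the paper's proof addresses. Your closing remark on avoiding circularity with \Cref{prop:1.11} is accurate and worth keeping: the monotonicity of $c_*$ and the definition of $\vt_*$ as a threshold are obtained in \Cref{prop:1.11} before the bound $\vt_*<\underline\vt$ is invoked there.
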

\begin{proof}
For the first claim, we need only show that $c*(\underline \vt) > 2$ by the strict convexity of $\cW$. To do so, consider the tangent line to $\underline\cW$ at $2(\sin\underline\vt, \cos\underline\vt)$. Suppose for contradiction that $c_*(\underline \vt) = 2$, then this line is also tangent to $\cW$ at $2(\sin\vt, \cos\vt)$ and it must not intersect $\cW$ at any other points. This contradicts the fact that this tangent line of $\underline\cW$  contains both the point $2(\sin\vt, \cos\vt)$ and $(c^*(\sfrac\pi2),0)$ (see \Cref{fig:H2}). 
This completes the proof that $c_*(\vt) > 2$ for $\vt \in (\vt_*,\underline \vt)$.

The second claim follows by an elementary geometry argument and is, thus, omitted.
\end{proof}

The second provides a complementary bound to~\eqref{e.c072202}.

\begin{proposition}\label{prop:1.18}
    For each $|\vt| \in [\vt_*, \sfrac\pi2]$, 
    \be\label{e.c053001}
	     c_*(\vt) \leq  \frac{2}{\cos(\vt - \vt_*)}
	     	\quad \text{ and }\quad
	     \lim_{D \to \infty} c_*(\vt) = \frac{2}{\cos \vt}.
    \ee
\end{proposition}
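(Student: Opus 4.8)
The plan is to exploit the Lax–Oleinik representation \eqref{e.Lq3a} together with the structure of optimal paths from \Cref{p.straight_lines}: an optimal path spends time $\tau_0$ on the road moving at a constant speed and then time $1-\tau_0$ moving in a straight line at speed at most $2$ through the field (when $y>0$ the field speed is exactly $|2(q_0,p_0)|$, which is at least $2$ since $p_0 = p_{q_0}\geq 0$ and $q_0 > 1/\sqrt{D-1}$ by \Cref{lem:3.4}.\ref{i.tau_0>0a}). For the upper bound in \eqref{e.c053001}, first I would establish that at the critical angle $\vt_*$ the optimal path to the boundary of $\cW$ consists of a purely road segment of length $c_*(\sfrac\pi2)$ followed by a field segment of length $2$ hitting the boundary tangentially — this is essentially the statement that $\vt_*$ is the last angle for which the road is used with $\tau_0$ "maximal" in the relevant sense, and it gives a point $(x_*,y_*) = (c_*(\sfrac\pi2)\tau + 2(1-\tau)\sin\vt_*,\, 2(1-\tau)\cos\vt_*) \in \partial\cW$. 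Then for a general direction $\vt \in [\vt_*,\sfrac\pi2]$ I would construct a competitor path that follows the road for the same time $\tau$ and then goes straight in the field at speed $2$ but now aimed so as to land in direction $\vt$; by \Cref{c.D_near_2}.\ref{i.speed_bounds1} the endpoint lies in $\cW$, and a direct trigonometric computation of the distance of that endpoint from the origin in direction $\vt$ gives exactly $2/\cos(\vt-\vt_*)$ in the limiting/extremal case, hence $c_*(\vt) \geq$ (wait — we want an upper bound, so instead) the \emph{boundary} point in direction $\vt$ of the region swept out by such paths sits at radius $2/\cos(\vt-\vt_*)$, and since $\cW$ is the sublevel set $\{J\le 0\}$ one argues that any point strictly beyond this radius would force, via \eqref{e.Lq3a} and the strict convexity of $\Lf,\Lr$ (\Cref{lem.tH}), a strictly positive action. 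More carefully: given $J(1,c_*(\vt)\sin\vt, c_*(\vt)\cos\vt)=0$ with optimal data $(\tau_0,z_0,q_0,p_0)$, I would bound $c_*(\vt)$ by writing $c_*(\vt)\le z_0 + 2(1-\tau_0)$ along the appropriate projection and then optimize, using that the road contribution $\tau_0\Lr(z_0/\tau_0)\le 0$ forces $z_0/\tau_0 \le c_*(\sfrac\pi2)$, which caps $z_0 \le c_*(\sfrac\pi2)\tau_0$ and, combined with the field geometry, yields $2/\cos(\vt-\vt_*)$ after elementary trigonometry; the role of $\vt_*$ enters because $\underline\vt$ (defined via $c_*(\sfrac\pi2)$) and $\vt_*$ are related through \Cref{p.c_angle>2}.

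For the limit as $D\to\infty$, the key observation is that $c_*(\sfrac\pi2)\to\infty$ like $\sqrt D$ by \Cref{c.D_near_2}.\ref{i.speed_bounds2}, so $\underline\vt = \arcsin(2/c_*(\sfrac\pi2))\to 0$, and by \Cref{p.c_angle>2} we have $\vt_* < \underline\vt \to 0$; thus the threshold angle collapses to $0$. I would then show that for \emph{fixed} $\vt \in (0,\sfrac\pi2]$ and $D$ large, an optimal path spends essentially all its time on the road: plugging the competitor "time $1-\delta$ on the road at speed $c_*(\sfrac\pi2)$, then a short straight segment of length $2\delta$ in direction $\vt$" into \eqref{e.Lq3a} shows the endpoint has radius $\approx c_*(\sfrac\pi2)(1-\delta)\sin\vt + 2\delta\,(\text{correction})$, which by choosing $\delta$ appropriately and letting $D\to\infty$ is consistent with $J=0$ only when the radius is $2/\cos\vt$. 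Concretely, as $D\to\infty$ one has $c_*(\sfrac\pi2)\to\infty$, and the requirement that the path actually reach a point in direction $\vt$ (not along the road) forces a nonvanishing field fraction; a matched-asymptotics / rescaling argument (set $q_0 = \theta/\sqrt{D-1}$ as in the proof of \Cref{c.large_D}, use \Cref{lem:3.4}) reduces the optimization to computing $\lim_{D\to\infty} c_*(\vt) = \min$ over the field-travel fraction, and the geometry of "travel far on the road then turn by angle $\vt$ and go straight a bounded distance $2$" yields precisely $2/\cos\vt$ (the road contributes the $\sin\vt$-extent essentially for free, and the $\cos\vt$ in the denominator comes from needing the perpendicular displacement $2$ to be produced by a field segment, whose length along direction $\vt$ is then $2/\cos\vt$ in the limit — more precisely the limiting Wulff shape in direction $\vt\neq 0$ degenerates onto the line $\{y=2(1-\tau), x\ \text{free}\}$ pushed to infinity, whose radial extent in direction $\vt$ is $2/\cos\vt$). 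I expect the upper bound and the $D\to\infty$ limit can be packaged together: both follow from the identity, provable from \eqref{e.Lq3a}, that
\be
	c_*(\vt) = \max\Big\{ c_*(\sfrac\pi2)\tau_0 + 2(1-\tau_0)\sin\vt' :\ \tau_0\in[0,1],\ \text{and geometric admissibility of }\vt'\Big\},
\ee
optimized subject to hitting direction $\vt$, and then the trigonometry is elementary.

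The main obstacle I anticipate is the passage to the limit $D\to\infty$: one must show not merely that $\vt_*\to 0$ (which is immediate from \Cref{c.D_near_2}.\ref{i.speed_bounds2} and \Cref{p.c_angle>2}) but that $c_*(\vt)$ converges to the claimed value \emph{uniformly enough} that the limit can be identified, and this requires controlling the optimal fraction $\tau_0 = \tau_0(D,\vt)$ as $D\to\infty$ — in particular ruling out that $\tau_0 \to 1$ too fast (which would make the field segment vanish and push $c_*(\vt)$ toward $c_*(\sfrac\pi2)\sin\vt \to \infty$, contradicting the claim) or $\tau_0\to 0$ (which would give $c_*(\vt)\to 2$). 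The resolution is a careful rescaling: substitute $q_0 = \theta/\sqrt{D-1}$, use \Cref{lem:3.4}.\ref{i.tau_0>0b} to write the action as $(1-\tau_0)(\tH(q_0)-2) + \tau_0\Lr(\tH'(q_0))$, set this to zero, and extract the leading-order balance as $D\to\infty$; the algebra here is the same flavor as in \Cref{c.large_D} but with the angular constraint $\tan\vt = y/x$ imposed, and the elementary but slightly delicate trigonometry that converts the optimal $(\tau_0,q_0,p_0)$ into the radius $c_*(\vt)$ is where the bulk of the work lies. The upper bound \eqref{e.c053001} for finite $D$ is comparatively routine once the characterization of $\vt_*$ from \Cref{p.c_angle>2} and the geometric picture in \Cref{fig:H2} are in hand.
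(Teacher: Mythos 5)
Your upper bound argument contains a genuine error. You assert that ``the road contribution $\tau_0 \Lr(z_0/\tau_0) \le 0$ forces $z_0/\tau_0 \le c_*(\sfrac\pi2)$,'' and use $z_0 \le c_*(\sfrac\pi2)\tau_0$ to cap the path length. But this inequality is \emph{reversed}: by \Cref{p.no_Huygens}, for a boundary point with $\tau_0 \in (0,1)$ the optimal path moves at road speed $\crr = z_0/\tau_0 > c_*(\sfrac\pi2)$ and correspondingly $\Lr(\crr) > 0$. So your bound on $z_0$ fails, and the projection estimate $c_*(\vt) \le z_0 + 2(1-\tau_0)$ (which is fine by the triangle inequality and $|\cf| < 2$) cannot be optimized the way you propose. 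Any path-based derivation of the upper bound must contend with the fact that optimal paths to $\partial\cW$ are \emph{faster} than $c_*(\sfrac\pi2)$ on the road, not slower.

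The paper's argument bypasses paths entirely and uses only the strict convexity of $\cW$ (\Cref{l.W_convex}): since $B_2(0) \subset \cW$ (the $\tau=0$ case of \Cref{c.D_near_2}.\ref{i.speed_bounds1}) and $2(\sin\vt_*, \cos\vt_*) \in \partial\cW \cap \partial B_2(0)$, the tangent line to the circle at that point, namely $(x,y)\cdot(\sin\vt_*,\cos\vt_*) = 2$, is the unique supporting hyperplane to $\cW$ there. Hence $\cW \subset \{(x,y)\cdot(\sin\vt_*,\cos\vt_*) \le 2\}$, and evaluating at $c_*(\vt)(\sin\vt,\cos\vt) \in \partial\cW$ gives $c_*(\vt)\cos(\vt - \vt_*) \le 2$ directly. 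For the $D\to\infty$ limit, your proposed rescaling $q_0 = \theta/\sqrt{D-1}$ is far more machinery than necessary: the paper simply sandwiches, combining the upper bound $c_*(\vt) \le 2/\cos(\vt-\vt_*)$ with the lower bound $c_*(\vt) > 2/\cos(\vt-\underline\vt)$ from \Cref{p.c_angle>2}, and observes that both converge to $2/\cos\vt$ because $0 \le \vt_* \le \underline\vt = \arcsin(2/c_*(\sfrac\pi2)) \to 0$ by \Cref{c.large_D}. You correctly identified that $\vt_* \to 0$, but did not notice that this, together with the two trigonometric bounds, already closes the argument without any control of the optimal $(\tau_0, q_0)$ as $D\to\infty$.
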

\begin{proof}
	The first inequality follows by trigonometry and using the fact that $\cW$ lies below the tangent line to $\partial \cW$ at $2(\sin \vt_*, \cos \vt_*)$; that is,
	\be
		\cW \subset \{(x,y) \in \overline \H : (x,y) \cdot (\sin \vt_*, \cos \vt_*) \leq 2\}.
	\ee
	To see the second inequality, we use that $0 \leq \vt_* \leq \underline \vt$ and, by~\eqref{e.underline_vt} and \Cref{c.large_D}, $\underline \vt(D) \to 0$ as $D\to\infty$.  It is then a direct consequence of the first inequality in~\eqref{e.c053001} and~\Cref{p.c_angle>2}.
\end{proof}

We are now in a position to prove \Cref{prop:1.11}.
\begin{proof}[Proof of \Cref{prop:1.11}.\ref{i.D_leq_2}]
Suppose $D\leq 2$.  Then the claim is finished if we show that
\begin{equation}\label{e.ss.61}
	J(1,x,y) \leq \frac{x^2+y^2}{4}-1
		\quad \text{ for }x^2 + y^2 \leq 4
	\quad \text{ and }\quad
	J(1,x,y) >0
		\quad \text{ for }x^2 + y^2 > 4.
\end{equation}
We assume without loss of generality that $y>0$.  Indeed, if $y=0$, it follows by the obvious continuity of $J$ that $J(1,x,0) \leq \sfrac{x^2}{4} - 1$ for $x\leq 2$ and $J(1,x,0) \geq 0$ for $x\geq 2$.  Using then \Cref{l.monotonicity}.\ref{i.radially_increasing}, we deduce that $J(1,x,0) >0$ for $x > 2$.  We assume without loss of generality that $x\geq 0$ because $J$ is clearly even in $x$.

We first consider the upper bound in~\eqref{e.ss.61}.  If we take the path $\gamma(s) = s(x,y),$ then
\be
	J(1,x,y)
		\leq \frac{x^2 + y^2}{4} -1.
\ee
Here we used that $y>0$ so that only the $\Lf$ appears in the computation of $J$.

We now consider a lower bound when $x^2 + y^2 > 4$.  If $\tau_0 = 0$, then we immediately deduce that $\gamma(s) = (\sfrac{s}{t})(x,y)$ from \Cref{p.straight_lines}, and it follows that
\be
	J(1,x,y)
		= \frac{x^2+y^2}{4} - 1
		> 0.
\ee
Let us, then, focus on the case
\be
	\tau_0 >0.
\ee

Before we begin, let us recall that, by~\eqref{e.tH}-\eqref{e.pq},
\begin{equation}\label{e.ss.62}
    \tH(q)
    	= q^2 + 1 \quad \text{ for } q \in \left[0,\frac{1}{\sqrt{D-1}}\right]
\quad \text{ and }\quad 
	\Lr(v)
		= \frac{v^2}{4}-1  \quad \text{ for }v \in \left[0,\frac{2}{\sqrt{D-1}}\right].
\end{equation}
Moreover, both $\tH$ and $\Lr$ are strictly convex (see~\Cref{lem.tH}) and increasing on $(0,\infty)$.

In view of \Cref{lem:3.4}.\ref{i.tau_0>0a}, the fact that $\tau_0>0$ implies that $p_0 = p_{q_0}$.  On the other hand, $p_0 > 0$ because $y>0$ (cf.~\eqref{e.Lq3b}, recalling that $\gamma(1) = y$).  It follows from~\eqref{e.pq} that $q_0 > \sfrac{1}{\sqrt{D-1}}$.  Then using 
the strict convexity of $\tH$ (\Cref{lem.tH}), we have
\be
	\tH'(q_0)
		\geq  2q_0
		> \frac{2}{\sqrt{D-1}}.
\ee
Now using the monotonicity of $\Lr$, we see that
\be
	\Lr(\tH'(q_0))
		> \Lr(\sfrac{2}{\sqrt{D-1}})
		= \frac{1}{D-1} - 1
		\geq 0,
\ee
where we used \eqref{e.ss.62}.
Hence, using \Cref{lem:3.4}.\ref{i.tau_0>0b} and also that $\tH$ is increasing,
\be
	\begin{split}
		J(1,x,y)
			&= (1-\tau_0)\left( \tH(q_0) - 2\right)
				+ \tau_0 \left( \Lr(\tH'(q_0))\right)
			> (1-\tau_0)\left( \tH(\sfrac{1}{\sqrt{D-1}}) - 2\right) + 0
			\\&
			= (1-\tau_0)\left( \frac{1}{D-1} - 1\right)
			\geq 0.
	\end{split}
\ee
This completes the proof.
\end{proof}

\begin{proof}[Proof of \Cref{prop:1.11}.\ref{i.D_>_2}]
First, by \Cref{p.straight_lines}, $\nabla J$ is continuous in $\H$.  

Next, without loss of generality, let $\vt \in [0,\sfrac{\pi}2)$ for the remainder of the proof. Observe that, in view of \Cref{l.monotonicity}.\ref{i.radially_increasing},
$c_*(\vartheta)$ is uniquely given by
\begin{equation}\label{e.a060401}
  J(1,c_*(\vartheta) \sin\vartheta, c_*(\vartheta) \cos \vartheta) = 0.  
\end{equation}
Further, by the continuity and nondegeneracy of $\nabla J$, we deduce from the implicit function theorem that $c_*(\vartheta)$ is differentiable in $\vartheta$.

Differentiating \eqref{e.a060401} with respect to $\vartheta$, we obtain, at the point $(x,y) = c_*(\vartheta)(\sin\vartheta, \cos\vartheta)$, 
\be
\frac{c'_*(\vartheta)}{c_*(\vartheta)}(x,y) \cdot \nabla J(1,x,y) = (-y,x) \nabla J(1,x,y),
\ee
and, hence, by \Cref{lem:3.4} again,
\be\label{e.c060501}
    c'_*(\vartheta)
        = c_*(\vartheta) \frac{(-y,x) \cdot\nabla J(1,x,y)}{(x,y) \cdot \nabla J(1,x,y)}
        \geq 0.
\ee
We note that the last inequality is strict whenever $\tau_0 > 0$ by \Cref{l.monotonicity}.\ref{i.grad_J}.

Given~\eqref{e.c060501}, we see that $c_*$ is increasing in $\vt$.  Hence, we define
\be
    \vt_* = \inf\{\vt \in [0,\sfrac\pi2] : c_*(\vt) >  2\}.
\ee
Since $D>2$, 
\Cref{c.D_near_2}.\ref{i.speedup} says that $c_*(\sfrac{\pi}{2}) >2$.  We deduce that $\vt_*$ is well-defined and satisfies $\vt_*<\sfrac{\pi}{2}$.

To deduce \eqref{e.a240722.13}, it remains to prove that  $\vt \in (\vt_*,\sfrac\pi2]$ implies $\tau_0 >0$. Indeed, suppose to the contrary that  for some $\vt \in (\vt_*,\sfrac\pi2]$ we have $\tau_0 = 0$, then $c_*(\vt) >2$ (since $c_*(\cdot)$ is monotone increasing). 
Moreover, for $(x,y) = c_*(\vt)(\sin\vt, \cos\vt$, we have $\gamma(s) = s(x,y)$. Hence,
\be
0=J(1,x,y) = \int_0^1 L_f(\dot\gamma(s))\,ds = \tfrac{1}{4}(x^2 + y^2) -1 = \tfrac{c_*(\vt)^2}{4}-1.
\ee
This is a contradiction. This proves \eqref{e.a240722.13}.

The improved upper bound of $\vt_*$ follows from the fact that, recalling \Cref{l.W_convex} (see, also, \Cref{fig:H2}),
\be\label{e.c072201}
	0 \leq \vt_*
		< \underline\vt
		= \arcsin \left(\sfrac2{c_*(\sfrac{\pi}2})\right)
\ee
and the lower bound~\eqref{e.a060702} of $c_*(\sfrac{\pi}2)$.

For the lower bound of $\vt_*$, let $(x,y) \in \partial\mathcal{W}$ be given such that
$$
(x,y) = c_*(\vt)(\sin \vt, \cos \vt) \quad \text{ for some }\vt \in \{\vt \in [0,\sfrac\pi2] : c_*(\vt) >  2\}.
$$
It suffices to show that 
\be\label{e.07120.1}
x \geq \frac{7}{8\sqrt{D-1}},
\ee
Indeed, were~\eqref{e.07120.1} established, then taking the infimum of all $\vt$ such that $c_*(\vt) >2$ yields 
\be
	2\sin \vt_*
		=c_*(\vt_*)\sin \vt_*
		\geq \frac{7}{8\sqrt{D-1}},
\ee
which implies \eqref{e.c071905}. 

Let us now prove~\eqref{e.07120.1}.  To aid us in this proof, associate to the point $(1,x,y)$ the quantities $\tau_0,p_0,q_0,z_0$  given as in \Cref{p.straight_lines}. By the assumptions on $(x,y)$, we have $\vt \in (\vt_*,\sfrac\pi2]$ and hence $\tau_0>0$, as we have argued above.

Next, by \Cref{prop:1.11}.\ref{i.tau_0>0a}, we deduce that $q_0 > \sfrac{1}{\sqrt{D-1}}$ and $\sfrac{z_0}{\tau_0} = H'_r(z_0)$. Since $\Hr$ is strictly convex, it follows that
\be
    \frac{z_0}{\tau_0}
        = H'_r(q_0)
        > H'_r\left(\tfrac{1}{\sqrt{D-1}}\right)
        = \tfrac{2}{\sqrt{D-1}},
\ee
where we used \eqref{e.ss.62}. Hence, by \eqref{e.Lq3b},
\be
    x
        = \tau_0 z_0 + (1-\tau_0)q_0
        \geq \tau_0^2 \cdot \tfrac{2}{\sqrt{D-1}}
                + (1-\tau_0)\cdot \tfrac{1}{\sqrt{D-1}}
        \geq \tfrac{7}{8\sqrt{D-1}}.
\ee
This is precisely~\eqref{e.07120.1}.  The proof is complete.
\end{proof}

\subsection{Further characterization of paths}

We now characterize the optimal paths a bit more precisely.  As a result, we see precisely why the strict inclusion $\underline \cW \subsetneq \cW$ holds.  Indeed, we see that, if $(x,y)$ is on the road-enhanced portion of the boundary in the field, it corresponds to a path that moves at speed $\crr > c_*(\sfrac\pi2)$ for time $\tau\in(0,1)$ and then moves at speed $\cf < 2$ in the field for remainder of the $(1-\tau)$ time.

\begin{proposition}\label{p.no_Huygens}
Fix $(x,y)$ on the front: $J(1,x,y) = 0$, and let $\gamma \in N(1,x,y)$ be the optimal path.  Then there are two alternatives:
\begin{enumerate}[(i)]
	\item (Non-road-enhanced or purely on road) If $x^2 + y^2 = 4$ or if $y = 0$, then $\gamma(s) = s(x,y)$;

	\item (Road-enhanced in the field) If $x^2 + y^2 > 4$ and $y>0$, then there are $\crr > c_*(\sfrac\pi2)$ and $\cf \in \H$ with $|\cf| < 2$ such that
		\be
			\gamma(s)
				= \begin{cases}
						s (\crr,0)
							\qquad & \text{ if } s \leq \tau_0\\
						\tau_0(\crr,0) + (s-\tau_0) \cf
							\qquad & \text{ if } s \geq \tau_0
					\end{cases}
		\ee
		for $\tau_0 \in (0,1)$ given by \Cref{p.straight_lines}.

\end{enumerate}
\end{proposition}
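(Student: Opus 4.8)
The plan is to extract the two-piece structure of $\gamma$ from \Cref{p.straight_lines} and \Cref{lem:3.4}, and then reduce the whole statement to the single quantitative inequality $\tH(q_*)<2$, where $q_*$ is the minimizer in~\eqref{e.roadspeed2}. For part (i): if $y=0$, the last assertion of \Cref{p.straight_lines} gives directly that the optimal $(z_0,\tau_0)$ in~\eqref{e.Lq3a} is $(x,1)$, i.e. $\gamma(s)=s(x,0)=s(x,y)$; and if $x^2+y^2=4$ (so $y>0$, by the previous case), then the straight path $\tilde\gamma(s):=s(x,y)$ lies in $N(1,x,y)$, meets the road only at $s=0$, and has action $\Lf(x,y)=\tfrac{x^2+y^2}{4}-1=0=J(1,x,y)$, so $\tilde\gamma$ is a minimizer and $\gamma=\tilde\gamma$ by the uniqueness in \Cref{p.straight_lines}.

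For part (ii), assume $x^2+y^2>4$ and $y>0$, and take $\tau_0,z_0,p_0,q_0$ as in~\eqref{e.Lq3b}, so that $\crr=z_0/\tau_0$ and $\cf=(2q_0,2p_0)$. First I would check $\tau_0\in(0,1)$: $\tau_0=1$ forces the path to stay on the road, so $y=0$; while $\tau_0=0$ makes $\gamma(s)=s(x,y)$ with action $\tfrac{x^2+y^2}{4}-1>0$, contradicting $J(1,x,y)=0$. Then \Cref{lem:3.4}.\ref{i.tau_0>0a} (applicable since $\tau_0>0$ and $y>0$) gives $q_0>\tfrac{1}{\sqrt{D-1}}$, $p_0=p_{q_0}$, and $\crr=\tH'(q_0)$; combined with~\eqref{e.tH} this gives $|\cf|^2=4(q_0^2+p_{q_0}^2)=4(\tH(q_0)-1)$, so $|\cf|<2\iff\tH(q_0)<2$. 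Evaluating~\eqref{e.Lq3a} at the optimal $(z_0,\tau_0)$ yields the balance relation $0=(1-\tau_0)\Lf(\cf)+\tau_0\Lr(\crr)$, so, $\tau_0$ and $1-\tau_0$ being positive, $\Lf(\cf)$ and $\Lr(\crr)$ have opposite signs (or both vanish).

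Now let $q_*>0$ minimize $\tH(q)/q$; it is an interior minimizer (by \Cref{lem.tH} and $\tH(q)/q\to\infty$ as $q\to0^+,\infty$), so $\tH'(q_*)=\tH(q_*)/q_*=c_*(\sfrac\pi2)$. Granting the crux inequality $\tH(q_*)<2$ (true for $D>2$; part (ii) is vacuous for $D\le2$, since $\cW=\overline{B_2(0)\cap\H}$ by \Cref{prop:1.11}.\ref{i.D_leq_2}), suppose for contradiction that $q_0\le q_*$. Then $\tH(q_0)\le\tH(q_*)<2$ (as $\tH$ is increasing), so $|\cf|<2$ and $\Lf(\cf)<0$; the balance relation forces $\Lr(\crr)>0$, hence $\crr>c_*(\sfrac\pi2)=\tH'(q_*)$ (using that $\Lr$ is increasing on $[0,\infty)$ with $\Lr(c_*(\sfrac\pi2))=0$, by \Cref{p.roadspeed}), and strict convexity of $\tH$ gives $q_0>q_*$ — a contradiction. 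Hence $q_0>q_*$, so $\crr=\tH'(q_0)>\tH'(q_*)=c_*(\sfrac\pi2)$; then $\Lr(\crr)>0$, and the balance relation forces $\Lf(\cf)<0$, i.e. $|\cf|<2$, which completes the proof.

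The main obstacle is the inequality $\tH(q_*)<2$. I would prove it by elementary calculus: from~\eqref{e.tH}--\eqref{eq:g} one has $\tH(q_*)=Dq_*^2-\tfrac{\mu p_{q_*}}{\kappa\nu+p_{q_*}}$, and, after differentiating and inserting the first-order condition $\tH'(q_*)q_*=\tH(q_*)$, the bound $\tH(q_*)<2$ reduces (on clearing denominators) to $2(\kappa\nu+p_{q_*})^2+\mu p_{q_*}(\kappa\nu+2p_{q_*})>0$, which is manifest and strict since $p_{q_*}>0$ when $D>2$. An alternative route to excluding the borderline case $\crr=c_*(\sfrac\pi2)$, $|\cf|=2$ is that such a path would place $(x,y)$ in $\underline\cW$, contradicting $\underline\cW\subsetneq\cW$ via \Cref{l.W_convex} together with $c_*(\vt)>2$ for $\vt\in(\vt_*,\sfrac\pi2)$; but $\tH(q_*)<2$ dispatches the non-borderline case $\crr<c_*(\sfrac\pi2)$, $|\cf|>2$ as well, so I would prefer to run everything through it uniformly.
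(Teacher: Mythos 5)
Your proof is correct and follows essentially the same route as the paper's: reduce to the two-piece path structure from \Cref{p.straight_lines} and \Cref{lem:3.4}, extract the balance relation $(1-\tau_0)\Lf(\cf)+\tau_0\Lr(\crr)=0$, and close the argument with the calculus inequality $\tH<2$, whose verification reduces (in both treatments) to the same manifestly positive polynomial $2(\kappa\nu+p)^2+\mu p(\kappa\nu+2p)$. The only difference is organizational: you prove the key inequality once at the single point $q_*=\argmin \tH(q)/q$ and then propagate to $q_0$ via strict convexity of $\tH$, whereas the paper isolates the slightly more general \Cref{prop:D.5} (valid for any $v_0$ with $\Lr(v_0)\leq 0$) and invokes it twice; both hinge on the identical computation.
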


Before we prove this, we state two lemmas.  First, we state a lemma whose proof we delay until \Cref{ss.control_lemmas}. 
\begin{lemma}
\label{prop:D.5}
Fix $v_0$ and $q_0$ such that $q_0$ is in the subdifferential of $\Lr$ at $v_0$.  Then, if $\Lr(v_0) < 0$, then $\tH(q_0) <  2$.  Moreover, if $D>2$ and $\Lr(v_0) \leq 0$, then $\tH(q_0) < 2$.
\end{lemma}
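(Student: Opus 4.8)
The plan is to exploit the Legendre-duality relationship between $\Lr$ and $\tH$ together with the explicit formulas~\eqref{e.tH}--\eqref{e.pq} and the road speed characterization~\eqref{e.roadspeed2}. Recall that $q_0$ lies in the subdifferential of $\Lr$ at $v_0$ if and only if $v_0$ lies in the subdifferential of $\tH$ at $q_0$ (by convex duality and the strict convexity of both functions from \Cref{lem.tH}), and in that case the Legendre identity gives $\Lr(v_0) + \tH(q_0) = v_0 q_0$. Thus $\Lr(v_0) < 0$ is equivalent to $\tH(q_0) < v_0 q_0$, i.e. $\tH(q_0)/q_0 < v_0$ (assuming $q_0 > 0$; the degenerate case $q_0 = 0$ needs a separate, easy remark since then $v_0 = \tH'(0^+) \le 0$ by monotonicity considerations, but $v_0 \ge 0$, forcing the boundary case). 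So it suffices to control $\tH(q_0)/q_0$ and, separately, to know $v_0 \le 2$.

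First I would establish the universal bound $v_0 \le 2$: this follows because $\Lr(v) = \frac{v^2}{4} - 1$ for $v \in [0, \frac{2}{\sqrt{D-1}}]$ and, more to the point, from~\eqref{e.roadspeed1}--\eqref{e.roadspeed2}, $\Lr(c_*(\sfrac\pi2)) = 0$ with $c_*(\sfrac\pi2) \ge 2$ by \Cref{c.D_near_2}.\ref{i.speedup}; since $\Lr$ is strictly increasing on $(0,\infty)$ and $\Lr(v_0) \le 0 = \Lr(c_*(\sfrac\pi2))$, we get $v_0 \le c_*(\sfrac\pi2)$. That is not quite $v_0 \le 2$, so instead I would argue directly: if $\Lr(v_0) < 0$ then $v_0 < c_*(\sfrac\pi2)$, and then $\tH(q_0)/q_0 \le v_0 < c_*(\sfrac\pi2) = \min_{q>0}\tH(q)/q$ would be a contradiction — wait, that shows $\Lr(v_0) < 0$ is impossible unless... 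Let me re-expand: the Legendre identity gives $\Lr(v_0) = v_0 q_0 - \tH(q_0)$, so $\Lr(v_0) < 0 \iff \tH(q_0)/q_0 > v_0$ is \emph{wrong}; it gives $\tH(q_0)/q_0 > v_0$ when $\Lr(v_0)<0$... no: $v_0 q_0 < \tH(q_0) \iff v_0 < \tH(q_0)/q_0$. Hmm, this says $v_0$ is \emph{less} than $\tH(q_0)/q_0$, which is automatic-ish. The real content must come from comparing $\tH(q_0)$ directly to $2$, not to $v_0 q_0$.

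So the corrected approach: from $\Lr(v_0) \le 0$ and the Legendre identity, $\tH(q_0) \ge v_0 q_0$ is false; rather $\tH(q_0) = v_0 q_0 - \Lr(v_0) \le v_0 q_0 \le \frac{v_0^2 + q_0^2 \cdot (\text{something})}{\ldots}$ — the clean route is: $\tH(q_0) = v_0 q_0 - \Lr(v_0)$, so if $\Lr(v_0) < 0$ then $\tH(q_0) < v_0 q_0$. Now I bound $q_0$ in terms of $v_0$ using $v_0 = \tH'(q_0)$ and strict convexity: $\tH'(q_0) \ge \tH(q_0)/q_0$ (since $\tH(0) = 1 > 0$, actually $\tH$ convex with $\tH(0)=1$ gives $\tH'(q_0) q_0 \ge \tH(q_0) - \tH(0) = \tH(q_0) - 1$), hence $v_0 q_0 \ge \tH(q_0) - 1$, so $\tH(q_0) < v_0 q_0 \le \tH(q_0) - 1 + v_0 q_0 - (\tH(q_0)-1)$... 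I am going in circles. The honest plan: treat the two regimes $q_0 \le \frac{1}{\sqrt{D-1}}$ and $q_0 > \frac{1}{\sqrt{D-1}}$ using~\eqref{e.tH}--\eqref{e.pq} and~\eqref{e.ss.62}. In the first regime $\Lr(v_0) = \frac{v_0^2}{4}-1 < 0$ forces $v_0 < 2$, hence $q_0 = \Lr'(v_0) = v_0/2 < 1 \le \frac{1}{\sqrt{D-1}}$ only when $D \le 2$, and then $\tH(q_0) = q_0^2 + 1 = \frac{v_0^2}{4} + 1 < 2$; in the second regime $\tH(q_0) = q_0^2 + p_{q_0}^2 + 1$ with $p_{q_0} > 0$ and one uses $v_0 = \tH'(q_0)$ together with the explicit $g$ from~\eqref{eq:g} to push through the inequality. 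The \textbf{main obstacle} I anticipate is exactly this second regime: showing $\tH(q_0) < 2$ when $q_0 > \frac{1}{\sqrt{D-1}}$ and $p_{q_0} > 0$, since then $q_0^2 + p_{q_0}^2 + 1 > \frac{1}{D-1} + 1$, which is \emph{not} obviously $< 2$ for $D$ close to $2$ — this is presumably why the statement splits off the case $D > 2$ with the stronger hypothesis $\Lr(v_0) \le 0$ (closed inequality): for $D>2$ one expects that $\Lr(v_0) = 0$ corresponds exactly to $v_0 = c_*(\sfrac\pi2)$, $q_0 = q_*$ the minimizer in~\eqref{e.roadspeed2}, and then $\tH(q_*) = c_*(\sfrac\pi2) q_* $ with $\tH(q_*)/q_* = c_*(\sfrac\pi2)$; strict convexity of $\tH$ and the fact that $c_*(\sfrac\pi2) > 2$ with the minimum of $\tH(q)/q$ attained interior should give $\tH(q_*) < 2$ via a comparison with the tangent line $q \mapsto 2q$ to the parabola-like lower bound. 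I would carry this out by writing $\tH(q) - 2q = (q - q_*)\cdot(\text{positive})$ near $q_*$ and checking the sign at $q_*$ directly from~\eqref{e.roadspeed2}, i.e. $\tH(q_*) - 2q_* = q_*(c_*(\sfrac\pi2) - 2) > 0$ — hmm that's the wrong sign again, so actually the correct chain must be $\tH(q_0) < v_0 q_0 \le c_*(\sfrac\pi2) q_0$ combined with a reverse bound $c_*(\sfrac\pi2) q_0 \le$ something; I will need to be careful, and I expect the cleanest finish uses~\eqref{eq:g}: $q_0^2 = \frac{1}{D-1}[p_{q_0}^2 + 1 + \frac{\mu p_{q_0}}{\kappa\nu + p_{q_0}}]$, so $\tH(q_0) = q_0^2 + p_{q_0}^2 + 1 = \frac{D}{D-1}(p_{q_0}^2+1) + \frac{1}{D-1}\cdot\frac{\mu p_{q_0}}{\kappa\nu+p_{q_0}}$, and one bounds this against $2$ after expressing $v_0 = \tH'(q_0) = 2q_0 + 2p_{q_0} p_{q_0}'(q_0)$ via implicit differentiation of $g$, then imposing $\Lr(v_0) = v_0 q_0 - \tH(q_0) \le 0$. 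This is an elementary but genuinely fiddly calculus computation, which is why, consistent with the paper's style, I would relegate it to \Cref{ss.control_lemmas} and present only the structural reduction here.
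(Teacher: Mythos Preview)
Your proposal is not yet a proof: after several false starts (including a sign error---from $\Lr(v_0) = v_0 q_0 - \tH(q_0) < 0$ one gets $\tH(q_0) > v_0 q_0$, not $\tH(q_0) < v_0 q_0$ as you write), you eventually arrive at the paper's framework: reduce by monotonicity to the boundary case $\Lr(v_0) = 0$, then split into the regimes $q_0 \le 1/\sqrt{D-1}$ and $q_0 > 1/\sqrt{D-1}$. The first regime you handle correctly. But you stop short of the decisive step in the second regime, deferring it as ``genuinely fiddly calculus.''

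The paper's argument there is exactly the computation you anticipate, and the missing idea is concrete. At the boundary case the Legendre identity gives $\tH(q_0) = q_0 v_0 = q_0 \tH'(q_0)$. Writing $\tH(q) = q^2 + p_q^2 + 1$ with $p_q = g^{-1}(q)$ and differentiating,
\[
q_0 \tH'(q_0)
    = 2\Big[q_0^2 + \frac{p_{q_0}\, g(p_{q_0})}{g'(p_{q_0})}\Big]
    = 2\Big[\tH(q_0) - 1 + \frac{p_{q_0}}{g'(p_{q_0})}\big(g(p_{q_0}) - p_{q_0}\, g'(p_{q_0})\big)\Big].
\]
The piece you did not identify is the sign of the last bracket: a direct computation from \eqref{eq:g} shows $g(p) - p\, g'(p) > 0$ for $p > 0$ (essentially because the ``$+1$'' in $g^2$ makes $g$ strictly sublinear at infinity). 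Hence the extra term is strictly positive, so $\tH(q_0) = q_0\tH'(q_0) > 2(\tH(q_0) - 1)$, i.e.\ $\tH(q_0) < 2$. Without isolating this inequality your second-regime case remains open, and none of the $c_*(\sfrac\pi2)$-based attempts you sketch can substitute for it.
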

Next, let us remind the reader of the dynamic programming principle.
 As this result is standard, we omit its proof.
\begin{lemma}[Dynamic programming principle]
    For $(t,x,y) \in (0,\infty)\times \overline \H$ and $\gamma \in N(t,x,y)$, suppose that
    \begin{equation}\label{e.bell.0}
	    J(t,x,y) = \int_0^t \hat L(\gamma(s),\dot\gamma(s))\,ds.
    \end{equation}
    with $\gamma(0) = 0$ and $\gamma(t) = (x,y)$. 
    Then for each $\tau \in (0,t)$, 
    \begin{equation}\label{e.bell.1}
        J(t,x,y) = \int_{\tau}^t \hat L(\gamma(s),\dot\gamma(s))\,ds + J(\tau,\gamma(\tau))
    \end{equation}
    and
    \begin{equation}\label{e.bell.2}
        J(\tau,\gamma(\tau)) = \int_0^\tau  \hat L(\gamma(s),\dot\gamma(s))\,ds.
    \end{equation}
\label{lem:bellman}
\end{lemma}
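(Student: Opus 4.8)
\emph{Plan.} The statement is the familiar dynamic programming (Bellman) principle, and the plan is to prove both identities at once by a \emph{splitting-and-gluing} argument. The two ingredients are that the admissible class $N$ consists of $H^1$ paths (so two $H^1$ pieces with matching trace at a common endpoint glue to an $H^1$ path) and that the Lagrangian $\hat L$ is autonomous --- it depends only on position and velocity, not explicitly on $s$ --- so the running cost is additive over any time split. No regularity of the given optimal $\gamma$ beyond $\gamma \in H^1(0,t)$ is needed; in particular, we will not need to know a priori that $J(\tau,\gamma(\tau))$ is attained (although that follows from \Cref{p.straight_lines}).

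\emph{Admissibility facts.} Fix $\tau\in(0,t)$ and set $P_\tau := \gamma(\tau)$. First, the restriction $\gamma|_{[0,\tau]}$ belongs to $N(\tau,P_\tau)$: it lies in $H^1(0,\tau)$, starts at $(0,0)$, ends at $P_\tau$, and stays in $\overline{\H}$. Second, given any competitor $\sigma\in N(\tau,P_\tau)$, define $\tilde\gamma$ on $[0,t]$ by following $\sigma$ on $[0,\tau]$ and $\gamma$ on $[\tau,t]$; since $\sigma(\tau)=P_\tau=\gamma(\tau)$, the path $\tilde\gamma$ is continuous, lies in $H^1(0,t)$, starts at $(0,0)$, ends at $(x,y)$, and remains in $\overline{\H}$, so $\tilde\gamma\in N(t,x,y)$. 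Because $\hat L$ carries no explicit $s$-dependence, the cost splits:
\[
\int_0^t \hat L(\tilde\gamma(s),\dot{\tilde\gamma}(s))\,ds
= \int_0^\tau \hat L(\sigma(s),\dot\sigma(s))\,ds + \int_\tau^t \hat L(\gamma(s),\dot\gamma(s))\,ds,
\]
all integrals being finite along $H^1$ paths thanks to the (at most) quadratic growth of $\Lf$ and of $\Lr$, the latter a consequence of the strict convexity and coercivity of $\tH$ from \Cref{lem.tH}.

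\emph{The inequality chain.} Write $A := \int_0^\tau \hat L(\gamma,\dot\gamma)\,ds$ and $B := \int_\tau^t \hat L(\gamma,\dot\gamma)\,ds$, so that optimality of $\gamma$ gives $J(t,x,y) = A + B$. Using the second admissibility fact and taking the infimum over $\sigma\in N(\tau,P_\tau)$ of $J(t,x,y)\le \int_0^t \hat L(\tilde\gamma,\dot{\tilde\gamma})\,ds$ yields $J(t,x,y)\le J(\tau,P_\tau) + B$. Using the first admissibility fact, $J(\tau,P_\tau)\le A$. Chaining these, $A+B = J(t,x,y) \le J(\tau,P_\tau) + B \le A + B$, so every inequality is an equality. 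The outer equality forces $J(\tau,P_\tau) = A$, which is exactly \eqref{e.bell.2}; feeding this back into $J(t,x,y)=A+B$ gives $J(t,x,y) = J(\tau,P_\tau) + B$, which is \eqref{e.bell.1}.

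\emph{Main obstacle.} There is little difficulty here; the only point requiring genuine (if brief) verification is the gluing step --- that concatenating an arbitrary admissible path on $[0,\tau]$ with the fixed tail $\gamma|_{[\tau,t]}$ produces a path that is again $H^1$ across the junction time $\tau$ and never leaves $\overline{\H}$. Both are immediate from the definition of $N$, but they are precisely what makes the competitor comparison legitimate; everything else is the routine infimum-chasing above. The same argument, applied to the interval $[\tau,t]$ in place of $[0,\tau]$, shows in addition that the tail $\gamma|_{[\tau,t]}$ is itself optimal for its own endpoints, though this is not needed for the two stated identities.
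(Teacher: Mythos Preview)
Your proof is correct and supplies exactly the standard splitting-and-gluing argument that the paper omits (the paper simply writes ``As this result is standard, we omit its proof''). One minor remark: the additivity of the cost over the split $[0,\tau]\cup[\tau,t]$ is just additivity of the integral and does not actually require autonomy of $\hat L$; autonomy would only matter if you were time-shifting a competitor, which you never do.
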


We now prove \Cref{p.no_Huygens}.
\begin{proof}[Proof of \Cref{p.no_Huygens}]
Using \Cref{p.straight_lines}, let us consider the cases $\tau_0 = 1$, $\tau_0 = 0$, and $\tau_0 \in (0,1)$. 
First consider when $\tau_0 = 1$.  This is equivalent to $y=0$ (under the standing assumption that $J(1,x,y) = 0$).  Then the conclusion follows directly from \Cref{p.straight_lines}.  

Next consider when $\tau_0 = 0$.  By \Cref{p.straight_lines}, this is equivalent to $\gamma(s) = s(x,y)$.  Additionally,
\be
	0
		= J(1,x,y)
		= \Lf(x,y),
\ee
so that we see that $x^2 + y^2 = 4$.  By the uniqueness of minimizing paths (\Cref{p.straight_lines}), it follows that $\tau_0 = 0$ is equivalent to $x^2 + y^2 = 4$ (under the standing assumption that $J(1,x,y) = 0$), which completes the proof of (i).

Finally, we consider the case $\tau_0 \in (0,1)$, which is the most difficult.  As we have seen above, this is equivalent to the assumption that $y>0$ and $x^2 + y^2 > 4$.  Additionally, by \Cref{c.D_near_2}, we must have $D>2$ in this case.

\Cref{lem:3.4}.\ref{i.tau_0>0b} yields
\begin{equation}\label{e.ss.51}
	J(s,\gamma(s))
		= (s-\tau_0)(\tH(q_0) -2) + \tau_0 \Lr\left(\frac{z_0}{\tau_0}\right) \quad \text{ for }s \in [\tau_0,t],
\end{equation}
where $q_0 = \Lr'\left( \frac{z_0}{\tau_0}\right)$.  Keeping this and \Cref{p.straight_lines} in mind, the optimal path $\gamma$ satisfies
\be
\dot\gamma(s) = \begin{cases}
    (c_{\rm r},0):= (\tfrac{z_0}{\tau_0},0) &\text{ for }s\in [0,\tau_0),\\
    c_{\rm f}:=(2q_0, 2p_{q_0}) &\text{ for }s \in (\tau_0,1]. 
\end{cases}
\ee
Hence, by the definition of $J$ and
by~\eqref{e.ss.51} evaluated at $s=1$, we have
\be
	\tau_0 \Lr(\crr) + (1-\tau_0) \Lf(\cf)
		= J(1,x,y)
		= (1-\tau_0)(\tH(q_0) -2) + \tau_0 \Lr\left(\frac{z_0}{\tau_0}\right),
\ee
whence
\be\label{e.c053103}
	\Lf(\cf) = \tH(q_0) -2.
\ee
Next, we show the non-strict inequality
\be\label{e.c053101}
	\Lr(\crr)
		\geq 0
	\quad\text{ and }\quad 
	\Lf(\cf) \leq 0.
\ee
Notice that, by~\eqref{e.ss.51}, evaluated at $s=1$, we have
\be\label{e.c053102}
	0 = J(1,x,y)
		= J(1,\gamma(1))
		= (1-\tau_0)(\tH(q_0) -2) + \tau_0 \Lr\left(\frac{z_0}{\tau_0}\right)
\ee
In view of~\eqref{e.c053103}, this implies that the two terms in~\eqref{e.c053101} have opposite signs. Suppose that~\eqref{e.c053101} were not true, then we must have 
$\Lr(\sfrac{z_0}{\tau_0})<0$. 
\Cref{prop:D.5} and the fact that $q_0 = \Lr'(\crr)$ imply  $\tH(q_0) < 2$.  This is impossible in view of \eqref{e.c053102}.  It follows that~\eqref{e.c053101} holds.

If $D>2$, then \Cref{prop:D.5} implies that $\tH(q_0) < 2$, whence~\eqref{e.c053102} implies that $\Lr(\crr) > 0$.  We deduce that $\crr > c_*(\sfrac\pi2)$ (recall that $\Lr$ is increasing and, by~\eqref{p.roadspeed}, $\Lr(c_*(\sfrac\pi2))=0$).  A similar argument using~\eqref{e.c053103} shows that $|\cf| < 2$.
This completes the proof.
\end{proof}

\subsection{Conical domains}

We now use our understanding of $J$ and $c_*$ developed in the previous subsections, as well as \Cref{t.conical}, to deduce several results for the problem posed on conical domains.  Recall the notation set in~\eqref{ss.conical_domains}.  Throughout this subsection, we suppose that $(D,\kappa,\mu,\nu) = (\tilde D,\tilde \kappa, \tilde \mu, \tilde \nu)$.

We also introduce the notation $c_{*a}$ for the speed on $\Omega_a$, with the understanding that $c_{*(\sfrac\pi2)} = c_*$.  For each $\vt \in [\sfrac\pi2-2a,\sfrac\pi2]$, this is well-defined by the identity~\eqref{e.c060701} and the fact that $J$ is radially increasing (\Cref{l.monotonicity}.\ref{i.radially_increasing}).

Recall that $J$ is clockwise rotationally increasing (\Cref{l.monotonicity}.\ref{i.grad_J}). Thus, using~\eqref{e.c060701}, we immediately see that the value of $J_a(t,x,y)$ is given by $J(t,x,y)$ if $(x,y)$ is closer to $\Gamma_0$ and by $J(t,\Psi_a(x,y))$ otherwise.  This yields the following result.
\begin{proposition}\label{p.J_a_J}
For any $(x,y) \in \overline{\Omega_a}$, we have that
\be
	J_a(t,x,y)
        = \begin{cases}
            J(t,x,y)
                \qquad &\text{ if } \frac{x}{y} \geq \tan(\sfrac\pi2-a),\\
            J(t,\Psi_a(x,y))
                \qquad &\text{ if } \frac{x}{y} \leq \tan(\sfrac\pi2-a).            
        \end{cases}
\ee
As a consequence, for all $\vt \in [\sfrac\pi2-2a,\sfrac\pi2]$,
    \be
        c_{*a}(\vt)
            = \begin{cases}
                c_*(\vt)
                    \qquad &\text{ if } \vt \in[\sfrac\pi2-a,\sfrac\pi2],
                \\
                c_*(\pi-2a-\vt)
                    \qquad &\text{ if } \vt \in [\sfrac\pi2-2a,\sfrac\pi2-a].
            \end{cases}
    \ee
\end{proposition}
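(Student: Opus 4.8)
The plan is to combine the representation $J_a(t,x,y)=\min\{J(t,x,y),\,J(t,\Psi_a(x,y))\}$ from \Cref{t.conical} (eqn.~\eqref{e.c060701}) with the angular monotonicity of $J$ already established in \Cref{l.monotonicity}.\ref{i.grad_J}. First I would record the elementary description of the reflection $\Psi_a$ in polar coordinates $(x,y)=(r\sin\vt,r\cos\vt)$, where $\vt$ denotes the angle measured from the positive $y$-axis: since $\Psi_a$ interchanges the rays $\Gamma_0=\{\vt=\sfrac\pi2\}$ and $\Gamma_a=\{\vt=\sfrac\pi2-2a\}$ and fixes their bisector $\Gamma_{\sfrac a2}=\{\vt=\sfrac\pi2-a\}$, it acts by $(r,\vt)\mapsto(r,\,\pi-2a-\vt)$; in particular it preserves $r$ and maps $\overline{\Omega_a}$ to itself. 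Because $\Psi_a$ is linear, it is compatible with the scaling \eqref{e.scaling}, so it suffices to understand $J(1,\cdot\,)$, and the claim $\sfrac xy\gtrless\tan(\sfrac\pi2-a)$ is exactly $\vt\gtrless\sfrac\pi2-a$ for points with $x,y\ge 0$.

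The key step is the claim that, for each fixed $r>0$, the map $\vt\mapsto J(1,r\sin\vt,r\cos\vt)$ is even in $\vt$ and nonincreasing in $|\vt|$ on $[-\sfrac\pi2,\sfrac\pi2]$. Evenness is immediate since the control problem~\eqref{e.Jcon} — like~\eqref{e.berestycki} — is invariant under $x\mapsto-x$, so $J(1,\cdot,\cdot)$ is even in its first spatial argument. For the monotonicity I would differentiate along the circle of radius $r$: using $\nabla J\in C_{\rm loc}((0,\infty)\times\H)$ from \Cref{p.straight_lines} and the identity $(-y,x)=-\tfrac{d}{d\vt}(x,y)$, one gets $\tfrac{d}{d\vt}J(1,r\sin\vt,r\cos\vt)=-(-y,x)\cdot\nabla J(1,x,y)\le 0$ for $\vt\in(0,\sfrac\pi2)$ by \Cref{l.monotonicity}.\ref{i.grad_J}; the inequality extends to the closed interval by continuity and to $\vt<0$ by evenness.

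Granting this, the proposition follows by a short angle bookkeeping. Fix $(x,y)\in\overline{\Omega_a}$ with angle $\vt\in[\sfrac\pi2-2a,\sfrac\pi2]$, so that $\Psi_a(x,y)$ has angle $\vt':=\pi-2a-\vt$, and note $\vt,\vt'\in[-\sfrac\pi2,\sfrac\pi2]$ whenever $a\le\sfrac\pi2$. If $\sfrac xy\ge\tan(\sfrac\pi2-a)$, i.e. $\vt\ge\sfrac\pi2-a\;(\ge 0)$, then a case split on the sign of $\vt'$, using $a\le\sfrac\pi2$ and $2\vt\ge\pi-2a$, gives $|\vt'|\le\vt=|\vt|$; the key step then yields $J(1,\Psi_a(x,y))\ge J(1,x,y)$, hence $J_a(t,x,y)=J(t,x,y)$ (for general $t$ by scaling). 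If instead $\sfrac xy\le\tan(\sfrac\pi2-a)$, apply the previous case to the point $\Psi_a(x,y)$, whose angle $\vt'\ge\sfrac\pi2-a$, and use that $\Psi_a$ is an involution, to conclude $J(t,\Psi_a(x,y))\le J(t,x,y)$ and hence $J_a(t,x,y)=J(t,\Psi_a(x,y))$. Finally, the formula for $c_{*a}$ is read off directly from $c_{*a}(\vt)=\sup\{r\ge0:J_a(1,r\sin\vt,r\cos\vt)\le 0\}$: for $\vt\in[\sfrac\pi2-a,\sfrac\pi2]$ the relevant points lie on the $\Gamma_0$ side, so $J_a=J$ there and $c_{*a}(\vt)=c_*(\vt)$; for $\vt\in[\sfrac\pi2-2a,\sfrac\pi2-a]$ one has $J_a(1,r\sin\vt,r\cos\vt)=J(1,r\sin\vt',r\cos\vt')$ with $\vt'=\pi-2a-\vt\in[\sfrac\pi2-a,\sfrac\pi2]$, giving $c_{*a}(\vt)=c_*(\pi-2a-\vt)$.

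The substantive content has effectively already been absorbed into \Cref{l.monotonicity} (angular monotonicity) and \Cref{t.conical} (the minimum representation~\eqref{e.c060701}); what remains here is genuinely routine. The only point requiring care is the sign bookkeeping for the reflected angle $\pi-2a-\vt$, which can be negative precisely when $a>\sfrac\pi4$, i.e. when $\Omega_a$ extends into the second quadrant — this is exactly why one must supplement the gradient inequality with the evenness of $J$ in $x$ rather than use the one-sided monotonicity alone.
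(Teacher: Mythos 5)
Your proof is correct, and it follows the same route the paper sketches: combine the representation $J_a=\min\{J,J\circ\Psi_a\}$ from \eqref{e.c060701} with the angular monotonicity from \Cref{l.monotonicity}.\ref{i.grad_J}. The paper, however, does not actually write this out — it merely asserts in the paragraph before \Cref{p.J_a_J} that the conclusion ``immediately'' follows. Your write-up supplies two details the paper elides, and both are worth flagging. First, you observe that \Cref{l.monotonicity}.\ref{i.grad_J} only gives one-sided information for $x,y>0$, and that when $a>\sfrac\pi4$ the reflected angle $\vt'=\pi-2a-\vt$ can be negative; you correctly patch this by invoking the evenness of $J(1,\cdot,\cdot)$ in $x$ (which in turn rests on $\Hf$ and $\tH$ depending on $q$ only through $q^2$, so that $\Lr$ is even), and this step is genuinely necessary — the paper's one-line justification would be incomplete without it. Second, you verify the inequality $|\vt'|\le|\vt|$ by an explicit case split rather than asserting it from a picture, which makes the ``closer to $\Gamma_0$'' dichotomy precise. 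So the proposal is a correct and in fact more careful version of the argument the paper leaves implicit; nothing is wrong and nothing is missing.
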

Let us note that the above can be rephrased in the following way:
\be\label{e.c060702}
    \cW_a = \left(\cW \cup \Psi_a(\cW) \right) \cap \overline{\Omega_a}.
\ee
Here $\cW = \cW_{\sfrac\pi2}$ is the Wulff shape on the half space $\H$.

We can read off several results from this. 
\begin{proposition}\label{p.240719.1}
Let $\mathcal{W}_a$ be as above, then
\begin{equation}\label{e.240719.2}
    \mathcal{W}_a \subset \{(x,y) \in \overline{\Omega_a}:~ {\rm dist}((x,y),\Gamma_0 \cup \Gamma_a) \leq 2\}.
\end{equation}
In particular, $c_{*a}(\sfrac\pi2-a) \leq 2\csc a$ is bounded uniformly in $D >1$.
\end{proposition}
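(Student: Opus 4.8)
\emph{Plan.} The plan is to reduce the whole statement to the half-space case through the identity \eqref{e.c060702}, namely $\cW_a = (\cW \cup \Psi_a(\cW)) \cap \overline{\Omega_a}$, where $\cW$ is the half-space Wulff shape. The crux of the matter is the elementary-looking bound
\[
	\cW \subset \{(x,y) \in \overline\H : y \leq 2\};
\]
once this is established, both \eqref{e.240719.2} and the bound on $c_{*a}(\sfrac\pi2-a)$ follow by bookkeeping with the reflection $\Psi_a$ together with a little trigonometry.

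First I would prove the half-space bound directly from the control formulation \eqref{e.Jcon}. Since $\tH(q) = q^2 + p_q^2 + 1 \geq 1$ by \eqref{e.tH}, taking $q = 0$ in the Legendre transform gives $\Lr(v) \geq -\tH(0) = -1$ for every $v$. Fix $(x,y) \in \cW$, so $J(1,x,y) \leq 0$, and let $\gamma \in N(1,x,y)$ be arbitrary; let $F \subseteq [0,1]$ be the measurable set of times at which $\gamma$ lies in the field. On $[0,1]\setminus F$ the path is confined to $\{y=0\}$, so $\dot\gamma_2 = 0$ a.e.\ there and hence $\int_F \dot\gamma_2\,ds = \int_0^1 \dot\gamma_2\,ds = y$; in particular we may assume $|F| > 0$, since otherwise $y = 0$. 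By Cauchy--Schwarz,
\[
	\int_F \Lf(\dot\gamma)\,ds
		\geq \int_F\Big(\tfrac14|\dot\gamma_2|^2 - 1\Big)\,ds
		\geq \frac{y^2}{4|F|} - |F|,
\]
while $\int_{[0,1]\setminus F} \Lr(\dot\gamma_1)\,ds \geq -(1-|F|)$; adding these and using $|F|\leq 1$ yields $\int_0^1 \hat L(\gamma,\dot\gamma)\,ds \geq \tfrac{y^2}{4|F|} - 1 \geq \tfrac{y^2}{4} - 1$. Taking the infimum over $\gamma$ gives $0 \geq J(1,x,y) \geq \tfrac{y^2}{4} - 1$, i.e.\ $y \leq 2$. (Alternatively, writing $(x,y) = r(\sin\vt,\cos\vt)$ with $r \leq c_*(\vt)$, the bound follows from $c_*(\vt) = 2$ for $|\vt| \leq \vt_*$ (\Cref{prop:1.11}) and $c_*(\vt) \leq 2/\cos(|\vt|-\vt_*)$ for $|\vt| \in [\vt_*,\sfrac\pi2]$ (\Cref{prop:1.18}), using that cosine is decreasing on $[0,\sfrac\pi2]$ and $0 \leq |\vt|-\vt_* \leq |\vt|$.)

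Next I would transfer this to $\cW_a$. Let $(x,y) \in \cW_a$. By \Cref{p.J_a_J}, either $x/y \geq \tan(\sfrac\pi2-a)$, in which case $J(1,x,y) = J_a(1,x,y) \leq 0$ so $(x,y) \in \cW$; or $x/y \leq \tan(\sfrac\pi2-a)$, in which case $\Psi_a(x,y) \in \cW$. In the first case, $(x,y) \in \overline{\Omega_a}$ together with $x/y \geq \tan(\sfrac\pi2-a)$ places $(x,y)$ in the angular sector $\vt \in [\sfrac\pi2-a, \sfrac\pi2]$, so $x \geq 0$ (recall $a \leq \sfrac\pi2$); hence the foot of the perpendicular from $(x,y)$ to the $x$-axis lies on $\overline{\Gamma_0}$, and $\dist((x,y),\Gamma_0) = y \leq 2$ by the half-space bound. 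In the second case, $\Psi_a$---the reflection across the bisector $\{\vt = \sfrac\pi2-a\}$ of $\Omega_a$---carries $(x,y)$ into that same sector, so, writing $\Psi_a(x,y) = (x',y')$, we have $x' \geq 0$ and (since $\Psi_a(x,y) \in \cW$) $\dist(\Psi_a(x,y),\Gamma_0) = y' \leq 2$; as $\Psi_a$ is an isometry with $\Psi_a(\Gamma_a) = \Gamma_0$ (see \eqref{e.Psi_a}), this gives $\dist((x,y),\Gamma_a) = \dist(\Psi_a(x,y),\Gamma_0) \leq 2$. In either case $\dist((x,y),\Gamma_0 \cup \Gamma_a) \leq 2$, which is \eqref{e.240719.2}.

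Finally, for the ``in particular'' claim: the boundary point of $\cW_a$ in the bisector direction $\vt = \sfrac\pi2-a$ is $p = c_{*a}(\sfrac\pi2-a)\,(\cos a, \sin a)$. Since $p$ lies on the bisector strictly between the two rays, the feet of the perpendiculars from $p$ to $\Gamma_0$ and to $\Gamma_a$ actually lie on those rays, and each perpendicular has length $c_{*a}(\sfrac\pi2-a)\sin a$; hence $\dist(p,\Gamma_0 \cup \Gamma_a) = c_{*a}(\sfrac\pi2-a)\sin a$. Substituting $p$ into \eqref{e.240719.2} gives $c_{*a}(\sfrac\pi2-a)\sin a \leq 2$, that is $c_{*a}(\sfrac\pi2-a) \leq 2\csc a$, with no dependence on $D$. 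As the proposition is a corollary of \Cref{prop:1.11}, \Cref{prop:1.18} and \Cref{p.J_a_J}, I expect no genuine obstacle; the one point calling for care is distinguishing the half-rays $\Gamma_0, \Gamma_a$ from the full lines through them when computing distances, which is handled entirely by the sign of the relevant coordinate.
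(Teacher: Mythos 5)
Your proof is correct and follows the same route as the paper's: reduce to the half-space inclusion $\cW \subset \{(x,y)\in\overline\H : y\leq 2\}$ via the reflection identity $\cW_a = (\cW \cup \Psi_a(\cW)) \cap \overline{\Omega_a}$, and then conclude by the trigonometry on the bisector. The only difference is that the paper cites $\cW \subset \{y\leq 2\}$ without elaboration, whereas you supply a clean self-contained derivation from the control formulation (via $\Lr \geq -1$, $\Lf(v)\geq \tfrac14 v_2^2-1$, and Cauchy--Schwarz), which is a welcome addition but does not change the structure of the argument.
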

\begin{proof}
    The set inclusion in \eqref{e.240719.2} follows from \eqref{e.c060702} and $\mathcal{W} \subset \{(x,y) \in \overline\H:~ y \leq 2\}$. 
\end{proof}

\begin{corollary}\label{cor.cone.cstar.1}
Let $\vt_*$ be defined by \Cref{prop:1.11}.  Then:
\begin{enumerate}[(i)]

    \item If $D\leq 2$, then $c_{*a}(\vt)=2$ for all $\vt \in [\tfrac{\pi}{2}-2a,\tfrac{\pi}{2}]$.

    \item If $D>2$, and $a\in (0, \tfrac{\pi}{2} -\vt_*]$, then $c_{*a}(\vt)>2$ for all $\vt \in [\tfrac{\pi}{2}-2a,\tfrac{\pi}{2}]$.

    \item If $D>2$, and $a\in [\tfrac{\pi}{2} - \vt_*, \tfrac{\pi}{2})$, then 
    \be
    c_{*a}(\vt)>2 \quad \text{ for }\vt \in (\tfrac{\pi}{2}-\vt_*, \tfrac{\pi}{2}] \cup [\tfrac{\pi}{2} -2a, \tfrac{\pi}{2} -2a + \vt_*).
    \ee
    and $c_{*a}(\vt) = 2$ otherwise.
    
\end{enumerate}
\end{corollary}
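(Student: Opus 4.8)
The plan is to obtain all three cases as a short corollary of two facts already in hand: the exact formula for $c_{*a}$ in terms of $c_*$ furnished by \Cref{p.J_a_J}, and the threshold description of $c_*$ in \Cref{prop:1.11}. Recall from \Cref{p.J_a_J} that the cone's angular window $[\sfrac\pi2-2a,\sfrac\pi2]$ splits at the bisecting ray $\vt=\sfrac\pi2-a$ (labelled $\Gamma_{\sfrac{a}{2}}$ in \Cref{f.Omega_a}): one has $c_{*a}(\vt)=c_*(\vt)$ on the $\Gamma_0$-side $[\sfrac\pi2-a,\sfrac\pi2]$ and $c_{*a}(\vt)=c_*(\pi-2a-\vt)$ on the $\Gamma_a$-side $[\sfrac\pi2-2a,\sfrac\pi2-a]$, the reflected angle $\pi-2a-\vt$ again tracing out $[\sfrac\pi2-a,\sfrac\pi2]$ as $\vt$ runs over the $\Gamma_a$-side. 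Thus $c_{*a}(\vt)$ is always $c_*$ evaluated at some ``folded'' angle $\Theta(\vt)\in[\sfrac\pi2-a,\sfrac\pi2]$, and by \Cref{prop:1.11} the whole statement reduces to deciding, for each $\vt$ in the cone, whether $\Theta(\vt)>\vt_*$ (in which case $c_{*a}(\vt)>2$) or $\Theta(\vt)\le\vt_*$ (in which case $c_{*a}(\vt)=2$), where $\vt_*$ is the half-space threshold.

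I would then run through the cases in order. For (i), \Cref{prop:1.11}.\ref{i.D_leq_2} gives $\vt_*=\sfrac\pi2$ when $D\le2$, so $c_*\equiv2$ on $[-\sfrac\pi2,\sfrac\pi2]$ and therefore $c_{*a}\equiv2$ on the whole cone. For (ii), the hypothesis $a\le\sfrac\pi2-\vt_*$ is precisely $\sfrac\pi2-a\ge\vt_*$, so $\Theta(\vt)\in[\sfrac\pi2-a,\sfrac\pi2]\subset[\vt_*,\sfrac\pi2]$ for every $\vt$ in the cone; by \Cref{prop:1.11}.\ref{i.D_>_2}, together with the strict monotonicity \eqref{e.a240722.13} and with $c_*(\sfrac\pi2)>2$ from \Cref{c.D_near_2}.\ref{i.speedup}, this forces $c_{*a}(\vt)>2$ throughout, the only exception being the single bisecting direction $\vt=\sfrac\pi2-a=\vt_*$ in the degenerate boundary case $a=\sfrac\pi2-\vt_*$ (where $\Theta(\vt)=\vt_*$ and hence $c_{*a}=2$); one may state (ii) with the open interval $a\in(0,\sfrac\pi2-\vt_*)$ to sidestep this harmless point.

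For (iii) the hypothesis is $\sfrac\pi2-a\le\vt_*$, so now $\vt_*$, and its reflection $\pi-2a-\vt_*$, fall strictly inside the cone. On the $\Gamma_0$-side $\Theta(\vt)=\vt$, so $c_{*a}(\vt)>2$ exactly for $\vt>\vt_*$ and $c_{*a}(\vt)=2$ on $[\sfrac\pi2-a,\vt_*]$; on the $\Gamma_a$-side $\Theta(\vt)=\pi-2a-\vt$, so $c_{*a}(\vt)>2$ exactly when $\pi-2a-\vt>\vt_*$, that is $\vt<\pi-2a-\vt_*$, and $c_{*a}(\vt)=2$ otherwise. Patching the two sides together, $c_{*a}=2$ precisely on the central sector of directions too far (by more than $\sfrac\pi2-\vt_*$) from both roads, and $c_{*a}>2$ on the two complementary sectors adjacent to $\Gamma_0$ and to $\Gamma_a$, which is the dichotomy in (iii). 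I expect the only real obstacle to be the angular bookkeeping itself — in particular folding the $\Gamma_a$-side correctly through the reflection $\vt\mapsto\pi-2a-\vt$ and handling the degenerate endpoint values of the parameter $a$; there is no analytic content beyond \Cref{p.J_a_J} and \Cref{prop:1.11}, both of which are already established.
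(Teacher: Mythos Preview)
Your approach is exactly what the paper intends: the corollary is stated without proof, to be read off directly from \Cref{p.J_a_J} (expressing $c_{*a}$ as $c_*$ composed with the folding map $\Theta$) together with the threshold description in \Cref{prop:1.11}, and your case analysis carries this out correctly. Your bookkeeping is in fact more careful than the printed statement: the edge case you flag in (ii) at $a=\sfrac\pi2-\vt_*$ is genuine, and the intervals you derive in (iii), namely $(\vt_*,\sfrac\pi2]$ and $[\sfrac\pi2-2a,\,\pi-2a-\vt_*)$, are the correct ones and reveal what appear to be typos in the corollary as written (where $\sfrac\pi2-\vt_*$ and $\vt_*$ have been interchanged).
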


From \Cref{p.J_a_J}, we can also easily deduce the nonconvexity of $\cW_a$.   Notice that this is a generic phenomenon, and the strict convexity (\Cref{l.W_convex}) of $\cW$ is an idiosyncracy of the case $a= \sfrac\pi2$. 
\begin{proposition}\label{p.non_convexity}
We have the following regaring the convexity of $\cW_a$:
\begin{enumerate}[(i)]

    \item\label{i.convex_iff}
    $\mathcal{W}_a$ is convex if and only if $a \geq \sfrac{\pi}{2} - \vt_*$, where $\vt_*$ is given in \Cref{prop:1.11}.

    \item\label{i.D_a_exists}
    For each $a \in (0,\sfrac\pi2)$, there exists $D_a \in [2,\infty)$ depending on $a$ such that $\mathcal{W}_a$ is convex for $D \in (1,D_a]$ and non-convex for any $D \in ( D_a,\infty)$. 

    \item\label{i.D_a_bound}
        $D_a \leq 4(2+\mu)^2\csc^2(2a)$.  In particular, $\mathcal{W}_a$ is nonconvex when $D \geq 4(2+\mu)^2\csc^2(2a)$.

\end{enumerate}
   
\end{proposition}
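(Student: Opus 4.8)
The plan is to reduce everything to the explicit description $\cW_a = (\cW\cup\Psi_a(\cW))\cap\overline{\Omega_a}$ of \eqref{e.c060702} (equivalently, \Cref{p.J_a_J}). Write $n_0=(\sin(\sfrac\pi2-a),\cos(\sfrac\pi2-a))$ for the unit vector along the bisector $\Gamma_{\sfrac a2}$ and $P=c_*(\sfrac\pi2-a)\,n_0$ for the point where $\partial\cW$ meets it. By \Cref{p.J_a_J}, $\cW_a=S\cup\Psi_a(S)$, where $S$ is the portion of $\cW$ in the half-cone adjacent to $\Gamma_0$; both $S$ and $\Psi_a(S)$ are convex (intersections of the convex set $\cW$, see \Cref{l.W_convex}, with a half-cone) and $S\cap\Psi_a(S)=[0,P]$. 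Since $\overline{\Omega_a}$ is convex (opening angle $2a\le\pi$) and meets the line $\R n_0$ only in the ray $\{tn_0:t\ge0\}$, any segment joining a point of $S$ to a point of $\Psi_a(S)$ stays in $\overline{\Omega_a}$ and hence crosses that ray at a single point $z=tn_0$ with $t\ge 0$. If $t\le c_*(\sfrac\pi2-a)$ then $z\in[0,P]\subset S\cap\Psi_a(S)$ and the segment splits into a piece in $S$ and a piece in $\Psi_a(S)$; otherwise $z\notin\cW_a$ and convexity fails. Thus $\cW_a$ is convex if and only if $t=z\cdot n_0\le c_*(\sfrac\pi2-a)$ for every such crossing point $z$.

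To prove the "if'' part of \ref{i.convex_iff}, suppose $a\ge\sfrac\pi2-\vt_*$, i.e.\ $0\le\sfrac\pi2-a\le\vt_*$. Then $c_*(\sfrac\pi2-a)=2$ and, by \Cref{prop:1.11}, $\partial\cW$ contains the radius-$2$ arc $\{2(\sin\vt,\cos\vt):|\vt|\le\vt_*\}$, so the line $\{(x,y)\cdot n_0=2\}$ is tangent to this arc at $2n_0$ and, by convexity of $\cW$, supports $\cW$: $\cW\subset\{(x,y)\cdot n_0\le 2\}$. This half-plane is invariant under $\Psi_a$ (reflection across $\Gamma_{\sfrac a2}$ fixes $n_0$), so $\cW\cup\Psi_a(\cW)$, hence its convex hull, hence every segment considered above, lies in $\{(x,y)\cdot n_0\le 2\}$. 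Therefore each crossing point $z$ satisfies $t=z\cdot n_0\le 2=c_*(\sfrac\pi2-a)$, and $\cW_a$ is convex.

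For the "only if'' part, suppose $a<\sfrac\pi2-\vt_*$, so $\vt_*<\sfrac\pi2-a\le\sfrac\pi2$ and \eqref{e.a240722.13} gives $\tfrac{d}{d\vt}c_*(\vt)\big|_{\vt=\sfrac\pi2-a}>0$. For small $\delta>0$ put $Q_1=c_*(\sfrac\pi2-a+\delta)\,(\sin(\sfrac\pi2-a+\delta),\cos(\sfrac\pi2-a+\delta))\in\partial\cW$ and $Q_2=\Psi_a(Q_1)$; by \Cref{p.J_a_J} both lie in $\cW_a$, while their midpoint $M=\tfrac12(Q_1+Q_2)=c_*(\sfrac\pi2-a+\delta)\cos\delta\,n_0$ lies on the bisector. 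Since $\tfrac{d}{d\delta}\big[c_*(\sfrac\pi2-a+\delta)\cos\delta\big]_{\delta=0}=\tfrac{d}{d\vt}c_*(\vt)\big|_{\sfrac\pi2-a}>0$, we get $|M|=c_*(\sfrac\pi2-a+\delta)\cos\delta>c_*(\sfrac\pi2-a)$ for $\delta$ small; as $M$ lies on the bisector, $J_a(1,M)=J(1,M)>0$, so $M\notin\cW_a$, contradicting convexity. This establishes \ref{i.convex_iff}.

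For \ref{i.D_a_exists} and \ref{i.D_a_bound} we track the $D$-dependence. By \ref{i.convex_iff} and \Cref{prop:1.11}, $\cW_a$ is convex iff $\sfrac\pi2-a\le\vt_*(D)$, i.e.\ iff $c_*(\sfrac\pi2-a)=2$ (recall $c_*(\cdot)\ge2$ always, with equality exactly on $[0,\vt_*(D)]$). From \eqref{e.tH}--\eqref{eq:g}, $p_q$, and hence $\tH$, are nondecreasing in $D$; thus $\Lr=\tH^*$ is nonincreasing in $D$, so the value function $J$ of \eqref{e.Jcon} is nonincreasing in $D$, so $\cW$ and $c_*(\sfrac\pi2-a)$ are nondecreasing in $D$. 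Combined with continuity of $J$ in $D$ (standard stability of the control problem) and the fact that $c_*(\sfrac\pi2-a)=2$ for $D\le 2$, the set $\{D>1:\cW_a\text{ convex}\}=\{D>1:c_*(\sfrac\pi2-a)\le 2\}$ is a closed interval $(1,D_a]$ with $D_a\ge 2$, and it is bounded because $\vt_*(D)\le\arcsin\big(\tfrac{2+\mu}{\sqrt D}\big)\to 0$ by \eqref{e.c071905}. Finally, for any $a\in(0,\sfrac\pi2)$ one has $4\csc^2(2a)=\csc^2 a\,\sec^2 a>\sec^2 a$, so $D\ge 4(2+\mu)^2\csc^2(2a)$ forces $D>(2+\mu)^2\sec^2 a$, hence $\cos a>\tfrac{2+\mu}{\sqrt D}$, hence $\sfrac\pi2-a>\arcsin\big(\tfrac{2+\mu}{\sqrt D}\big)>\vt_*(D)$ by \eqref{e.c071905}, so $\cW_a$ is nonconvex and $D_a\le 4(2+\mu)^2\csc^2(2a)$. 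The step I expect to need the most care is the "if'' part of \ref{i.convex_iff}, namely the passage from the local picture at the seam $P$ to global convexity of $S\cup\Psi_a(S)$: this is precisely where the supporting half-plane $\cW\subset\{(x,y)\cdot n_0\le 2\}$ and the convexity of the cone $\overline{\Omega_a}$ are indispensable; the monotonicity and continuity in $D$ of $c_*(\sfrac\pi2-a)$ are not recorded in the quoted statements but follow routinely from the control formulation.
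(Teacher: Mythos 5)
Your proof is correct. For part~\ref{i.convex_iff} you use the same underlying idea as the paper --- convexity is decided entirely at the seam point $P=c_*(\sfrac\pi2-a)n_0$ on the bisector --- but you fill in the global step that the paper leaves implicit: writing $\cW_a=S\cup\Psi_a(S)$ with $S=\cW\cap\overline{\Omega_{\sfrac a2}}$ convex, noting every segment joining the two pieces crosses the bisector ray exactly once, and using the $\Psi_a$-invariant supporting half-plane $\{(x,y)\cdot n_0\le 2\}$ (available when $\sfrac\pi2-a\le\vt_*$ because $\partial\cW$ is the radius-$2$ arc there) to bound the crossing point. The paper simply asserts ``convex iff there is a supporting hyperplane at $(\bar x,\bar y)$ iff $c_*'(\sfrac\pi2-a)\le 0$.'' For part~\ref{i.D_a_exists} you supply a monotonicity-in-$D$ argument (via $p_q\nearrow$, hence $\tH\nearrow$, hence $\Lr\searrow$, hence $J\searrow$, hence $\cW\nearrow$) that the paper does not state; the paper only shows nonconvexity for large $D$ and implicitly invokes the threshold structure, so your addition genuinely closes a small gap. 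For part~\ref{i.D_a_bound} you take a different route: you combine part~\ref{i.convex_iff} with the quantitative bound $\vt_*<\arcsin\bigl(\tfrac{2+\mu}{\sqrt D}\bigr)$ of~\eqref{e.c071905}, which in fact proves the sharper bound $D_a<(2+\mu)^2\sec^2 a\le 4(2+\mu)^2\csc^2(2a)$. The paper instead uses the elementary geometric sufficient criterion $c_*(\sfrac\pi2)\cos a>c_*(\sfrac\pi2-a)$ for nonconvexity (the midpoint of the two corner points on the roads lies beyond the bisector point), together with the bounds $c_*(\sfrac\pi2)\ge 2\sqrt D/(2+\mu)$ from \Cref{c.D_near_2}.\ref{i.speed_bounds2} and $c_*(\sfrac\pi2-a)\le 2\csc a$ from \Cref{p.240719.1}. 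Both routes are valid; yours reuses \Cref{prop:1.11} and is quantitatively tighter, the paper's is self-contained and does not rely on the precise constant in the lower bound of~\eqref{e.c071905}.
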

\begin{proof}
To prove \ref{i.convex_iff}, notice that $\mathcal{W}_a$ is convex if and only if there is a supporting hyperplane at $(\bar x, \bar y) = c_{*}(\sfrac{\pi}{2}-a)(\sin (\sfrac{\pi}{2}-a), \cos(\sfrac{\pi}{2}-a))$.  See \Cref{f.triangles} for an illustration, where $\hat u_a = (\sin(\sfrac\pi2-a), \cos(\sfrac\pi2-a))$.
This is the case if and only if $ \tfrac{d}{d\vt} c_*(\tfrac{\pi}{2}-a) \leq 0$, which is equivalent to $a \geq \sfrac\pi2 - \vt_*$, thanks to \Cref{prop:1.11}.\ref{i.D_>_2}. 

The assertion \ref{i.D_a_exists} follows from the fact that 
\be\label{e.c071908}
    c_{*a}(\sfrac\pi2-a) \leq 2\csc a
\ee
(from \Cref{p.240719.1}) and that, by \Cref{c.D_near_2,p.J_a_J}, $c_{*a}(\sfrac\pi2) = c_{*a}(\sfrac\pi2-2a) = c_*(\sfrac\pi2) \to \infty$ as $D \to \infty$. Thus there exists $D_a$ such that $\mathcal{W}_a$ is nonconvex for $D > D_a$.

We now prove assertion \ref{i.D_a_bound}. By \Cref{p.J_a_J}, $c_{*a}(\sfrac\pi2) = c_{*a}(\sfrac\pi2-2a) = c_*(\sfrac\pi2)$.  Hence, $\cW_a$ is nonconvex if
\be\label{e.c071909}
    c_*(\sfrac\pi2) \cos(a)
        > c_*(\sfrac\pi2-a).
\ee
See \Cref{f.triangles} for an illustration.  Using \Cref{c.D_near_2}.\ref{i.speed_bounds2} to bound the left hand side and~\eqref{e.c071908} to bound the right hand side, we see that a sufficient condition for~\eqref{e.c071909} (and, thus, $\cW_a$ is nonconvex) is given by  
\be
    \frac{2 \sqrt D}{2 + \mu} \cos(a)
        \geq 2 \csc a.
\ee
The conclusion follows by rearranging the above.  The proof is complete.  
\end{proof}
\begin{figure}
\centering
    \begin{tikzpicture}
        \def\a{29} 
        \def\length{5} 
        \def\r{.05} 
    
        \draw[<->] (-.5,0) -- (1.3*\length,0) node[right] {$x$};
        \draw[->] (0,-.5) -- (0,.9*\length) node[above] {$y$};
    
        \draw[violet]
            (0,0) -- ({\length*cos(2*\a)}, {\length*sin(2*\a)});
        \fill[violet]
            ({\length*cos(2*\a)}, {\length*sin(2*\a)})
                circle (\r)
            node[violet,above left] {\tiny$\Psi_a(c_*(\sfrac\pi2),0)$};
            
        \draw[violet] (0,0) -- (\length, 0);
        \fill[violet]
            (\length,0)
                circle (\r)
            node[violet,below] {\tiny$(c_*(\sfrac\pi2),0)$};

        \draw[violet] (\length,0) -- ({\length*cos(2*\a)},{\length*sin(2*\a)});
    
        \draw[red, dashed] (0,0) -- ({1.2*\length*cos(\a)}, {1.2*\length*sin(\a)});

        \fill[violet]
            ({\length*cos(\a)*cos(\a)}, {\length*sin(\a)*cos(\a)})
                circle (\r)
            node[violet, right]
                {\tiny$c_*(\sfrac\pi2)\cos(a) \hat u_a$};

        \draw[red,opacity=.3]
            ({.97*\length*cos(\a)*cos(\a)}, {.97*\length*sin(\a)*cos(\a)})
            -- ({.97*\length*cos(\a)*cos(\a)+.03*\length*cos(\a)*sin(\a)}, {.97*\length*sin(\a)*cos(\a)-.03*\length*cos(\a)*cos(\a)})
            -- ({\length*cos(\a)*cos(\a)+.03*\length*cos(\a)*sin(\a)}, {\length*sin(\a)*cos(\a)-.03*\length*cos(\a)*cos(\a)});
    
        \draw ({.95*(\length/3)*cos(\a)},{.95*(\length/3)*sin(\a)}) arc[start angle=\a, end angle={2*\a}, radius=.95*(\length/3)];
        
        \draw ({1.05*(\length/3)*cos(\a)},{1.05*(\length/3)*sin(\a)}) arc[start angle=\a, end angle={2*\a}, radius=1.05*(\length/3)]
            node[midway, above right]{\tiny $a$};
        
        \draw ({\length/3},0) arc[start angle=0, end angle=\a, radius=\length/3]
            node[midway, right]{\tiny $a$};
        
        \draw[dotted] ({(\length/2)*cos(2*\a)},{(\length/2)*sin(2*\a)}) arc[start angle=2*\a, end angle=90, radius=\length/2]
            node[midway, above ]{\tiny $\frac\pi2-2a$};
    
        \fill[blue, opacity=0.05]
            (0,0) -- plot[domain=0:{\a}, samples=50] ({\length*((1-.25*(\x/\a))*cos(\x))}, {\length*((1-.25*(\x/\a))*sin(\x))}) -- cycle;
        \fill[blue, opacity=0.05]
            (0,0) -- plot[domain={\a}:{2*\a}, samples=50] ({\length*((1-.25*((2*\a-\x)/\a))*cos(\x))}, {\length*((1-.25*((2*\a-\x)/\a))*sin(\x))}) -- cycle;

        \fill[blue]
            ({.75*\length*cos(\a)}, {.75*\length*sin(\a)})
                circle (\r)
            node[blue, above left]
                {\tiny $c_*(\sfrac\pi2-a)\hat u_a$};


    \end{tikzpicture}

    \caption{An cartoon example illustrating the nonconvex shape $\mathcal{W}_a$. 
    The violet, solid outline is the boundary of the region formed by the convex hull of the points $(c_*(\sfrac\pi2,0)$ and  $\Psi_a(c_*(\sfrac\pi2,0)$,
    while the blue shaded region is the Wulff shape.  Above, we denote the unit vector $\hat u_a = (\cos(a), \sin(a))$.}\label{f.triangles}
\end{figure}
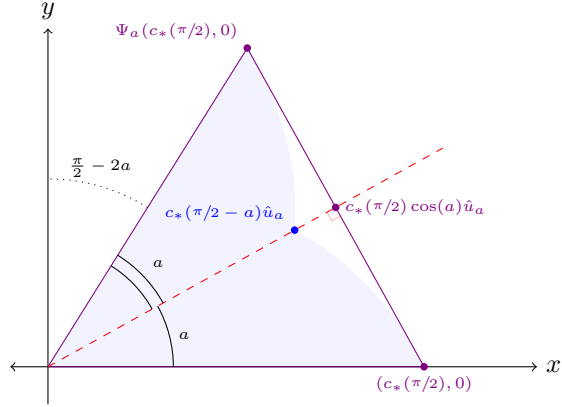

\section{Strong and weak viscosity solutions}\label{s.strong_solutions}

For the convenience of the reader, we provide the definitions of weak and strong viscosity solutions in our context.  We note that the notion of weak viscosity solution here corresponds to the standard notion of viscosity solution as in~\cite{UsersGuide}, while the notion of strong viscosity solution appeared more recently (see, e.g. \cite{Imbert2017flux,Imbert2017quasi,Guerand2017effective,Forcadel2023non} for the problem without obstacle).

Fix the domain $Q=(0,\infty)\times \H^2$ and any Hamiltonians $H, F:\mathbb{R}^2 \to \overline{\mathbb{R}}$ that correspond to the field and the road, respectively.  Consider the (variational) Hamilton-Jacobi equation:
\begin{equation}\label{e.c050801}
	\begin{cases}
		\min\{\rsig, \rsig_t + H(\rsig_x,\rsig_y)\} = 0 &\text{ in }(0,\infty) \times \H,\\
		\min\{\rsig, \rsig_t + F(\rsig_x,\rsig_y)\} = 0 &\text{ in }(0,\infty) \times \partial \H.
	\end{cases}
\end{equation}
We first define the notion of a weak viscosity solution.
\begin{definition}[Weak viscosity solutions]

    \begin{enumerate}[(i)]

        \item Let $\rsig : \bar{Q} \to \mathbb{R}$ be upper semicontinuous. We say that $\rsig$ is a weak subsolution to \eqref{e.c050801} if for any point $(t_0,x_0,y_0) \in Q$ such that $\rsig(t_0,x_0,y_0)>0$, and any $C^1$ function $\varphi$ touching $\rsig$ from above, then
    \begin{equation}\label{e.c050802}
        \begin{array}{lcll}
			y_0>0
				& \quad \Longrightarrow \quad
				&\varphi_t + H(\varphi_x,\varphi_y) \leq 0 \qquad
				& \text{ at }(t_0,x_0,y_0),\\
			y_0=0
				&\quad \Longrightarrow \quad
				&\text{either }\quad \varphi_t + H(\varphi_x,\varphi_y) \leq 0
				\quad \text{ or } \quad \varphi_t + F(\varphi_x,\varphi_y) \leq 0 
				& \text{ at }(t_0,x_0,y_0).
        \end{array}
    \end{equation}
    We note that the latter can be written as $\min\{\phi_t + H(\nabla \phi), \phi_t + F(\nabla \phi)\}\leq 0$.  This is how it is presented in~\cite{UsersGuide}.
	
	\item Let $\rsig : \bar{Q} \to [0,\infty)$ be lower semicontinuous. We say that $\rsig$ is a weak supersolution to \eqref{e.c050801} if $\rsig \geq 0$ in $Q$, and if
        for any point $(t_0,x_0,y_0) \in Q$ and any $C^1$ function $\varphi$ touching $\rsig$ from above, then 
        \begin{equation}\label{e.c050803}
	        \begin{array}{lcll}
				y_0>0
					& \quad \Longrightarrow \quad
					&\varphi_t + H(\varphi_x,\varphi_y) \geq 0 \qquad & \text{ at }(t_0,x_0,y_0),\\
				y_0=0
					&\quad \Longrightarrow \quad
					&\text{either }\quad \varphi_t + H(\varphi_x,\varphi_y) \geq 0 \quad
					\text{ or } \quad \varphi_t + F(\varphi_x,\varphi_y) \geq 0 
					& \text{ at }(t_0,x_0,y_0).
        \end{array}
    \end{equation}
        We note that the latter can be written as $\max\{\phi_t + H(\nabla \phi), \phi_t + F(\nabla \phi)\}\geq 0$.  This is how it is presented in~\cite{UsersGuide}.
	
	\item Let $\rsig \in C_{\rm loc}(\bar Q)$. We say that $\rsig$ is a weak solution to \eqref{e.c050801} if $\rsig$ is both a weak subsolution and a weak supersolution.
	
    \end{enumerate}
\end{definition}

We contrast this with the notion of a strong solution.  Notice that the alternatives in~\eqref{e.c050802} and~\eqref{e.c050803} do not appear in~\eqref{e.c050804} and~\eqref{e.c050805}.

\begin{definition}[Strong viscosity solutions]     
    \begin{enumerate}[(i)]
    
        \item Let $\rsig : \bar{Q} \to \mathbb{R}$ be upper semicontinuous. We say that $\rsig$ is a strong subsolution to \eqref{e.c050801} if for any point $(t_0,x_0,y_0) \in Q$ such that $\rsig(t_0,x_0,y_0)>0$, and any $C^1$ function $\varphi$ touching $\rsig$ from above, then
    \begin{equation}\label{e.c050804}
        \begin{array}{lcll}
			y_0>0
				& \quad \Longrightarrow \quad
				&\varphi_t + H(\varphi_x,\varphi_y) \leq 0 \qquad
				& \text{ at }(t_0,x_0,y_0),\\
		y_0=0
			&\quad \Longrightarrow \quad 
			& \varphi_t +  F(\varphi_x,\varphi_y) \leq 0  
			& \text{ at }(t_0,x_0,y_0).
        \end{array}
    \end{equation}

        \item Let $\rsig : \bar{Q} \to [0,\infty)$ be lower semicontinuous. We say that $\rsig$ is a strong supersolution to \eqref{e.c050801} if $\rsig \geq 0$ in $Q$, and if for any point $(t_0,x_0,y_0) \in Q$ and any $C^1$ function $\varphi$ touching $\rsig$ from above, then 
        
        \begin{equation}\label{e.c050805}
        \begin{array}{lcll}
			y_0>0
				& \quad \Longrightarrow \quad 
				&\varphi_t + H(\varphi_x,\varphi_y) \geq 0 \qquad 
				& \text{ at }(t_0,x_0,y_0),\\
			y_0=0
				&\quad \Longrightarrow \quad 
				& \varphi_t + F(\varphi_x,\varphi_y) \geq 0  
				& \text{ at } (t_0,x_0,y_0).
        \end{array}
    \end{equation}

        \item Let $\rsig \in C_{\rm loc}(\bar Q)$. We say that $\rsig$ is a strong solution to \eqref{e.c050801} if $\rsig$ is both a strong subsolution and a strong supersolution.

    \end{enumerate}
\end{definition}

Let us make a small note that the terminology used for strong and weak solutions is not consistent across the literature.  In place of using the, perhaps more common, terminology of strong (resp. weak) $F$-solution or $\tH$-flux-limited solution, we opt for the simpler terminology above.  This follows the terminology used in the classic text of Crandall, Ishii, and Lions; see \cite[Section~7A]{UsersGuide}.

Let us also note that, in practice, the Hamiltonian on the boundary is different for the weak and strong solutions.  More precisely, given a weak solution $\rsig$ to a Hamilton-Jacobi equation with Hamiltonian $H$ in the field and $F_0$ on the road, it is usually a strong solution to a Hamilton-Jacobi equation with Hamiltonian $H$ in the field and $F$ on the road, with $F \neq F_0$.  This is the case in our setting.  To compensate for this, we always reference the specific boundary conditions when we write ``strong solution'' or ``weak solution.''

\section{The half-relaxed limits and convergence of $(u^\eps,v^\eps$)}\label{s.half_relaxed}

In this section, we prove \Cref{t.uvw} and \Cref{t.strong} concerning the convergence of $v^\eps, u^\eps$ to $w$, which is the weak solution of~\eqref{e.HJ}-\eqref{e.HJ_F_0} and the strong solution of~\eqref{e.HJ}-\eqref{e.HJ_F}.  We do so by using the method of half-relaxed limits.  A main issue is to connect these with the notion of {\em strong} sub- and supersolutions so that the comparison principle may be applied. Note that this section does not depend on the results in \Cref{s.control_corollaries}.



\subsection{Definition and preliminary bounds}

Let $u^*, u_*$ (resp. $v^*, v_*$) be the half-relaxed limits of $u^\ep$ (resp. $v^\ep$):
\begin{equation}\label{e.half_relaxed}
    v^*(t,x,y) = \limsup_{(t',x',y') \to (t,x,y) } v(t',x',y')
    \quad\text{ and }\quad
    v_*(t,x,y) = \liminf_{(t',x',y') \to (t,x,y) } v(t',x',y'),
\end{equation}
with $u_*$ and $u^*$ are defined similarly.  Let
\begin{equation}\label{e.hr25}
    \rsig^*(t,x,y)
        = \begin{cases}
            v^*(t,x,y)
                \qquad&\text{ if } y > 0,\\
            \max\{u^*(t,x),v^*(t,x,0)\}
                \qquad&\text{ if } y = 0,
        \end{cases}
\end{equation}
and
\begin{equation}\label{e.hr26}
    \rsig_*(t,x,y)
        = \begin{cases}
            v_*(t,x,y)
                \qquad&\text{ if } y > 0,\\
            \min\{u_*(t,x),v_*(t,x,0)\}
                \qquad&\text{ if } y = 0.
        \end{cases}
\end{equation}
We immediately notice that, by construction,
\begin{equation}
    \rsig_* \leq \rsig^*.
\end{equation}
Hence, the locally uniform convergence in \Cref{t.uvw} follows if we show that
\be\label{e.c052402}
    \rsig_* \geq \rsig^*.
\ee
We do this by showing that these are, respectively, super- and subsolutions of the same equation and that comparison holds for that equation, despite the infinite, discontinuous initial data.  Let us note that, by construction, $w^*$ is lower semicontinuous and $w_*$ is upper semicontinuous.

One might be concerned about the finiteness of $\rsig_*$ and $\rsig^*$.  It follows immediately from~\eqref{e.initial_data2} and a straightforward comparison principle argument that
\be\label{e.c052403}
	\frac{\mu}{\nu}\|U\|_\infty, \|V\|_\infty
		\leq 1
\ee
(recall~\eqref{e.initial_data2}).  
We, thus, deduce that
\begin{equation}\label{e.nonnegative}
    \rsig^*
        \geq \rsig_*
        \geq \liminf_{\eps\to0} \eps \log \frac{1}{\min\left\{1, \sfrac{\nu}{\mu}\right\}}
        = 0.
\end{equation}
Next, we state the upper bound, the proof of which is slightly more involved, although not difficult. It is delayed until \Cref{ss.w_lemmas}.
\begin{lemma}\label{l.upper_bound}
    Under the assumptions of \Cref{t.uvw} and given $T\in[0,\infty)$, there is $A_T>0$ such that
    \begin{equation}
        u^\eps(t,x), v^\eps(t,x,y) \leq \frac{A_T}{t}\Big(1 + x^2 + y^2\Big)
            \qquad\text{ for all } t\in[0,T]
                \text{ and } \eps >0.
    \end{equation}
    Consequently, we have
    \begin{equation}\label{e.2.11}
        0 \leq \rsig_*(t,x,y)
            \leq \rsig^*(t,x,y)
            \leq \frac{A_T}{t}\Big(1 + x^2 + y^2\Big)
            \qquad\text{ for all } t\in[0,T].
    \end{equation}
\end{lemma}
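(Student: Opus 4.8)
The plan is to reduce~\eqref{e.2.11} to the pointwise bound on $u^\eps,v^\eps$, and then to prove the latter by comparing $U$ and $V$ from below with explicitly solvable \emph{linear} parabolic problems.

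The consequence~\eqref{e.2.11} is immediate once the pointwise bound is known: since $(t,x,y)\mapsto \frac{A_T}{t}(1+x^2+y^2)$ is continuous, passing to the half-relaxed limits~\eqref{e.half_relaxed} preserves the inequality $u^\eps,v^\eps\le\frac{A_T}{t}(1+x^2+y^2)$, and~\eqref{e.hr25}--\eqref{e.hr26} then give the same bound for $\rsig^*$ and $\rsig_*$ (including at $y=0$), the lower bound $\rsig_*\ge 0$ being~\eqref{e.nonnegative}. So the content is the bound on $u^\eps$ and $v^\eps$. Undoing the Hopf--Cole scaling~\eqref{e.thin_front_scaling}, this is equivalent to a Gaussian-type lower bound on $V$ and $U$ in the original variables, which I would obtain as follows. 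Using~\eqref{e.initial_data1}, fix $\delta,\rho>0$ with $V(0,\cdot)\ge\delta\,\mathds{1}_{B_\rho}$ on $\H$ and $\delta',\rho'>0$ with $U(0,\cdot)\ge\delta'\,\mathds{1}_{B_{\rho'}}$ on $\R$. Since $0\le V\le 1$ by~\eqref{e.c052403}, $V_t-\Delta V=V(1-V)\ge 0$ in $\H$, and since $U\ge 0$ the flux condition in~\eqref{e.berestycki} reads $-V_y+\kappa\nu V=\kappa\mu U\ge 0$ on $\partial\H$. Hence $V$ is a supersolution of the dissipative Robin heat equation $\hat V_t=\Delta\hat V$ in $\H$, $-\hat V_y+\kappa\nu\hat V=0$ on $\partial\H$, $\hat V(0,\cdot)=\delta\,\mathds{1}_{B_\rho}$. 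By the maximum principle (a negative boundary minimum of $V-\hat V$ is excluded by the Hopf lemma and the sign of the Robin term), $V\ge\hat V$, and the classical Gaussian lower bound for the Robin heat semigroup gives $\hat V(s,X,Y)\ge c_0(1+s)^{-1}\exp(-C_0(1+|X|^2+|Y|^2)/s)$ for all $s>0$, with $c_0,C_0>0$ depending only on $\kappa\nu,\delta,\rho$. Likewise, since $V|_{y=0}\ge 0$, the road equation yields $U_t-DU_{xx}+\mu U\ge 0$, so $e^{\mu s}U$ is a supersolution of a heat equation and $U(s,X)\ge c_0'e^{-\mu s}(1+s)^{-1/2}\exp(-C_0'(1+X^2)/s)$. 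Taking $-\eps\log$ and substituting $s=t/\eps$, $X=x/\eps$, $Y=y/\eps$ turns these into $v^\eps(t,x,y)\le \eps|\log c_0|+\eps\log(1+t/\eps)+\tfrac{C_0(\eps^2+x^2+y^2)}{t}$ and the analogous bound for $u^\eps$ with the extra term $\mu t$. Using $\log(1+\sigma)\le\sigma$ (so $\eps\log(1+t/\eps)\le t$), $\mu t\le\mu T$, $\eps\lesssim 1$, and $1+x^2+y^2\ge 1$, every term except the Gaussian one is bounded on $[0,T]$ by a $T$-dependent constant times $\tfrac1t(1+x^2+y^2)$, while the Gaussian term contributes $\tfrac{C_0(1+x^2+y^2)}{t}$; choosing $A_T$ large, independently of $\eps$, finishes the bound.

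The step I expect to be the main obstacle is the comparison $V\ge\hat V$ with the Robin boundary condition, together with the time-global Gaussian lower bound for $\hat V$ and $\hat U$ with $\eps$-independent constants: one must be careful with the sign of the Robin term in the maximum principle and with the behavior at spatial infinity and at $t=0$, and it is precisely there that the nontriviality and compact support of the initial data~\eqref{e.initial_data1} enter. Everything else is routine bookkeeping. (An alternative would be to exhibit directly a smooth supersolution of the scaled system~\eqref{e.scaled_eqn}, such as $\tfrac{A}{t}(R^2+x^2+y^2)$, exploiting $v^\eps\ge 0$ and $1-e^{-v^\eps/\eps}\le 1$; but this candidate fails the supersolution inequality near $x=y=0$, so the route through the linear comparisons $V\ge\hat V$, $U\ge\hat U$ seems cleaner.)
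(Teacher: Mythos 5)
Your proposal is genuinely different from the paper's argument.  The paper stays entirely in the Hopf--Cole variables and constructs explicit supersolutions of the \emph{scaled} system~\eqref{e.scaled_eqn}: a logarithmic barrier $\bar v=\alpha t-\eps\beta\log(1-C_0^2y^2/\eps^2)-\eps\beta\log(1-C_0^2x^2/\eps^2)+\eps\gamma$ on the tiny box $\max\{|x|,|y|\}\le \eps/(2C_0)$, and a quadratic-in-space, $1/t$-in-time supersolution $\bar v=\alpha t+\beta\bigl(\sqrt{x^2+(\eps/2C_0)^2}+y+x^2+y^2\bigr)/t+\eps\gamma$ on the complement, with the two glued where both are valid.  (The variant $\tfrac{A}{t}(R^2+x^2+y^2)$ that you tried and discarded is indeed not a supersolution near the origin — the paper's fix is exactly this two-zone construction, not a single formula.)  You instead work in the original variables, bounding $V$ from below by the solution of a linear Robin heat equation and $U$ by a heat-plus-decay equation, then invoke Gaussian lower bounds for those kernels and undo the Hopf--Cole transform.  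Both routes reach the same estimate; yours is perhaps more transparent probabilistically, the paper's is self-contained (no heat-kernel bounds need be quoted) and produces a supersolution that is reused verbatim in the proof of \Cref{l.Lipschitz}.

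Two points deserve care.  First, the ``classical Gaussian lower bound for the Robin heat semigroup'' with prefactor $(1+s)^{-1}$ that you cite is not correct as stated: the boundary condition $-\hat V_y+\kappa\nu\hat V=0$ with $\kappa\nu>0$ is \emph{absorbing}, mass leaves through $\partial\H$, and the one-dimensional Robin kernel at the boundary decays like $s^{-3/2}$ (visible from the resolvent $G_\lambda(0,0)=(\sqrt\lambda+\kappa\nu)^{-1}$ and a Tauberian argument), so the two-dimensional prefactor near $y=0$ is closer to $(1+s)^{-2}$.  This does not sink the argument — after applying $-\eps\log$, any polynomial prefactor contributes an $O(\eps\log(t/\eps))$ term that is bounded by a $T$-dependent constant and is absorbed into $A_T$ — but the kernel bound you would need to cite is less ``classical'' than the Dirichlet or Neumann ones, so you should either prove the lower bound you actually need (which is substantially weaker than what you wrote) or correct the exponent.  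Second, the step ``fix $\delta',\rho'>0$ with $U(0,\cdot)\ge\delta'\,\mathds{1}_{B_{\rho'}}$'' fails when $U(0,\cdot)\equiv 0$, which is allowed by~\eqref{e.initial_data1}; one must then restart the linear comparison from a small positive time after the coupling through $V|_{y=0}$ has made $U$ positive.  The paper's proof carries the same silent hypothesis, so this is not peculiar to your approach, but it is worth making explicit.
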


Next, one complication is that, formally, we expect $w^*(0,\cdot,\cdot) = +\infty$ everywhere. This is one issue that prevents us from using the comparison principle to deduce~\eqref{e.c052402}.  To sidestep this issue, we construct suitable sub- and supersolutions to control the behavior of $w_*$ and $w^*$ at $(0,0)$ for positive times and at any $(x,y)\neq (0,0)$ at time zero.  We state these results here but again postpone the proofs until \Cref{ss.w_lemmas}.

\begin{lemma}\label{lem:2.4a}
$w_*(t,0,0) = w^*(t,0,0) = 0$ for $t>0$. 
\end{lemma}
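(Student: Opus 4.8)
The plan is to reduce the statement to the (known) fact that the road--field model invades at a positive speed, and then transport that fact through the Hopf--Cole scaling \eqref{e.thin_front_scaling}. First, note that the lower bound is immediate: by \eqref{e.nonnegative} we have $0 \le w_*(t,0,0) \le w^*(t,0,0)$, so it suffices to prove $w^*(t,0,0) \le 0$ for each fixed $t > 0$. Unwinding the definitions \eqref{e.half_relaxed}--\eqref{e.hr25} of the half-relaxed upper limit, this is equivalent to the claim that for every $\delta > 0$ there exist $\rho > 0$ and $\eps_0 > 0$ such that $v^\eps(t',x',y') \le \delta$ and $u^\eps(t',x') \le \delta$ whenever $0 < \eps < \eps_0$, $|t'-t| < \rho$, $y' \ge 0$ and $x'^2 + y'^2 < \rho^2$; and, undoing \eqref{e.thin_front_scaling}, this is in turn equivalent to the pointwise lower bounds $V\!\big(\tfrac{t'}{\eps}, \tfrac{x'}{\eps}, \tfrac{y'}{\eps}\big) \ge e^{-\delta/\eps}$ and $U\!\big(\tfrac{t'}{\eps}, \tfrac{x'}{\eps}\big) \ge e^{-\delta/\eps}$.

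The main ingredient I would invoke is that there exist constants $\sigma_0 > 0$, $T_0 > 0$ and $c_0 \in (0,1)$, depending only on the parameters and the initial data, such that $V(T,X,Y) \ge c_0$ for all $T \ge T_0$ and $(X,Y) \in \overline\H$ with $X^2 + Y^2 \le \sigma_0^2 T^2$, and $U(T,X) \ge c_0$ for $T \ge T_0$ and $|X| \le \sigma_0 T$. This is exactly the statement that the invaded region grows at least linearly in every direction, which is contained in \cite{BRR_JMB, Berestycki2016shape} (for instance because $V(T,X,Y) \to 1$ and $U(T,X) \to \sfrac{\nu}{\mu}$ uniformly on such balls, so one may take $c_0 = \sfrac12$); alternatively, it can be reproduced by elementary means, using the strong maximum principle to make $(U,V)$ strictly positive after any positive time together with a standard moving subsolution for the cooperative system.

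Given this, the rest is routine bookkeeping. Choosing $\rho := \tfrac{t}{2}\min\{1,\sigma_0\}$ forces $t' > \tfrac t2$ whenever $|t'-t|<\rho$, hence $T := \tfrac{t'}{\eps} \ge \tfrac{t}{2\eps} \ge T_0$ once $\eps$ is small, and also $\sqrt{(x'/\eps)^2 + (y'/\eps)^2} < \rho/\eps \le \sigma_0 t'/\eps = \sigma_0 T$ (and likewise $|x'/\eps| \le \sigma_0 T$), so the invasion lower bound applies at the scaled points. This yields $v^\eps(t',x',y') = -\eps \log V\!\big(\tfrac{t'}{\eps}, \tfrac{x'}{\eps}, \tfrac{y'}{\eps}\big) \le -\eps \log c_0$, and similarly $u^\eps(t',x') \le -\eps\log c_0$; shrinking $\eps_0$ if necessary so that $-\eps\log c_0 < \delta$ for all $\eps < \eps_0$ completes the estimate, and letting $\delta \downarrow 0$ gives $w^*(t,0,0)\le 0$. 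The only genuine obstacle here is the invasion lower bound on a linearly expanding ball; since this is available in the literature (and recoverable by hand), the lemma follows.
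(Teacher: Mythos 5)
Your proposal is correct and, at the structural level, runs exactly as the paper's proof does: both reduce the lemma to a lower bound of the form $V(T,X,Y)\ge c_0$ and $U(T,X)\ge c_0$ on a linearly expanding ball around the origin, and then transport that bound through the Hopf--Cole scaling \eqref{e.thin_front_scaling} to conclude $w^*(t,0,0)\le 0$ (the bound $w^*\ge 0$ being \eqref{e.nonnegative}). The difference is where the linear-speed lower bound comes from: you cite \cite{BRR_JMB,Berestycki2016shape}, whereas the paper proves it from scratch in order to stay self-contained. The paper's construction uses moving bump subsolutions $\psi_{c,\mathbf e}$ built from a principal eigenfunction $\phi_R$ of $-\Delta$ (with $\lambda_1(R)=\sfrac13$), which gives $V\ge\eta_0$ only for $y\ge R$ inside the expanding ball; it then needs a separate compactness-plus-Hopf-lemma argument (passing $t_k\to\infty$ and extracting a limiting entire solution $\tilde V$) to push positivity down to the boundary $y=0$ and also to get the bound on $U$. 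Your ``alternatively, by elementary means'' gesture is in the right spirit but elides precisely that boundary step, which is the genuinely nontrivial part; since your primary route is the BRR citation (which does cover the boundary), your argument stands, but it is worth noting that the paper's self-contained derivation needs real additional work at $y=0$. Finally, the paper's proof actually establishes the stronger statement $w^*(t,x,y)=0$ for $|(x,y)|<\sfrac{t}{8}$, while yours gives only the origin, which is all the lemma asserts.
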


\begin{lemma}\label{lem:2.4}
$w_*(0,x,y) = w^*(0,x,y) = +\infty$ for $(x,y)\neq (0,0).$ 
\end{lemma}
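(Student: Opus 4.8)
The plan is to deduce the statement from a quantitative, Gaussian-type upper bound on $(V,U)$ near the initial time, obtained from an explicit supersolution, and then to transport this bound through the Hopf--Cole rescaling \eqref{e.thin_front_scaling}. Since $\rsig^* \geq \rsig_*$ always, it is enough to prove $\rsig_*(0,x,y) = +\infty$ for $(x,y)\neq(0,0)$; by \eqref{e.hr26} and the definition of the half-relaxed limits, this amounts to showing that $v^\eps(t',x',y')$ — and, when $y=0$, also $u^\eps(t',x')$ — tends to $+\infty$ as $\eps\to 0$ and $(t',x',y')\to(0^+,x,y)$. Fix $R_0$ so large that the initial data \eqref{e.initial_data1}--\eqref{e.initial_data2} are supported in the ball of radius $R_0$ about the origin.

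First I would remove the reaction term: since $V(1-V)\leq V$, the functions $\hat V:=e^{-t}V$ and $\hat U:=e^{-t}U$ are subsolutions of the linear, cooperative system
\[
  \hat V_t - \Delta\hat V \leq 0 \ \text{ in } \H, \qquad \hat U_t - D\hat U_{xx} = \nu\hat V|_{y=0} - (1+\mu)\hat U \ \text{ in } \R, \qquad -\hat V_y = \kappa(\mu\hat U - \nu\hat V) \ \text{ on } \partial\H.
\]
For each $\lambda>0$ I would then introduce the planar supersolution
\[
  \overline V_\lambda(t,x,y) = \min\bigl\{1,\, e^{-\lambda(|(x,y)| - R_0 - D\lambda t)}\bigr\}, \qquad \overline U_\lambda(t,x) = \tfrac{\nu}{\mu}\,\overline V_\lambda(t,x,0),
\]
and verify — using $D>1$, the $2$-dimensional radial Laplacian in the field, the fact that the constants $1$ and $\nu/\mu$ are supersolutions of the respective equations, and that a minimum of supersolutions across the transition surface $\{|(x,y)| = R_0 + D\lambda t\}$ is again a supersolution — that $(\overline V_\lambda,\overline U_\lambda)$ is a supersolution of the system above, and that it dominates $(V,U)(0,\cdot)$ at $t=0$ by \eqref{e.initial_data2}. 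Invoking the comparison principle for this cooperative system (as in \cite{BRR_JMB}) and undoing the substitution gives, for $|(x,y)| > R_0 + D\lambda t$,
\[
  V(t,x,y) \leq e^{\lambda R_0}\, e^{(1+D\lambda^2)t - \lambda|(x,y)|}, \qquad U(t,x) \leq \tfrac{\nu}{\mu}\, e^{\lambda R_0}\, e^{(1+D\lambda^2)t - \lambda|x|}.
\]

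Finally I would pass to $v^\eps,u^\eps$ and optimize in $\lambda$. Evaluating the bound at $(\sfrac t\eps,\sfrac x\eps,\sfrac y\eps)$ and applying $-\eps\log$ yields $v^\eps(t,x,y) \geq \lambda(|(x,y)| - \eps R_0) - (1+D\lambda^2)t$ whenever $|(x,y)| > \eps R_0 + D\lambda t$. The decisive point is that a \emph{single} rate $\lambda$ would only give $\rsig_*(0,x,y)\geq \lambda|(x,y)|$, which is finite; choosing instead $\lambda = \tfrac{|(x,y)| - \eps R_0}{2Dt}$ (which satisfies the constraint) upgrades the inequality to
\[
  v^\eps(t,x,y) \geq \frac{(|(x,y)| - \eps R_0)^2}{4Dt} - t,
\]
and the analogous bound holds for $u^\eps$ on $\{y=0\}$. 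Letting $\eps\to 0$ and $(t',x',y')\to(0^+,x,y)$ with $(x,y)\neq(0,0)$ drives the right-hand side to $+\infty$, so $\rsig_*(0,x,y) = +\infty$, hence $\rsig^*(0,x,y)=+\infty$ as well.

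I expect the main obstacle to be the construction step: checking rigorously that $(\overline V_\lambda,\overline U_\lambda)$ is a viscosity supersolution across the transition surface and at its intersection with the road, and invoking the comparison principle for the coupled system with its flux-exchange boundary condition. The conceptual ingredient — that $\lambda$ must be allowed to depend on the evaluation point in order to turn exponential-in-space decay of $V$ into the Gaussian-in-space decay that survives the parabolic rescaling $(t,x,y)\mapsto(\sfrac t\eps,\sfrac x\eps,\sfrac y\eps)$ — is easy once noticed but is exactly what produces $+\infty$ rather than a finite value.
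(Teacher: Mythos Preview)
Your proposal is correct, but it takes a more elaborate route than the paper and contains one conceptual misstep worth flagging.

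The paper works directly in the Hopf--Cole variables: for each fixed $A>0$ it writes down the explicit linear pair
\[
  \underline v(t,x,y)=A(x+y)-Q_A(t+\eps),\qquad \underline u(t,x)=Ax-Q_At-\eps(Q_A+\log\tfrac{\nu}{\mu}),
\]
checks by one line of algebra that this is a subsolution of \eqref{e.scaled_eqn} once $Q_A\geq\max\{2A^2+1,DA^2\}$, and deduces $\rsig_*(0,x,y)\geq A(x+y)$; letting $A\to\infty$ (and using the $x\mapsto -x$ symmetry) finishes.  Via the Hopf--Cole transform this is exactly your exponential supersolution for $(V,U)$, but with a planar profile $e^{-A(x+y)}$ in place of your radial $e^{-\lambda|(x,y)|}$, and with no cutoff by $\min\{\cdot,1\}$ --- which is why the paper avoids the transition-surface issue you anticipate.

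Your claim that optimizing in $\lambda$ is ``decisive'' is not right: the single-$\lambda$ bound $v^\eps(t,x,y)\geq \lambda|(x,y)|-(1+D\lambda^2)t-\eps\lambda R_0$ already gives, after the half-relaxed limit at $t=0$, $\rsig_*(0,x,y)\geq\lambda|(x,y)|$ for \emph{every} $\lambda>0$, and hence $+\infty$ by sending $\lambda\to\infty$.  The Gaussian optimization is a nice refinement (and gives a quantitative bound in the spirit of \Cref{l.upper_bound}) but is not needed for the lemma.  Otherwise your construction checks out: the pair $(1,\nu/\mu)$ and the exponential pair are each supersolutions of the cooperative linear system, and the componentwise minimum of supersolutions of a cooperative system is again a supersolution; the radial dependence makes $\partial_y\overline V_\lambda|_{y=0}=0$, so the flux condition reduces to $0\geq 0$.
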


\subsection{The relaxed equation}

Our first main lemma is the following (recall the definition of $F_0$ in~\eqref{e.B}):

\begin{lemma}\label{l.hr}
    The half-relaxed limits have the following properties.
    \begin{enumerate}[(i)]
    
    \item\label{i.rho^*_relaxed} The upper relaxed limit $\rsig^*$ is a weak subsolution to \eqref{e.HJ}-\eqref{e.HJ_F_0},
and satisfies
        \begin{equation}
            \rsig^*(0,x,y) = +\infty
                \qquad\text{ for all } (x,y) \in \overline \H.
        \end{equation}

    \item\label{i.rho_*_relaxed} The lower relaxed limit $\rsig_*$ is a weak supersolution to \eqref{e.HJ}-\eqref{e.HJ_F_0},
        and satisfies
        \begin{equation}
            \rsig_*(0,x,y) = 
                \begin{cases}
                    +\infty
                        \qquad&\text{ for all } (x,y) \in \overline \H\setminus\{(0,0)\},\\
                    0
                        \qquad&\text{ if } (x,y) = (0,0).
                \end{cases}
        \end{equation}
        
    \item\label{i.rho^*} There is ${\rho^*}$ such that 
        \begin{equation}\label{e.l309182}
            \rsig^*(t,x,v) = t {\rho^*}(\sfrac{x}{t}, \sfrac{y}{t}).
        \end{equation}
        and that is upper semi-continuous is a weak subsolution to 
        \begin{equation}\label{e.sigma*}
            \begin{cases}
				\min\{{\rho^*} - (x,y) \cdot \nabla {\rho^*} + H(\nabla \rho^*), {\rho^*}\} \leq 0
                    \quad &\text{ in } \H,\\
				\min\{ \rho^* - (x,y)\cdot\nabla \rho^* + F_0(\nabla \rho^*), \rho^*\}
					\leq 0
						\quad &\text{ in } \R\times \{0\}.
            \end{cases}
        \end{equation}
        
    \end{enumerate}
\end{lemma}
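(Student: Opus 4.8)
The plan is to prove (i) and (ii) by parallel perturbed--test--function (half--relaxed limits) arguments and then to deduce (iii) from (i) together with a scaling identity. As preliminaries: by the strong maximum principle applied to~\eqref{e.berestycki} (with its Robin coupling) one has $U,V>0$ for $t>0$, so parabolic regularity makes $u^\eps,v^\eps$ smooth classical solutions of~\eqref{e.scaled_eqn} on $(0,\infty)\times\overline{\H}$; and by~\eqref{e.nonnegative} together with \Cref{l.upper_bound}, the half--relaxed limits $u^*,v^*$, hence $w^*$, are finite and nonnegative on $(0,\infty)\times\overline{\H}$, so everything below is well defined. We use the two rewritings of the coupling recorded in the excerpt: the third equation of~\eqref{e.scaled_eqn} forces $v^\eps_y\ge-\kappa\nu$ and yields $\nu e^{(u^\eps-v^\eps)/\eps}=\Bo(v^\eps_y)+\mu$, so that the road equation becomes $u^\eps_t-\eps D u^\eps_{xx}+D|u^\eps_x|^2+\Bo(v^\eps_y)=0$ on $(0,\infty)\times\R\times\{0\}$.

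For (i), let $\varphi\in C^1$ touch $w^*$ from above at $(t_0,x_0,y_0)\in Q$ with $w^*(t_0,x_0,y_0)>0$; after a standard perturbation the contact is strict and localized. If $y_0>0$ then $w^*=v^*$ nearby, there are interior local maximizers $(t_\eps,x_\eps,y_\eps)\to(t_0,x_0,y_0)$ of $v^\eps-\varphi$ along which $v^\eps\to w^*(t_0,x_0,y_0)>0$, and inserting the first equation of~\eqref{e.scaled_eqn} there --- using $\partial_t v^\eps=\varphi_t$, $\nabla v^\eps=\nabla\varphi$, $\Delta v^\eps\le\Delta\varphi$, and $e^{-v^\eps/\eps}\to0$ --- and letting $\eps\to0$ gives $\varphi_t+\Hf(\nabla\varphi)\le0$. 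If $y_0=0$, one must instead show $\min\{\varphi_t+\Hf(\nabla\varphi),\ \varphi_t+F_0(\nabla\varphi)\}\le0$ at $(t_0,x_0,0)$; assume for contradiction both quantities are $>0$. A near--maximizer of $v^\eps-\varphi$ over $\overline{\H}$ lying in the open half--space contradicts $\varphi_t+\Hf>0$ as above; if instead it lies on the road, the boundary inequality $v^\eps_y\le\varphi_y$ combined with $\nu e^{(u^\eps-v^\eps)/\eps}=\Bo(v^\eps_y)+\mu$ forces $v^\eps-u^\eps\to0$ at $(t_0,x_0,0)$ (so $u^*=v^*=w^*$ there), after which feeding the road equation $u^\eps_t-\eps D u^\eps_{xx}+D|u^\eps_x|^2+\Bo(v^\eps_y)=0$ at a near--maximizer of $u^\eps-\varphi|_{y=0}$, using monotonicity of $\Bo$ and $v^\eps_y\le\varphi_y$, and letting $\eps\to0$ contradicts $\varphi_t+F_0>0$. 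This gives the weak subsolution property; the relaxed $\min$ in~\eqref{e.c050802} is exactly what absorbs the dichotomy. Finally $w^*(0,\cdot)\equiv+\infty$ on $\overline{\H}$: away from the origin this is \Cref{lem:2.4}, and at the origin $w^*(0,0,0)$, being an upper half--relaxed limit, inherits $+\infty$ from the nearby time--zero points.

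Part (ii) is the mirror image: one tests $w_*$ from below, interchanges maximizers/minimizers and the inequalities, uses $v^\eps_y\ge\varphi_y$ at boundary minimizers, and now retains the favorable signs $1-e^{-v^\eps/\eps}\ge0$ and $e^{(u^\eps-v^\eps)/\eps}\ge0$ (so no positivity of the limit is needed); nonnegativity, finiteness and the initial trace come from~\eqref{e.nonnegative}, \Cref{l.upper_bound} and \Cref{lem:2.4,lem:2.4a}. Part (iii) follows from (i) and scaling: \eqref{e.thin_front_scaling} gives the exact identity $\lambda v^\eps(\tfrac t\lambda,\tfrac x\lambda,\tfrac y\lambda)=v^{\lambda\eps}(t,x,y)$ and likewise for $u^\eps$, so taking upper half--relaxed limits as $\eps\to0$ --- equivalently $\lambda\eps\to0$ --- yields $\lambda v^*(\tfrac t\lambda,\tfrac x\lambda,\tfrac y\lambda)=v^*(t,x,y)$ and the same for $u^*$; hence $w^*$ is positively $1$--homogeneous, so $\rho^*(x,y):=w^*(1,x,y)$ obeys~\eqref{e.l309182} and is upper semicontinuous since $w^*$ is. To transfer the equation, note that if $\psi$ touches $\rho^*$ from above at $(x_0,y_0)$ then $\varphi(t,x,y):=t\,\psi(\tfrac xt,\tfrac yt)$ touches $w^*$ from above at $(1,x_0,y_0)$ with $\varphi_t=\psi-(x,y)\cdot\nabla\psi$ and $\nabla_{x,y}\varphi=\nabla\psi$ there, so the weak subsolution inequalities from (i) at $(1,x_0,y_0)$ are precisely the two inequalities of~\eqref{e.sigma*}.

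The main obstacle is the boundary case of part (i) (and its mirror in (ii)): one must track the coupling terms $e^{\pm(u^\eps-v^\eps)/\eps}$ and the normal derivative $v^\eps_y$ at near--optimal points and show that the half--relaxed limit keeps the correct one of the two candidate boundary Hamiltonians $\Hf$ and $F_0$ --- i.e.\ that the relaxed boundary condition is not degraded in the limit, including the step that $v^\eps-u^\eps\to0$ wherever the limit is positive. The interior arguments and the scaling in (iii) are routine by comparison.
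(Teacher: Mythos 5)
Your overall architecture — half-relaxed limits with perturbed test functions for (i) and (ii), and a scaling argument transferring (i) into (iii) — matches the paper, and parts (ii) and (iii) are fine. The gap is in the boundary case of part (i), which is exactly where you flag the "main obstacle," and your sketch for it does not close.

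The difficulty is the following. On the road, $\rsig^*(t_0,x_0,0)=\max\{u^*(t_0,x_0),v^*(t_0,x_0,0)\}$. If this maximum is achieved by $u^*$ alone, with $v^*(t_0,x_0,0)<\rsig^*(t_0,x_0,0)$, then there need not exist \emph{any} near-maximizers of $v^\eps-\varphi$ converging to $(t_0,x_0,0)$, so the branch of your argument that reads off $v^\eps_y\le\varphi_y$ from a boundary maximizer of $v^\eps-\varphi$ never gets off the ground. You then want to "feed the road equation $u^\eps_t-\eps Du^\eps_{xx}+D|u^\eps_x|^2+\Bo(v^\eps_y)=0$ at a near-maximizer of $u^\eps-\varphi|_{y=0}$, using $v^\eps_y\le\varphi_y$" — but the inequality $v^\eps_y\le\varphi_y$ is a consequence of maximizing $v^\eps-\varphi$ in $y$, not of maximizing $u^\eps-\varphi|_{y=0}$ in $(t,x)$; the two maximizers are in general at different points, so there is no a priori control on $\Bo(v^\eps_y)$ at the $u$-maximizer and the contradiction with $\varphi_t+F_0(\nabla\varphi)>0$ does not follow. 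In short, you are reading two different optimality conditions at the same point without a mechanism to make the points coincide. You also do not treat the case $\varphi_y(t_0,x_0,0)\le-\kappa\nu$, where $\Bo(\varphi_y)=+\infty$ makes the $F_0$-alternative vacuous and one must instead show that the $H$-alternative holds, by forcing the $\eps$-maximizers into the interior.

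The paper's proof resolves both issues by a single device you are missing: one takes the maximizer of the \emph{coupled} functional $\max\{v^\eps+\eps\log A,\ u^\eps\}-\varphi$ for a judiciously chosen constant $A$ (or $\tilde A$). This produces one sequence of maximizers $(t_\eps,x_\eps,y_\eps)\to(t_0,x_0,0)$, and whichever of the two components realizes the max at that point immediately carries the correct optimality condition there (the $v$-boundary inequality if $v^\eps+\eps\log A$ wins, the $u$-equation with a quantitative lower bound $e^{(u^\eps-v^\eps)/\eps}\ge A$ on the coupling term if $u^\eps$ wins). The dichotomy on the sign of $\varphi_y+\kappa\nu$ then dictates the choice of $A$: for $\varphi_y\le-\kappa\nu$ a very large $A$ rules out boundary maximizers entirely, yielding the $H$-inequality; for $\varphi_y>-\kappa\nu$ the choice $\tilde A<\kappa\mu/(\kappa\nu+\varphi_y)$ forces the boundary maximizer onto the $u$-component and produces precisely $\Bo(\varphi_y)$ in the limit. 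Without this coupled test function and the $A$-dichotomy, the boundary subsolution inequality cannot be closed by the argument as you have sketched it.
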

Given \Cref{lem:2.4a} and \Cref{lem:2.4} regarding the initial data, 
we note that the proofs of \Cref{l.hr}.\ref{i.rho^*_relaxed} and \Cref{l.hr}.\ref{i.rho_*_relaxed} are essentially the same.  Hence, we omit the latter. 
\begin{proof}[Proof of \Cref{l.hr}.\ref{i.rho^*_relaxed}]
The lower semicontinuity of $u^*$ and $v^*$, which is then inherited by $\rsig^*$ follows directly from the definition of the half-relaxed limits~\eqref{e.half_relaxed}.  Additionally, the behavior of the initial data follows from  \Cref{lem:2.4}.  Hence, we focus only on showing that $\rsig^*$ is a subsolution to the constrained Hamilton-Jacobi equation.

Fix an test function $\phi$.  Suppose $\rsig^* - \phi$ has a strict local maximum point at $(t_0,x_0,y_0)$ with $t_0 > 0$ and $y_0 \geq 0$.  If $y_0 >0$, standard arguments arguments apply directly and we omit the argument.  We, thus, consider only the case $y_0 = 0$, i.e. $\rsig^* - \phi$ has a strict local maximum point at $(t_0,x_0,0)$.  If $w^*(t_0,x_0,0) = 0$, the conclusion is immediate.  Hence, we assume that $w^*(t_0,x_0,0)>0$.  We break the argument up into two cases.

{\bf Case one: $\phi_y(t_0,x_0,0) \leq -\kappa\nu$.} 
Fix a constant $A>0$ such that
\begin{equation}\label{eq:BB}
    \nu A >  -\phi_t(t_0,x_0,0) - D|\phi_x(t_0,x_0,0)|^2 +\mu.
\end{equation}
Since $(t_0,x_0,0)$ is a strict local maximum, there is $(t_\eps, x_\eps, y_\eps)$ tending to $(t_0,x_0,0)$ that is a local maximum of $\max\{v^\eps + \eps \log A,u^\ep\} - \phi$.  (Actually, this occurs along a subsequence of $\eps_n$ tending to zero, but we omit this notationally).

We claim that $y_\eps > 0$ for all $\eps$ sufficiently small.  Were this the case, it follows by standard arguments (that we omit) that
\begin{equation}\label{e.l220918a}
    \min\{\phi, \phi_t + |\nabla \phi|^2 + 1\} \leq 0
        \quad\text{ at } (t_0, x_0, 0).
\end{equation}
We show that $y_\eps>0$ by contradiction.  Suppose that $y_\eps = 0$. If, at $(t_\eps,x_\eps,0),$
\[
    v^\eps + \eps \log A > u^\eps,
\]
then $v_\ep + \ep\log A - \phi$ has a local maximum at $(t_\ep,x_\ep,0)$. By the third equation in~\eqref{e.scaled_eqn}, we find \begin{equation}\label{eq:step22}
    \begin{split}
        0 &\geq \partial_y(v^\ep - \phi) \big|_{(t_\ep,x_\ep,0)} 
        = k\left[\mu e^{\frac{v^\ep-u^\ep}{\ep}}-\nu\right] - \phi_y(t_0,x_0,0) + o(1)
        \\&
        \geq \frac{\kappa\mu}{A}-\kappa\nu - \phi_y(t_0,x_0,0) + o(1)
        \geq \frac{\kappa\mu}{A}   + o(1), 
    \end{split}
\end{equation}
where the last inequality follows by the assumption $\phi_y(t_0,x_0,0) \leq -\kappa\nu$ and that $o(1)$ is a quantity tending to zero as $\eps \to 0$.  This is a contradiction as $\kappa\mu/A >0$.

On the other hand, if, at $(t_\eps,x_\eps,0),$
\[
    v^\eps + \eps \log A \leq u^\eps,
\]
then $u_\ep - \phi$ has a local maximum at $(t_\ep,x_\ep,0)$.
The second equation in~\eqref{e.scaled_eqn}, implies that, at the point $(t_\eps, x_\eps, 0)$,
\begin{equation}\label{eq:BBB}
    \begin{split}
        0
        &\geq \phi_t -\ep D \phi_{xx} + D|\phi_x|^2 +  \nu e^{\frac{u^\ep - v^\ep}{\ep}}-\mu
        \\&
        \geq \phi_t -\ep D \phi_{xx} + D|\phi_x|^2 + \nu  A-\mu
        = \left(\nu A + \phi_t + D |\phi_x|^2 - \mu\right) + o(1)
        > 0.
    \end{split}
\end{equation}
In the last inequality, we used~\eqref{eq:BB} and took $\eps$ sufficiently small.  This is clearly a contradiction.  Since we have reached a contradiction in all cases, we conclude that $y_\eps > 0$ for $\eps$ sufficiently small.

{\bf Case two: $\phi_y(t_0,x_0,0) > -k \nu$.}   fix an arbitrary constant $\tilde{A}$ such that
\begin{equation}\label{e.c080901}
    0 < \tilde A < \frac{\kappa\mu}{\kappa\nu + \phi_y(t_0,x_0,0)}.
\end{equation}
Again, since $(t_0,x_0,0)$ is a strict local maximum, there is $(t_\eps, x_\eps, y_\eps)$ tending to $(t_0,x_0,0)$ that is a local maximum of $\max\{v^\eps + \eps \log \tilde A,u^\eps\} - \phi$.  (Actually, this again occurs along a subsequence of $\eps_n$ tending to zero, but we omit this notationally).

If $y_\eps > 0$ for $\eps$ sufficiently small, then we again conclude~\eqref{e.l220918a} by standard (and, thus, omitted) arguments.  Thus we suppose that $y_\eps = 0$ for all $\eps$ small.  Then, as in~\eqref{eq:step22}, if,
\begin{equation}\label{e.l221002a}
    v^\eps + \eps \log \tilde A \geq u^\eps
        \qquad\text{ at } (t_\eps, x_\eps,0),
\end{equation}
we find
\begin{equation}
    \begin{split}
        0 &\geq \partial_y(v^\ep - \phi) \big|_{(t_\ep,x_\ep,0)} 
        = k\left[\mu e^{\frac{v^\ep-u^\ep}{\ep}}-\nu\right] - \phi_y(t_0,x_0,0) + o(1)
        \\&
        \geq  k\left[\mu e^{- \log \tilde A}-\nu\right] - \phi_y(t_0,x_0,0) + o(1)
        >0
    \end{split}
\end{equation}
where the last inequality followed by the choice of $\tilde{A}$ in~\eqref{e.c080901} and taking $\eps$ small enough.

It follows that \eqref{e.l221002a} does not hold, and hence
\begin{equation}
    v^\eps + \eps \log \tilde A < u^\eps
        \qquad\text{ at } (t_\eps, x_\eps,0),
\end{equation}
when $\rsig^*(t_0,x_0,0)$ takes the limit value of $u^\eps$ at $(t_\eps, x_\eps, 0)$.  We, thus, use the second equation in~\eqref{e.scaled_eqn} and deduce, at $(t_\eps, x_\eps,0)$,
\begin{equation}
    0 \geq \phi_t - \eps D \phi_{xx} + D |\phi_x|^2 + \nu e^\frac{u^\eps - v^\eps}{\eps} - \mu
        > \phi_t - \eps D \phi_{xx} + D |\phi_x|^2 + \nu \tilde A - \mu.
\end{equation}
Taking $\eps$ to zero and then
\begin{equation}
    \nu \tilde A \to \frac{\kappa\nu \mu}{\kappa\nu + \phi_y(t_0,x_0,0)}
        =  \Bo(\phi_y(t_0,x_0,0)) + \mu, 
\end{equation}
we deduce, at $(t_0,x_0,0),$
\begin{equation}
    0 \geq \phi_t - \eps D \phi_{xx} + D |\phi_x|^2 + \Bo(\phi_y).
\end{equation}
This concludes the proof.
\end{proof}

\begin{proof}[Proof of \Cref{l.hr}.\ref{i.rho^*}]
Define, for any $(x,y) \in \overline \H$,
\begin{equation}\label{e.c081002}
    {\rho^*}(x,y) = \rsig^*(1,x,y).
\end{equation}
We claim that, for all $(t,x,y) \in (0,\infty) \times \H$ and $A>0$,
\begin{equation}\label{e.c081001}
    \rsig^*(At,Ax,Ay) = A \rsig^*(t,x,y).
\end{equation}
Notice that, if~\eqref{e.c081001}, holds, then, applying it with $A = \sfrac{1}{t}$ at $(1,\sfrac{x}{t},\sfrac{y}{t})$ and then using~\eqref{e.c081002}, we derive \eqref{e.l309182}.
Hence, we need only establish~\eqref{e.c081001}.  This is easy seen by using~\eqref{e.thin_front_scaling}:
\begin{equation}
    v^\eps(At,Ax,Ay)
        = A v^{\sfrac{\eps}{A}}(t, x, y)
    \qquad\text{ and }\qquad
    u^\eps(At,Ax)
        = A u^{\sfrac{\eps}{A}}(t, x).
\end{equation}
We deduce~\eqref{e.c081001} in the limit $\eps\to 0$.

In order to derive~\eqref{e.sigma*} from    \eqref{e.HJ} and weak $F_0$-condition on the boundary, one simply takes test functions $\psi: \overline \H \to \R$ for ${\rho^*}$ and transforms them to test functions $\tilde\psi(t,x,y)$ for $\rsig^*$ via
\begin{equation}
    \tilde \psi(t,x,y) = t \, \psi \left(\sfrac{x}{t}, \sfrac{y}{t}\right)
\end{equation}
and then argues in a straightforward manner.  We omit the details.
\end{proof}

In order to apply comparison principle arguments, we require some regularity of $\rsig^*$.  We do this by establishing a local Lipschitz bound $\rsig^*$ via $\rho^*$, which formally follows from the convexity of the Hamiltonian in  \eqref{e.sigma*}, for instance:
\be\label{e.c050807}
	\left|\nabla {\rho^*} - \frac{(x,y)}{2}\right|^2
		\leq - (x,y) \cdot \nabla {\rho^*} + |\nabla {\rho^*}|^2 + \frac{|(x,y)|^2}{4}
		\leq \frac{|(x,y)|^2}{4}
\ee
in $\{(x,y)\in \H:~ {\rho^*}(x,y) >0\}$.
\begin{lemma}\label{l.Lipschitz}
Let  ${\rho^*}$ be the function given in \Cref{l.hr}.\ref{i.rho^*}. Then the following  statements hold.
\begin{itemize}
    \item[(i)] There is $A_R>0$ such that for all $0\leq |x_i|\leq R$ and $0\leq y_i \leq R$, 
    \begin{equation} \label{eq:l.Lipschitz.1}
        |{\rho^*}(x_0,y_0) - {\rho^*}(x_1,y_1)|
            \leq \frac{A_R|(x_0 - x_1, y_0 - y_1)|}{\sqrt{\min\{y_0,y_1\}}}.
    \end{equation}
     In particular, ${\rho^*}$ is locally Lipshitz continuous in $\H$.
    \item[(ii)] ${\rho^*}$ is locally $C^{\sfrac12}$ on $\overline \H$.
\end{itemize}  
\end{lemma}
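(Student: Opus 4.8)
The plan is to convert the weak subsolution property of $\rho^*$ recorded in~\Cref{l.hr}.\ref{i.rho^*} into a pointwise eikonal-type gradient bound, to promote that into the quantitative estimate~\eqref{eq:l.Lipschitz.1} by a penalization argument performed on regions held a fixed distance from the road $\{y=0\}$, and finally to obtain the boundary H\"older bound~(ii) by integrating the estimate from~(i) as $y\to 0$.

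First I would establish the eikonal bound. Let $\phi\in C^1$ touch $\rho^*$ from above at $(x_0,y_0)$ with $y_0>0$, so that the first line of~\eqref{e.sigma*} is the active one. Then either $\rho^*(x_0,y_0)\le 0$, in which case $\rho^*(x_0,y_0)=0$ by~\eqref{e.nonnegative} and $(x_0,y_0)$ is a local minimum of $\phi$, forcing $\nabla\phi(x_0,y_0)=0$; or $\phi-(x,y)\cdot\nabla\phi+H(\nabla\phi)\le 0$ at $(x_0,y_0)$, in which case completing the square exactly as in~\eqref{e.c050807} gives $|\nabla\phi(x_0,y_0)-\tfrac12(x_0,y_0)|^2\le\tfrac14|(x_0,y_0)|^2-1-\phi(x_0,y_0)\le\tfrac14|(x_0,y_0)|^2$. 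Either way $|\nabla\phi(x_0,y_0)|\le|(x_0,y_0)|$, so $\rho^*$ is a bounded (by~\Cref{l.upper_bound}) upper semicontinuous viscosity subsolution of the eikonal inequality $|\nabla u|\le|(x,y)|$ on the open half-ball $\H\cap B_{2R}$. This uses only interior touching points: on $\{y=0\}$ the boundary alternative in~\eqref{e.sigma*} coming from $F_0$ controls only the tangential slope $\rho^*_x$ — the restriction $p>-\kappa\nu$ in the definition~\eqref{e.B} of $B_0$ does not bound $\rho^*_y$ from above — which is exactly why one cannot expect a gradient bound that stays uniform up to the road.

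Next I would promote this to~\eqref{eq:l.Lipschitz.1}. Fix $z_0=(x_0,y_0)$, $z_1=(x_1,y_1)$ with $0\le|x_i|\le R$ and $0<y_i\le R$ (the estimate being vacuous otherwise), set $\delta=\tfrac12\min\{y_0,y_1\}$, and work on the convex open set $\Omega_\delta=\{y>\delta\}\cap B_{2R}$, which contains $z_0$, $z_1$ and the segment joining them. For each $\eta>0$ one argues that $M:=\sup_{\Omega_\delta}\big(\rho^*(z)-\rho^*(z_1)-(K_\delta+\eta)|z-z_1|\big)\le 0$ once $K_\delta$ is large enough: the supremum is attained at some $\bar z\in\overline{\Omega_\delta}$ by upper semicontinuity and boundedness; if $\bar z\in\Omega_\delta$ and $M>0$ then the cone $z\mapsto\rho^*(z_1)+(K_\delta+\eta)|z-z_1|$ touches $\rho^*$ from above at $\bar z\neq z_1$, and the eikonal subsolution property forces $K_\delta+\eta\le|\bar z|\le 2R$, contradicting the choice $K_\delta>2R$, while $\bar z=z_1$ gives $M=0$; and on $\partial\Omega_\delta$ one uses $\rho^*(z)\le C(1+|z|^2)$ from~\Cref{l.upper_bound}, $\rho^*(z_1)\ge 0$, and $|z-z_1|\ge\delta$ on the face $\{y=\delta\}$ together with $|z-z_1|\gtrsim R$ on $\{|z|=2R\}$, to see the bracketed quantity is negative there provided $K_\delta\gtrsim(1+R^2)\delta^{-1}$. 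Letting $\eta\to 0$ and exchanging the roles of $z_0$ and $z_1$ yields $|\rho^*(z_0)-\rho^*(z_1)|\le K_\delta|z_0-z_1|$; optimizing the argument — joining $z_0$ to $z_1$ through a chain of intermediate heights and re-applying the estimate at each scale — sharpens the power of $\min\{y_0,y_1\}$ in the denominator to the claimed $\tfrac12$, giving~\eqref{eq:l.Lipschitz.1}, and local Lipschitz continuity in $\H$ is then immediate since $\min\{y_0,y_1\}$ is bounded below on compact subsets. I expect this step — making the penalization rigorous near $\{y=0\}$, where the eikonal bound fails and $F_0$ is one-sided — to be the main obstacle; it is precisely this that forces the degenerate constant in place of a Lipschitz bound uniform up to the road.

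Finally, for~(ii), I would apply~\eqref{eq:l.Lipschitz.1} along the vertical segments from $(x,y)$ to $(x,2^{-N}y)$: the $k$-th increment is at most $A_R\,2^{-(k+1)/2}\sqrt{y}$, a summable series, so $\{\rho^*(x,2^{-N}y)\}_{N\ge 0}$ is Cauchy and $|\rho^*(x,y)-\lim_{y'\to 0^+}\rho^*(x,y')|\le CA_R\sqrt{y}$. Combining this vertical $C^{1/2}$ bound with the horizontal Lipschitz bound from~\eqref{eq:l.Lipschitz.1} and with the upper semicontinuity of $\rho^*$ — which, using the controlled boundary behaviour of $v^\eps$ and $u^\eps$, identifies $\rho^*(x,0)$ with $\lim_{y'\to 0^+}\rho^*(x,y')$ — yields that $\rho^*$ is locally $C^{1/2}$ on $\overline\H$, completing the proof.
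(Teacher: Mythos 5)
The estimate~\eqref{eq:l.Lipschitz.1}, and more importantly the one-sided estimate~\eqref{e.c081101} that actually drives part~(ii), cannot be reached from the interior equation alone---which is exactly the information you restrict to when you declare ``this uses only interior touching points.'' Your penalization on $\Omega_\delta=\{y>\delta\}\cap B_{2R}$ forces $K_\delta\gtrsim(1+R^2)\delta^{-1}$, because the supremum can only be excluded from the artificial face $\{y=\delta\}$ by pitting the a priori bound on $\rho^*$ against the penalty $K_\delta\delta$; this is off by a full power from the claimed $\delta^{-1/2}$, and the proposed chaining through dyadic heights does not close the gap: a Lipschitz constant scaling like $y^{-1}$ yields an $O(1)$ increment across each dyadic shell $y\mapsto 2y$, so summing over $\log(1/\delta)$ shells produces a logarithmic modulus of continuity, not $C^{1/2}$. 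No interior-only mechanism upgrades $\delta^{-1}$ to $\delta^{-1/2}$, so the sharpening you assert at the end of your second step would fail.

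The ingredient you discard as ``one-sided'' is exactly what the paper's proof pivots on: $\Bo(p)=+\infty$ for $p\le-\kappa\nu$ in~\eqref{e.B}, which acts as a lower bound on the $y$-slope of any admissible touching test function at the road. The paper builds the cone-plus-quartic test function $\phi_\beta$ centered at $(x_0,y_0)$ with slope $\alpha(1+y_0^{-1/2})$ and shows the contact point cannot land on $\{y=0\}$: after localizing $|x_t-x_0|$ via~\eqref{e.c082901}, the $y$-slope at $(x_t,0)$ is of size $-\alpha^2$, which falls below $-\kappa\nu$ once $\alpha$ is large enough, making~\eqref{e.c081104} contradictory; the factor $1+y_0^{-1/2}$ is calibrated precisely so this works uniformly as $y_0\to 0$, and that is where the exponent $\tfrac12$ comes from. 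Without this input, part~(ii) is also blocked: your telescoping shows that $\lim_{y'\to 0^+}\rho^*(x,y')$ exists and upper semicontinuity yields ``$\le\rho^*(x,0)$,'' but the reverse inequality $\rho^*(x,0)\le\lim_{y'\to 0^+}\rho^*(x,y')$---a genuine junction statement, since $\rho^*(\cdot,0)$ involves $u^*$ through~\eqref{e.hr25}---is obtained in the paper by taking $y=0$ in the one-sided estimate~\eqref{e.c081101}, which your symmetric cone bound valid only inside $\{y>\delta\}$ cannot supply. The ``controlled boundary behaviour of $v^\eps$ and $u^\eps$'' you invoke there is not an available fact; it is in effect the conclusion.
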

 The proof is postponed until \Cref{ss.w_lemmas}; however, it follows the rough idea given in the discussion around~\eqref{e.c050807}.  An important consequence of \Cref{l.Lipschitz} is that, since $w^*(t,x,y) = t\rho^*(x/t,y/t)$, 
 \be\label{e.c051304}
	w^*(t,x,y) \in C_{\rm loc}((0,\infty)\times \overline\H).
\ee

\subsection{The flux-limited equation}

The last step to applying the comparison principle is to show that $w_*$ and $w^*$ are, respectively, {\em strong} sub- and supersolutions of~\eqref{e.HJ}-\eqref{e.HJ_F}.  We show this now.

\subsubsection{Critical slopes}

For each viscosity subsolution or supersolution to \eqref{e.HJ} in the  interior of the domain $\H$, additional information can be obtained by considering ``critical slopes,''
which were introduced in \cite{Imbert2017flux}.

Let $\rsig: [0,\infty)\times\overline{\H} \to \mathbb{R}$ be given. Suppose $\varphi$ is a test function that touches $\rsig$ from below at a boundary point $(t_0,x_0,0)$, then one can define $\underline p$ to be the maximal number such that $\tilde\varphi(t,x,y) = \varphi(t,x,y) + \underline p y$ touches $\rsig$ from below at $(t_0,x_0,0)$. Furthermore, the test function $\tilde\varphi$ with the critical slope inherits the subsolution inequality provided $\rsig$ is a viscosity solution to the equation \eqref{e.HJ} in the interior of the upper half plane $\H$. We state this formally now.

\begin{lemma}
	[Critical slopes for super-solutions]
	\label{l.critsuper}
Let $\overline\rsig$ be a super-solution to~\eqref{e.HJ}, 
and let $\varphi$ be an arbitrary test function touching $\overline\rsig$ from below at any point $(t_0,x_0,0)$. Then the ``critical slope," defined as 
\be\begin{split}
	\underline{p}
		= \sup\{p \geq
		0:~\exists r>0,
    			~\varphi(t,x,y) + py \leq \overline\rsig (t,x,y)~\text{ for }
			~(t,x,y) \in B_r(t_0,x_0,0)\text{ with }y\geq 0\},
\end{split}\ee
satisfies that either $\underline p = +\infty$ or 
\be
	\varphi_t(t_0,x_0,0) + \Hf(\varphi_x(t_0,x_0,0),\varphi_y(t_0,x_0,0) + \underline{p}) \geq 0.
\ee
\end{lemma}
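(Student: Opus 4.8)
The plan is the following. If $\underline p=+\infty$ there is nothing to prove, so assume $\underline p<+\infty$. The goal is to show that the ``critical'' test function $\tilde\varphi(t,x,y):=\varphi(t,x,y)+\underline p\,y$ inherits the interior supersolution inequality of \eqref{e.HJ} at the boundary point $(t_0,x_0,0)$; the strategy is to produce interior points of $\H$ at which suitable perturbations of $\tilde\varphi$ touch $\overline\rsig$ from below, apply the equation there, and let those points converge to $(t_0,x_0,0)$. Shrinking the neighborhood, I may assume $\varphi\le\overline\rsig$ on $\overline{B_R(t_0,x_0,0)}\cap\{y\ge0\}$ for some $R>0$, with equality at $(t_0,x_0,0)$.

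First I would locate the interior touching points. Fix $\delta>0$. Since $\underline p+\delta$ is not an admissible slope, for every $r\in(0,R)$ there is a point $\zeta\in B_r(t_0,x_0,0)$ with $\varphi(\zeta)+(\underline p+\delta)\zeta_y>\overline\rsig(\zeta)$, and any such point has $\zeta_y>0$ because $\varphi\le\overline\rsig$ nearby. Introduce a smooth barrier $\chi_r\ge0$ on $B_r(t_0,x_0,0)$ that vanishes at the center and blows up to $+\infty$ on $\partial B_r$ (e.g.\ $\chi_r(\zeta)=\big(r^2-|\zeta-(t_0,x_0,0)|^2\big)^{-1}-r^{-2}$), and for a small coefficient $\theta>0$ consider
\[
	\Phi(\zeta):=\overline\rsig(\zeta)-\varphi(\zeta)-(\underline p+\delta)\zeta_y+\theta\,\chi_r(\zeta)
\]
on the compact set $\overline{B_r(t_0,x_0,0)}\cap\{y\ge0\}$. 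This $\Phi$ is lower semicontinuous, equals $+\infty$ on $\partial B_r$, is $\ge0$ on $\{y=0\}$, and -- choosing $\theta$ small relative to the deficit of $\overline\rsig-\varphi-(\underline p+\delta)y$ at a bad point lying in the inner half-ball -- takes a negative value there; hence $\Phi$ attains its (negative) minimum at an interior point $\zeta_*=(t_*,x_*,y_*)$ with $t_*>0$ and $y_*>0$. Equivalently, $\varphi+(\underline p+\delta)y-\theta\chi_r$ touches $\overline\rsig$ from below at the interior point $\zeta_*$.

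Then, since $\overline\rsig$ is a supersolution of $\min\{w,\,w_t+\Hf(\nabla w)\}=0$ in $(0,\infty)\times\H$, testing at $\zeta_*$ gives
\[
	\varphi_t(\zeta_*)-\theta(\chi_r)_t(\zeta_*)+\Hf\big(\varphi_x(\zeta_*)-\theta(\chi_r)_x(\zeta_*),\ \varphi_y(\zeta_*)+\underline p+\delta-\theta(\chi_r)_y(\zeta_*)\big)\ge0 .
\]
Finally I would pass to the limit, letting $r\to0$, then $\theta\to0$, then $\delta\to0$. Then $\zeta_*\to(t_0,x_0,0)$, and since $\Phi(\zeta_*)<0$ together with $\overline\rsig(\zeta_*)\ge\varphi(\zeta_*)$ forces $\theta\chi_r(\zeta_*)<(\underline p+\delta)y_*\le(\underline p+\delta)r$, the minimizer is confined well inside $B_r$, so the perturbation terms $\theta(\chi_r)_t(\zeta_*)$ and $\theta\nabla\chi_r(\zeta_*)$ tend to $0$; continuity of $\varphi_t,\nabla\varphi$ and of $\Hf$ then yields $\varphi_t(t_0,x_0,0)+\Hf(\varphi_x(t_0,x_0,0),\,\varphi_y(t_0,x_0,0)+\underline p)\ge0$, which is the claim. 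The main obstacle is precisely this last step: one must couple the choices of $r$, $\theta$ and $\delta$ so that the barrier simultaneously pins the minimizer strictly inside $B_r$ and perturbs the gradient negligibly in the limit -- this is the heart of the Imbert--Monneau critical-slope argument \cite{Imbert2017flux,Imbert2017quasi}. When $\overline\rsig$ has additional regularity (as the relaxed limit $w^*$ does by \Cref{l.Lipschitz}), the argument above can instead be recast as a strict subsolution versus supersolution comparison: were the conclusion false, $\varphi+(\underline p+\delta)y$ would be a strict subsolution of the interior equation near $(t_0,x_0,0)$ for small $\delta$, and comparison on a thin half-ball would force it below $\overline\rsig$ there, contradicting the maximality of $\underline p$.
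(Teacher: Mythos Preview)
Your approach is exactly what the paper does: it gives no argument at all and simply cites \cite[Lemma~2.9]{Imbert2017flux} and \cite[Lemma~A.9]{Imbert2017quasi}. You have gone further by sketching the Imbert--Monneau mechanism (perturb past the critical slope, force an interior touching point, apply the interior equation, pass to the limit), and you correctly identify the only genuinely delicate step, namely the coupling of $r$, $\theta$, and $\delta$ so that the barrier confines the minimizer without contaminating the gradient in the limit. As written, your bound $\theta\chi_r(\zeta_*)<(\underline p+\delta)r$ controls $\chi_r(\zeta_*)$ only in terms of $r/\theta$, which blows up since $\theta$ was chosen small after $r$; so the stated order ``$r\to0$, then $\theta\to0$'' does not close. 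A cleaner route, closer to what Imbert--Monneau actually do, is to first make the touching strict in the tangential variables only (replace $\varphi$ by $\varphi-|(t-t_0,x-x_0)|^2$, which leaves $\varphi_y$ and hence the critical slope unchanged), so that $\overline\rsig-\varphi-(\underline p+\delta)y$ is bounded below by a fixed positive constant on $\partial B_r\cap\{y\ge0\}$ minus $(\underline p+\delta)r$; then for $\delta$ small relative to $r$ the minimum is forced to an interior point with $y>0$ without any exploding penalty, and the limit $\delta\to0$ followed by $r\to0$ goes through directly.
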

\begin{proof}
See \cite[Lemma 2.9]{Imbert2017flux} and \cite[Lemma A.9]{Imbert2017quasi}. 
\end{proof}

\begin{lemma}
	[Critical slopes for sub-solutions]
	\label{l.critsub}
Let $\underline\rsig$ be a subsolution to~\eqref{e.HJ}, 
and let $\varphi$ be an arbitrary test function touching $\underline\rsig$ from above at any point $(t_0,x_0,0)$. 
Assume that $\underline\rsig$ satisfies the weak continuity condition
\begin{equation}\label{eq:wcp}
    \underline\rsig(t_0,x_0,0) = \limsup_{(t,x,y) \to (t_0,x_0,0)} \underline\rsig(t,x,y).
\end{equation}
Then the ``critical slope," defined as
\be\label{e.bar_p}
    \overline{p}
    		= \inf\{p \leq
		0:
			~\exists r>0,
			~\varphi(t,x,y) + py \geq \underline\rsig(t,x,y)
				~\text{ for }~(t,x,y) \in B_r(t_0,x_0,0)
				\text{ with }y\geq 0\}
\ee
is finite and 
\begin{equation}
   \min\{\underline\rsig(t_0,x_0,0),\varphi_t(t_0,x_0,0) + H(\varphi_x(t_0,x_0,0),\varphi_y(t_0,x_0,0) + \overline{p}) \}\leq 0.
\end{equation}
\end{lemma}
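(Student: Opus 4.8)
The plan is to follow the critical-slope argument of Imbert and Monneau that underlies \Cref{l.critsuper} (see \cite[Lemma~2.9]{Imbert2017flux} and \cite[Lemma~A.9]{Imbert2017quasi}), adapting it from super- to sub-solutions; the genuinely new point is the role of the weak continuity hypothesis \eqref{eq:wcp}, which ensures $\underline\rsig$ has no downward jump onto $\{y=0\}$ and hence does not interfere with the slope analysis carried out on $\{y\ge 0\}$. As usual I would first reduce to strict touching: replacing $\varphi(t,x,y)$ by $\varphi(t,x,y)+|(t,x,y)-(t_0,x_0,0)|^4$ changes neither $\varphi_t$, $\nabla\varphi$ at $(t_0,x_0,0)$ nor the critical slope $\overline p$, so we may assume $\underline\rsig-\varphi$ has a strict local maximum over $\{y\ge 0\}$ at $(t_0,x_0,0)$. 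We may also assume $\underline\rsig(t_0,x_0,0)>0$, since otherwise the claimed inequality is immediate from the first entry of the minimum.

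\emph{Finiteness of $\overline p$.} The set $S=\{p\le 0:\ \exists\,r>0,\ \varphi+py\ge\underline\rsig\ \text{on}\ B_r(t_0,x_0,0)\cap\{y\ge 0\}\}$ contains $0$ and is closed upward in $(-\infty,0]$ (if $p_1<p_2\le 0$ and $p_1\in S$ then $\varphi+p_2y\ge\varphi+p_1y\ge\underline\rsig$ on $\{y\ge0\}$), so $\overline p=\inf S\in[-\infty,0]$. If $\overline p=-\infty$, then for each $p$ in a sequence $p_k\to-\infty$ the function $\varphi+p_ky$ fails to dominate $\underline\rsig$ on every ball about $(t_0,x_0,0)$; a localization argument (using the strict maximum and \eqref{eq:wcp} near $\{y=0\}$) produces interior local maxima $(t_k,x_k,y_k)\to(t_0,x_0,0)$, $y_k>0$, of $\underline\rsig-(\varphi+p_ky)$. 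The interior sub-solution property of \eqref{e.HJ} at $(t_k,x_k,y_k)$ yields $\underline\rsig(t_k,x_k,y_k)\le 0$ or $\varphi_t+\Hf(\varphi_x,\varphi_y+p_k)\le 0$ there; the former is impossible for large $k$ since $\underline\rsig(t_k,x_k,y_k)\ge(\varphi+p_ky)(t_k,x_k,y_k)\to\underline\rsig(t_0,x_0,0)>0$, while the latter forces $(\varphi_y+p_k)^2\le-\varphi_t-\varphi_x^2-1$, which is bounded for $(t_k,x_k,y_k)$ near $(t_0,x_0,0)$, contradicting $p_k\to-\infty$. Hence $\overline p$ is finite.

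\emph{The critical test function inherits the inequality.} Fix $p_k\uparrow\overline p$ with $p_k<\overline p$. Since $\overline p=\inf S$, $\varphi+p_ky$ fails to dominate $\underline\rsig$ on every ball about $(t_0,x_0,0)$, so — again by the strict maximum and \eqref{eq:wcp} — there are interior local maxima $(t_k,x_k,y_k)\to(t_0,x_0,0)$ with $y_k>0$ of $\underline\rsig-(\varphi+p_ky)$. The interior sub-solution property of \eqref{e.HJ} at $(t_k,x_k,y_k)$ gives $\underline\rsig(t_k,x_k,y_k)\le 0$ or $\varphi_t+\Hf(\varphi_x,\varphi_y+p_k)\le 0$ there; the first alternative is excluded for large $k$ exactly as above, using $\underline\rsig(t_k,x_k,y_k)\ge(\varphi+p_ky)(t_k,x_k,y_k)\to\underline\rsig(t_0,x_0,0)>0$. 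Thus $\varphi_t+\Hf(\varphi_x,\varphi_y+p_k)\le 0$ at $(t_k,x_k,y_k)$ for all large $k$, and letting $k\to\infty$ — using $(t_k,x_k,y_k)\to(t_0,x_0,0)$, $p_k\to\overline p$, and continuity of $\varphi_t,\nabla\varphi,\Hf$ — yields $\varphi_t(t_0,x_0,0)+\Hf(\varphi_x(t_0,x_0,0),\varphi_y(t_0,x_0,0)+\overline p)\le 0$, which is the desired inequality.

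I expect the main obstacle to be precisely the localization step invoked twice above: one must arrange that the near-maximizers of $\underline\rsig-(\varphi+p_ky)$ over a small set actually lie in the interior, away from both $\{y=0\}$ and the outer boundary, and converge to $(t_0,x_0,0)$, so that the interior equation applies and the limit can be taken. This is the technical heart of the critical-slope method and is carried out carefully in \cite[Lemma~2.9]{Imbert2017flux} and \cite[Lemma~A.9]{Imbert2017quasi}; in our setting the strictness reduction and the weak continuity condition \eqref{eq:wcp} (which prevents a downward jump of $\underline\rsig$ onto $\{y=0\}$ from letting boundary points masquerade as genuine maximizers) are exactly what make that argument go through. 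The remaining verifications — upward closedness of $S$, the coercivity bound for $\Hf$, and the passages to the limit — are routine.
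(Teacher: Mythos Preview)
Your sketch follows the Imbert--Monneau critical-slope method, which is exactly what the paper cites for this lemma; the outline of the main step --- taking $p_k\uparrow\overline p$ with $p_k<\overline p$, using $p_k\notin S$ together with the strict-maximum reduction to push local maxima of $\underline\rsig-(\varphi+p_ky)$ into $\{y>0\}$, applying the interior equation there, and passing to the limit --- is correct in structure, and your closing paragraph identifies the right technical point.

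The finiteness step, however, contains a direction error. You claim that if $\overline p=-\infty$ then there exist $p_k\to-\infty$ for which $\varphi+p_ky$ \emph{fails} to dominate $\underline\rsig$ on every ball. But you yourself showed that $S$ is upward-closed in $(-\infty,0]$ and contains $0$; hence $\overline p=\inf S=-\infty$ forces $S=(-\infty,0]$, so \emph{every} $p\le 0$ lies in $S$, i.e.\ dominates on some (possibly $p$-dependent) ball. There are then no $p_k\notin S$ to feed into your localization: for $p_k\in S$ the maximum of $\underline\rsig-(\varphi+p_ky)$ over a small half-ball equals $0$, is attained at the boundary point $(t_0,x_0,0)$, and produces no interior touching point at which to invoke the equation. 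The finiteness argument in the references the paper cites is therefore genuinely separate from your second step; it combines the weak-continuity hypothesis \eqref{eq:wcp} with the coercivity of $\Hf$ in a way that does not rely on having $p_k\notin S$.
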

\begin{proof}
See \cite[Lemma 2.10]{Imbert2017flux} \cite[Lemma A.10]{Imbert2017quasi}. 
\end{proof}

\subsubsection{Connecting weak and strong viscosity (sub/super) solutions}

We begin by observing 
that the definition~\eqref{e.Hr} of $\tH$ in \Cref{ss.flux} implies the following.
\begin{lemma}\label{lem.pq}
Fix $q \in \mathbb{R}$, then the following statements are equivalent.
\begin{enumerate}[(i)]
	\item $\tH(q) > \inf \Hf(q,\cdot)$;
	\item $\Hf(q,0) < F_0(q,0)$;
	\item there is a unique number $p_q> {\rm argmin}\,\Hf(q,\cdot)=0$   such that $\Hf(q,p_q) = F_0(q,p_q)$;
	\item $q^2 > \sfrac{1}{(D-1)}$
	\item the number $p_q$ given by~\eqref{e.pq} is positive.
\end{enumerate} 
If any of the above conditions hold, the numbers $p_q$ in (iii) and (v) are the same and $\tH(q) = \Hf(q,p_q)$.
\end{lemma}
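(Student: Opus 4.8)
The plan is to route every equivalence through two elementary monotonicity observations. Fix $q\in\R$; since both $\Hf(q,p)=q^2+p^2+1$ and $F_0(q,p)=Dq^2+\Bo(p)$ depend on $q$ only through $q^2$, we may as well assume $q\ge 0$. Note that $p\mapsto\Hf(q,p)$ is strictly increasing on $[0,\infty)$ with $\argmin\Hf(q,\cdot)=0$, while $p\mapsto F_0(q,p)$ is continuous and strictly decreasing on $(-\kappa\nu,\infty)$ because $\Bo'(p)=-\mu\kappa\nu/(\kappa\nu+p)^2<0$. Evaluating at $p=0$ gives $\Hf(q,0)=q^2+1$ and $F_0(q,0)=Dq^2$, so that (ii) reads $q^2+1<Dq^2$, which is exactly (iv); this settles (ii)$\Leftrightarrow$(iv) with no further work.

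Next I would introduce $\psi(p):=\Hf(q,p)-F_0(q,p)=p^2+1-(D-1)q^2+\tfrac{\mu p}{\kappa\nu+p}$ on $[0,\infty)$. A direct differentiation gives $\psi'(p)=2p+\mu\kappa\nu/(\kappa\nu+p)^2>0$, so $\psi$ is strictly increasing; moreover $\psi(p)\to+\infty$ as $p\to\infty$ and $\psi(0)=1-(D-1)q^2$. Hence $\psi$ has a zero in $(0,\infty)$, necessarily unique, if and only if $\psi(0)<0$, i.e. if and only if (iv) holds; since $\argmin\Hf(q,\cdot)=0$, this is precisely assertion (iii), with existence, uniqueness and positivity of the crossing point all coming for free, so (iii)$\Leftrightarrow$(iv). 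To tie the crossing point to the explicit formula \eqref{e.pq}, rewrite $\psi(p)=0$ as $\tfrac1{D-1}\big[p^2+1+\tfrac{\mu p}{\kappa\nu+p}\big]=q^2$, i.e. $g(p)^2=q^2$, i.e. $g(p)=q$; thus the $p_q$ of (iii) equals $g^{-1}(q)$. Since $g(0)=1/\sqrt{D-1}$ and $g$ is increasing, the definition \eqref{e.pq} then gives $p_q>0$ exactly when $q^2>\tfrac1{D-1}$, which is (v)$\Leftrightarrow$(iv), and also shows that the $p_q$ of (iii) and (v) coincide.

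It remains to treat (i)$\Leftrightarrow$(ii) and the final identity. If (ii) fails, then $\Hf(q,0)\ge F_0(q,0)$ and by definition $\tH(q)=\Hf(q,0)=\inf\Hf(q,\cdot)$, so (i) fails. If (ii) holds, then by continuity $\Hf(q,p')<F_0(q,p')$ for all small $p'>0$, so $\min\{\Hf(q,p'),F_0(q,p')\}=\Hf(q,p')>\Hf(q,0)=\inf\Hf(q,\cdot)$, and taking the supremum over $p'>0$ yields $\tH(q)>\inf\Hf(q,\cdot)$, which is (i). Finally, under (ii) the sign of $\psi$ shows that $p'\mapsto\min\{\Hf(q,p'),F_0(q,p')\}$ agrees with the increasing branch $\Hf(q,\cdot)$ on $(0,p_q]$ and with the decreasing branch $F_0(q,\cdot)$ on $[p_q,\infty)$, so its supremum is attained at $p_q$ and equals $\Hf(q,p_q)=F_0(q,p_q)$; hence $\tH(q)=\Hf(q,p_q)$. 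I expect nothing genuinely hard here: the only points requiring a little care are the reduction to $q\ge 0$ so that $g^{-1}(q)$ is meaningful, and the consistent use of $\argmin\Hf(q,\cdot)=0$ when reading off statement (iii) and the definition of $\tH$.
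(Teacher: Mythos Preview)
Your proof is correct and follows essentially the same approach as the paper's: both arguments hinge on the monotonicity of $p\mapsto\Hf(q,p)$ and $p\mapsto F_0(q,p)$ on $[0,\infty)$ to locate the unique crossing, together with the direct evaluation $\Hf(q,0)-F_0(q,0)=1-(D-1)q^2$. Your version is slightly more explicit (introducing $\psi$ and computing its derivative, and unwinding the sup--min definition of $\tH$ rather than quoting the formula \eqref{e.tH}), but the substance is the same.
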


\begin{proof}
The equivalence of (iii) and (v) is obvious.  We now show the equivalence of (i) - (iv).
For each fixed $q$, observe that ${\rm argmin}\,\Hf(q,\cdot) =0$,  $\Hf(q,\cdot)$ (resp. $F_0(q,\cdot)$) is strictly increasing (resp. strictly decreasing) in $(0,\infty)$, and that $\Hf(q,p) > F_0(q,p)$ for $p$ sufficiently large. Hence, the unique positive root $p_q>0$ exists if and only if $\Hf(q,0) < F_0(q,0)$. On the other hand,
\be
H(q,0) < F_0(q,0) \quad \Longleftrightarrow \quad  q^2 > \sfrac{1}{(D-1)} \quad \Longleftrightarrow \quad \tH(q) > \inf \Hf(q,\cdot),
\ee
thanks to the definition of $\tH$ in \eqref{e.tH}.
This completes the proof of the equivalence of all statements.

The fact that both $p_q$ are the same is a direct computation.  That $\tH(q) = \Hf(q,p_q)$ is also immediate from the fact that $\Hf(q,p)$ is increasing and $F_0(q,p)$ is decreasing for all $p\in [0,\infty)$.
\end{proof}

We now adapt the arguments of \cite[Proposition A.8]{Imbert2017quasi} in order to connect strong and weak solutions.  We remind the reader that $F = \max\{\tH, \Hf^-\}$, with $\tH$, and $\Hf^-$ defined by~\eqref{e.Hr} and~\eqref{e.Hf-}, respectively.

\begin{proposition}\label{thm:4.7}

   Let $F_0$ and $F$ be given respectively in \eqref{e.B} and \eqref{e.F}. Then every weak $F_0$-supersolution $\rsig$ (resp. weak $F_0$-subsolution additionally satisfying \eqref{eq:wcp}) to~\eqref{e.HJ}-\eqref{e.HJ_F_0} is a strong supersolution (resp. strong subsolution) of \eqref{e.HJ}-\eqref{e.HJ_F}.
\end{proposition}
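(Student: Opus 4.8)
The plan is to adapt the argument of \cite[Proposition A.8]{Imbert2017quasi} to the present obstacle setting, treating the supersolution and subsolution halves separately. In the interior $\H$ the equations \eqref{e.HJ} defining weak and strong solutions are identical, so only boundary points $z=(t_0,x_0,0)$ need attention: there the subsolution condition is vacuous when $\rsig(z)=0$, while the supersolution condition must be verified at every $z$ (and is immediate where $\overline\rsig(z)=0$, since the interior and relaxed boundary inequalities already hold there). Throughout, given a $C^1$ test function $\varphi$ touching $\rsig$ at $z$, I write $\lambda=-\varphi_t(z)$, $q=\varphi_x(z)$, $p=\varphi_y(z)$, and the task is to upgrade the relaxed inequality $\min\{\Hf(q,p),F_0(q,p)\}\le\lambda$ (resp. $\max\{\Hf(q,p),F_0(q,p)\}\ge\lambda$) to the flux-limited one $F(q,p)=\max\{\Hf^-(q,p),\tH(q)\}\le\lambda$ (resp. $\ge\lambda$).

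\emph{Subsolution.} Let $\underline\rsig$ be a weak $F_0$-subsolution satisfying \eqref{eq:wcp}, let $\varphi$ touch it from above at $z$, and assume $\underline\rsig(z)>0$. Since $\underline\rsig$ solves \eqref{e.HJ} in $\H$, \Cref{l.critsub} produces a finite critical slope $\overline p\le 0$ with $\Hf(q,p+\overline p)\le\lambda$; set $\overline P:=p+\overline p\le p$. The $y$-slopes of test functions touching $\underline\rsig$ from above at $z$ with the same $t,x$-slopes are exactly $[\overline P,\infty)$, because adding $cy$ with $c\ge 0$ keeps $\varphi$ above $\underline\rsig$ on $\{y\ge 0\}$, while $c\in[\overline p,0]$ is admissible by definition of $\overline p$. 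As $\Hf^-(q,\cdot)$ is nonincreasing and $p\ge\overline P$, we get $\Hf^-(q,p)\le\Hf^-(q,\overline P)\le\Hf(q,\overline P)\le\lambda$. For $\tH(q)\le\lambda$ I would follow the heuristic of \Cref{ss.flux}: if $F_0(q,0)\le\Hf(q,0)$ then $\tH(q)=\Hf(q,0)\le\Hf(q,\overline P)\le\lambda$; otherwise $\tH(q)=\Hf(q,p_q)=F_0(q,p_q)$ with $p_q>0$ from \eqref{e.pq}, and either $p_q\ge\overline P$, so that the relaxed condition at the admissible slope $p_q$ reads $\min\{\Hf(q,p_q),F_0(q,p_q)\}=\tH(q)\le\lambda$, or $p_q<\overline P$ (which forces $\overline P>0$) and the monotonicity of $\Hf(q,\cdot)$ on $[0,\infty)$ gives $\tH(q)=\Hf(q,p_q)<\Hf(q,\overline P)\le\lambda$. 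Hence $F(q,p)\le\lambda$.

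\emph{Supersolution.} Let $\overline\rsig$ be a weak $F_0$-supersolution and let $\varphi$ touch it from below at $z$. By \Cref{l.critsuper} there is $\underline p\ge 0$ with $\overline P:=p+\underline p\in(-\infty,+\infty]$ and $\Hf(q,\overline P)\ge\lambda$ when $\overline P<\infty$. For each $p'\le\overline P$ the function $\varphi+(p'-p)y$ still touches $\overline\rsig$ from below at $z$ — a lift by $p'-p\in[0,\underline p]$ if $p'\ge p$, and lying below $\varphi$ on $\{y\ge 0\}$ if $p'<p$ — so the relaxed boundary inequality yields $\max\{\Hf(q,p'),F_0(q,p')\}\ge\lambda$. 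Now I would split on the sign of $\overline P$. If $\overline P<0$, then $\Hf^-(q,\overline P)=\Hf(q,\overline P)\ge\lambda$, and since $F(q,\cdot)$ is nonincreasing with $p\le\overline P$ we get $F(q,p)\ge F(q,\overline P)\ge\lambda$. If $\overline P\ge 0$ (or $\overline P=+\infty$), I claim $\tH(q)\ge\lambda$, so that $F(q,p)\ge\tH(q)\ge\lambda$: indeed, if $p_q>\overline P$ then $\tH(q)=\Hf(q,p_q)>\Hf(q,\overline P)\ge\lambda$; while if $p_q\le\overline P$, evaluating the displayed inequality at $p'=p_q$ and using $\Hf(q,p_q)=F_0(q,p_q)=\tH(q)$ (or $p_q=0$ with $F_0(q,0)\le\Hf(q,0)=\tH(q)$) gives $\tH(q)\ge\lambda$.

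\emph{Main obstacle.} The technical heart—converting relaxed solutions into flux-limited ones by ``pushing into the interior''—is already packaged in \Cref{l.critsub,l.critsuper}, so what remains is careful boundary bookkeeping: keeping straight that the supersolution inequality is effectively tested against the \emph{largest} admissible $y$-slope and the subsolution inequality against the \emph{smallest}; that $\Hf^-$ is the nonincreasing truncation of $\Hf$, so $F(q,\cdot)$ is nonincreasing; and that the whole dichotomy is governed by the crossing slope $p_q$ of \eqref{e.pq}, precisely where the relaxed $\min$/$\max$ in \eqref{e.HJ_F_0} collapses to the flux-limited value $\tH(q)$. The one genuinely delicate point I expect is that the critical slope $\overline P$ from \Cref{l.critsuper} may be negative; this case is not lost because $\Hf^-(q,\overline P)=\Hf(q,\overline P)$ there, so the critical-slope bound $\Hf(q,\overline P)\ge\lambda$ converts directly into $F(q,p)\ge\lambda$. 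The obstacle term $\min\{\rsig,\cdot\}$ causes no real difficulty, for the reasons noted in the first paragraph.
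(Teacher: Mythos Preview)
Your proposal is correct and follows essentially the same approach as the paper's own proof: both use the critical slope lemmas \Cref{l.critsub,l.critsuper} to push into the interior, then exploit the crossing slope $p_q$ to collapse the relaxed $\min/\max$ to $\tH(q)$. Your case organization in the supersolution half (splitting first on the sign of $\overline P$, then on $p_q\lessgtr\overline P$) differs cosmetically from the paper's (which splits first on $p_{q_0}\lessgtr p_0+\bar p$), but the logical content is identical; one minor remark is that your parenthetical about the supersolution case $\overline\rsig(z)=0$ being ``immediate'' is unnecessary, since your main argument never uses the value of $\overline\rsig(z)$.
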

\begin{proof}[Proof of \Cref{thm:4.7} for subsolutions]
Fix any test function $\varphi$ that touches $\rsig$ from above at $(t_0,x_0,y_0)$.  If $y_0 >0$ or $\rsig(t_0,x_0,0)=0$, there is nothing to show.  We, thus, assume that
\be
	y_0 = 0
	\quad\text{ and }\quad
	\rsig(t_0,x_0,0) > 0.
\ee 
Recall the definition of $p_{q_0}$ in \eqref{e.pq} and also the
the definition of $F_0$ in \eqref{e.B}.  Denote 
\begin{equation}\label{eq:notate}
	(q_0,p_0)
		= (\varphi_x(t_0,x_0,0),\varphi_y(t_0,x_0,0)),
		\quad \lambda = - \varphi_t(t_0,x_0,0),
		\quad \text{ and } \quad 
		\tH(q_0) = \Hf(q_0,p_{q_0}).
\end{equation}
By definition of weak solution to~\eqref{e.HJ}-\eqref{e.HJ_F_0}, we have
\begin{equation}\label{eq:relax1}
\min\{ F_0(q_0,p_0), \Hf(q_0,p_0)\} \leq \lambda.
\end{equation}
By \Cref{l.critsub}, there exists a critical slope $-\infty<\bar p \leq 0$ such that
\begin{equation}\label{eq:m1.2a}
\Hf(q_0,p_0 + \bar p) \leq \lambda.
\end{equation}

We claim that
\be\label{e.c050901}
	\tH(q_0) \leq \lambda.
\ee
Let us postpone its proof momentarily and show how to conclude the proof.  Since $\Hf^-$ is decreasing in $p$, we have
\be
	\Hf^-(q_0,p_0)
		\leq \Hf^-(q_0,p_0 + \bar{p})
        \leq \Hf(q_0,p_0+\bar p)
        \leq \lambda.
\ee
Combining this with~\eqref{e.c050901} yields
\be
	F(q_0,p_0)
		=\max\{\Hf^-(q_0,p_0),\tH(q_0)\} \leq \lambda,
\ee
which is precisely~\eqref{e.HJ_F}, finishing the proof.

We now establish~\eqref{e.c050901}.  If $\tH(q_0) \leq \Hf(q_0, p_0 + \bar p)$, then the conclusion is immediate from~\eqref{eq:m1.2a}.  Hence, we assume instead that
\be\label{e.c050903}
	\tH(q_0) > \Hf(q_0, p_0 + \bar p).
\ee 
By \Cref{lem.pq}, this implies that $F_0(q_0,\cdot)$ intersects the increasing part of $\Hf(q_0,\cdot)$ at $p_{q_0}>0$, i.e.
\begin{equation}\label{eq:m1.1}
  F_0(q_0,p_{q_0}) = \Hf(q_0,p_{q_0})= \tH({q_0}).
\end{equation}
Next, the combination of \eqref{e.c050903} and \eqref{eq:m1.1} implies that
\be
	\Hf(q_0,p_{q_0})> \Hf(q_0,p_0 + \bar p). 
\ee
Since $p_{q_0}>0$ and $\Hf(q_0,\cdot)$ is increasing on on $\R_+$, 
we see that
\be\label{e.c050902}
	p_0 + \bar p < p_{q_0}.
\ee
Recall that, due to the definition~\eqref{e.bar_p} of $\bar p$, we have
\be
	\rsig(t,x,y) \leq \phi(t,x,y) + \bar p y
		\qquad\text{ for $(t,x,y)$ sufficiently close to $(t_0,x_0,0)$}.
\ee
Since $\phi(t,x,y) \approx \phi(t,x,0) + p_0 y$, we deduce from~\eqref{e.c050902} that there is $r>0$ such that
\be
	\rsig(t,x,y)
		\leq \varphi(t,x,0) + p_{q_0} y
		\quad \text{ for }(t,x,y) \in B_r(t_0,x_0,0) \setminus\{(t_0,x_0,0)\}.
\ee
The above is an equality when $(t,x,y) = (t_0,x_0,0)$.  Hence, using $\phi(t,x,0) + p_{q_0}y$ as a test function of $\rsig$ in~\eqref{e.HJ}-\eqref{e.HJ_F_0}, we deduce
\be
	\tH(q_0)
    = \min\{F_0(q_0,p_{q_0}),\Hf(q_0,p_{q_0})\}
		\leq \lambda
\ee
where the first equality holds by~\eqref{eq:m1.1} and the second follows as in~\eqref{eq:relax1} by definition of weak subsolution. 
This proves \eqref{e.c050901}. Thus the claim is proved.
\end{proof}

\begin{proof}[Proof of \Cref{thm:4.7} for supersolutions]
Suppose that $\varphi$ touches $\rsig$ from below at $(t_0,x_0,0)$, and fix $\lambda, q_0, p_0,$ and $\tH(q_0)$ as in \eqref{eq:notate}.  Then $\rsig \geq 0$ everywhere, and
\be
\max\{H(q_0,p_0), F_0(q_0,p_0)\} \geq \lambda.
\ee
We only need to show 
\be\label{e.c050906}
	F(q_0,p_0)
		=\max\{ \Hf^-(q_0,p_0) , \tH(q_0)\}
		\geq \lambda.
\ee
Let $p_{q_0}\geq 0$ be given by~\eqref{e.pq} and  
let the critical slope $\bar p \in [0,+\infty]$ be as in \Cref{l.critsuper}. Let us first argue under the assumption that
\be\label{e.c051301}
	p_{q_0} < \bar p + p_0,
\ee
but we consider the opposite case afterwards. 

If \eqref{e.c051301} holds, then clearly, from the definition~\eqref{e.bar_p} of $\bar p$, that
\be\label{e.c050904}
	\varphi(t,x,0) + p_{q_0}y
	\quad\text{ touches $\rsig$ from below at $(t_0,0,0)$}.
\ee
By the definition of relaxed supersolution,~\eqref{e.c050904} implies that
\begin{equation}\label{eq:m1.3}
    \max\{F_0(q_0,p_{q_0}),\Hf(q_0,p_{q_0})\} \geq \lambda. 
\end{equation}

Consider first the case where $\tH(q_0) > \inf \Hf(q_0,\cdot)$.  Then $p_{q_0}$, defined above, is exactly the $p_{q_0}$ given in \Cref{lem.pq}.(iii).  Hence,
\be
	\tH(q_0)
		= \Hf(q_0,p_{q_0})
		= \max\{F_0(q_0,p_{q_0}),\Hf(q_0,p_{q_0})\}
		\geq \lambda,
\ee
which establishes~\eqref{e.c050906} in this case.

Now consider the opposite case, where 
$\tH(q_0) \leq  \inf \Hf(q_0,\cdot)$.  Then, \Cref{lem.pq}.(iv) and~\eqref{e.pq} implies that $p_{q_0} = 0$, combining with  \Cref{lem.pq}.(ii) and \eqref{eq:m1.3}  we get
\be\label{e.a240720.1}
	\Hf(q_0,0)
        = \max\{F_0(q_0,0), \Hf(q_0,0)\}
        \geq \lambda.
\ee 
Combining \eqref{e.a240720.1} with the obvious inequalities below (recall the definition~\eqref{e.Hf-})
\be
	\max\{\Hf^-(q_0,p_0),\tH(q_0)\}
		\geq \Hf^-(q_0,p_0)
		\geq \inf \Hf(q_0,\cdot)	
		= \Hf(q_0,0),
\ee
we deduce that
\be
	F(q_0,p_0)
		= \max\{\Hf^-(q_0,p_0),\tH(q_0)\}
		\geq \Hf(q_0,0)
		= \max\{F_0(q_0,0), \Hf(q_0,0)\}
		\geq \lambda.
\ee
This concludes the proof of \Cref{thm:4.7}, in the case that~\eqref{e.c051301} holds.

Let us now consider the case where~\eqref{e.c051301} does not hold, i.e. we assume
\be\label{e.c051303}
	p_{q_0} \geq \bar p + p_0,
\ee
In particular, we deduce that $0\leq \bar p < +\infty$, and, by \Cref{l.critsuper},
\be\label{e.c051302}
	\lambda
		\leq \Hf(q_0,p_0 + \bar p).
\ee
In case $p_0 + \bar p \leq 0$, then 
recalling that $\Hf^-$ is nonincreasing everywhere and is equal to $\Hf$ on $(-\infty,0]$, we find
\be
	\lambda \leq \Hf(q_0,p_0 + \bar p)
		= \Hf^-(q_0,p_0 + \bar p)
		\leq \Hf^-(q_0,p_0)
		< F(q_0, p_0)
\ee
which is the desired inequality~\eqref{e.c050906}.

In case $p_0 + \bar p >0$, then
\be\label{e.c062602}
    \lambda
        \leq \Hf(q_0,p_0 + \bar p)
        \leq \Hf(q_0, p_{q_0}) = \tH(q_0) \leq F(q_0,p_0),
\ee
which, again, is the desired inequality~\eqref{e.c050906}.
The first inequality in~\eqref{e.c062602} follows from \eqref{e.c051302} and the second follows from \eqref{e.c051303} and $p_0 + \bar p \geq 0$. 

Thus, in all cases,  \eqref{e.c050906} follows.  This concludes the proof.
\end{proof}

\subsection{The convergence of $w^\eps$: the proof of \Cref{t.uvw}.\ref{i.convergence}-\ref{i.Wulff} and \Cref{t.strong}}

\begin{proof}
Let us first note that we need only prove that $w_* = w^*$ on $(0,\infty)\times \overline\H$.  Indeed, denoting their common value by $w$, then the locally uniform convergence of $u^\eps, v^\eps$ to $w$ follows then directly from the definitions of $w_*$ and $w^*$, while the fact that $w$ is a weak solution to~\eqref{e.HJ}-\eqref{e.HJ_F_0} and a strong solution to~\eqref{e.HJ}-\eqref{e.HJ_F} follows from \Cref{l.hr} and \Cref{thm:4.7}, respectively, in view of the fact that $w^* = w_* = w$.

We now prove that $w_* = w^*$.  First, we claim that for each $\tau>0$, we have
\begin{equation}\label{e.s.scp1}
    w^*(\tau,x,y) \leq w_*(0,x,y) \quad \text{ for all }(x,y) \in \overline\H.
\end{equation}
By \Cref{lem:2.4a} and \Cref{lem:2.4}, 
\be
	w^*(\tau,0,0) =0 \leq w_*(0,0,0).
\ee
For $(x,y)\neq (0,0)$, \Cref{l.upper_bound} and \Cref{lem:2.4} imply that
\be
	w^*(\tau,x,y)
		< +\infty
		= w_*(0,x,y).
\ee
Hence~\eqref{e.s.scp1} holds.

We can, thus, apply the comparison principle, detailed in \Cref{thm:scp}, to $w^*(\tau+\cdot, \cdot,\cdot)$ and $w_*$ to conclude that
\begin{equation}\label{e.s.scp2}
	w^*(\tau + t,x,y)
		\leq w_*(t,x,y)
		\quad \text{ for }(t,x,y) \in (0,\infty)\times \overline\H.
\end{equation}
Let us note that \Cref{thm:scp} requires that we are working with {\em strong} sub- and supersolutions, which we have thanks to \Cref{thm:4.7}.

Recall from~\eqref{e.c051304} that $w^* \in C^{\sfrac12}_{\rm loc}((0,\infty)\times \overline \H)$.  Then, for fixed $(t,x,y) \in (0,\infty)\times \overline \H$, we can take $\tau \to 0$ in~\eqref{e.s.scp2}.  We, thus, find
\be
	w^*(t,x,y)
		\leq w_*(t,x,y) \quad \text{ for }(t,x,y) \in (0,\infty)\times \overline \H.
\ee
Since, by construction, we have $w^* \geq w_*$, it follows that $w^*=w_*$. 
	
The next claim, \Cref{t.uvw}.\ref{i.scaling}, is immediate from \Cref{t.uvw}.\ref{i.convergence} and \Cref{l.hr}.\ref{i.rho^*}.

Thanks to \Cref{t.uvw}.\ref{i.convergence}-\ref{i.scaling}, $\mathcal{W} = \{(x,y):~w(1,x,y) = 0\}$ is well defined and $\rsig(t,x,y) = t\rho(\sfrac{x}{t},\sfrac{y}{t})$.
Next, we observe that $\mathcal{W}$ is star-shaped. 
	It suffices to show that $\rho(x,y) \geq \rho(\beta x, \beta y)$ for all $0<\beta<1$. Indeed, one can show that $\rho(x,y)$ and $\rho(\beta x, \beta y)$ forms a pair of solution and subsolution to the equation \eqref{e.sigma*}. Hence one again have $\rho(x,y) \geq \rho(\beta x, \beta y)$. 
This completes the proof. 
\end{proof}

\subsection{The spreading properties of $(U,V)$: the proof of \Cref{t.uvw}.\ref{i.spreading}}

\Cref{t.uvw}.\ref{i.spreading} is made up of two equations~\eqref{e.conv_to_0} and~\eqref{e.conv_to_1}, which we prove one at a time.

\begin{proof}[Proof of~\eqref{e.conv_to_0}]
	We first show that the claim holds for all $(x,y)$ far from the origin: there exists $R>1$ such that
\begin{equation}\label{e.largeR}
    \lim_{t\to\infty} \sup_{ \frac{1}{t}(x,y) \notin B_{R+1}} V(t,x,y)
        =0
    \qquad\text{ and }\qquad
    \lim_{t\to\infty} \sup_{\frac{1}{t}(x,0) \notin B_{R+1}} U(t,x)
    		= 0.
\end{equation}
Indeed, for any fixed $\vt \in [-\sfrac\pi2, \sfrac\pi2]$, let
\be
	\overline{V}_\vt(t,x,y)
		= \exp\left\{ -(x,y)\cdot(\sin \vt, \cos \vt) +Rt\right\}
	\quad \text{ and } \quad
	\overline{U}_\vt(t,x)
		= \frac{\nu}{\mu} \exp\left\{ -x\sin \vt +Rt\right\}.
\ee
A straightforward computation shows that, for arbitrarily fixed $R \geq \max\{2, D\}$, $(\overline{U}_\vt,\overline{V}_\vt)$ is a supersolution to the unscaled problem~\eqref{e.berestycki}. Up to multiplying $(\overline{U}_\vt,\overline{V}_\vt)$ by a large constant $C>1$ so that $U \leq C\overline U$ and $V \leq C \overline V$ at $t=0$, one can then conclude by applying the comparison principle (see \cite[Proposition 4.3]{Berestycki2016shape}) and performing the scaling in $\eps$.    We omit the details as they are entirely straightforward.

We now consider the case when $(x,y)$ may be ``near'' the origin.  We claim that, for all $\eta>0$,
\begin{equation}\label{e.d.2}
        \lim_{t\to+\infty} \sup_{\frac{1}{t}(x,y) \in \overline B_{R+2}  \atop {\rm dist}(\frac{1}{t}(x,y), \mathcal{W} ) >\eta} V(t,x,y) = 0
        \qquad\text{ and }\qquad
        \lim_{t\to+\infty} \sup_{\frac{1}{t}(x,0) \in \overline B_{R+2}  \atop {\rm dist}(\frac{1}{t}(x,0), \mathcal{W} ) >\eta} U(t,x) = 0.
\end{equation}
Notice that the proof of~\eqref{e.conv_to_0} follows directly from the combination of~\eqref{e.largeR} and~\eqref{e.d.2}.

To this end, we first show how~\eqref{e.d.2} follows from the claim that
\be\label{e.c051305}
	\delta_\eta
		:=\lim_{\eps\to0}  \min\{\min_{(x,0) \in K_\eta}u^\eps(1,x),~ \min_{(x,y) \in K_\eta} v^\eps(1,x,y)\}
		> 0,
\ee
where
\be
	K_\eta =\{(1,x',y'):~ (x',y') \in \overline B_{R+2},  {\rm dist}((x',y'), \mathcal{W} ) \geq \eta\}.
\ee
Indeed, assuming~\eqref{e.c051305}, we find
\be
	\begin{split}
		\lim_{t\to+\infty} \sup_{\frac{1}{t}(x,y) \in \overline B_{R+2}  \atop {\rm dist}(\frac{1}{t}(x,y), \mathcal{W} ) >\eta} V(t,x,y)
			&= \lim_{\eps \to 0} \sup_{\eps (x,y) \in \overline B_{R+2}  \atop {\rm dist}(\eps(x,y), \mathcal{W} ) >\eta} V(\sfrac1\eps,x,y)
			\\&
			= \lim_{\eps \to 0} \sup_{(x',y') \in K_\eta} V(\sfrac1\eps,\sfrac{x'}{\eps},\sfrac{y'}{\eps})
			\\&
			= \lim_{\eps \to 0} \sup_{(x',y') \in K_\eta} e^{-\frac{1}{\eps} v^\eps(x',y')}
			\leq \liminf_{\eps \to 0} e^{-\frac{1}{\eps} \delta_\eta}
			= 0,
	\end{split}
\ee
and a similar statement for $U$ to hold,   
which is precisely~\eqref{e.d.2}.

We now prove~\eqref{e.c051305} to finish.  If it were not true, then there must be a sequence $\eps_n \to 0$ and $(x_n,y_n) \in K_\eta$ such that either $v^{\eps_n}(1,x_n,y_n) \to 0$ or $u^{\eps_n}(1,x_n) \to 0$.  By compactness, we may, up to passing to a subsequence whose renumbering we omit, assume that $(x_n,y_n)$ converges to a point $(x_\infty, y_\infty) \in K_\eta$.  By the locally uniform convergence given by \Cref{t.uvw}.\ref{i.convergence}, it follows that $w(1,x_\infty,y_\infty) = 0$.  This implies that $(x_\infty, y_\infty) \in \cW$, which contradicts the fact that $(x_\infty,y_\infty) \in K_\eta$.  Hence,~\eqref{e.c051305} must hold.  The proof is complete.
\end{proof}

The proof of the lower bound of $(V,U)$ in the interior of $\cW$ is significantly more involved.  The bound for $U$ follows from that of $V$, so we separate their proofs.

\begin{proof}[Proof of the bound on $V$ in~\eqref{e.conv_to_1}]
Fix any sequence $(\bar t_n, \bar x_n, \bar y_n)$ such that $\bar t_n \to \infty$, $\bar y_n > 0$, and
\be
	\dist\left(\frac1{\bar t_n} (\bar x_n, \bar y_n), \H\setminus \cW\right)
		\geq \eta.
\ee  
It is enough to show that 
\be
	V(\bar t_n, \bar x_n, \bar y_n) \to 1.
\ee
Let us note that, by~\eqref{e.c052403}, we need only show that
\be\label{e.V_geq_1}
	\liminf_{n\to\infty} V(\bar t_n, \bar x_n, \bar y_n) \geq 1.
\ee
Up to passing to a subsequence whose renumbering we suppress, there is $(\bar x, \bar y)$ such that $(\sfrac{\bar x_n}{\bar t_n}, \sfrac{\bar y_n}{\bar t_n}) \to (\bar x, \bar y)$ and
\be\label{e.c051401}
	\dist\left((\bar x, \bar y), \H\setminus \cW\right)
		\geq \eta.
\ee
Let us fix $n$ sufficiently large so that
\be
	|(\bar x, \bar y) - (\sfrac{\bar x_n}{\bar t_n}, \sfrac{\bar y_n}{\bar t_n})|
		\leq \frac{\eta}{2}.
\ee
There are two cases to consider depending on $\bar y$.

\medskip
\noindent
{\bf \# Case one: $\bar y>0$.}  Define the test functions
\be\label{e.c051501}
	\phi_n(x,y) = |t-1|^2 + |x-\sfrac{\bar x_n}{\bar t_n}|^2 + |y - \sfrac{\bar y_n}{\bar t_n}|^2.
\ee
Using~\eqref{e.c051401}, $(\bar x, \bar u) \in {\rm Int}\, \mathcal{W}$. By the definition of $\cW$, it follows that $w - \phi_n$ attains a maximum value of $0$ at  the point $(1,\sfrac{\bar x_n}{\bar t_n}, \sfrac{\bar y_n}{\bar t_n})$ on $B_{\tilde \eta}(1,\bar x, \bar y)$ for some $\tilde \eta$ that is uniform in $n$ sufficiently large.  From \Cref{t.uvw}.\ref{i.convergence}, it follows that, for $n$ sufficiently large,
\be\label{e.c051404}
	v^{\sfrac1{\bar t_n}} - \phi_n  \qquad (\text{where } \quad v^{\sfrac1{\bar t_n}} = v^{\ep} \big|_{\ep = \sfrac1{\bar t_n}})
\ee
has an interior maximum in $B_{\tilde \eta}(1, \bar x, \bar y)$ at some point $(t_n,x_n,y_n)$ such that
\be\label{e.c051402}
	(t_n, x_n,y_n) \to (1,\bar x, \bar y)
		\qquad\text{ as } n\to\infty.
\ee
Clearly, for $n$ sufficiently large, $y_n > 0$.  Using~\eqref{e.scaled_eqn}, we find
\be\label{e.c051403}
	\partial_t \phi_n - \frac{1}{\bar t_n} \Delta \phi_n + |\nabla \phi|^2 \leq V(\cdot\ \bar t_n, \cdot\ \bar t_n, \cdot\ \bar t_n) - 1
		\qquad\text{ at $(t_n,x_n,y_n)$.}
\ee
Let us note that, by explicit computation and the convergence~\eqref{e.c051402}, the left hand side of~\eqref{e.c051403} tends to zero as $n\to\infty$.  Hence,
\be\label{e.c051406}
	1
		\leq \liminf_{n \to \infty} V(t_n \bar t_n, x_n \bar t_n, y_n \bar t_n).
\ee

On the other hand, we have
\be\label{e.c051405}
	\begin{split}
		V(t_n \bar t_n, x_n \bar t_n , y_n \bar t_n)
			&= \exp\left\{- \bar t_n v^{\sfrac{1}{\bar t_n}}(t_n, x_n, y_n)\right\}
			\\&
			\leq \exp\left\{- \bar t_n
					\left(
						v^{\sfrac{1}{\bar t_n}}
						(1, \sfrac{\bar x_n}{\bar t_n}, \sfrac{\bar y_n}{\bar t_n})
						- \phi_n (1, \sfrac{\bar x_n}{\bar t_n}, \sfrac{\bar y_n}{\bar t_n})
						+ \phi_n (t_n, x_n, y_n)
					\right)
					\right\}
			\\&
			= V(\bar t_n, \bar x_n, \bar y_n) \exp\left\{\bar t_n
					\left(
						- \phi_n (t_n, x_n, y_n)
					\right)
					\right\}
			\leq V(\bar t_n, \bar x_n, \bar y_n),
	\end{split}
\ee
since $(x_n,y_n)$ is the location of the maximum of~\eqref{e.c051404}.  Here we also used the form of $\phi_n$ to deduce that $\phi_n (1, \sfrac{\bar x_n}{\bar t_n}, \sfrac{\bar y_n}{\bar t_n}) = 0$.

Putting together~\eqref{e.c051405} and~\eqref{e.c051403}, we deduce that
\be
	1
		\leq \liminf_{n\to\infty} V(\bar t_n, \bar x_n, \bar y_n).
\ee
This is precisely~\eqref{e.V_geq_1}, which completes the proof in this case.

\medskip
\noindent
{\bf \# Case two: $\bar y=0$.}
Fix any (small) $\delta>0$, and define the test function
\be
    \Phi_\delta(y)=1+ \frac{\delta y}{1+y}.
\ee
It is easy to observe that
\begin{equation}\label{e.c051505}
	\frac{\Phi_\delta'(0)}{\Phi_\delta(0)} = \delta,
	\qquad 
	\Phi_\delta(0)=1,
	\qquad \text{ and }\qquad
	-\Phi_\delta'' \leq 2\Phi_\delta \quad \text{ for }y \geq 0.
\end{equation}
As in the previous case, we may find a sequence $(t_n,x_n,y_n) \in B_{\tilde \eta}(1,\sfrac{\bar x_n}{\bar t_n}, \sfrac{\bar y_n}{\bar t_n})$ that are locations of maxima of
\be\label{e.c051502}
	\max\left\{
		v^{\sfrac{1}{\bar t_n}} + \frac{1}{\bar t_n} \log \Phi_\delta\left( \bar t_n y\right),
		u^{\sfrac{1}{\bar t_n}} + \frac{1}{\bar t_n} \log \left( \frac{\nu}{\mu} - \frac{\delta}{2\kappa\mu}\right)
		\right\}
		- \phi_n
\ee
Here, $\phi_n$ is as in~\eqref{e.c051501}.  Using the form of $\phi_n$ and the fact that $w(1,\cdot,\cdot) \equiv 0$ on $B_\eta(\bar x, \bar y)$, it follows that
\be\label{e.c051503}
	(t_n,x_n,y_n) \to (1,\bar x,\bar y).
\ee
If $\bar y >0$, then repeat the proof of case one. Henceforth suppose $\bar y = 0$ in \eqref{e.c051503}, i.e. $y_n \to 0$.
There are exactly three remaining subcases to consider here.

{\bf Subcase one: $y_n=0$ for infinitely many $n$ and the maximum is attained by the $u$ term in~\eqref{e.c051502}.}
Momentarily, let
\be
	\tilde \phi_n(t,x,y)
		= \phi_n(t,x,y) - \frac{1}{\bar t_n} \log\Phi_\delta(\bar t_n y).
\ee
It is clear that all $t$ and $x$ derivatives of $\tilde \phi_n$ tend to zero at $(t_n,x_n,y_n)$ as $n\to\infty$ due to the convergence~\eqref{e.c051503}.   Thus, we use~\eqref{e.scaled_eqn} to find, at $(t_n,x_n,y_n)$,
\be
	\begin{split}
		0
			&\geq \partial_t \tilde \phi_n
				- \frac{1}{\bar t_n} D \partial_{xx} \tilde \phi_n
				+ D |\partial_x \tilde \phi_n|^2
				+ \nu e^{\bar t_n \left(u^{\sfrac{1}{\bar t_n}} - v^{\sfrac{1}{\bar t_n}}\right)} - \mu
			\\&
			\geq 
				o(1)
				+ \nu e^{\bar t_n \left(u^{\sfrac{1}{\bar t_n}} - v^{\sfrac{1}{\bar t_n}}\right)} - \mu
			\\&
			\geq o(1)
				+ \nu \left(\frac{\mu}{\nu - \sfrac{\delta}{2\kappa}}\right) - \mu
			= o(1)
				+ \mu \left(\frac{\nu}{\nu - \sfrac{\delta}{2\kappa}} - 1\right)
			> 0,
	\end{split}
\ee
where the third inequality follows from the fact that, by assumption, at $(t_n, x_n, 0)$,
\be
	v^{\sfrac{1}{\bar t_n}} + \frac{1}{\bar t_n} \underbrace{\log \Phi_\delta\left(0\right)}_{=0}
		\leq 
		u^{\sfrac{1}{\bar t_n}} + \frac{1}{\bar t_n} \log \left( \frac{\nu}{\mu} - \frac{\delta}{2\kappa \mu}\right)
\ee
and the last inequality follows by taking $n$ sufficiently large. This is a contradiction.  Hence, this subcase cannot occur.

{\bf Subcase two: $y_n=0$ for infinitely many $n$ and the maximum is attained by the $v$ term in~\eqref{e.c051502}.}
We argue using the equation for the boundary condition of $v$ in~\eqref{e.scaled_eqn}.  
Indeed, we find, at $(t_n,x_n,0)$,
\be
	\begin{split}
		-\frac{2 \bar y_n}{\bar t_n} - \delta
			= \partial_y \tilde\phi_n
			&\geq  \kappa \left(\mu e^{\bar t_n \left(v^{\sfrac{1}{\bar t_n}}-u^{\sfrac{1}{\bar t_n}}\right)} - \nu\right)
			\\&
			\geq \kappa \left( \nu - \frac{\delta}{2\kappa} - \nu\right)
			= -\frac{\delta}{2},
	\end{split}
\ee
where, in the second inequality, we used that, by assumption
\be
	v^{\sfrac{1}{\bar t_n}} + \underbrace{\frac{1}{\bar t_n} \log \Phi_\delta\left(0\right)}_{=0}
		\geq 	u^{\sfrac{1}{\bar t_n}}
				+ \frac{1}{\bar t_n} \log \left( \frac{\nu}{\mu} - \frac{\delta}{2\kappa \mu}\right).
\ee
This is clearly a contradiction when $n$ is sufficiently large.  Hence, this subcase cannot occur.

{\bf Subcase three: $y_n>0$ for all $n$ sufficiently large.}
We argue similarly as in the first case, except using the equation for the boundary condition of $v$ in~\eqref{e.scaled_eqn} in place of the equation for $u$.  It is clear that all derivatives of $\phi_n$ tend to zero at $(t_n,x_n,y_n)$ as $n\to\infty$ due to the convergence~\eqref{e.c051503}.

Thus, we find, at $(t_n,x_n,y_n)$,
\be
	\begin{split}
		0
			&\geq \partial_t \tilde \phi_n 
				- \frac{1}{\bar t_n} \Delta \tilde \phi_n
				+ |\nabla \tilde \phi_n|^2
				+ 1 - V(\cdot \ \bar t_n,\cdot \ \bar t_n,\cdot \ \bar t_n)
			\\&
			\geq o(1)
				- \left( - \frac{\Phi_\delta''(\bar t_n y_n)}{\Phi(\bar t_n y_n)} + \frac{\Phi_\delta'(\bar t_n y_n)^2}{\Phi_\delta(\bar t_n y_n)^2}\right)
				+ \left|o(1) + \frac{\Phi'_\delta(\bar t_n y_n)}{\Phi_\delta(\bar t_n y_n)}\right|^2
				+ 1 - V(\cdot \ \bar t_n,\cdot \ \bar t_n,\cdot \ \bar t_n)
			\\&
			\geq o(1) - 2\delta + 1 - V(\cdot \ \bar t_n,\cdot \ \bar t_n,\cdot \ \bar t_n).
	\end{split}
\ee
In the last inequality, we used~\eqref{e.c051505}.  Thus, we have
\be\label{e.c051506}
	\liminf_{n\to\infty} V(t_n \bar t_n, x_n \bar t_n, y_n \bar t_n)
		\geq 1 - 2\delta.
\ee
Next, we use that $(t_n,x_n,y_n)$ is the location of a maximum of~\eqref{e.c051502}.  Using this and then the positivity of $\log \Phi_\delta$, we find
\be
	\begin{split}
		v^{\sfrac{1}{\bar t_n}}(t_n,x_n,y_n)
			&\geq v^{\sfrac{1}{\bar t_n}}(1, \sfrac{\bar x_n}{\bar t_n}, \sfrac{\bar y_n}{\bar t_n})
				- \frac{1}{\bar t_n} \log \Phi_\delta(\bar y_n)
				+ \frac{1}{\bar t_n} \log \Phi_\delta(\bar t_n y_n)
			\\&
			\geq v^{\sfrac{1}{\bar t_n}}(1, \sfrac{\bar x_n}{\bar t_n}, \sfrac{\bar y_n}{\bar t_n})
				- \frac{1}{\bar t_n} \log \Phi_\delta(\bar y_n),
	\end{split}
\ee
which, after undoing the Hopf-Cole transform, yields
\be
	V(t_n \bar t_n, x_n \bar t_n, y_n \bar t_n)
		\leq V(\bar t_n, \bar x_n, \bar y_n) \left(1+ \frac{\delta \bar y_n}{1+ \bar y_n} \right)
		\leq V(\bar t_n, \bar x_n, \bar y_n) \left(1+ \delta\right).
\ee
Combining this with~\eqref{e.c051506} and using that $\bar y_n \to 0$, by assumption of the case two, we deduce
\be
	\liminf_{n\to\infty} V(\bar t_n, \bar x_n, \bar y_n)
		\geq \frac{1 - 2\delta}{1+\delta}.
\ee
Letting $\delta \to 0$, this is precisely~\eqref{e.V_geq_1}, finishing the proof of case two and, thus, the lower bound of $V$.
\end{proof}

We now use the lower bound on $V$ to deduce the lower bound on $U$.
\begin{proof}[Proof of the bound on $U$ in~\eqref{e.conv_to_1}]
We argue by contradiction.  Suppose there is a sequence $(\bar t_n, \bar x_n)$ such that $\bar t_n \to \infty$, and
\be
	\dist\left(\frac1{\bar t_n} (\bar x_n, 0), \H\setminus \cW\right)
		\geq \eta,
\ee
but
\be\label{e.c051507}
	\lim_{n\to\infty} U(\bar t_n, \bar x_n, \bar y_n) < \frac{\mu}{\nu}.
\ee
Let
\be
	U_n(t,x) = U(t + \bar t_n, x + \bar x_n)
	\quad\text{ and }\quad
	V_n(t,x) =V(t + \bar t_n, x+ \bar x_n, y).
\ee
By parabolic regularity theory and using~\eqref{e.c051507}, we can pass to a sequence so that 
\begin{equation}\label{e.sssu1}
	\lim_{j \to \infty} U_j(0,0)
		= \lim_{j \to \infty} U^{\ep_j}(t_j,x_j)
		< \tfrac{\nu}{\mu},
	\quad \text{ and }\quad
	(U_j,V_j) \to (U_\infty,1)
		\quad \text{ in } C^{1,2}_{\rm loc},
\end{equation}
where $U_\infty$ is a solution to 
\begin{equation}\label{e.sus}
	\partial_t U_\infty - D \partial_{xx} U_\infty
		= \nu - \mu U_\infty
		\quad \text{ in } \mathbb{R}^2
	\qquad\text{ and } \qquad
	0 \leq U_\infty  \leq \tfrac{\nu}{\mu}.
\end{equation}
In~\eqref{e.sus}, we used that, for all fixed $(t,x)$ and large enough $n$,
\be
	\dist\left(\frac{1}{t + \bar t_n}(\bar x_n, 0), \H \setminus \cW\right)
		\geq \frac{\eta}{2},
\ee
so that the previous proof implies that
\be
	V_n(t,x)
		= V(t+ \bar t_n, x + \bar x_n, 0)
		\to 1.
\ee
Noticing that, for any $t_0>0$,
\be
	\underline{U} = \tfrac{\nu}{\mu} (1- e^{-\mu(t+t_0)})
\ee
is a subsolution to~\eqref{e.sus}. Since $\underline{U}(t_0,\cdot) =0 \leq U_\infty(t_0,\cdot)$, we may apply the comparison principle to deduce that
\be
	\frac{\mu}{\nu}
		> U_\infty (0,0)
		\geq \underline U(0,0)
		= \frac{\nu}{\mu} \left(1- e^{-\mu t_0}\right).
\ee
Taking $t_0 \to \infty$, we obtain a contradiction.  This concludes the proof.
\end{proof}

\section{Using the control problem to compute $w$: $w = \max\{J,0\}$} \label{sec:w=J+}

In this section, we prove \Cref{t.w=J+}, namely, $w = \max\{J,0\}.$  Our approach is to use Freidlin's condition. It first appeared in~\cite[Section~2]{Freidlin(N)} as ``condition (N).''  Roughly, it says that the optimal paths in~\eqref{e.Jcon} that lead to the front at time $t$ remain at or beyond the front at all intermediate times $s \leq t$.  Let us note that Freidlin's context was a bit different from ours being boundary-less; nonetheless, the proof may be easily adapted.

First, we make some simplifications.  By the scaling invariance~\eqref{e.scaling}, 
we need only consider $t=1$ and $\gamma \in H^1(0,1)$.  Next, we define some notation.  Let the ``action'' of the Lagrangian $\hat L$ be
\be\label{e.c052901}
	\cA(\gamma) := \int_0^1 \hat L(\gamma(s), \dot \gamma(s)) ds.
\ee
Let us now define Freidlin's condition.

\begin{definition}[Freidlin's condition]
Let $J$ be given in \eqref{e.Jcon} and define
\be\label{e.P}
	\cP = \{(t,x,y) \in (0,\infty)\times \overline\H:~ J(t,x,y)>0\}.
\ee
We say that $J$ satisfies (F) if, for each $(t,x,y) \in \partial P$, 
\be\tag{\bf F}
	\begin{split}
		J(t,x,y)
			= \inf \left\{ \cA(\gamma) : 
			         \gamma \in N(t,x,y) \text{ and } (s,\gamma(s)) \in P~\text{ for }s\in(0,t)\right\}. 
	\end{split}
\ee
\end{definition}

\begin{lemma}\label{l.(F)}
The value function $J$ satisfies condition (F).
\end{lemma}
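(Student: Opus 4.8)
The plan is to prove the two inequalities in (F) separately. One direction is immediate: every $\gamma$ admissible in the right-hand side of (F) belongs to $N(t,x,y)$, so $J(t,x,y)\le \inf\{\cA(\gamma):\gamma\in N(t,x,y),\ (s,\gamma(s))\in\cP\ \text{for}\ s\in(0,t)\}$. The substance is the reverse inequality, and for this I would first reduce the problem. By the scaling symmetry~\eqref{e.scaling} (which also makes $\cP$ a cone, since $J(\lambda t,\lambda x,\lambda y)=\lambda J(t,x,y)$), I may take $t=1$; by the evenness of $J$ in $x$, I may take $x\ge 0$; and the vertex $(0,0,0)$ is the only boundary point with $t=0$, where (F) is vacuous, so I assume $t=1>0$. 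Since $J$ is continuous by \Cref{p.straight_lines} and $(1,x,y)\in\partial\cP$, we have $J(1,x,y)=0$, so it suffices to exhibit admissible paths $\gamma$ with $\cA(\gamma)\to 0$. Let $\gamma\in N(1,x,y)$ be the unique optimal path of \Cref{p.straight_lines}, with parameters $\tau_0,z_0,q_0,p_0$, and split into the cases $\tau_0\in(0,1)$ and $\tau_0\in\{0,1\}$.

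\emph{Case $\tau_0\in(0,1)$.} By \Cref{p.straight_lines}, $y>0$, and $\gamma$ is not a straight line through the origin, so we are in the road-enhanced alternative of \Cref{p.no_Huygens}; hence $z_0/\tau_0=\tH'(q_0)=\crr>c_*(\sfrac\pi2)$ and the field velocity $\cf$ satisfies $|\cf|<2$. Combining $\gamma(s)=(s/\tau_0)(z_0,0)$ from~\eqref{e.Lq3b} with~\eqref{e.a.240719.1} on $[0,\tau_0]$, and \Cref{lem:3.4} on $[\tau_0,1]$, the running value along $\gamma$ is
\[
J(s,\gamma(s))=\begin{cases}s\,\Lr(\tH'(q_0)), & s\in(0,\tau_0],\\[1mm](s-\tau_0)\big(\tH(q_0)-2\big)+\tau_0\,\Lr(\tH'(q_0)), & s\in[\tau_0,1].\end{cases}
\]
Now $\Lr(\tH'(q_0))=\Lr(\crr)>\Lr(c_*(\sfrac\pi2))=0$ by monotonicity of $\Lr$ and \Cref{p.roadspeed}, while $\tH(q_0)-2=\Lf(\cf)<0$ by~\eqref{e.c053103} and $|\cf|<2$. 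Thus $s\mapsto J(s,\gamma(s))$ is strictly positive on $(0,\tau_0]$ and is affine-decreasing on $[\tau_0,1]$ from a positive value at $\tau_0$ to $J(1,x,y)=0$ at $s=1$, so it is strictly positive on all of $(0,1)$. Therefore $(s,\gamma(s))\in\cP$ for $s\in(0,1)$; $\gamma$ itself is admissible, and the infimum in (F) is $\le\cA(\gamma)=J(1,x,y)$.

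\emph{Case $\tau_0\in\{0,1\}$.} By \Cref{p.straight_lines}, $\gamma(s)=s\,\gamma(1)$, where either $\gamma(1)=(x,y)$ with $y>0$ and $|(x,y)|=2$ (if $\tau_0=0$), or $\gamma(1)=(c_*(\sfrac\pi2),0)$ (if $\tau_0=1$); in either case $J(1,\gamma(1))=J(1,x,y)=0$, so $J(s,\gamma(s))=sJ(1,\gamma(1))=0$ and $\gamma$ lies on $\partial\cP$, hence is \emph{not} admissible. I would therefore perturb it: for small $c>0$ set $\gamma_c(s)=\big(s+c\,s(1-s)\big)\gamma(1)$, which lies in $\overline\H$ (a cone), satisfies $\gamma_c(0)=0$, $\gamma_c(1)=\gamma(1)$, and visits the same road/field regions as $\gamma$. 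Since $\gamma_c(s)/s=(1+c(1-s))\gamma(1)$ with $1+c(1-s)>1$ on $(0,1)$, and $\lambda\mapsto J(1,\lambda\gamma(1))$ is strictly increasing with $J(1,\gamma(1))=0$ (by \Cref{l.monotonicity}.\ref{i.radially_increasing}), we get $J(s,\gamma_c(s))=s\,J(1,\gamma_c(s)/s)>0$ for $s\in(0,1)$, so $\gamma_c$ is admissible. Finally, $\dot\gamma_c(s)=(1+c(1-2s))\gamma(1)$ is bounded uniformly in $c\in[0,1]$ and $s\in[0,1]$, and the integrand of $\cA(\gamma_c)=\int_0^1\hat L(\gamma_c(s),\dot\gamma_c(s))\,ds$ converges, as $c\to0^+$, to $\hat L(\gamma(s),\dot\gamma(s))$ (using continuity of $\Lf,\Lr$ and that $\gamma_c,\gamma$ are in the same regions), so $\cA(\gamma_c)\to\cA(\gamma)=J(1,x,y)$. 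Hence the infimum in (F) is $\le J(1,x,y)$, which together with the trivial inequality completes the proof.

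The step I expect to be the main obstacle is the second case: at the "non-road-enhanced" boundary points the optimal path rides along $\partial\cP$ rather than entering the open set $\cP$, so one must design a perturbation that genuinely enters $\cP$ (this is exactly where the radial monotonicity of $J$, equivalently the star-shapedness of $\cW$, is needed) while changing the action by only $o(1)$ (which uses the continuity, and convexity, of $\Lf$ and $\Lr$).
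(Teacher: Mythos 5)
Your proof is correct and follows essentially the same route as the paper: both split into the road-enhanced case (where Proposition~\ref{p.no_Huygens} gives that $J(s,\gamma(s))>0$ for $s\in(0,1)$ so $\gamma$ is itself admissible) and the straight-line boundary case (where the optimal path rides along $\partial\cP$ and must be perturbed into $\cP$ using the radial monotonicity of $J$). The only cosmetic difference is the perturbation used in the degenerate case --- the paper takes the speed-up-then-rest reparametrization $\gamma_\eps(s)=\gamma(s(1+\eps))$ at the outset to reduce the claim to ``$(s,\gamma(s))\in\bar\cP$,'' whereas you deform the straight line to $\gamma_c(s)=(s+cs(1-s))\gamma(1)$ --- but both rest on the same ingredients (Lemma~\ref{l.monotonicity}.\ref{i.radially_increasing} and continuity of $\Lf,\Lr$), so this is the same proof.
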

\begin{proof}
Without loss of generality, we take $t=1$.  Additionally, we note that it is enough to show that the minimizer $\gamma$ satisfies $(s,\gamma(s)) \in \bar {\cP}$ for all $s \in [0,1]$.  Indeed, by replacing any (nontrivial) $\gamma$ by
\be
	\gamma_\eps(s)
		= \begin{cases}
			\gamma\left( s(1+\eps) \right)
				\qquad&\text{ if } s \leq \frac{1}{1+\eps},
			\\
			\gamma(1)
				\qquad&\text{ if } s \geq \frac{1}{1+\eps},
		\end{cases}
\ee
we see that
\be
	\lim_{\eps\searrow 0} \cA(\gamma_\eps)
		= \cA(\gamma)
	\qquad\text{ and, for all $\eps>0$, }\quad
	\cA(\gamma) < \cA(\gamma_\eps).
\ee
We consider two cases along the lines of \Cref{p.no_Huygens}: (i) $y=0$ or $x^2 + y^2 = 4$; (ii) $x^2 + y^2 > 4$ and $y>0$.

In the case (i), $\gamma(s) = s(x,y)$ and $\dot\gamma(s) = (x,y)$. Then $J(1,x,y) = 0$ and the form of $\hat{L}$ in \eqref{e.hatL} implies $\hat{L}(\gamma(s),\dot\gamma(s))\equiv 0$.
A direct computation using \Cref{lem:bellman} gives
\be
	J(s,\gamma(s))
		= s J(1,x,y)
		= 0.
\ee
Hence, $(s,\gamma(s)) \in \bar {\cP}$.  This completes the proof of this case.

In case (ii), we apply \Cref{p.no_Huygens} again to find
\be
	\gamma(s)
		= \begin{cases}
			(\crr,0) s
				\qquad&\text{ if } s \leq \tau_0,\\
			(\crr,0) \tau_0 + \cf(s-\tau_0)
				\qquad&\text{ if } s \geq \tau_0
		\end{cases}
\ee
with $\crr > c_*(\sfrac\pi2)$ and $|\cf| < 2$.  It follows easily that when $s\leq \tau_0$,
\be
	J(s,\gamma(s))
		= s \Lr(\crr)
		> 0.
\ee
When $s \geq \tau_0$,
\be
	J(s,\gamma(s))
		= (s- \tau_0) \Lf(\cf)
			+ \tau_0 \Lr(\crr),
\ee
and we see that this is strictly decreasing in $s \in [\tau_0,1]$ because $|\cf| < 2$.  Since $J(1,\gamma(1)) = 0$, it follows that, when $s \in [\tau_0, 1)$,
\be
	J(s,\gamma(s)) > 0.
\ee
We conclude that $(s,\gamma(s)) \in \cP$ for all $s\in (0,1)$. 
This completes the proof.
\end{proof}

\subsection{The proof of \Cref{t.w=J+}}

We now use Freidlin's condition to  show that $w = \max\{J,0\}$.

\begin{proof}[Proof of \Cref{t.w=J+}]
Let $I(t,x,y) = \max\{J(t,x,y),0\}$. It is easy to see that $I$ satisfies the continuity requirements of the uniqueness result \Cref{cor:scp}.  As a result, it suffices to verify that $I$ is a strong solution to \eqref{e.HJ}-\eqref{e.HJ_F}. First, we verify that it is a strong subsolution. Indeed, in $\{I \leq 0\}$, there is nothing to prove. In $\{I=J\}=\cP = \{J>0\}$,  $J$ satisfies
\be
	\begin{cases}
		J_t + \Hf(\nabla J) = 0
			\qquad &\text{ in } \cP\cap \{y>0\},
		\\
		J_t + F(\nabla J) = 0
			\qquad &\text{ on } \cP \cap \{y = 0\}.
	\end{cases}
\ee
See, e.g. \cite[Theorem 2.9]{Barles2018flux} for the corresponding properties of $J$. 
It follows easily that $I$ is a strong subsolution, as claimed.

Next, we verify that $I$ is a strong supersolution to \eqref{e.HJ}-\eqref{e.HJ_F}. Obviously, $I\geq 0$. Suppose $I - \varphi$ has a local minimum at $(t_0,x_0,y_0)$. If $(t_0,x_0,y_0) \in \cP$, then $I = J$ in a neighborhood of $(t_0,x_0,y_0)$ and this again follows from \cite[Theorem 6.4]{Imbert2017flux}. If $(t_0,x_0,y_0) \in {\rm Int}\,\{(t,x,y): J \leq 0\}$, then $I \equiv 0$ in a neighborhood of $(t_0,x_0,y_0)$, so that the supersolution property follows from the fact that $H(0,0) >0$ and $F(0,0) >0$.

It remains to consider the case $(t_0,x_0,y_0) \in \partial \cP$. Using the dynamic programming principle (\Cref{lem:bellman}), we find
\be
	J(t_0,x_0,y_0)
		= \inf_{\gamma \in N(t_0,x_0,y_0)}
			\left\{ 
				\int_{\tau}^{t_0}
				\hat{L}(\gamma(s),\dot\gamma(s))\,ds
				+ J(\tau,\gamma(\tau))\right\},
\ee
where we recall the definition~\eqref{e.N} of $N$. 
Since $J$ satisfies condition (F), it follows that at the minimizing path $\gamma$, $I(s,\gamma(s)) = J(s,\gamma(s))$ for $s \in [0,t_0]$. Based on this observation, and that $I \geq J$, it follows that
\be
	I(t_0,x_0,y_0)
		= \inf_{\gamma \in N(t_0,x_0,y_0)}\left\{ \int_{\tau}^{t_0} \hat{L}(\gamma(s),\dot\gamma(s))\,ds + I(\tau,\gamma(\tau))\right\}.
\ee
This equality implies by standard arguments (see \cite[Theorem 6.4]{Imbert2017flux}) that at the point $(t_0,x_0,y_0)$,
\be
	\begin{cases}
		\varphi_t + H(\nabla \varphi) \geq 0
				\qquad &\text{ if }y_0 >0,
		\\
		\varphi_t + F(\nabla \varphi) \geq 0
			\qquad &\text{ if }y_0 =0.
	\end{cases}
\ee
Hence, $I=\max\{J,0\}$ is a strong solution to \eqref{e.HJ}-\eqref{e.HJ_F}.  This completes the proof.
\end{proof}



\section{Conical domains}\label{sec:ext}

In this section, we discuss the extension of our result to conical domains and non-compactly supported initial data.  Nearly all results follow analogously, so we only briefly outline the main steps of the proof where changes are necessary.  For simplicity, we only consider the case
\be
    \mu = \tilde \mu,
    \quad
    \nu = \tilde \nu,
    \quad\text{and}\quad
    \kappa = \tilde \kappa.
\ee

\subsection{The case when diffusion is the same on both roads}

In \Cref{t.conical}, there are two main claims.  First, it states that $v^\eps, u^\eps$ converge  to a common limit $w_a$ in $C_{loc}$.  This follows exactly as in the nonconical case $a=\sfrac\pi2$.  Further, in the process, we can deduce that
\be\label{e.HJa}
    \min\{\partial_t w_a + \Hf(\nabla w_a),w_a)\} = 0
        \qquad\text{ in } \R_+\times \Omega_a
\ee
and that the boundary conditions
on $\R_+ \times \{0\} \times \R_+$
\begin{align}
    \min\{w_a, \partial_t w_a + F(\nabla w_a)\} = 0,
        \label{e.HJ_F_0a1}\\
    \min\{\tilde w_a,\partial_t \tilde w_a + F(\nabla \tilde w_a)\} = 0,
        \label{e.HJ_F_0a2}
\end{align}
are satisfied in the strong sense. 
(Note that the boundary condition at $(t,x,y) = (t,0,0)$ is not necessary thanks to \Cref{lem:2.4a}).  Recall, from~\eqref{e.Psi_tilde}, that $\tilde w_a(t,x,y) := w_a(t,\Psi(x,y))$.  We omit the proof of these facts. 

Second, it states that $w_a$ is given in terms of $J_a$.
 It is contained in the following:
\begin{proposition}\label{p.w_aDD}
Let $a \in (0,\sfrac\pi2]$, and suppose $(D,\kappa, \mu,\nu) = (\tilde D, \tilde \kappa, \tilde \mu, \tilde \nu)$. Then
\begin{equation}\label{e.waa}
    w_a(t,x,y)
        = \max\{J_a(t,x,y),0\} \quad \text{ for }t>0,~ (x,y)\in \Omega_a,
\end{equation}
where $J_a$ is defined by~\eqref{e.Jcon2}, and, moreover,
\be\label{e.c062603}
    J_a(t,x,y)
        = \min\{J(t,x,y), J(t,\Psi_a(x,y))\},
\ee
where $J: [0,\infty)\times \H$ is given by \eqref{e.Jcon}.
\end{proposition}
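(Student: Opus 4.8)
The plan is to establish the two displayed identities in turn: first the geometric formula \eqref{e.c062603} for $J_a$, which is a statement purely about the control problems, and then the identification \eqref{e.waa} of $w_a$ with $\max\{J_a,0\}$, which follows the template of \Cref{t.w=J+}.

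For \eqref{e.c062603}, the point I would exploit is that, since $(D,\kappa,\mu,\nu)=(\tilde D,\tilde\kappa,\tilde\mu,\tilde\nu)$, the reflection $\Psi_a$ across the bisector $\Gamma_{\sfrac{a}{2}}$ is an isometry with $\Psi_a^2=\mathrm{id}$ that maps $\overline{\Omega_a}$ onto itself, exchanges $\Gamma_0$ and $\Gamma_a$, and leaves both $\Lf$ and the common road Lagrangian $\Lr$ invariant; moreover $\overline{\Omega_a}\subset\overline\H$ because $2a\le\pi$. Write $K(t,\cdot)=\min\{J(t,\cdot),J(t,\Psi_a(\cdot))\}$. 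Step (a): from \Cref{l.monotonicity}.\ref{i.grad_J} and the evenness of $J$ in $x$, $J(t,\cdot)$ on a circle in $\overline\H$ is symmetric about the positive $y$-axis and nondecreasing in the angular distance to the road $\R\times\{0\}$; hence if $(x,y)$ lies in the half of $\overline{\Omega_a}$ adjacent to $\Gamma_0$ then $\Psi_a(x,y)$ is at least as far from the road, so $K(t,x,y)=J(t,x,y)$, and symmetrically $K=J\circ\Psi_a$ on the $\Gamma_a$-half. Step (b), $J_a\le K$: by $\Psi_a$-invariance of $J_a$ and of $K$ we may assume $(x,y)$ is in the $\Gamma_0$-half (hence $x\ge0$); the unique optimal half-space path $\gamma_*$ of \Cref{p.straight_lines} stays in $\overline{\Omega_a}$ by convexity of $\Omega_a$, never meets $\Gamma_a$, so $\gamma_*\in N_a(t,x,y)$ with cone action equal to its half-space action, giving $J_a(t,x,y)\le\cA(\gamma_*)=J(t,x,y)=K(t,x,y)$. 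Step (c), $J_a\ge K$: for arbitrary $\gamma\in N_a(t,x,y)$ with $(x,y)$ in the $\Gamma_0$-half, I would introduce the folding map $\Phi$ (identity on the $\Gamma_0$-half, $\Psi_a$ on the $\Gamma_a$-half) and set $\hat\gamma=\Phi\circ\gamma$; then $\hat\gamma\in N(t,x,y)$ (using $\overline{\Omega_a}\subset\overline\H$, that $\Phi$ fixes $\Gamma_{\sfrac{a}{2}}$ and the endpoint), $|\dot{\hat\gamma}|=|\dot\gamma|$ a.e., $\hat\gamma$ is in the field / on the road exactly when $\gamma$ is in the field / on $\Gamma_0\cup\Gamma_a$, and since $\Psi_a$ is an isometry carrying the tangent line of $\Gamma_a$ onto that of $\Gamma_0$, the half-space action of $\hat\gamma$ equals the cone action of $\gamma$; hence $J(t,x,y)\le\cA(\gamma)$, and taking the infimum, $K(t,x,y)=J(t,x,y)\le J_a(t,x,y)$. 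Combining (b)–(c) yields \eqref{e.c062603}; along the way it shows every optimal cone path folds onto $\gamma_*$.

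For \eqref{e.waa}, the plan is to mimic \Cref{t.w=J+}: show $\max\{J_a,0\}$ is a strong solution of the conical system \eqref{e.HJa}, \eqref{e.HJ_F_0a1}, \eqref{e.HJ_F_0a2} with initial datum $0$ at the origin and $+\infty$ elsewhere, and invoke the conical version of the comparison principle \Cref{thm:scp}/\Cref{cor:scp} — applicable because $w_a$ is, by construction, the strong solution of that same problem. The subsolution property on $\cP_a=\{J_a>0\}$ and the trivial supersolution property on $\Int\{J_a\le0\}$ go through as before, using that on each half of $\Omega_a$, $J_a$ agrees up to $\Psi_a$ with $J$, which solves the half-space equation. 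The one delicate point is the supersolution inequality at $z\in\partial\cP_a$, and, exactly as in \Cref{t.w=J+}, this reduces to checking Freidlin's condition for $J_a$; but by Step (c) an optimal cone path $\gamma$ to $z$ folds to the optimal half-space path $\gamma_*$, which stays in $\overline{\cP}$ by \Cref{l.(F)}, and then the $\Psi_a$-invariance of $J_a$ gives $J_a(s,\gamma(s))=J_a(s,\Phi(\gamma(s)))=J(s,\gamma_*(s))\ge0$, so $(s,\gamma(s))\in\overline{\cP_a}$. The dynamic-programming argument of \Cref{t.w=J+} then yields the inequality at $z$. The spreading statements and the formula for $\cW_a$ are then immediate from \eqref{e.waa} and \eqref{e.c062603}, just as in \Cref{t.uvw}.\ref{i.spreading}.

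The main obstacle is Step (c): verifying rigorously that folding transports the conical control problem isometrically onto the half-space one. This is precisely where the hypothesis $(D,\kappa,\mu,\nu)=(\tilde D,\tilde\kappa,\tilde\mu,\tilde\nu)$ enters — it makes the two road Lagrangians coincide, so that the reflection exchanging the two roads preserves the action — and one must check that folded paths remain in $H^1$ and that the tangential-velocity bookkeeping for $\Lr$ along $\Gamma_a$ matches that along $\Gamma_0$. Everything else is a routine adaptation of the half-space arguments; in particular, the angular monotonicity of $J$ used in Step (a) is a soft consequence of \Cref{l.monotonicity}.\ref{i.grad_J} together with the evenness of $J$ in $x$.
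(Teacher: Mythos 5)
The proposal is correct and follows the same overall strategy as the paper — establish \eqref{e.c062603}, show that $\max\{J_a,0\}$ is a strong solution of the conical system, and then invoke uniqueness — but the two write-ups spend their effort on complementary parts of the argument, so the comparison is worth making. For \eqref{e.c062603}, the paper simply says the identity is ``simple to check'' and omits the proof; you supply a clean, self-contained argument via the folding map $\Phi$, using the convexity of $\Omega_a$ (which requires $2a\le\pi$) to show that the optimal half-space path of \Cref{p.straight_lines} stays in $\overline{\Omega_a}$, and using the fact that $\Phi$ is $1$-Lipschitz and a local isometry to transport any $\gamma\in N_a$ to an admissible half-space path with the same action. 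You also make explicit the verification of Freidlin's condition for $J_a$ (again via folding, so an optimal cone path folds onto the unique optimal half-space path and \Cref{l.(F)} applies), whereas the paper disposes of this with ``it follows by the arguments in \Cref{sec:w=J+}.'' Conversely, the paper is more careful than you are at one point: it first shows $J_a$ itself is a strong solution of the unconstrained system~\eqref{e.HJ_Ja}--\eqref{e.HJ_F_0_Ja}, and the delicate step there is the \emph{subsolution} property across the fold line $\Gamma_{\sfrac{a}{2}}$, since $J_a=\min\{J,J\circ\Psi_a\}$ has a genuine corner there ($\nabla J$ is not $\Psi_a$-invariant on the bisector, as one sees from \eqref{e.a0722.11}) and the minimum of two subsolutions need not be a subsolution. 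The paper handles this with a Lipschitz--a.e.--stability-of-subsolutions argument. Your plan says the subsolution property ``goes through as before, using that on each half of $\Omega_a$, $J_a$ agrees up to $\Psi_a$ with $J$,'' which covers the two open halves but is silent about the fold. This is not a fatal gap — it is repaired either by reproducing the paper's Lipschitz/a.e./stability argument, or more in keeping with your Lagrangian viewpoint, by deriving the interior subsolution inequality directly from the dynamic programming principle (test a touching $\varphi$ at $(t_0,x_0,y_0)\in\Gamma_{\sfrac{a}{2}}$ against short straight-line control paths, which works uniformly in the interior of $\Omega_a$ with no reference to the fold) — but as written the justification you give does not cover it and should be amended.
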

\begin{proof}
The equality~\eqref{e.c062603} is simple to check using \Cref{l.monotonicity}.\ref{i.grad_J} or simply from the variational problem \eqref{e.Jcon2}, so we omit the details.  We focus instead on~\eqref{e.waa}.  
Since the uniqueness of $w_a$ can be established in an analogous way as in \Cref{sec:A}, it remains to show that $\max\{J_a, 0\}$ is a weak solution to \eqref{e.HJa}-\eqref{e.HJ_F_0a1}-\eqref{e.HJ_F_0a2}.

First, notice that $J$ by the monotonicity of $\vt\mapsto J(t,\sin \vt, \cos vt)$ (\Cref{l.monotonicity}.\ref{i.grad_J}), it follows that $J_a = J$ and $\tilde J_a = \tilde J$ on the subdomain $\Omega_{\sfrac{a}2}$.  Hence, $J_a$ satisfies in viscosity sense
\be\label{e.HJ_Ja}
    \partial_t J_a + \Hf(\nabla J_a) = 0
        \qquad\text{ in } \R_+\times \Omega_a
\ee
away from the line $\Gamma_{\sfrac{a}2}$, 
and satisfies in the strong sense
\be\label{e.HJ_F_0_Ja}
    \begin{cases}
        \partial_t J_a + F(\nabla J_a) = 0\\
        \partial_t \tilde J_a + F(\nabla \tilde J_a) = 0
    \end{cases}
        \qquad\text{ on } \R_+\times\{0\}\times\R_+.
\ee
Additionally, it is Lipschitz continuous, so it satisfies \eqref{e.HJ_Ja} in the classical sense almost everywhere. One can then argue via a approximation argument and stability of subsolution that $J_a$ satisfies \eqref{e.HJ_Ja} in the viscosity sense everywhere in $\mathbb{R}_+\times \Omega_a$ (see, e.g.,~\cite[Chapter II, Proposition~5.1]{Bardi1997optimal} in a related context). Thus, $J_a$ is a strong subsolution of~\eqref{e.HJ_Ja}-\eqref{e.HJ_F_0_Ja}.

On the other hand, since $J_a$ is the minimum of two solutions, it is a strong supersolution to~\eqref{e.HJ_Ja}-\eqref{e.HJ_F_0_Ja}.  We deduce that $J_a$ is a strong solution to~\eqref{e.HJ_Ja}-\eqref{e.HJ_F_0_Ja}.  Finally, it follows by the arguments in \Cref{sec:w=J+} that $\max\{J_a, 0\}$ is a strong solution to~\eqref{e.HJa}-\eqref{e.HJ_F_0a1}-\eqref{e.HJ_F_0a2} in $\Omega_a$.  This completes the proof.
\end{proof}

\subsection{The case with unequal diffusion on the two roads}

Consider the case when the diffusion rate on the roads are different in~\eqref{e.eqn11}. Precisely, we assume that
\be 
    \tilde{D} > D > 2.
\ee 
because, in the case where $D \leq 2$, the road $\Gamma_0$ does not play a role in the propagation.  Because we wish to compare objects with different diffusivities, let us write, for any $\tilde D > D >2$,
    \be w_a^{(D,\tilde D)}\ee
to be the solution to~\eqref{e.HJa} and, on $\R_+ \times\{0\}\times \R_+$,
\begin{align}
    \min\{w_a^{(D,\tilde D)}, \partial_t w_a^{(D,\tilde D)} + F^{(D)}(\nabla w_a^{(D,\tilde D)})\} = 0,
        \label{e.HJ_Fb1}\\
    \min\{\tilde w_a^{(D,\tilde D)},\partial_t \tilde w_a^{(D,\tilde D)} + F^{(\tilde D)}(\nabla \tilde w_a^{(D,\tilde D)})\} = 0.
        \label{e.HJ_Fb2}
\end{align}
We have added the additional superscript to denote the dependence on $D$ and $\tilde D$, and $\tilde{w}_a^{(D,\tilde D)} = w_a^{(D,\tilde D)}(t,\Psi_a(x,y))$. The convergence of $v^\eps, u^\eps$ to $w_a^{(D,\tilde D)}$ follows analogously as in the previous case.  Finally, we update our notation similarly for
\be
    J_a^{(D,\tilde D)}.
\ee

The main complication here is that the analogue of~\eqref{e.waa}-\eqref{e.c062603} may not hold.  Indeed,~\eqref{e.waa}-\eqref{e.c062603} is  true because the optimal paths in $J_a$, defined as in~\eqref{e.Jcon2} with the two $\Lr$ terms there taking into account the respective diffusivity $D$ or $\tilde D$, will clearly only interact with the road closest to the endpoint.  When $1 \ll D \ll \tilde D$, it is not immediate to rule out that optimal paths near $\Gamma_0$ (the road associated to the smaller diffusivity $D$) will use the ``very fast'' road $\Gamma_a$ (the road associated to the larger diffusivity $\tilde D$) for some time before passing passing through the field to use the ``fast, but not as fast'' road $\Gamma_0$.  This is an interesting question worthy of future investigation.  Our goal, however, is to deduce some immediate results from our developed theory, so we do not pursue it further here.

We first immediately find a lower bound on $w_a$ via the comparison principle.
\begin{lemma}\label{lemma:4.4}
Let $(\kappa,\mu,\nu) = (\tilde\kappa,\tilde\mu,\tilde\nu)$ and let $D < \tilde D$.
    Let $a \in (0,\sfrac\pi2)$. Then
    \be
        w_a^{(D,\tilde D)}(t,x,y)
            \geq w_a^{(\tilde D, \tilde D)}
            = \max\{0,
                \min\{
                    J_a^{(\tilde D, \tilde D)}(t,x,y),
                    J_a^{(\tilde D, \tilde D)}(t, \Psi_a(x,y))
                    \}
                \}.
    \ee
\end{lemma}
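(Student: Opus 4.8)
The plan is to establish the two assertions separately. The displayed identity for $w_a^{(\tilde D,\tilde D)}$ is just \Cref{p.w_aDD} applied with $D$ replaced by $\tilde D$, so that both roads carry the common diffusivity $\tilde D$: that proposition gives $w_a^{(\tilde D,\tilde D)} = \max\{0, J_a^{(\tilde D,\tilde D)}\}$ together with the formula \eqref{e.c062603}, and, since $\Psi_a$ is an involution exchanging $\Gamma_0$ and $\Gamma_a$, one has $J_a^{(\tilde D,\tilde D)}\circ\Psi_a = J_a^{(\tilde D,\tilde D)}$, which makes the extra minimum in the displayed formula redundant. Thus the real content of the lemma is the inequality $w_a^{(D,\tilde D)}\geq w_a^{(\tilde D,\tilde D)}$, which I would deduce from the comparison principle once I show that $w_a^{(\tilde D,\tilde D)}$ is a strong subsolution of the system \eqref{e.HJa}-\eqref{e.HJ_Fb1}-\eqref{e.HJ_Fb2} solved by $w_a^{(D,\tilde D)}$.

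The point is the monotonicity of the flux limiter in the diffusivity. From \eqref{e.tH}-\eqref{eq:g} one reads that $p_q$ is nondecreasing in $D$ — the threshold $\sfrac{1}{(D-1)}$ decreases and, beyond it, $g$ decreases pointwise in $D$ so that $g^{-1}(q)$ increases — hence $\tH(q) = q^2 + p_q^2 + 1$, computed with diffusivity $D$, is nondecreasing in $D$ (as already noted in the proof of \Cref{p.roadspeed}). Writing $\tH^{(D)},\tH^{(\tilde D)}$ for $\tH$ built with diffusivity $D$, $\tilde D$ respectively, and using that $\Hf^-$ does not depend on $D$, \eqref{e.F} gives, for $D\leq\tilde D$,
\[
    F^{(D)}(q,p) = \max\{\Hf^-(q,p), \tH^{(D)}(q)\} \leq \max\{\Hf^-(q,p), \tH^{(\tilde D)}(q)\} = F^{(\tilde D)}(q,p)
\]
for all $(q,p)$. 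Now $w_a^{(\tilde D,\tilde D)}$ is a strong solution of \eqref{e.HJa} with the flux-limited condition carrying $F^{(\tilde D)}$ on both $\Gamma_0$ and $\Gamma_a$; in particular it is a strong subsolution of \eqref{e.HJa}-\eqref{e.HJ_Fb1}-\eqref{e.HJ_Fb2}, because in $\R_+\times\Omega_a$ the interior equation is unchanged, on $\Gamma_a$ the boundary Hamiltonian is unchanged, and on $\Gamma_0$ any test inequality $\varphi_t + F^{(\tilde D)}(\nabla\varphi)\leq 0$ forces $\varphi_t + F^{(D)}(\nabla\varphi)\leq 0$ by the pointwise bound above.

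On the other hand, $w_a^{(D,\tilde D)}$ is a strong solution of \eqref{e.HJa}-\eqref{e.HJ_Fb1}-\eqref{e.HJ_Fb2}, being the locally uniform limit of $(v^\eps,u^\eps,\tilde u^\eps)$, hence a strong supersolution. Both functions are nonnegative and continuous on $(0,\infty)\times\overline{\Omega_a}$, and the conical analogues of \Cref{lem:2.4a,lem:2.4,l.upper_bound} say that each vanishes at $(t,0,0)$ for $t>0$, is locally bounded for $t>0$, and equals $+\infty$ at $(0,x,y)$ for $(x,y)\neq(0,0)$. Therefore, for every $\tau>0$, $w_a^{(\tilde D,\tilde D)}(\tau,x,y)\leq w_a^{(D,\tilde D)}(0,x,y)$ on $\overline{\Omega_a}$ (trivially where $(x,y)\neq(0,0)$, since the right side is $+\infty$, and as $0\leq 0$ at the origin). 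Applying the comparison principle for strong sub- and supersolutions on the conical domain — the two-road analogue of \Cref{thm:scp}, which permits infinite initial data — to $w_a^{(\tilde D,\tilde D)}(\tau+\cdot,\cdot,\cdot)$ and $w_a^{(D,\tilde D)}$ yields $w_a^{(\tilde D,\tilde D)}(\tau+t,x,y)\leq w_a^{(D,\tilde D)}(t,x,y)$ for all $t>0$ and $(x,y)\in\overline{\Omega_a}$; letting $\tau\to 0$ and using the continuity of $w_a^{(\tilde D,\tilde D)}$ finishes the proof.

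The main obstacle is the availability of a comparison principle in this setting — on the conical domain $\Omega_a$, with two roads carrying different flux limiters, and with the discontinuous, infinite datum at $t=0$; this is the two-road version of \Cref{thm:scp}, established exactly as in the appendix. Everything else is routine once the pointwise bound $F^{(D)}\leq F^{(\tilde D)}$ on $\Gamma_0$, and hence the preservation of the strong subsolution property when $F^{(\tilde D)}$ is replaced by the smaller $F^{(D)}$ there, is in place.
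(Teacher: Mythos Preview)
Your proposal is correct and follows essentially the same approach as the paper: use the monotonicity of the road Hamiltonian in the diffusivity to recognize $w_a^{(\tilde D,\tilde D)}$ as a strong subsolution of the equation for $w_a^{(D,\tilde D)}$, then invoke comparison, with the equality coming directly from \Cref{p.w_aDD}. The paper's own proof compresses this into two sentences; you have simply unpacked the monotonicity $F^{(D)}\leq F^{(\tilde D)}$ explicitly and spelled out the $\tau$-shift and initial-data matching (as in the proof of \Cref{t.uvw}) that justify the comparison, and you have also correctly observed that the minimum in the displayed formula is redundant by the $\Psi_a$-symmetry of $J_a^{(\tilde D,\tilde D)}$.
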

\begin{proof}
The first inequality follows directly from the fact that all Hamiltonians are increasing in $\hat D$, meaning that $w^{(\tilde D, \tilde D)}$ is a subsolution to the equation satisfied by $w^{(D,\tilde D)}$.  The second equality is simply \Cref{p.w_aDD} with our updated notation.
\end{proof}

Next, we deduce that the speed of the faster road remains the same as in the half plane class, while the speed on the slower road can sometime be enhanced if the angle $a$ is small and $\tilde{D}$ is large.  Let us introduce the following notation: for $\hat D$, let \be
    c_*(\vt, \hat D)
\ee
be the directional spreading speed associated with $(D,\kappa,\mu,\nu)$ on $\H$, as in \eqref{e.directional}, and, for $\hat D_0$, $\hat D_a$, we let
\be
    c_{*a}(\vt,\hat D_0, \hat D_a)
\ee
be the directional spreading speed on $\Omega_a$ associated to diffusivity $\hat D_0$ on $\Gamma_0$ and diffusivity $\hat D_a$ on $\Gamma_a$.

\begin{proposition}\label{p.tilde_D > D}
Fix $a \in (0,\sfrac\pi4)$.  The spreading speed on the fast road $\Gamma_a$ is unchanged by the slow road:
\begin{equation}\label{e.c071801}
    c_{*a}(\sfrac\pi2-2a, D, \tilde D) = c_*(\sfrac\pi2, \tilde D).
\end{equation}
Furthermore, if $2\csc(2a) > c_*(\sfrac\pi2,D)$, there exists $\tilde{D}_{\min}>2$ such that 
\be\label{e.c071802}
    c_{*a}(\sfrac\pi2) > c_*(\sfrac\pi2, D)
        \qquad \text{ whenever }\tilde{D} > \tilde{D}_{min}.
\ee
\end{proposition}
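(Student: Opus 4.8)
The plan is to read both speeds off the control value function. Throughout the unequal‑diffusion regime one still has $w_a^{(D,\tilde D)}=\max\{0,J_a^{(D,\tilde D)}\}$, where $J_a^{(D,\tilde D)}$ is the value function in~\eqref{e.Jcon2} with the road Lagrangian $\Lr^{(D)}$ (Legendre dual of $\tH^{(D)}$) acting on $\Gamma_0$ and $\Lr^{(\tilde D)}$ acting on $\Gamma_a$; this is the one part of \Cref{p.w_aDD} whose proof survives unchanged (it is the explicit formula~\eqref{e.c062603} that can fail). In particular $c_{*a}(\vt,D,\tilde D)=\sup\{r\ge 0:\ J_a^{(D,\tilde D)}(1,r\sin\vt,r\cos\vt)\le 0\}$, so for the two \emph{lower} bounds we need only exhibit, for the relevant endpoint, one admissible path in $N_a$ with nonpositive action; and we will use nothing about $J_a^{(D,\tilde D)}$ beyond the trivial inequality ``value $\le$ action of any test path'' and the fact that $\Psi_a$ is an isometry (so a constant‑speed path along $\Gamma_a$ at speed $v$ costs $\Lr^{(\tilde D)}(v)$).

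For~\eqref{e.c071801} I would first get ``$\ge$'' by taking, for any $v\le c_*(\sfrac\pi2,\tilde D)$, the straight path $\gamma(s)=sQ_v$ from the vertex to the point $Q_v\in\Gamma_a$ at distance $v$ from it: its action is $\Lr^{(\tilde D)}(v)\le \Lr^{(\tilde D)}(c_*(\sfrac\pi2,\tilde D))=0$ by \Cref{p.roadspeed} and the monotonicity of $\Lr^{(\tilde D)}$, so $J_a^{(D,\tilde D)}(1,Q_v)\le 0$. For ``$\le$'' I would invoke \Cref{lemma:4.4}, which gives $w_a^{(D,\tilde D)}\ge w_a^{(\tilde D,\tilde D)}\ge 0$, together with \Cref{p.J_a_J} applied with the common diffusivity $\tilde D$, which identifies the speed of $w_a^{(\tilde D,\tilde D)}$ along $\Gamma_a$ (direction $\sfrac\pi2-2a$) as $c_*(\sfrac\pi2,\tilde D)$. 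Since $\{w_a^{(D,\tilde D)}(1,\cdot)=0\}\subset\{w_a^{(\tilde D,\tilde D)}(1,\cdot)=0\}$, the speed along $\Gamma_a$ cannot exceed $c_*(\sfrac\pi2,\tilde D)$, so equality holds.

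For~\eqref{e.c071802} the idea is a two‑leg path that exploits the (now very fast) road $\Gamma_a$ before cutting across the wedge to $\Gamma_0$. Given $2\csc(2a)>c_*(\sfrac\pi2,D)$, fix $R$ with $c_*(\sfrac\pi2,D)<R<2\csc(2a)$, so that $R\sin 2a<2$, and set $\tau:=1-\tfrac12 R\sin 2a\in(0,1)$. Let $Q\in\Gamma_a$ be at distance $R\cos 2a$ from the vertex (note $\cos 2a>0$ since $a<\sfrac\pi4$), and take $\gamma$ to be $s\mapsto (s/\tau)Q$ on $[0,\tau]$ followed by the straight segment from $Q$ to $(R,0)\in\Gamma_0$ on $[\tau,1]$. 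Because $\Omega_a$ is a convex wedge of opening $2a<\sfrac\pi2$ with $Q\in\Gamma_a$ and $(R,0)\in\Gamma_0$, that segment lies in $\overline{\Omega_a}$ (in fact in $\Omega_a$ off its endpoints), so $\gamma\in N_a(1,R,0)$. On $[\tau,1]$ the field speed is $|(R,0)-Q|/(1-\tau)=R\sin 2a/(1-\tau)=2$, so that leg contributes $\int_\tau^1\Lf\,ds=0$; on $[0,\tau]$ the road speed is $R\cos 2a/\tau$, so that leg contributes $\tau\,\Lr^{(\tilde D)}\big(R\cos 2a/\tau\big)$, which is $\le 0$ as soon as $R\cos 2a/\tau\le c_*(\sfrac\pi2,\tilde D)$, again by \Cref{p.roadspeed}. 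Since $\hat D\mapsto c_*(\sfrac\pi2,\hat D)$ is nondecreasing (\Cref{p.roadspeed}) and tends to $+\infty$ (e.g.\ $c_*(\sfrac\pi2,\hat D)\ge 2\sqrt{\hat D}/(2+\mu)$ by \Cref{c.D_near_2}.\ref{i.speed_bounds2}), there is $\tilde D_{\min}>2$ with $c_*(\sfrac\pi2,\tilde D)\ge R\cos 2a/\tau$ for all $\tilde D>\tilde D_{\min}$; for such $\tilde D$ the composite path has nonpositive action, so $J_a^{(D,\tilde D)}(1,(R,0))\le 0$, i.e.\ $w_a^{(D,\tilde D)}(1,(R,0))=0$, and hence $c_{*a}(\sfrac\pi2,D,\tilde D)\ge R>c_*(\sfrac\pi2,D)$.

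The only genuinely delicate point is the opening claim, that the control value still governs the spreading speed when $D\ne\tilde D$ --- equivalently that $\max\{0,J_a^{(D,\tilde D)}\}$ is a (super)solution of~\eqref{e.HJa}--\eqref{e.HJ_Fb1}--\eqref{e.HJ_Fb2}, so comparison forces $w_a^{(D,\tilde D)}\le\max\{0,J_a^{(D,\tilde D)}\}$. This is exactly the half of \Cref{p.w_aDD}'s argument (following \cite[Theorem~6.4]{Imbert2017flux}) that does not use the explicit min‑formula~\eqref{e.c062603}, and it transfers with no change. Everything after that is elementary path bookkeeping; in particular we never need~\eqref{e.c062603} itself, only the one‑sided estimate that $J_a^{(D,\tilde D)}$ is bounded above by the action of any test path.
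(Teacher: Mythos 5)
Your path constructions and the use of \Cref{lemma:4.4} for the upper bound are correct, and the bookkeeping (isometry of $\Psi_a$, field leg at exact speed $2$ so $\Lf = 0$, large $\tilde D$ to make the road leg nonpositive) is all sound. But there is a genuine gap in the opening step, and you have under-assessed it.

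You assert that $w_a^{(D,\tilde D)}=\max\{0,J_a^{(D,\tilde D)}\}$ (or at least the one-sided bound $w_a^{(D,\tilde D)}\le\max\{0,J_a^{(D,\tilde D)}\}$ that your lower bounds actually require) follows from the ``half of \Cref{p.w_aDD}'s argument that does not use the explicit min-formula \eqref{e.c062603}.'' That is not how the proof of \Cref{p.w_aDD} is structured: the verification that $\max\{0,J_a\}$ is a strong (super)solution first reduces to the subdomain $\Omega_{\sfrac a2}$ via \eqref{e.c062603}, and it is exactly that reduction which lets one invoke the half-space result (in particular the structural properties of optimal paths needed for Freidlin's condition (F) from \Cref{l.(F)}, which underpins the supersolution property of $\max\{0,J\}$ on the free boundary $\partial\cP$). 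When $D\neq\tilde D$, (F) is no longer automatic because, as the paper points out, optimal paths for $J_a^{(D,\tilde D)}$ could conceivably leave $\Gamma_0$, cross the field, use the faster road $\Gamma_a$, and return; none of the path-structure lemmas (\Cref{p.straight_lines}, \Cref{p.no_Huygens}) have been re-proved in this mixed setting. So the inequality $w_a^{(D,\tilde D)}\le\max\{0,J_a^{(D,\tilde D)}\}$ is precisely what is at issue, not a formality.

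The paper sidesteps this by never invoking $J_a^{(D,\tilde D)}$ at all. It builds the barrier $\overline w(t,x,y)=\max\{0,J^{(\tilde D)}(t,\Psi_a(x,y))\}$ directly from the \emph{half-space} value function for diffusivity $\tilde D$ (for which the identity $w=\max\{0,J\}$ \emph{is} proved), verifies by hand that $\overline w$ is a strong supersolution of \eqref{e.HJa}--\eqref{e.HJ_Fb1}--\eqref{e.HJ_Fb2} --- the only nontrivial check is the Kirchhoff condition on the slow road $\Gamma_0$, which uses the angular monotonicity of $J$ (\Cref{l.monotonicity}.\ref{i.grad_J}) and is where $a<\sfrac\pi4$ enters --- and then applies the comparison principle. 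With that barrier in hand, your two test paths still do useful work, but they should be read as bounding $J^{(\tilde D)}(1,\Psi_a(\cdot))$: both of your paths never touch $\Gamma_0$ except at the terminal point, so after reflection by $\Psi_a$ they are admissible half-space paths, and $\cA(\gamma)\le 0$ gives $\overline w(1,\cdot)=0$ directly. Routed this way your argument goes through, and it then agrees with the paper's lower-bound computation $c_{*a}(\vt,D,\tilde D)\ge c_*(\pi-2a-\vt,\tilde D)$, followed by \Cref{prop:1.18} at $\vt=\sfrac\pi2$. Your explicit two-leg path is essentially a self-contained re-derivation of the $\tilde D\to\infty$ limit in \Cref{prop:1.18}, which is a nice touch but not needed once the barrier is in place.
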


Before beginning the proof, we make a note of the case $a\geq \sfrac\pi4$.  It is clear that~\eqref{e.c071801} will hold for $a\geq \sfrac\pi4$.  Intuitively this is because the slow road is even farther from the fast road $\Gamma_a$.  Our proof does not directly carry over for technical reasons, although we do not anticipate serious issues developing a new proof.  It appears that one should, roughly, define the supersolution not purely by $J^{(\tilde D)}$, but also using $\sfrac{(x^2 + y^2)}{4t} - t$ on a certain part of the domain.  We opt for simplicity and omit this case.

On the other hand, we believe that~\eqref{e.c071802} should not hold for any choice of $\tilde D$ and $D$ when $a\geq \sfrac\pi4$.  The reason for this is, roughly, that $\Gamma_0$ and $\Gamma_a$ are anti-aligned (the former runs horizontally to the right and the latter runs up and to the left; see \Cref{f.Omega_a}). Roughly speaking the presence of the fast road influence the speed on the slower road if and only if the angle $2a$ is strictly less than $\sfrac\pi2$ and $\tilde D$ is large enough.

\begin{proof}
Define
\begin{equation}
    \overline{w}
        =\max\{0,
            J^{(\tilde D)}(t,\Psi_a(x,y))
            \}.
\end{equation}
It suffices to prove that $\overline w$ is a strong supersolution to \eqref{e.HJa}-\eqref{e.HJ_Fb1}-\eqref{e.HJ_Fb2}.  Postponing this momentarily, we show how to conclude the proof with it assumed.  By comparison,
\be
    w^{(D,\tilde D)}_a \leq \overline{w}.
\ee
From this it follows that, for all $\vt \in [\sfrac\pi2,\sfrac\pi2-2a]$,
\be\label{e.c071803}
    c_{*a}(\vt, D, \tilde D) \geq c_*(\pi-2a-\vt, \tilde D).
\ee
From \Cref{lemma:4.4}, we have that
\be\label{e.c071804}
    c_{*a}(\vt,D,\tilde D)
        \leq c_{a*}(\vt,\tilde D, \tilde D)
        = \max\{c_*(\vt,\tilde D), c_*(\pi -2a-\vt, \tilde D)\}.
\ee
Specializing~\eqref{e.c071803}-\eqref{e.c071804} to the case $\vt = \sfrac\pi2-2a$, we deduce~\eqref{e.c071801}.

To understand~\eqref{e.c071802}, we take $\vt = \sfrac\pi2$ in \eqref{e.c071803} 
and use \Cref{prop:1.18}
to see that
\be
    \lim_{\tilde{D} \to +\infty} c_{*a}(\sfrac\pi2, D, \tilde D)
        \geq \lim_{\tilde{D} \to +\infty} c_*(\sfrac\pi2 - 2a, \tilde D)
        \geq 2\sec(\sfrac\pi2 - 2a)
        = 2\csc(2a) > c_*(\sfrac\pi2, D). 
\ee
This concludes the proof up to establishing that $\overline w$ is a supersolution, as claimed above.

It remains to prove that $\overline w$ is a strong supersolution to~\eqref{e.HJa}-\eqref{e.HJ_Fb1}-\eqref{e.HJ_Fb2}. There is nothing to check in the $\H$ and on $\Gamma_a$ since, there,
\be
    \overline w(t,x,y) = w^{(\tilde D)}(t, \Psi_a(x,y)),
\ee
which satisfies~\eqref{e.HJa}-\eqref{e.HJ_Fb2}.
Hence, it suffices to check the boundary condition on $\Gamma_0$. 

Fix any test function $\varphi$ and suppose that $\overline w - \varphi$ has a minimum at $(t,x,0)$ with $t>0$. 
There are two cases: (a) $\overline{w}(t,x,0)=0$; (b) $\overline{w}(t,x,0)>0$.   Case (a) is trivial.  Indeed, in this case, because $\overline w\geq 0$, we have that $\varphi$ has a minimum at $(t,x,0)$.  Thus, $\varphi_t(t,x,0) =\varphi_x(t,x,0)=0$. We deduce that
\be\label{e.c071805}
    \varphi_t + H_f^-(\nabla \varphi)
        \geq 1 >0.
\ee

In case (b), we use that $J^{(\tilde D)}\circ \Psi_a$ solves~\eqref{e.HJa} at the point $(t,\Psi_a(x,0))$ since $\Psi_a(x,0) \in \H$.  Hence,
\be
    \varphi_t + H_f(\nabla \varphi)
        \geq \hat{J}_t  + H_f(\nabla \hat J)
        =0.
\ee
However, $a\in (0,\sfrac\pi4)$ and \Cref{l.monotonicity}.\ref{i.grad_J} imply that $J^{(\tilde D)}(t,\Psi_a(x,y))$ is decreasing in $y$ at $y=0$.  It must be that $\varphi_y \leq 0$.  We deduce that, at $(t,x,0)$,
\be\label{e.c071806}
    \varphi_t + H_f^-(\nabla \varphi)
        =\varphi_t + H_f(\nabla \varphi)
        \geq 0.
\ee

Putting together~\eqref{e.c071805}-\eqref{e.c071806} and recalling that $F(p,q) = \max\{H_f^-(p,q), H_r(q)\}$, we have
\be
    \varphi_t + F(\nabla \varphi) \geq 0 \quad \text{ at }(t,x,0).
\ee
Thus, the proof is complete.
\end{proof}

\section{Proofs of technical lemmas}\label{s.technical}

\subsection{Lemmas related to the control formulation}\label{ss.control_lemmas}

\subsubsection{Convexity of $\tH$: \Cref{lem.tH}}

\begin{proof}
    For assertion (iii), we begin by noting that it is enough to establish that $p_q^2 = (g^{-1}(q))^2$ is convex as a function of $q$ when $q \geq \sqrt{\sfrac{1}{(D-1)}}$.
To this end, we notice that
\[
    \begin{split}
        \frac{d^2}{dx^2} (g^{-1}(q))^2
            &= 2\frac{d}{dx} \frac{g^{-1}(q)}{g'(g^{-1})(q))}
            = 2\left(
                    \frac{1}{g'(g^{-1})(q))^2}
                    - \frac{ g^{-1}(q) g''(g^{-1}(q))}{g'(g^{-1}(q)^3}
                \right)
            \\&
            = \frac{2}{g'(g^{-1}(q)^3}\left(1 - \frac{g^{-1}(q) g''(g^{-1}(q))}{g'(g^{-1}(q)}\right).
    \end{split}
\]
Above we used multiple times that
\[
    (g^{-1})'(q)
        = \frac{1}{g'(g^{-1}(q))}.
\]
We claim that
\begin{equation}
    \frac{g^{-1}(q) g''(g^{-1}(q))}{g'(g^{-1}(q)}
        \leq 1,
\end{equation}
which is enough to conclude the proof.  This is equivalent to establishing that $p g''(p) \leq g'(p)$ for $p \geq 0$.  By homogeneity, it suffices to prove this under the assumption that $D-1 = 1$, which simplifies the computations somewhat. 
 Hence, from~\eqref{eq:g}, we find
\[
    g'(p) = \frac{2p + \frac{\mu k \nu}{(\kappa\nu + p)^2}}{2 g}
    \quad\text{ and } \quad
    g''(p) = \frac{2 - 2\frac{\mu k \nu}{(\kappa\nu + p)^3}}{2 g}
        - \frac{\left(2p + \frac{\mu k \nu}{(\kappa\nu + p)^2}\right)^2}{4 g^3}.
\]
We immediately see that
\[
    p g''(p)
        \leq \frac{2p}{2g}
        \leq \frac{2p + \frac{\mu k \nu}{(\kappa\nu + p)^2}}{2 g}
        = g'(p),
\]
which concludes the proof.
\end{proof}

\subsubsection{The unique minimizing path follows straight lines: \Cref{p.straight_lines}}

\begin{proof}[Proof of \Cref{p.straight_lines}]
Observe that $\Hf \leq \tH$ by~\eqref{e.tH}.  Hence,
\be
	\Lr(v_1) \leq \Lf(v)
		\qquad\text{ for all } v.
\ee
Observe also that $\Lf$ and $\Lr$ are strictly convex because $\Hf$ and $\tH$ are strictly convex.  This is obvious for $\Hf$ and is a consequence of \Cref{lem.tH} for $\tH$.

Let us first consider the case where $y>0$.   We claim that any path $\gamma$ connecting $(0,0)$ and $(x,y)$ not of the form~\eqref{e.Lq3b} can be replaced by a path $\tilde \gamma$ of the form~\eqref{e.Lq3b} with strictly lower cost; that is,
\be\label{e.c052304}
	\cA(\tilde \gamma) < \cA(\gamma),
\ee
where $\cA$ is defined in~\eqref{e.c052901}.
Let us note that the compactness inherent in paths of the form~\eqref{e.Lq3b}, which, by~\eqref{e.c052304} are the optimal ``class'' of paths, yields the existence of a minimizer.

Because $y>0$, we can define
\be
	\tau_0
		:= \max\left\{s\in [0,1]: \gamma_2(s)=0\right\}
		< 1.
\ee
First, we claim that
\be
	\tilde \gamma(s)
		:=\begin{cases}
			(\gamma_1(s), 0)
				\quad &\text{ if } s \leq \tau_0,\\
			\gamma(s)
				\quad&\text{ if } s \geq \tau_0
		\end{cases}
\ee
is equal to $\gamma$ or it
has strictly lower cost than $\gamma$; that is,~\eqref{e.c052304} holds. 
We argue assuming that $\tilde \gamma \neq \gamma$.

Then there is (recalling the continuity of $\gamma$) an interval $(s_0, s_1) \subset (0,\tau_0)$ such that
	\be
		\gamma_2(s_0) = \gamma_2(s_1) = 0
		\qquad\text{ and }\qquad
		\gamma_2(s) > 0 \quad\text{ for all } s \in (s_0,s_1).
	\ee
We immediately see that
	\be
		\cA(\gamma) - \cA(\tilde \gamma)
			\geq \int_{s_0}^{s_1}
				\frac{\dot\gamma_2(s)^2}{4} ds
			> 0.
	\ee
	Hence,~\eqref{e.c052304} is established.
	
	Either $\tilde \gamma$ is of the form~\eqref{e.Lq3a} or we may let $\dbtilde \gamma$ be the modification of it to be of that form; that is
	\be
		\dbtilde \gamma(s)
			= \begin{cases}
				\frac{s}{\tau_0} \tilde \gamma(\tau_0)
					\qquad&\text{ if } s \leq \tau_0,\\
				\tilde \gamma(\tau_0) + \frac{s-\tau_0}{1-\tau_0}(\tilde \gamma(1) - \tilde\gamma(\tau_0))
					\qquad&\text{ if } s\geq \tau_0.
			\end{cases}
	\ee
	Using the strict convexity of $\Lr$ and $\Lf$, we see that
	\be 
		\begin{split}
			\cA(\tilde \gamma)
				& = \int_0^{\tau_0} \Lr\left(\dot{\tilde{\gamma}}\right) ds
					+ \int_{\tau_0}^1 \Lr\left(\dot{\tilde{\gamma}}\right) ds
				\\&
				\geq \tau_0 \Lr\Big(\frac{1}{\tau_0}\int_0^{\tau_0}\dot{\tilde{\gamma}}\Big) ds
					+ (1-\tau_0) \Lr\Big(\frac{1}{1-\tau_0} \int_{\tau_0}^1\dot{\tilde{\gamma}}\Big) ds
				= \cA(\dbtilde \gamma(s))
		\end{split}
	\ee
	and the above inequality is strict if $\tilde \gamma \neq \dbtilde \gamma$.  This completes the proof of our claim.
	
	Since any path may be replaced by one of the form~\eqref{e.Lq3b}, we see that the equality in~\eqref{e.Lq3a} holds.  Moreover, such paths are parametrized by two parameters, $z$ and $\tau$, living in a compact set.  The existence of a minimizer follows easily.  The uniqueness proof follows exactly along the lines of the proof of \Cref{l.W_convex}; that is, given two distinct minimizers, the suitable ``average'' of them as in~\eqref{e.c052401} will have lower cost, a contradiction.  This is standard, so we omit its proof.

Next, we analyze $\nabla J$. First, it follows from \eqref{e.Lq3a} that $J$ is locally Lipschitz and hence differentiable almost everywhere in $(0,\infty)\times \H$. Next, note that whenever $J$ is differentiable, it satisfies $J_t + H(\nabla J) = 0$ and that
 the dynamic programming principle (\Cref{lem:bellman}) implies that 
 \be
L_f(\dot\gamma(s)) = \frac{d}{ds} J(s,\gamma(s)) = J_t(s,\gamma(s)) +\dot\gamma(s) \cdot \nabla J(s,\gamma(s)) = 0 \quad \text{ for a.e. } s \in [0,t].
 \ee
 It follows that, at differentiable points $(t,x,y) = (t,\gamma(t))$,
 \be
H(\nabla J) = -J_t = \dot\gamma(t) \cdot \nabla J - L_f(\dot\gamma(t)).
 \ee
 It follows that $\nabla J$ and $\dot\gamma(t)$ are conjugate, i.e. 
 \be\label{e.a240722.12}
 \nabla J(t,x,y) = DL_f(\dot\gamma(t)) = \tfrac{1}{2}\dot\gamma(t).
 \ee
 Since this holds almost everywhere, and that $(x,y)\mapsto \dot\gamma(t)$  is continuous (since it is unique), it follows that $\nabla J$ is continuous and \eqref{e.a240722.12} is true everywhere. This proves \eqref{e.a0722.11}.

In the case where $y=0$, the above work clearly shows that $\tau_0 = 1$.  The remainder of the steps follow analogously.  The proof is complete. 
\end{proof}

\subsubsection{Fine properties of $J$: \Cref{lem:3.4}}
\begin{proof}
Suppose that the expression in the square bracket in \eqref{e.Lq3a} attains a global minimum at some $\tau_0 \in (0,t)$ and $z_0 \in (0,x)$. 
By differentiating the quantity in square brackets in~\eqref{e.Lq3a} in $z$, we deduce the optimality condition
\be
	0
		= - \partial_x\Lf\left(\frac{(x-z_0,y)}{1-\tau_0}\right) + \Lr'\left(\frac{z_0}{\tau_0}\right),
\ee
which yields
\be\label{e.c051603}
	q_0 = \frac{x-z_0}{2(1-\tau_0)} = \Lr'\left(\frac{z_0}{\tau_0}\right).
\ee
Similarly, differentiating in $\tau$ at the minimizer $\tau_0$, we find
\be\label{e.c051602}
	\begin{split}
		0
			&= - \Lf\left(\frac{(x-z_0,y)}{1-\tau_0}\right)
				+ \frac{(x-z_0,y)}{1-\tau_0} \cdot \nabla \Lf\left(\frac{(x-z_0,y)}{1-\tau_0}\right)
				+ \Lr\left(\frac{z_0}{\tau_0}\right)
				- \frac{z_0}{\tau_0} \Lr'\left( \frac{z_0}{\tau_0}\right)
			\\&
			= q_0^2 + p_0^2 + 1
				+ \Lr\left(\frac{z_0}{\tau_0}\right)
				- \frac{z_0}{\tau_0} \Lr'\left( \frac{z_0}{\tau_0}\right).
	\end{split}
\ee
Let us recall that, since $\Lr$ is the Legendre transform of $\tH$, i.e. $\Lr(v) = \max_{q} vq - \tH(q)$, we have
\be\label{e.c051604}
	\tH(q) + \Lr(v)
		= vq \quad \text{ provided that }
		q = \Lr'(v)
		~\text{ or }~
		v = \tH'(q).
\ee
Applying this in~\eqref{e.c051602} and recalling~\eqref{e.c051603} yields
\be\label{e.c051605}
	q_0^2 + p_0^2 + 1
		= \frac{z_0}{\tau_0} \Lr'\left( \frac{z_0}{\tau_0}\right)
				- \Lr\left(\frac{z_0}{\tau_0}\right)
		= \tH \left( \Lr'\left(\frac{z_0}{\tau_0}\right)\right)
		= \tH(q_0).
\ee
Using again~\eqref{e.c051604}, we find that $\sfrac{z_0}{\tau_0} = \tH'(q_0)$.  Then~\eqref{e.c051605} yields that $p_0 = p_{q_0}$ due to~\eqref{e.tH}-\eqref{e.pq}.  

Next, note that $y = (1-\tau_0) p_{q_0} >0$, so by the definition of $p_{q_0}$ in \eqref{e.pq}, we have $q_0 > \tfrac{1}{\sqrt{D-1}}.$

Finally, the form of $J$ in~\ref{i.tau_0>0b} follows from a direct computation using the above identities.  This completes the proof of~\ref{i.tau_0>0a} and \ref{i.tau_0>0b}. 
\end{proof}

\subsubsection{Relating $\Lr(v_0)$ and $\tH(q_0)$ when $q_0= \Lr'(v_0)$: \Cref{prop:D.5}}
\begin{proof}
Since both quantities are strictly convex and have zero derivative at $0$, they are strictly increasing away from the origin.  Hence, it is enough to check the boundary case $\Lr(v_0) = 0$.

Additionally, to avoid discussions of regularity, we investigate only the cases $D<2$ and $D>2$.  Indeed, in these cases, the critical $v_0$ such that $\Lr(v_0) = 2$ occurs in the interior of the two cases in the definition~\eqref{e.Hr} of $\tH$ where all quantities are smooth.  The case $D=2$ holds by continuity.

Let us first consider the case when $D < 2$, where we need to show $L(v_0) < 0$ implies $H(q_0) <2$.  
To this end, we claim that $\Lr(2) = 0$ and $\tH(\Lr'(2)) = 2$.  Let $v_0 = 2$ and $q_0 = 1$.  Indeed, in this case
\be
    q_0 < \frac{1}{\sqrt{D-1}}
\ee
so that $\tH(q) = q^2 - 1$ for $q$ near $q_0$.  Hence,
\be
     \tH'(q_0) = 2q_0 = v_0,
\ee
which implies that
\be
	\Lr(v_0) = v_0q_0 - \tH(q_0) = 0.
\ee
Additionally, $\tH(q_0) = 2$.  This concludes the proof in this case.

Next, consider the case $D>2$, where we need to show that $L(v_0)\leq 0$ implies $H(q_0) <2$.  In this case, the additional $g$ term in $\tH$ plays a role.  Fix $v_0$ such that
\be
	\Lr(v_0) = 0,
\ee
and let $q_0 = \Lr'(v_0)$.  Recall that, also, $v_0 = \tH'(q_0)$.  If 
\be
	q_0 \leq \frac{1}{\sqrt{D-1}}.
\ee
then
\be
	\tH(q)
		= q^2 + 1
		\leq \frac{1}{D-1} + 1
		< 2.
\ee
Hence, we consider the case where
\be\label{e.c052101}
	q_0 > \frac{1}{\sqrt{D-1}}.
\ee
We have
\be\label{e.c052102}
	\tH(q_0)
		= \tH(q_0) + \Lr(v_0)
		= q_0 v_0
		= q_0 \tH'(q_0).
\ee
Let us compute the right hand side and show that it is less than $2$.  Recall that $\tH(q) = q^2 + p_q^2 + 1$, where $p_{q_0} = g^{-1}(q)$ due to~\eqref{e.c052101}.  Then:
\be\label{e.c052103}
	\begin{split}
		q_0\tH'(q_0)
			&= q_0[2q_0 + 2g^{-1}(q_0) (g^{-1})'(q_0))]
			= 2\left[ q_0^2 + q_0g^{-1}(q_0) (g^{-1})'(q_0)) \right]
			\\&
			= 2\left[ \tH(q_0) - 1-p_{q_0}^2 + \frac{p_{q_0} g(p_{q_0})}{g'(p_{q_0})} \right]
			\\&
			= 2\left[ \tH(q_0) - 1+\frac{p_{q_0}}{g'(p_{q_0})}\Big(-p_{q_0}g'(p_{q_0}) + g(p_{q_0})\Big) \right].
	\end{split}
\ee
We claim that
\be\label{e.c052104}
	-p_{q_0}g'(p_{q_0}) + g(p_{q_0}) > 0.
\ee
If that were true, then~\eqref{e.c052102}-\eqref{e.c052103} implies that
\be
	\tH(q_0)
		> 2 \tH(q_0) - 2,
\ee
which, after rearrangement, yields the claim.

We now prove~\eqref{e.c052104}.  We do this by direct computation using that $g(p_{q_0}) = q_0 > 0$ and $p_{q_0}>0$ by~\eqref{e.c052101}:
\be
	\begin{split}
		g(p_{q_0}) \Big(g(p_{q_0}) - p_{q_0} g'(p_{q_0})\Big)
			&= \left(
					p_{q_0}^2 + 1 + \frac{\mu p}{\kappa\nu + p_{q_0}}
				\right)
				- \left(
					p_{q_0}^2
					+ \frac{1}{2} \frac{\mu p}{\kappa\nu + p_{q_0}} 
						\frac{\kappa\nu}{\kappa\nu + p_{q_0}}
				\right)
			>0.
	\end{split}
\ee
This concludes the proof.
\end{proof}

\subsection{Proofs of lemmas relating to $w_*$ and $w^*$}
\label{ss.w_lemmas}

\subsubsection{The upper bound on $u^\eps, v^\eps$: \Cref{l.upper_bound}}

\begin{proof}[Proof of \Cref{l.upper_bound}]
    Following~\cite[Lemma~2.1]{Evans1989pde}, it is easy to see that the desired bounds hold for $v^\eps$ away from the boundary.  Hence, we focus on obtaining a bound for $v^\eps$ and $u^\eps$ near the boundary.  To this end, we simply construct a supersolution and appeal to the (parabolic) comparison principle.

    First we obtain a bound on for $|x|, |y| \leq \sfrac{\eps}{2C_0}$.  Fix $\alpha, \beta > 0$ to be chosen.  Let
    \begin{equation}\label{e.c081003}
        \begin{split}
            &\bar v(t,x,y) = \alpha t - \eps\beta \log\Big( 1 - \frac{C_0^2 y^2}{\eps^2} \Big)
            - \eps\beta \log\Big( 1 - \frac{C_0^2 x^2}{\eps^2}  \Big)
            + \eps \gamma
            \qquad\text{ and}
            \\&
            \bar u(t,x) = \alpha t - \eps \beta \log\Big(1 - \frac{C_0^2 x^2}{\eps^2}  \Big).
        \end{split}
    \end{equation}
    Clearly $(u^\eps, v^\eps) < (\bar u, \bar v)$ at $t=0$ due to~\eqref{e.initial_data1} and~\eqref{e.c081003}.      Hence, we need only show that $(\bar u, \bar v)$ is a supersolution to~\eqref{e.scaled_eqn}.  Note that we use here the finiteness of $T$.
    
    One immediately sees that, for $|x|, |y| < \sfrac{C_0}{\eps}$,
    \begin{equation}
        \begin{split}
            \bar v_t - &\eps \Delta \bar v + |\nabla \bar v|^2 + (1 - e^{-\bar v/\eps})
                \\&
                \geq \alpha - \eps \left(\frac{\frac{2\beta C_0^2}{\eps}}{1 - \frac{C_0^2 y^2}{\eps^2}}
                +\frac{\frac{4\beta C_0^4 y^2}{\eps^3}}{(1 - \frac{C_0^2 y^2}{\eps^2})^2}
                + \frac{\frac{2\beta C_0^2 }{\eps}}{1 - \frac{C_0^2 x^2}{\eps^2}}
                +\frac{\frac{4\beta C_0^4 x^2}{\eps^3}}{(1 - \frac{C_0^2 x^2}{\eps^2})^2}
                \right)  
                \\&\qquad
                +\left|\frac{\frac{2\beta C_0^2 y}{\eps}}{1 - \frac{C_0^2 y^2}{\eps^2}}\right|^2
                + \left|\frac{\frac{2\beta C_0^2 x}{\eps}}{1 - \frac{C_0^2 x^2}{\eps^2}}\right|^2
                \\&
                = \alpha + \frac{4\beta(\beta-1) C_0^4 y^2}{\eps^2 (1 - \frac{C_0^2 y^2}{\eps^2})^2} 
                    + \frac{4\beta(\beta - 1) C_0^4 x^2}{\eps^2 (1 - \frac{C_0^2 x^2}{\eps^2})^2} 
                    - \frac{2 \beta C_0^2}{1 - \frac{C_0^2 y^2}{\eps^2}}
                    - \frac{2 \beta C_0^2}{1 - \frac{C_0^2 x^2}{\eps^2}}
        \end{split}
    \end{equation}
    When $x\geq \sfrac{\eps}{2C_0}$ the second and third terms dominate after increasing $\beta$, and the above is positive.  When $x \leq \sfrac{\eps}{2C_0}$, the first term dominates after increasing $\alpha$, and the above is positive.  Hence
    \begin{equation}\label{e.c081004}
        \bar v_t - \eps \Delta \bar v + |\nabla \bar v|^2 + (1 - e^{-\bar v/\eps})
            \geq 0
                \qquad\text{ in } (0,\infty) \times (\sfrac{-\eps}{C_0}, \sfrac{\eps}{C_0})\times (0, \sfrac{\eps}{C_0}).
    \end{equation}
    
    A similar computation shows that
    \begin{equation}\label{e.c081005}
        \bar u_t - \eps D \bar u_{xx}+ D|\bar u_x|^2 +  \nu e^{\frac{\bar u - \bar v}{\ep}} - \mu  = 0
            \qquad\text{ in } (0,\infty) \times (\sfrac{-\eps}{C_0}, \sfrac{\eps}{C_0}).
    \end{equation}
    after further increasing $\alpha$ and $\beta$, if necessary, depending on $D$ and $\mu$.

    Finally, notice that, when $y=0$,
    \begin{equation}\label{e.c081006}
        \bar v_y
            = 0
            < \kappa( \mu e^{\gamma} - \nu)
            = \kappa( \mu e^\frac{\bar v - \bar u}{\eps} - \nu),
    \end{equation}
    after increasing $\gamma$.  The combination of~\eqref{e.c081004},~\eqref{e.c081005}, and~\eqref{e.c081006}, along with the comparison principle, shows that $(u^\eps, v^\eps) < (\bar u, \bar v)$ for all $t>0$.  This concludes the proof on the set $(0,\infty) \times [\sfrac{-\eps}{2C_0}, \sfrac{\eps}{2C_0}]\times[0,\sfrac{\eps}{2C_0}]$.

    We now consider the complement of the above set; that is, $\max\{|x|,|y|\}\geq \sfrac{\eps}{2C_0}$.  For any time interval $[0,T]$, we again define supersolutions: for $\alpha, \beta, \gamma>0$ to be determined, let
    \begin{equation}\label{e.c081008}
        \begin{split}
            &
            \bar v(t,x,y)
                = \alpha t + \beta\frac{\sqrt{x^2 + (\sfrac{\eps}  {2C_0})^2} + y + x^2 + y^2}{t} + \eps \gamma
            \qquad\text{ and }\qquad
            \\&
            \bar u(t,x)
                = \alpha t + \beta\frac{|x| + x^2}{t} + \eps\gamma.
        \end{split}
    \end{equation}
    Up to increasing $\alpha, \beta, \gamma$, clearly $(u^\eps, v^\eps) < (\bar u, \bar v)$ on the parabolic boundary of our set; that is, at $t=0$ and $\max\{|x|,|y|\}> \sfrac{\eps}{2C_0}$ or when $t\in(0,T]$ and $\max\{|x|,|y|\}= \sfrac{\eps}{2C_0}$.  For the former, this is obvious because $\bar u, \bar v = +\infty$, and for the latter, this is clear from the supersolution~\eqref{e.c081003} evaluated at $\max\{|x|,|y|\}= \sfrac{\eps}{2C_0}$.

    Hence, we need only show that $(\bar u, \bar v)$ is a supersolution to~\eqref{e.scaled_eqn}.
    We check only that~\eqref{e.c081004} holds, as the other inequalities follow by similar methods as~\eqref{e.c081004} and along the lines of the work above.  Indeed, for $|x|, |y|$ such that $\max\{|x|,|y|\}\geq \sfrac{\eps}{2C_0},$ we find
    \begin{equation}
        \begin{split}
            \bar v_t &- \eps \Delta \bar v + |\nabla \bar v|^2 + (1 - e^{-\bar v/\eps})
                \\&
                \geq \alpha - \beta \frac{\sqrt{x^2 + (\sfrac{\eps}{2C_0})^2} + y + x^2+y^2}{t^2}
                    - \frac{\beta \eps}{t}
                        \left( 
                        \frac{\sfrac{2C_0}{\eps}}{((\sfrac{2C_0x}{\eps})^2 + 1)^{\sfrac32}}
                        + 2
                        \right)
                    \\&\qquad
                    + \frac{\beta^2}{t^2}\Big( x^2\Big(\frac{1}{\sqrt{x^2 + (\sfrac{\eps}{2C_0})^2}} + 2\Big)^2 + (1+y)^2\Big)
                \\&
                \geq \alpha - \beta \frac{\sqrt{x^2 + (\sfrac{\eps}{2C_0})^2} + y + x^2+y^2}{t^2}
                    - \frac{\beta}{t}
                        \left( 
                        2C_0
                        + 2\eps
                        \right)
                    \\&\qquad
                    + \frac{\beta^2}{t^2}\Big( x^2\Big(\frac{1}{\sqrt{x^2 + (\sfrac{\eps}{2C_0})^2}} + 2\Big)^2 + (1+y)^2\Big).
        \end{split}
    \end{equation}
    It is clear that the third term after the last inequality can be absorbed in the first and last terms (depending on whether $t$ is large or small), up to increasing $\alpha$ and $\beta$ so that $\alpha \geq 2C_0 \beta$ and $\beta \geq 2C_0$.  Additionally, the second term is nonnegative after increasing $\beta$.  Indeed, the $y$-component of the last term clearly dominate the $y$-component of the second term.  When $x$ is at least $\sfrac{\eps}{2C_0}$, it is easy to see that the fourth term is bounded below by
    \be
        \frac{\beta^2}{t^2} ( 1 + 2x)^2
    \ee
    which is clearly larger than the $x$-component of the second term.  When $x < \sfrac{\eps}{2C_0}$, we ``borrow'' from the $y$-component:
    \be
        \frac{\beta^2}{t^2}\Big( x^2\Big(\frac{1}{\sqrt{x^2 + (\sfrac{\eps}{2C_0})^2}} + 2\Big)^2 + (1+y)^2\Big)
            \geq \frac{\beta^2}{t^2}(1 + 2y + y^2)
    \ee
    while the second term satisfies:
    \be
        - \beta \frac{\sqrt{x^2 + (\sfrac{\eps}{2C_0})^2} + y + x^2+y^2}{t^2}
            \geq - \frac{\beta}{t^2} \Big(\frac{\eps}{\sqrt{2}C_0} + y + \frac{\eps^2}{4C_0^2} + y^2\Big).
    \ee
    It is clear that, up to increasing $\beta$, the sum of these two is non-negative.  We deduce that
    \begin{equation}
        \bar v_t - \eps \Delta \bar v + |\nabla \bar v|^2 + (1 - e^{-\bar v/\eps})
            \geq 0.
    \end{equation}
    As noted above, the rest of the argument that $(\bar u, \bar v)$ is a supersolution to~\eqref{e.scaled_eqn} follows similarly.  We deduce that $(u^\eps, v^\eps) \leq (\bar u, \bar v)$, which finishes the proof.
\end{proof}

\subsubsection{The lower bound of $(U,V)$: \Cref{lem:2.4a}}\label{subsec:b.2}

\begin{proof}[Proof of \Cref{lem:2.4a}]
Let $B_R(0)$ be the open ball in $\mathbb{R}^2$ with radius $R$ centered at the origin and let $\lambda_R$, $\phi_R(x,y)$ be, respectively, the principal eigenvalue and positive eigenfunction of 
\begin{equation}
\begin{cases}
    -\Delta \phi_R = \lambda_R \phi_R \quad \text{ in }B_R(0),\quad \phi_R = 0 \quad \text{ on }\partial B_R(0).\\
    \sup \phi_R = 1.
    \end{cases}
\end{equation}
Next, we fix any $R,c>0$ such that $\lambda_1(R) = \frac{1}{3}$,
which is possible since $\lambda_1(R) = R^{-2}\lambda_1(0)>0$. Next, we define, for any $c>0$ and ${\bf e} \in \mathbb{S}^1$
\be
\psi_{c,{\bf e}}(t,x,y)= \begin{cases}
    \phi_R((x,y+R) - tc{\bf e})) &\text{ if }(x,y+R) - tc{\bf e}) \in B_R(0),\\
    0 &\text{ otherwise},
\end{cases} 
\ee
Observe that for any $c,\eta>0$ and ${\bf e}= (e_1,e_2)\in\mathbb{S}^1$ satisfying
\begin{equation}
\frac{c^2}{4}\leq \frac{1}{3},\quad  0 < \eta<\frac{1}{3},\quad e_2\geq 0, 
\end{equation}
it can be directly verified that
\begin{equation}
\begin{split}
    (\psi_{c,{\bf e}})_t - \Delta \psi_{c,{\bf e}}
    	\leq \psi_{c,{\bf e}}(1-\psi_{c,{\bf e}}) &\text{ in }(0,\infty)\times \H,
	\\
    -(\psi_{c,{\bf e}})_y
    	\leq 0 = \psi_{c,{\bf e}} &\text{ on }(0,\infty) \times \partial \H.
\end{split}
\end{equation}
Hence, the pair $(\psi_{c,{\bf e}},0)$ forms a pair of subsolution to \eqref{e.berestycki}. By the notion of generalized subsolution (See \cite[Definition 4.2]{Berestycki2016shape} or \cite[Definition 1.1.1]{Lam2022introduction}), it follows that $(\underline\psi,0)$ is a generalized subsolution, where
\be
\underline\psi(t,x,y) = \eta \sup\{\psi_{c,{\bf e}}(t,x,y):~ 0<c\leq 1,~ {\bf e}\in \mathbb{S}^1,~e_2\geq 0\}.
\ee
By the strong maximum principle, $V(1,x,y)>0$ in 
 the closure of $\H$. Hence, we may 
 choose $0<\eta<\frac{1}{3}$ small enough so that $\psi(1,x,y) \leq V(t,x,y)$ in the (compact) support of $(x,y) \mapsto \psi(1,x,y)$. A comparison principle thus yields that
$V(t,x,y) >\underline\psi(t,x,y)$ for all $t \geq 1$ and all $(x,y) \in \H$, in particular, 
\be
V(t,x,y) \geq \eta_0 \quad \text{ if }t \geq 1,~ y \geq R,~\text{ and }~ |(x,y)|<t/2,
\ee
where $\delta_0$ is independent of the $(t,x,y)$ in the prescribed ranged. 

Next, we claim
\begin{equation}\label{e.bccc}
    V(t,x,y) \geq \eta_0 
        \quad \text{ if }t \geq 1
        ~\text{ and }~ |(x,y)|<t/4.
\end{equation}
If not, then there exists $t_k \to \infty$, $0\leq y_k \leq R$, $|x_k|\leq t/4$, such that $V_k(t,x,y):= V(t_k + t, x_k + x, y)$ satisfies $V_k(0,0,0) = 0$. Letting $k \to \infty$, then (up to a subsequence) $y_k \to y_\infty$ and $V_k \to \tilde V$ in compact subsets of $\mathbb{R} \times \overline{\H}^2$, where $\tilde V$ satisfies
\begin{equation}\label{e.bcc}
\begin{cases}
    \tilde{V}_t = \Delta \tilde{V} + \tilde V(1-\tilde V) &\text{ in }\mathbb{R}\times \H,\\
    -\tilde{V}_y(t,x,0) \geq -\nu \tilde{V}(t,x,0),\quad ~\tilde{V}(t,x,R)>0~ &\text{ for }(t,x) \in \mathbb{R}^2,\\
    \tilde{V}(0,0,y_\infty) = 0. &
\end{cases}
\end{equation}
Now, the strong maximum principle implies $\tilde{V}(t,x,y) >0$ for $y>0$. Hence, 
$y_\infty = 0$ and $\tilde{V}_y(0,0,0)>0$ by the Hopf's lemma. This contradicts the boundary condition in \eqref{e.bcc}. This proves \eqref{e.bccc}. Similarly, we can show that there exists $\eta'_0>0$ such that 
\begin{equation}\label{e.bcccc}
U(t,x) \geq \eta'_0\quad \text{ for }t \geq 1,~\text{ and }~|x|\leq  t/8.\end{equation}
Finally, \eqref{e.bccc} and \eqref{e.bcccc} implies $w^*(t,x,y) = 0$ for $|(x,y)| < t/8$. This proves the lemma. 
\end{proof}

\begin{proof}[Proof of \Cref{lem:2.4}]
Fix $A>0$, and define
\be
\underline{u}(t,x)=A x - Q t- \ep(Q + \log\tfrac{\nu}{\mu}),\quad \text{ and }\quad \underline{v}(t,x,y)=A(x+y) - Q(t+\ep),
\ee
where $Q=Q_A$ is chosen large enough so that 
\be
Q e^{-A x}\geq U(0,x) \quad \text{ and }\quad \frac{\nu Q}{\mu} e^{-A(x+y)} \geq V(0,x,y),\quad Q \geq \max\{2A^2+1,DA^2\}.
\ee
Then we can verify that $(\underline{u}(t,x),\underline{v})$ is a subsolution to \eqref{e.scaled_eqn}, and satisfies
\be
\underline{u}(0,x) \leq u^\ep(0,x)\quad \text{ and }\quad \underline{v}(0,x,y) \leq v^\ep(0,x,y).
\ee
By comparison, we have
\be
u^\ep(t,x) \geq Ax - Q_At - \ep(Q_A + \log \tfrac{\nu}{\mu}),\quad v^\ep(t,x,y) \geq A(x+y) - Q_A(t+\ep).
\ee
Taking the half-relaxed limit (see \eqref{e.half_relaxed}~-~\eqref{e.hr26}) as $\ep \to 0$, we have
\be
w_*(t,x,y)\geq A(x+y) - Q_At.
\ee
The desired conclusion follows by setting $t=0$ and letting $A \to +\infty$.
\end{proof}

\subsubsection{Continuity of $\rsig^*$ and ${\rho^*}$: \Cref{l.Lipschitz}}

\begin{proof}[Proof of \Cref{l.Lipschitz}]
    First we prove that it is enough to establish the following estimate: 
    for every $(x_0,y_0)$ with $y_0>0$, there is a constant $\alpha$ such that, for every $(x,y) \in \overline \H$ satisfying $|x-x_0| + |y-y_0| \leq 1$, we have
    \begin{equation}\label{e.c081101}
        {\rho^*}(x,y)
            \leq {\rho^*}(x_0,y_0) + \alpha \left(1 +y_0^{-\sfrac12}\right) (|x-x_0| + |y-y_0|),
    \end{equation}
    where  $\alpha$ can be chosen uniformly for all $(x_0,y_0)$ in bounded subsets of $\overline\H$.

    By reversing the role of $(x,y),(x_0,y_0)$ in \eqref{e.c081101}, we immediate deduce \eqref{eq:l.Lipschitz.1} and that ${\rho^*}$ is locally Lipshitz continuous on $\H$. This proves assertion (i). To prove assertion (ii), we need to understand the behavior at the boundary.  
    Let us point out that it is enough to establish a $C^{\sfrac12}$ bound on $\H$ and show that ${\rho^*}$ is continuous up to the boundary in $y$.

    The easier piece is continuity up the boundary in $y$, so we begin there.     
    Since $\rsig^*$ is upper semicontinuous, so is ${\rho^*}$.  We deduce that
    \begin{equation}\label{e.c091502}
        \limsup_{(\tilde x, \tilde y)\to(x,0)} {\rho^*}(\tilde x,\tilde y)
            \leq {\rho^*}(x,0).
    \end{equation}
    We seek the reverse inequality.  We obtain this by applying~\eqref{e.c081101} with $y =0 $ and taking a limit as $y_0 \searrow 0$.  Indeed,
    \begin{equation}\label{e.c091501}
        {\rho^*}(x_0,0)
            \leq {\rho^*}(x_0,y_0)
                + \alpha (\sqrt{y_0} + y_0).
    \end{equation}
    We deduce from~\eqref{e.c091502} and~\eqref{e.c091501} that, for any $x_0$,
    \begin{equation}
        {\rho^*}(x_0,0) = \lim_{y\searrow0} \rho^*(x_0,y).
    \end{equation}
    
    Next, we obtain the $C^{\sfrac12}$ bound up to the boundary by applying~\eqref{e.c081101} a large, but finite, number of times.  Fix $(x_0,y_0), (x_1, y_1) \in \overline \H$ such that
    \begin{equation}
		y_0 +|x_0 - x_1|+|y_0 - y_1| \leq 1,
    \end{equation}
and denote
    \begin{equation}
        \delta = \max\{|y_0-y_1|, |x_0-x_1|\}
        \quad\text{ and }
        \quad
        \eps = y_0 + \delta - y_1 \geq 0.
    \end{equation}
    Notice that $\eps \leq 2\delta$ and, hence, it suffices to show that
    \begin{equation}\label{e.c091503}
        {\rho^*}(x_0,y_0) \leq {\rho^*}(x_1,y_1) + C (\sqrt \delta + \sqrt \eps).
    \end{equation}
    We now establish~\eqref{e.c091503}. 
    For any natural number $N$
    we have
    \begin{equation}
        \begin{split}
            {\rho^*}(x_0,y_0)
                &= {\rho^*}(x_0,y_0) - {\rho^*}(x_1,y_0+ \ep)
                    \\&\qquad
                    + \sum_{i=1}^N \left({\rho^*}(x_1, y_1+2^{-i+1}\eps)
                        - {\rho^*}(x_1, y_1+2^{-i} \eps)\right)
                    \\&\qquad + {\rho^*}(x_1, y_1 + 2^{-N} \eps)
                    - {\rho^*}(x_1,y_1)
                    + {\rho^*}(x_1,y_1)
                \\&\leq 
                    2\alpha \frac{ \delta + |x_1-x_0|}{ \sqrt{y_0 + \delta}}
                    + \sum_{i=1}^N 2\frac{\alpha 2^{-i} \eps}{\sqrt{y_1 + 2^{-i}\eps}}
                    + 2\frac{\alpha 2^{-N} \eps}{\sqrt{y_1}}
                    + {\rho^*}(x_1, y_1)
                \\&\leq
                    4\alpha \sqrt \delta 
                    + 2\sum_{i=1}^\infty \alpha 2^{-i/2} \sqrt \eps
                    + \frac{\alpha 2^{-N+1} \eps}{\sqrt{y_1}}
                    + {\rho^*}(x_1, y_1)
                \\&\leq
                    4\alpha \sqrt \delta 
                    + C \alpha \sqrt \eps
                    + \frac{\alpha 2^{-N+1} \eps}{\sqrt{y_1}}
                    + {\rho^*}(x_1, y_1).
        \end{split}
    \end{equation}
    Taking $N\to\infty$, we deduce~\eqref{e.c091503}, which completes the proof of the $C^{\sfrac12}$ bound of ${\rho^*}$. This proves assertion (ii).

    It remains to establish~\eqref{e.c081101}. For this purpose, fix $(x_0,y_0) \in \mathbb{H}^2$ and let $R=|x_0| + y_0$. Let $\alpha$ be a constant to be chosen later, 
    and define, for any $\beta$,
    \begin{equation}
        \phi_\beta(x,y) = \beta + \alpha(1+y_0^{-\sfrac12})\left(|(x-x_0,y-y_0)| + |(x-x_0,y-y_0)|^4 \right).
    \end{equation}
    Define
    \begin{equation}
        \beta_0 := \inf\{\beta : \phi_\beta > {\rho^*} \text{ on } \overline \H\}.
    \end{equation}
    By \Cref{l.upper_bound}, for any $\alpha>0$ and $\beta \in \mathbb{R}$, we have $\phi_\beta > {\rho^*}$ for $|x|+y$ large enough.  Thus, $\beta_0$ is well-defined and there is a touching point $(x_t,y_t)$ such that
    \begin{equation}\label{e.c081102-1}
        \phi_{\beta_0}(x_t,y_t) = {\rho^*}(x_t,y_t),
    \end{equation}
    but $\phi_{\beta_0} \geq {\rho^*}$ elsewhere.  If $(x_t,y_t) = (x_0,y_0)$, we deduce~\eqref{e.c081101} immediately.  Hence, we consider the case where $(x_t,y_t) \neq (x_0,y_0)$.

    First, notice that ${\rho^*}(x_t,y_t)>0$ in this case. Indeed, by the definition of $\beta_0$ and \eqref{e.nonnegative}, 
  \begin{equation}\label{e.c082901-1}
        \beta_0
            = \phi_{\beta_0}(x_0, y_0)
            \geq {\rho^*}(x_0,y_0)
            \geq 0,
    \end{equation}
and hence,     
        \begin{equation}\label{e.c081102}
        \begin{split}
            {\rho^*}(x_t,y_t) 
                &= \phi_{\beta_0}(x_t,y_t)
            >\beta_0\geq 0.
        \end{split}
    \end{equation}
    where the equality follows from \eqref{e.c081102-1}, the strict inequality from $(x_t,y_t)\neq(x_0,y_0)$ and the form of $\phi_{\beta_0}$.

    If $y_t > 0$, then, due to~\eqref{e.sigma*} and~\eqref{e.c081102}, the following holds at $(x_1,y_t)$:
    \begin{equation}\label{e.c081103}
        \begin{split}
            0
                &\geq \phi_{\beta_0} - (x_t,y_t)\cdot \nabla \phi_{\beta_0} + |\nabla \phi_{\beta_0}|^2 + 1
                \\&
                > 0 - \tilde\alpha (x_t,y_t) \cdot \frac{(x_t-x_0,y_t-y_0)}{|(x_t-x_0,y_t-y_0)|}(1 + 4 |(x_t-x_0,y_t-y_0)|^3)
                \\&\qquad
                + \tilde\alpha^2 \bigg|\frac{(x_t-x_0,y_t-y_0)}{|(x_t-x_0,y_t-y_0)|}(1 + 4 |(x_t-x_0,y_t-y_0)|^3)\bigg|^2
                    + 1
                \\&
                \geq - \tilde\alpha R \cdot (1 + 4 |(x_t-x_0,y_t-y_0)|^3)
                \\&\qquad
                - \tilde\alpha |(x_t-x_0,y_t-y_0)|(1 + 4 |(x_t-x_0,y_t-y_0)|^3)
                \\&\qquad
                + \tilde\alpha^2 (1 + 4 |(x_t-x_0,y_t-y_0)|^3)^2
                    + 1.
        \end{split}
    \end{equation}
    where $\tilde\alpha = \alpha(1+y_0^{-\sfrac12})$ and $R=|x_0|+y_0$.
    The last term dominates when $\alpha$, and hence $\tilde\alpha$, is sufficiently large. Indeed, if $\alpha \geq 1+R$, then $\tilde{\alpha} \geq 1 + R$ and the last term containing the highest power of $|(x_t-x_0,y_t-y_0)|$ implies that \eqref{e.c081103} is impossible for $|(x_t-x_0,y_t-y_0)| \geq C_0$ for some constant $C_0$ chosen uniformly in ${\alpha} \geq 1$. By choosing $\alpha$ still larger, we deduce that \eqref{e.c081103} is also impossible for $|(x_t-x_0,y_t-y_0)| \leq C_0$. Hence, the case $y_t>0$ is impossible.

    Let us now consider the case $y_t = 0$.  Notice that~\eqref{e.c081103} does not use the positivity of $y_t$ at all, so we again deduce that
    \begin{equation}
        \phi_{\beta_0} - (x_t,0)\cdot \nabla \phi_{\beta_0} + |\nabla \phi_{\beta_0}|^2 + 1
            >0  \qquad \text{ at }(x_t,y_t),
    \end{equation}
    so that, from the second equation of~\eqref{e.sigma*}, we must have
    \begin{equation}\label{e.c081104}
        0 \geq \phi_{\beta_0} - (x_t,0)\cdot \nabla \phi_{\beta_0} + D|(\phi_{\beta_0})_x|^2 + \Bo((\phi_{\beta_0})_y)  \qquad \text{ at }(x_t,y_t).
    \end{equation}
We claim that, provided $\alpha$ is chosen large, then
    \begin{equation}\label{e.c082901}
        |x_t-x_0| \leq \frac{C(1+|x_0|^2)\sqrt{y_0}}{\alpha},
    \end{equation}
although we postpone the proof momentarily.
Since $y_t = 0$ and $y_0>0$, the definition of $\phi_{\beta_0}$ yields
    \begin{align}
        (\phi_{\beta_0})_y
            &= \alpha(1+{y_0}^{-\sfrac12)} \cdot\frac{ - y_0}{|(x_t-x_0,0-y_0)|}(1 + 4 |(x_t-x_0,0-y_0)|^3) \notag\\
            &< -\frac{\alpha\sqrt{y_0}}{|x_t-x_0|}.\label{e.c082902}
    \end{align}
    Plugging~\eqref{e.c082901} 
    into~\eqref{e.c082902}, we find
    \begin{equation}
        (\phi_{\beta_0})_y(x_t,0)
            < \frac{-\alpha \sqrt{y_0}}{\sqrt{ \sfrac{C^2 y_0}{\alpha^2} }}
            = \frac{-\alpha^2}{ C(1+|x_0|^2)}.
    \end{equation}
    By choosing $\alpha$ sufficiently large so that $(\phi_{\beta_0})_y(x_t,0) \leq -\kappa\nu$, we have $\Bo((\phi_{\beta_0})_y(x_t,0)) = +\infty$ and obtain a contradiction with \eqref{e.c081104}. 
    
We now establish~\eqref{e.c082901}. First, use $\beta_0 \geq 0$ (see \eqref{e.c082901-1}) and  \Cref{l.upper_bound} to find
    \begin{equation}
        \begin{split}
            \alpha(1+y_0^{-\sfrac12})( |x_t - x_0| + |x_t-x_0|^4)
                &\leq \phi_{\beta_0}(x_t,0)
                = {\rho^*}(x_t,0)
                \\&
                \leq A_T( 1 + |x_t|^2)
                \leq 2A_T( 1 + x_0^2 + |x_t-x_0|^2).
        \end{split}
    \end{equation}
    Then~\eqref{e.c082901} follows after an application of Young's inequality and suitably increasing $\alpha$ so that the $x_t-x_0$ term on the right hand side can be absorbed into the left hand side, i.e.
    \be
    \frac{\alpha}{\sqrt{y_0}} |x_t-x_0| \leq 2A_T(1+x_0^2).
    \ee
    By noting that the choice of $\alpha$ depends only on $R=|x_0|+y_0$, the proof is complete.
\end{proof}

\appendix
\section{Comparison principle}\label{sec:A}
In this section, we develop a comparison principle, and, thus, a uniqueness theorem for strong solutions to
\begin{equation}\label{e.HJ'}
		\min\{\rsig, \rsig_t + \Hf (\rsig_x,\rsig_y)\} = 0
			\qquad \text{ in }(0,\infty)\times \H
\ee
and
\be\label{e.HJ_F'}
		\min\{\rsig, w_t + F(\rsig_x, \rsig_y)\} = 0
			\qquad \text{ on } (0,\infty) \times \R \times \{0\},
\end{equation}
Written in this form, it is does not seem that classic results (as in~\cite{UsersGuide}) will apply, and one might think that the ideas of~\cite{Imbert2017flux, Lions2017well, Barles2023illustrated} are needed.  It turns out that our situation is significantly simpler because it is not, in some sense, a true junction problem. Indeed, our problem does not involve two or more hyperplanes glued together -- we have only {\em one} hyperplane.  Thus, the main idea is to use the ideas of~\cite{Lions2017well} to deduce a Neumann type boundary condition on $\partial \H$.  Then, after a suitable localization procedure, we may apply the classic comparison principle for Neumann boundary conditions.

Before we begin, we make a few comments.  First, for the comparison result to hold, we do not need the specific form of $\Hf$ and $\tH$. All we use is that $(q,p) \mapsto \Hf(q,p)$ and $q\mapsto \tH(q)$ are convex and coercive, and that $F(q,p) = \max\{\Hf^-(q,p),\tH(q)\}$.

Our first main result is the following:
\begin{theorem}
	[Comparison principle]
	\label{thm:scp}
Fix any $T>0$.  Let $\underline{w}$ and $\overline w$ be, respectively, strong sub- and supersolutions to~\eqref{e.HJ'}-\eqref{e.HJ_F'} on $(0,T)\times \overline \H$. If $\underline w(0,x,y) \leq \overline{w}(0,x,y)$ for $(x,y) \in \overline \H$, then
	\be
		\underline w \leq \overline w.
			\qquad\text{ on }[0,T)\times\overline \H.
	\ee 
\end{theorem}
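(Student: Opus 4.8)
The plan is to follow the strategy sketched in the introductory remarks of the appendix: reduce the flux-limited boundary condition on $\partial\H$ to a classical Neumann-type (or more precisely oblique-derivative/state-constraint-free) boundary condition, and then invoke the standard comparison principle after a localization. The key structural observation is that $F(q,p) = \max\{\Hf^-(q,p), \tH(q)\}$, and $\Hf^-(q,p) = \Hf(q,p_+)$ is nonincreasing in $p$ and constant for $p \le 0$. Following the reasoning of Lions--Souganidis \cite{Lions2017well}, a strong subsolution satisfying $\min\{w, w_t + F(w_x,w_y)\}=0$ on the boundary and $\min\{w,w_t+\Hf(w_x,w_y)\}=0$ in the interior in fact satisfies, wherever $w>0$, a Neumann condition of the form $w_y \ge -c_0$ for a suitable constant, together with $w_t + \Hf(w_x,w_y) \le 0$ when touched by test functions whose $y$-derivative is large enough; symmetrically for supersolutions. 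The point is that the ``flux-limited'' information is not a genuine junction coupling (there is only one hyperplane $\{y=0\}$, not several glued half-lines), so it collapses to an effective boundary condition for a single Hamilton-Jacobi equation on the half-space.

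The concrete steps I would carry out are: (1) Reduce to the case where $\underline w$ and $\overline w$ are bounded and where we need only compare on compact sets: use the quadratic growth bound \eqref{e.2.11} (available because these are the half-relaxed limits, or assumed as hypothesis) together with the finite speed of propagation intrinsic to the coercive Hamiltonians $\Hf,\tH$ to localize; add a penalization term like $\delta/(T-t)$ to handle the terminal time and a term $\eta(|x|^2+|y|^2)$ to kill behavior at spatial infinity. (2) Set up the doubling of variables: for $\alpha, \varepsilon > 0$ consider
\[
	\Phi(t,x,y,s,x',y') = \underline w(t,x,y) - \overline w(s,x',y') - \frac{|t-s|^2 + |x-x'|^2 + |y-y'|^2}{2\varepsilon} - \text{(penalizations)},
\]
and let $(\hat t,\hat x,\hat y,\hat s,\hat x',\hat y')$ be a maximum point. (3) Analyze the location of the maximum: the usual interior argument handles $\hat y, \hat y' > 0$. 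The delicate case is when $\hat y = 0$ or $\hat y' = 0$. Here is where the structure of $F$ enters: I would use \Cref{l.critsub} and \Cref{l.critsuper} (the critical-slopes lemmas) to push the test functions into the interior and reduce $F$ to $\Hf$, exactly as was done in \Cref{ss.flux} to derive \eqref{e.F}; alternatively, use that $\Hf^-$ is nonincreasing in $p$ so that the subsolution test function, after adjusting its $y$-slope downward (which is allowed at a boundary maximum since decreasing $p$ keeps the test function above), satisfies the full $\Hf$-inequality. (4) Combine the two inequalities at the doubled point, send $\varepsilon \to 0$, then the penalization parameters to $0$, to conclude $\underline w \le \overline w$.

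The main obstacle I anticipate is step (3) in the boundary case, specifically handling the situation where \emph{one} of $\hat y, \hat y'$ is zero and the other is positive — the asymmetric case. When both are zero one can play the critical-slope game symmetrically on both the sub- and supersolution; when both are positive it is the classical interior argument. But in the mixed case the penalization in $y$ forces a relation between the slopes that must be reconciled with the one-sided nature of $F$ and the fact that $\Hf^-$ is only nonincreasing (not strictly decreasing) in $p$. I expect this is resolved by noting that at such a mixed maximum point the $y$-derivative of the penalization has a definite sign — $\partial_y$ of the penalty with respect to the boundary variable is $-(\hat y - \hat y')/\varepsilon \le 0$ when $\hat y = 0 < \hat y'$ — which means the subsolution is being touched at its boundary point by a test function with nonpositive $y$-slope, precisely the regime where $\Hf^- = \Hf$, so no critical-slope correction is even needed; and one uses the interior equation for $\overline w$ at the positive point $\hat y'$. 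A secondary technical point worth care is the infinite-initial-data feature emphasized in the appendix's opening: one must ensure the localization/penalization is compatible with $\underline w(0,\cdot) \le \overline w(0,\cdot)$ when both sides may be $+\infty$ away from the origin, which I would handle by working on the slab $[\tau, T]$ for $\tau > 0$ and using \Cref{lem:2.4a} to control the behavior near $(0,0,0)$, then letting $\tau \to 0$ at the end using the local Hölder continuity \eqref{e.c051304}.
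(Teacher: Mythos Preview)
Your overall strategy matches the paper's: reduce the flux-limited boundary condition to a Neumann/Kirchhoff-type condition following Lions--Souganidis, then invoke classical comparison techniques. The execution differs in a few ways worth noting.

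The paper isolates the Kirchhoff reduction as a separate lemma (\Cref{lem.subk}): every strong sub/supersolution of \eqref{e.HJ'}--\eqref{e.HJ_F'} is automatically a weak sub/supersolution of \eqref{e.HJ'} with the explicit Neumann condition $-w_y + p_{w_x} = 0$ on $\partial\H$. With this lemma in hand, the proof of \Cref{thm:scp} proceeds by three reductions: (i) truncate $\underline w$ from above via $\underline w_K = \min\{\underline w - t, K\} + t$, using convexity of the Hamiltonians and $\Hf(0)=\tH(0)=1$ to verify the subsolution property is preserved; (ii) replace $\underline w$ by the convex combination $(1-\mu)\underline w + \mu\,\tilde w_1$ with the explicit subsolution $\tilde w_1 = -K/(T-t) - \log(1+|x|^2+y^2)$, which yields a \emph{strict} subsolution tending to $-\infty$ at the temporal and spatial boundary; (iii) since $\overline w \ge 0$, any negative infimum of $\overline w - \underline w$ then lies in a compact set, and the paper simply cites the classical Neumann comparison principle \cite[Theorem~7.12]{UsersGuide} rather than writing out the doubling.

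Your route---carrying out the doubling directly and handling boundary points via the critical-slope lemmas---should also work, but is closer to Imbert--Monneau than to the Lions--Souganidis shortcut the paper actually takes; the paper's point is precisely that once \Cref{lem.subk} is proved, one is in the standard Neumann setting and need not re-do the boundary analysis. Your generic penalizations $\delta/(T-t)$ and $\eta(|x|^2+y^2)$ inside the doubling functional are also less clean than the paper's convex-combination trick, which bakes the penalization into the subsolution itself (again via convexity of $\Hf,\tH$) and delivers the strict inequality for free. Finally, your ``secondary technical point'' about infinite initial data is misplaced here: \Cref{thm:scp} takes $\underline w(0,\cdot) \le \overline w(0,\cdot)$ as a hypothesis with real-valued sub/supersolutions; the infinite-data maneuver (shift by $\tau$ and let $\tau\to 0$) is the content of \Cref{cor:scp}, not of this theorem.
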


Before we prove \Cref{thm:scp}, we show how to deduce uniqueness of (possibly infinite) solutions to~\eqref{e.HJ'} from it.  This is our second main result.
\begin{corollary}
	[Uniqueness]
    \label{cor:scp}
	Any two functions $w: [0,\infty)\times \overline \H \to \mathbb{R}\cup \{+\infty\}$ satisfying
	\begin{enumerate}[(i)]
       \item On $[0,\infty)\times \overline \H$, $w$ is lower semicontinuous and a strong supersolution to~\eqref{e.HJ'},
       
    \item On $(0,\infty)\times \overline \H$, $w$ is finite-valued, continuous, and is a strong subsolution to~\eqref{e.HJ'}, and
    
    \item\label{i.ellt}
    For $t>0$, $w(t,0,0) \leq 0$, while at $t=0$, we have $w(0,0,0) \geq 0$ 
    and $w(0,x,y) = +\infty$ for $(x,y)\neq 0$.
    
    \end{enumerate}
\end{corollary}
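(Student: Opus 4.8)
The plan is to reduce the statement to the comparison principle \Cref{thm:scp} by a time-shift that sidesteps the singular initial datum at the origin. Let $w_1$ and $w_2$ be two functions satisfying (i)--(iii); the goal is to show $w_1 \equiv w_2$. Fix $T>0$ and $\tau \in (0,T)$. Because the equations \eqref{e.HJ'}--\eqref{e.HJ_F'} are autonomous in $t$, the shifted function $w_1(\tau + \cdot,\cdot,\cdot)$ is, by (ii), finite, continuous, and a strong subsolution of \eqref{e.HJ'}--\eqref{e.HJ_F'} on $(0,T)\times\overline\H$, while $w_2$ is a strong supersolution there by (i).

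Next I would verify the hypothesis on the initial data in \Cref{thm:scp}, i.e.\ $w_1(\tau,x,y) \le w_2(0,x,y)$ on $\overline\H$. For $(x,y)\ne(0,0)$ this is immediate since (iii) gives $w_2(0,x,y)=+\infty$. At the origin, $w_2 \ge 0$ everywhere (it is a strong supersolution), so $w_2(0,0,0)\ge 0$; and (iii) together with $w_1 \ge 0$ forces $w_1(\tau,0,0)=0$. Hence $w_1(\tau,0,0)=0\le w_2(0,0,0)$, and \Cref{thm:scp} yields $w_1(\tau+t,x,y)\le w_2(t,x,y)$ for all $(t,x,y)\in[0,T)\times\overline\H$. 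Letting $\tau\to0^+$ and using the continuity of $w_1$ on $(0,\infty)\times\overline\H$ from (ii) gives $w_1\le w_2$ on $(0,T)\times\overline\H$; as $T$ is arbitrary this holds on $(0,\infty)\times\overline\H$. Swapping the roles of $w_1$ and $w_2$ gives the reverse inequality, so $w_1\equiv w_2$ on $(0,\infty)\times\overline\H$.

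Finally, at $t=0$ the functions agree away from the origin by (iii), and at the origin I would note that $w_i(t,0,0)=0$ for every $t>0$ (each $w_i$ being a nonnegative strong supersolution satisfying (iii)), so lower semicontinuity from (i) gives $w_i(0,0,0)\le \liminf_{(t,x,y)\to(0,0,0)}w_i(t,x,y)\le\lim_{t\to0^+}w_i(t,0,0)=0$, while (iii) gives $w_i(0,0,0)\ge0$; hence $w_i(0,0,0)=0$ and $w_1\equiv w_2$ on all of $[0,\infty)\times\overline\H$. There is no real obstacle here beyond \Cref{thm:scp} itself; the only delicate point is checking that the time-shifted subsolution is admissible on $[0,T)$ and that the ordering of the data at the single singular point $(0,0,0)$ holds, which is precisely where the normalizations $w(t,0,0)\le0$ for $t>0$ and $w(0,0,0)\ge0$ in (iii) are used.
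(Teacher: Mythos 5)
Your proposal is correct and follows essentially the same approach as the paper's proof: shift the subsolution forward by $\tau$, verify the initial-data ordering (trivially at $(0,0)$ via (iii), and at other points since $w_2(0,\cdot)=+\infty$), apply \Cref{thm:scp}, and let $\tau\to 0^+$ using the continuity of the subsolution guaranteed by (ii). The only addition is your explicit verification of equality at $t=0$ via lower semicontinuity, which the paper leaves implicit.
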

\begin{proof} 
We argue by contradiction.  Fix any two functions $w$ and $\tilde{w}$ that satisfy (i), (ii) and (iii).   By the arbitrariness of $w$ and $\tilde w$, we need only show that $w \leq \tilde{w}$.

Any supersolution must necessarily satisfy 
\be
    \tilde w(t,x,y) \geq 0
        \qquad\text{ for all } (t,x,y) \in (0,\infty)\times \bar \H.
\ee
Fix $\tau > 0$.  Using~\ref{i.ellt}, we see that
\be
    w(\tau,0,0)
        \leq 0
        \leq \tilde w(0,0,0).
\ee
Moreover, again using~\ref{i.ellt}, we have, for all $(x,y) \neq (0,0)$,
\be
    w(\tau,x,y)
        < +\infty
        = \tilde w(0,x,y).
\ee
Hence,
\be
    w(\tau, x, y)
        \leq \tilde w(0,x,y)
        \qquad\text{ for all } (x,y) \in \overline \H.
\ee
Finally, we immediately see that
\be
    w(t+\tau, x,y)
\ee
is a subsolution to~\eqref{e.HJ'}.

Applying \Cref{thm:scp}, it follows that
\begin{equation}
    w(t+\tau,x,y)
        \leq \tilde w(t,x,y)
        \quad \text{ for all }(t,x,y) \in (0,\infty)\times \overline \H.
\end{equation}
By the continuity of $w$ on $(0,\infty)\times \overline H$, We can then let $\tau \to 0$ to obtain $w \leq \tilde{w}$ on $(0,\infty)\times \overline \H$, as desired.
\end{proof}

\subsection{Strong solution implies Neumann-type boundary condition}

We follow the idea of~\cite{Lions2017well}, as presented in \cite{Barles2023illustrated}. For this purpose, we associate a Neumann-type boundary condition to strong sub- and supersolutions. 
\begin{definition}\label{d.subdifferential}
    Let $\overline{w}: (0,\infty)\times \overline{\H} \to \mathbb{R}$, and let $(t_0,x_0,y_0) \in (0,\infty)\times \overline{\H}$ be given.   We say that the constant vector $(-\lambda,q,p)$ is an element of the subdifferential at $(t_0,x_0,y_0)$, a set denoted by $D^-\overline{w}(t_0,x_0,y_0)$,
    if there exists $r_0>0$ such that
\be\label{e.c062801}
    \begin{split}
        w(t,x,y)
        &\geq w(t_0,x_0,y_0)
            + (-\lambda,q,p)\cdot(t-t_0,x-x_0,y-y_0)
            + o(|t-t_0| +|x-x_0|+ |y-y_0|)
        \\&
        \text{ for } (t,x,y) \in \{(t',x',y') \in (0,\infty)\times \overline{\H}:~ |(t'-t_0, x'-x_0,y'-y_0)| <r_0\}.
    \end{split}
\ee
The superdifferential at $(t_0,x_0,y_0)$, denoted $D^+w(t_0,x_0,y_0)$ is defined similarly up to reversing the inequality in~\eqref{e.c062801}.

For a given function $\phi(t,x)$ (that is, not depending on $y$), we denote by the sub and superdifferentials, denoted $D^-_{t,x} \phi(t_0,x_0)$ and $D^+_{t,x}\phi(t_0,x_0)$, analogously. 
\end{definition}

We now state the boundary condition: 
\be\label{e.kirchhoff}
    - \rsig_y + \kirch(\rsig_x) = 0.
\ee
We choose the notation $\kirch$ to match that of Lions and Souganidis~\cite{Lions2017well}. 
Then a (weak) solution of~\eqref{e.HJ'}-\eqref{e.kirchhoff} is one such that
\be\label{e.kirchhoff'}
    \begin{cases}
        \min\left\{\rsig, 
            \max\left\{
                - \rsig_y + \kirch(\rsig_x),
                \rsig_t + \Hf(\rsig_x,\rsig_y)
            \right\}
        \right\}
        \geq 0
            \qquad &\text{ on } (0,\infty)\times \R\times\{0\},
        \\
        \min\left\{
            \rsig ,
            - \rsig_y + \kirch(\rsig_x), 
            \rsig_t + \Hf(\rsig_x,\rsig_y)
        \right\}
            \leq 0
                \qquad &\text{ on } (0,\infty)\times \R\times\{0\}.
    \end{cases}
\ee
The first inequality in~\eqref{e.kirchhoff'} corresponds to supersolutions (along with the condition that $\rsig$ is lower semicontinuous), while the second inequality in~\eqref{e.kirchhoff'} corresponds to subsolutions (along with the condition that $\rsig$ is upper semicontinuous).  Let us point out that if $w$ is a supersolution to~\eqref{e.HJ'}-\eqref{e.HJ'}, it must be that
\be\label{e.w_geq_ell_t}
    w \geq 0
        \qquad\text{ on } (0,\infty) \times \bar \H.
\ee

We now show that strong solutions satisfy the Kirchhoff condition.  This was originally observed by Lions and Souganidis in~\cite{Lions2017well} in a slightly different context, and we follow their proof.
\begin{lemma}\label{lem.subk}
    Let $\rsig$ be a strong subsolution (resp. strong supersolution) of \eqref{e.HJ'}-\eqref{e.HJ_F'}, then it is a weak solution of the Neumann-type condition \eqref{e.HJ'}-\eqref{e.kirchhoff} with coefficient
    \be 
        \kirch(q_0) = p_{q_0},
    \ee
    where we recall that $p_{q_0} \in [0,\infty)$ is given by \eqref{e.pq}.
\end{lemma}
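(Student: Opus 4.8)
The plan is to transfer a test function for the Neumann-type condition \eqref{e.HJ'}-\eqref{e.kirchhoff} into a test function for the flux-limited condition \eqref{e.HJ'}-\eqref{e.HJ_F'}, using the critical-slope lemmas (\Cref{l.critsub}, \Cref{l.critsuper}) to bridge the two. I will treat the subsolution and supersolution cases separately, as they are dual but not identical.

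For the subsolution case: fix a point $(t_0,x_0,0)$ on the boundary where $\rsig(t_0,x_0,0)>0$ (the other case being trivial), and suppose a smooth $\varphi$ touches $\rsig$ from above there; I must verify the second inequality in \eqref{e.kirchhoff'}, i.e. that $\min\{-\varphi_y + \kirch(\varphi_x), \varphi_t + \Hf(\nabla\varphi)\} \leq 0$. Write $(-\lambda, q_0, p_0) = (\varphi_t,\varphi_x,\varphi_y)$ at the point. If $-\varphi_y + p_{q_0} = -p_0 + p_{q_0} \leq 0$, there is nothing to prove since $\kirch(q_0) = p_{q_0}$. So assume $p_0 < p_{q_0}$. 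Then I invoke the strong subsolution property: $\rsig$ being a strong $F$-subsolution means $\varphi_t + F(q_0,p_0) \leq 0$, that is, $-\lambda + \max\{\Hf^-(q_0,p_0), \tH(q_0)\} \leq 0$, so both $\Hf^-(q_0,p_0) \leq \lambda$ and $\tH(q_0)\leq\lambda$. Now I apply \Cref{l.critsub}: since $\rsig$ is also a subsolution of \eqref{e.HJ} in the interior, the critical slope $\bar p \leq 0$ exists (finite) with $\Hf(q_0, p_0 + \bar p) \leq \lambda$; using that $\tH(q_0) = \Hf(q_0,p_{q_0})$ (by \Cref{lem.pq} when $p_{q_0}>0$, or directly when $p_{q_0}=0$) together with $p_0 < p_{q_0}$ and monotonicity of $\Hf(q_0,\cdot)$ on $\R_+$, I can push the touching test function up by slope $p_{q_0}$ as in the proof of \Cref{thm:4.7} to conclude $\Hf(q_0,p_{q_0})\leq\lambda$ — but I actually want the reverse direction here, so more carefully: the key observation is that if $p_0 < p_{q_0}$ then $\varphi(t,x,0) + p_{q_0} y$ still touches $\rsig$ from above in a half-neighborhood (by the definition of $\bar p$ in \eqref{e.bar_p} and $p_0 + \bar p \le p_0 < p_{q_0}$), so the weak $F_0$-subsolution property — which $\rsig$ inherits — gives $\min\{F_0(q_0,p_{q_0}),\Hf(q_0,p_{q_0})\}\leq\lambda$; but $F_0(q_0,p_{q_0}) = \Hf(q_0,p_{q_0}) = \tH(q_0)$, so $\tH(q_0)\le\lambda$, i.e. $\varphi_t + \tH(q_0) \leq 0 \leq \varphi_t + \Hf(q_0,p_0)$ would need $\Hf(q_0,p_0)\le\tH(q_0)=\Hf(q_0,p_{q_0})$, true since $p_0 < p_{q_0}$. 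Hence $\varphi_t + \Hf(\nabla\varphi)\leq 0$, giving the second inequality of \eqref{e.kirchhoff'}.

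For the supersolution case: suppose $\varphi$ touches $\rsig$ from below at $(t_0,x_0,0)$; I must show $\max\{-\varphi_y + p_{q_0}, \varphi_t + \Hf(\nabla\varphi)\}\geq 0$ (and $\rsig\geq 0$, which holds by \eqref{e.w_geq_ell_t}). Again set $(-\lambda,q_0,p_0)$ for the derivatives of $\varphi$. If $-p_0 + p_{q_0}\geq 0$, done. So assume $p_0 > p_{q_0}\geq 0$. By \Cref{l.critsuper} the critical slope $\bar p\in[0,+\infty]$ exists; if $\bar p = +\infty$ then $\varphi + py$ touches $\rsig$ from below for every $p$, so in particular $\varphi(t,x,0) + p_{q_0}y$ does, giving $\max\{F_0(q_0,p_{q_0}),\Hf(q_0,p_{q_0})\}\geq\lambda$, hence $\tH(q_0)\ge\lambda$ when $p_{q_0}>0$ or $\Hf(q_0,0)\geq\lambda$ when $p_{q_0}=0$; either way combined with $p_0 > p_{q_0}$ and monotonicity one gets $\Hf(q_0,p_0)\ge\Hf(q_0,p_{q_0}) = \tH(q_0)\ge\lambda$ (or, when $p_{q_0}=0$, $\Hf(q_0,p_0)\geq\Hf(q_0,0)\geq\lambda$), so $\varphi_t + \Hf(\nabla\varphi)\geq 0$. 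If $\bar p < +\infty$, then $p_0 + \bar p \ge p_0 > p_{q_0}$ combined with \Cref{l.critsuper}'s conclusion $\Hf(q_0,p_0+\bar p)\ge\lambda$ and monotonicity of $\Hf(q_0,\cdot)$ on $\R_+$ (noting $p_0 + \bar p > 0$) forces $\Hf(q_0,p_0)\geq\lambda$ as well — wait, that needs $p_0 \geq p_0 + \bar p$, i.e. $\bar p \leq 0$, contradicting $\bar p\ge 0$ unless $\bar p = 0$; so instead I use the reverse: $\Hf(q_0,p_0)\le\Hf(q_0,p_0+\bar p)$, which gives nothing directly, so here I argue $\varphi(t,x,0) + p_{q_0}y$ touches from below (since $p_{q_0} < p_0 \le p_0 + \bar p$, so $p_{q_0}$ is admissible in \eqref{e.bar_p}'s defining set for $\overline\rsig$-from-below), apply the weak $F_0$-supersolution property to get $\max\{F_0(q_0,p_{q_0}),\Hf(q_0,p_{q_0})\}\geq\lambda$, and finish as before. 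The cleanest organization mirrors \Cref{thm:4.7} exactly, replacing its final conclusion $F(q_0,p_0)\gtrless\lambda$ with the Kirchhoff inequality; I expect the main obstacle to be correctly handling the boundary subcases $p_{q_0}=0$ versus $p_{q_0}>0$ and making sure the test-function-pushing argument (which half-neighborhood, which sign of the slope adjustment) is set up consistently, but this is routine given \Cref{l.critsub}, \Cref{l.critsuper}, \Cref{lem.pq}, and the already-established \Cref{thm:4.7}.
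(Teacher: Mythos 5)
Your route is genuinely different from the paper's, and considerably more circuitous. The paper's proof never invokes the critical-slope lemmas or any test-function manipulation beyond the initial touching: it works directly with the superdifferential. For the subsolution case one assumes $\Hf(q_0,p_0) > \lambda$ and shows $p_0 \geq p_{q_0}$. The strong condition $\max\{\Hf^-(q_0,p_0),\tH(q_0)\}\le\lambda$ gives $\Hf^-(q_0,p_0)\le\lambda<\Hf(q_0,p_0)$, hence $p_0>0$ (since $\Hf^-$ and $\Hf$ agree on $p\le 0$); it also gives $\Hf(q_0,p_{q_0})\le\tH(q_0)\le\lambda<\Hf(q_0,p_0)$, and monotonicity of $\Hf(q_0,\cdot)$ on $[0,\infty)$ forces $p_0>p_{q_0}$. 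The supersolution case is a symmetric two-case split on which of $\Hf^-(q_0,p_0)\ge\lambda$ or $\tH(q_0)\ge\lambda$ holds. No critical slopes, no weak $F_0$ condition.

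Your plan also has two concrete gaps. First, you repeatedly rely on ``the weak $F_0$-subsolution property — which $\rsig$ inherits'' (and its supersolution twin). That implication, strong $F$-solution $\Rightarrow$ weak $F_0$-solution, is neither stated nor proved in the paper; \Cref{thm:4.7} proves only the converse direction. It is true here, but it needs its own short verification, which you do not supply. Second, in the subsolution case your closing step ``$\Hf(q_0,p_0)\le\Hf(q_0,p_{q_0})$, true since $p_0<p_{q_0}$'' is false when $p_0<-p_{q_0}$: since $\Hf(q_0,\cdot)$ is even, $p_0<p_{q_0}$ alone does not control $p_0^2$. The missing case $p_0\le 0$ is in fact covered by $\Hf^-(q_0,p_0)=\Hf(q_0,p_0)\le\lambda$ — a fact you already recorded from the strong condition — but you never make the connection. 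In addition, the digression on $\bar p$ finite versus infinite in your supersolution argument is a red herring: \Cref{l.critsuper} only licenses \emph{raising} the $y$-slope, whereas you need to lower it from $p_0$ to $p_{q_0}$; the correct (and trivial) justification is that lowering the slope of a test function touching from below preserves ``below'' on $\{y\ge 0\}$, which has nothing to do with whether the critical slope $\underline p$ is finite. With these three repairs the approach would close, but the direct case analysis of the paper is both shorter and avoids relying on the unestablished strong-to-weak implication.
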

\begin{proof}
First, we assume $\rsig$ is a strong subsolution. 
It suffices to check the condition on the boundary $\{y=0\}$. Fix $(t_0,x_0,0)$ and $(-\lambda, p_0,q_0) \in D^+ \rsig(t_0,x_0,0).$  Then
\be
    \max\{\Hf^-(q_0,p_0), \tH(q_0)\}
        = F(q_0,p_0)
        \leq \lambda.
\ee
If $\Hf(q_0,p_0) \leq \lambda$ or $w(t_0,x_0,0) \leq 0$, then we are finished by~\eqref{e.kirchhoff'}

It remains only to argue in the case where $\Hf(q_0,p_0) > \lambda$ and $w(t_0,x_0,0) > 0$.  In this case, we must show that
\be\label{e.c062802}
    p_0 \geq p_{q_0}.
\ee
Since
\be\label{e.c062803}
    \Hf^-(q_0,p_0)
        \leq \lambda
        < \Hf(q_0,p_0),
\ee
it follows that $p_0>0$ (recall~\eqref{e.Hf-}).

By construction, we have that
\be\label{e.c062804}
    \Hf(q_0, p_{q_0})
        \leq \tH(q_0)
        \leq \lambda.
\ee
Since $p_{q_0} \geq 0$ and $\Hf(q_0,\cdot)$ is increasing on $[0,\infty)$, we deduce from~\eqref{e.c062803}-\eqref{e.c062804} that~\eqref{e.c062802} holds, as desired.

Next, assume that $\rsig$ is a strong supersolution and $(-\lambda, p_0, q_0) \in D^-w(t_0,x_0,0)$.  Then
\begin{equation}\label{e.subk.2}
    \max\{\Hf^-(q_0,p_0), \tH(q_0)\}
        \geq \lambda.
\end{equation}
We are finished if
\be
    - \lambda + \Hf(q_0,p_0)
        \geq 0.
\ee
Hence, we consider when
\be
    \Hf(q_0,p_0) < \lambda,
\ee
in which case we need to show that $p_0 \leq p_{q_0}$. By \eqref{e.subk.2}, we divide into two cases: (i) $\Hf^-(q_0,p_0) \geq \lambda$; (ii) $\tH(q_0) \geq \lambda$.

In case (i), $\Hf^-(q_0,p_0) > \Hf(q_0,p_0)$, which implies that $p_0 < 0$.  By definition, $0\leq  p_{q_0}$.   Thus, the proof is complete in this case.

In case (ii), $\Hf(q_0,p_0) < \lambda \leq \tH(q_0)$.  We are, thus, in the setting of \Cref{lem.pq}.(i), whence we conclude that $\tH(q_0) = \Hf(q_0,p_{q_0})$. It follows that
\be
    \Hf(q_0, |p_0|)
        = \Hf(q_0, p_0)
        < \Hf(q_0, p_{q_0}).
\ee
Since $\Hf$ is increasing on $[0,\infty)$ and $p_{q_0} \geq 0$, we deduce that
\be
    p_0
        \leq |p_0|
        < p_{q_0}.
\ee
This completes the proof.
\end{proof}

\subsection{Proof of the comparison principle}\label{ss.comparison}

\begin{proof}[Proof of \Cref{thm:scp}]
In view of \Cref{lem.subk}, we are nearly in the classical setting of, e.g., \cite[Theorem~7.12]{UsersGuide}.  However, such arguments rely on the boundedness of the sub- and supersolutions as well as the domain.  In the next three steps, we perform a reduction to this setting.

\medskip
\noindent
{\bf \# Step one: Without loss of generality, we may assume that $\underline w$ is bounded from above.} 
    We claim that $\underline{w}_K = \min\{\underline w - t,K\} + t$ is a strong subsolution to \eqref{e.HJ'}-\eqref{e.HJ_F'} for each $K>0$. Indeed, take a sequence $\{g_j\}$ of smooth functions satisfying 
    \be\label{e.c071807}
        0 \leq g'_j(r)\leq 1
        \qquad\text{ and }\qquad
        g_j(r)\nearrow \min\{r,K\}
            \quad \text{ for }r\in\mathbb{R}.
    \ee
    Notice that
    \be\label{e.c070302}
        |\nabla g_j(\underline w)|
            \leq |\nabla w|.
    \ee
    We claim that
    \be 
        \hat w = g_j(\underline w + t) - t
    \ee
    is a viscosity subsolution to \eqref{e.HJ'}-\eqref{e.HJ_F'}. 
    To see this, first note that we need only check the set $\{(t,x,y):~ \hat{w} > 0\}$.  We check this case formally assuming that $\underline w$ is $C^1$, although it is easy to see that these computations can easily be made rigorous. For $G = \Hf, \Hf^-$ or $\Hr$,
    \be
        \partial_t \hat w + G(\nabla \hat w)
            = g_j' \partial_t \hat w
                + g_j'
                - 1
                + G(g_j' \nabla \underline w).
    \ee
    Next, since $G(0) = 1$, and $G$ is convex, it follows that, for any $\lambda \in (0,1)$ and for any $p$,
    \be
        G(\lambda p) \leq \lambda G(p) + (1-\lambda) G(0)
            = \lambda G(p) + 1-\lambda.
    \ee
    Here $p\in \R$ or $\R^2$, depending on the choice of $G$.  Hence,
    \be
        \begin{split}
            \partial_t \hat w + G(\nabla \hat w)
                &\leq g_j' \partial_t \hat w
                    + g_j'
                    - 1
                    + g_j'G(\nabla \underline w)
                    + 1 - g_j'
                \\&
                = g_j' \left(\partial_t \underline w + G(\nabla \underline w)\right)
                \leq 0.
        \end{split}
    \ee
    Using the stability of strong subsolutions (see, e.g., \cite[Theorem 14.2.1]{Barles2023illustrated}), we take $j \to \infty$ and deduce that $\underline{w}_K = \min\{\underline w,K\}$ is a strong subsolution to \eqref{e.HJ'}-\eqref{e.HJ_F'}.
    
    Notice that, if we prove that $\underline{w}_K \leq \overline w$ for all $K$, then we deduce that $\underline W \leq \overline w$ in the limit $K\to\infty$.  We may, thus, assume that $\underline w$ is bounded from above.

\medskip
\noindent
{\bf \# Step two: reduction to a strict subsolution.} 
Without loss of generality, we may assume that there is $\eta>0$ such that
\be\label{e.c070303}
    \limsup_{t\to T-} \underline w
        = \limsup_{|x|+|y|\to\infty} \underline w
        = -\infty
    \quad\text{ and }\quad
    \min_{\{0\}\times\overline \H} (\overline w - \underline w) > 0,
\ee
while
\be\label{e.c070304}
\begin{cases}
    \min\{\underline w ,\partial_t \underline w + \Hf(\nabla \underline w) + 2\eta\} \leq 0
        &\qquad\text{ on }(0,T)\times \H,\\
    \min\{\underline w,\partial_t \underline w + F(\nabla \underline w) + 2\eta\} \leq 0
        &\qquad \text{ on }(0,T)\times \partial \H.
\end{cases}
\ee
It is easy to see that
\be
    \tilde{w}_1(t,x,y)=-\frac{K}{T-t} -\log(1+|x|^2 + |y|^2)
\ee
is a strong subsolution to~\eqref{e.HJ'}-\eqref{e.HJ_F'} for $K$ sufficiently large. Thanks to the convexity of $H$, $\Hf^-$ and $\tH$, the function
\be
    \underline{w}_\mu = (1-\mu)\underline{w} + \mu \tilde{w}_1
\ee
satisfies~\eqref{e.c070303}-\eqref{e.c070304} 
for any $0<\mu<1$ (recall that $\underline w$ is bounded from above by the previous step). Again, it suffices to show that $\underline{w}_\mu \leq \overline w$ for all sufficiently small $\mu >0$.

\medskip
\noindent
{\bf \# Step three: reduction to a compact portion of the boundary and the conclusion.}
Let us note that, due to~\eqref{e.w_geq_ell_t} and the work in Step two, there is $R>0$ such that the
\be\label{e.c070305}
    \inf_{[0,T]\times\overline \H} (\overline w - \underline w)
        = \min_{Q_R} (\overline w - \underline w)
        <0,
\ee
where
\be
    Q_R = \{(t,x,y) \in [0,T]\times\overline \H : \sfrac1R < t < T-\sfrac1R, |x| + |y| \leq R\}.
\ee
At this point, we are essentially in the classical setting where the standard technique of double variables can be applied.  This can be done in the same vein as the time independent result \cite[Theorem~7.12]{UsersGuide}.  Indeed, on can check that our Neumann-type boundary condition~\eqref{e.kirchhoff} satisfies the conditions stated there.  
This concludes the proof.
\end{proof}

\bibliographystyle{plain}
\bibliography{refs}

\end{document}